\def\thetitle{The geometry of the curve graph of a right-angled Artin group}
\def\theauthor{Sang-hyun Kim and Thomas Koberda}
\let\@@enum@org\@@enum@
\def\@@enum@[#1]{\@@enum@org[\normalfont #1]}
\newcommand\form[1]{\langle #1\rangle}
\newcommand\syl[1]{\| #1 \|_{\mathrm{syl}}}
\newcommand\stl[1]{\| #1 \|_{\mathrm{*}}}
\def\opp{^\mathrm{opp}}
\newcommand\sgn{\mathrm{sgn}}
\newcommand\stab{\mathrm{Stab}}
\newcommand\co{\colon}
\newcommand\Z{\mathbb{Z}}
\newcommand\cay{\operatorname{Cayley}}
\newcommand\aut{\operatorname{Aut}}
\newcommand\Mono{\operatorname{Mono}}
\newcommand\PMod{\operatorname{PMod}}
\newcommand\Comm{\operatorname{Comm}}
\newcommand\Fill{\operatorname{Fill}}
\newcommand\Out{\operatorname{Out}}
\newcommand\supp{\operatorname{supp}}
\newcommand\lk{\operatorname{Lk}}
\newcommand\st{\operatorname{St}}
\newcommand\diam{\operatorname{diam}}
\newcommand\gex{{{\Gamma}^e}}
\newcommand\aga{{A(\Gamma)}}
\newcommand\Gam{\Gamma}
\newcommand\bZ{\mathbb{Z}}
\newcommand\Mod{\operatorname{Mod}}
\newcommand\mC{\mathcal{C}}
\newcommand\mL{\mathcal{L}}
\newtheorem{thm}{Theorem}
\newtheorem{lem}[thm]{Lemma}
\newtheorem{cor}[thm]{Corollary}
\newtheorem{prop}[thm]{Proposition}
\theoremstyle{rem}
\newtheorem{exmp}[thm]{Example}
\newtheorem*{rem}{Remark}
\theoremstyle{defn}
\newtheorem{defn}[thm]{Definition}
\def\be{\begin{enumerate}}
\def\ee{\end{enumerate}}
\begin{document}

\markboth{\theauthor}{\thetitle}

\title{\thetitle}

\author{Sang-hyun Kim}
\address{Department of Mathematical Sciences, Seoul National University, Seoul 151-747, Korea} \email{s.kim@snu.ac.kr}

\author{Thomas Koberda}
\address{Department of Mathematics, Yale University, 20 Hillhouse Ave, New Haven, CT 06520, USA}\email{thomas.koberda@gmail.com}

\date{\today}
\keywords{right-angled Artin group, mapping class group, curve graph, curve complex, extension graph, coarse geometry}

\maketitle

\begin{abstract}
We develop an analogy between right-angled Artin groups and mapping class groups through the geometry of their actions on the extension graph and the curve graph respectively.  The central result in this paper is the fact that each right-angled Artin group acts acylindrically on its extension graph.  From this result we are able to develop a Nielsen--Thurston classification for elements in the right-angled Artin group. Our analogy spans both the algebra regarding subgroups of right-angled Artin groups and mapping class groups, as well as the geometry of the extension graph and the curve graph. On the geometric side, we establish an analogue of Masur and Minsky's Bounded Geodesic Image Theorem and their distance formula.
\end{abstract}

\section{Introduction}
\subsection{Overview}
In this article, we study the geometry of the action of a right-angled Artin group $\aga$ on its extension graph $\gex$.  The philosophy guiding this paper is that a right-angled Artin group $\aga$ behaves very much like the mapping class group $\Mod(S)$ of a hyperbolic surface $S$ from the perspective of the geometry of the action of $\aga$ on $\gex$, compared with the action of $\Mod(S)$ on the curve graph $\mC(S)$.  
The analogy between right-angled Artin groups and extension graphs versus mapping class groups and curve graphs 
is not perfect and it notably breaks down in several points, though it does help guide us to new results.

The results we establish in this paper can be divided into algebraic results and geometric results.  From the algebraic point of view, we discuss the role of the extension graph and of the curve graph in understanding the subgroup structure of right-angled Artin groups and mapping class groups respectively.  From the geometric point of view, we discuss not only the intrinsic geometry of the extension graph and curve graph, but also the geometry of the canonical actions of the right-angled Artin group and the mapping class group respectively.

The central observation of this paper is that from the point of view of coarse geometry, the extension graph can be thought of as the Cayley graph of the right-angled Artin group equipped with \emph{star length} rather than with word length.  Roughly speaking, the star length of an element $w\in\aga$ is the smallest $k$ for which $w=u_1\cdots u_k$, where each $u_i$ is contained in the subgroup generated by the star of a vertex in $\Gamma$.

Inspired by an analogous fact relating word length in the mapping class group with distance in the curve graph, we are able to refine distance estimates in $\gex$ by developing a theory of subsurface projections and proving a distance formula which recovers the \emph{syllable length} in $\aga$, at least for graphs of girth greater than four.  Roughly, the syllable length of $w\in\aga$ is the smallest $k$ for which $w=v_1^{n_1}\cdots v_k^{n_k}$, where each $v_i$ is a vertex in $\Gamma$ and $n_i\in\mathbb{Z}$.

In light of the preceding remarks, an alternative title for this article could be ``The geometry of the star metric on right-angled Artin groups".
In the interest of clarity and brevity, we will not state the results one by one here in the introduction.  In the next subsection we have included a tabular summary of the results in this paper, together with references directing the reader to the discussion of the corresponding result.

\subsection{Summary of results}
The following two tables summarize the main results of this article.  The results are recorded with parallel results in mapping class group theory in order to emphasize the analogy between the two objects.  For each result, either a reference will be given or the reader will be directed to the appropriate statement in this article.

\begin{table}
\begin{center}
\begin{tabular}{ | p{6.1cm}| p{6.1cm}|}
\hline
\multicolumn{2}{|c|}{{\bf Summary of Geometric Results}} \\
\hline
$\aga$ & $\Mod(S)$\\ \hline
{\bf Extension graph} $\gex$ & {\bf Curve graph} $\mC(S)$\\ \hline
$\gex$ is quasi--isometric to an electrification of $\cay(\aga)$ (Theorem \ref{thm:qi}) & $\mC(S)$ is quasi--isometric to an electrification of $\cay(\Mod(S))$ (\cite{masurminsky1})\\ \hline
Extension graphs fall into exactly two quasi--isometry classes (Theorem \ref{thm:starqiclass}) & Curve graphs are quasi--isometrically rigid (\cite{rafischleimer})\\ \hline
$\gex$ is a quasi--tree (\cite{KK2012}) & $\mC(S)$ is $\delta$--hyperbolic (\cite{masurminsky1})\\ \hline
The action of $\aga$ on $\gex$ is acylindrical (Theorem \ref{thm:acyl}) & The action of $\Mod(S)$ on $\mC(S)$ is acylindrical (\cite{bowditchtight})\\ \hline
Loxodromic--elliptic dichotomy for nonidentity elements (Section \ref{s:ntclass}) & Nielsen--Thurston classification (\cite{flp})\\ \hline
Each loxodromic element has a unique pair of fixed points on $\partial\gex$ (Lemma \ref{l:northsouth}) & Each pseudo-Anosov has a unique pair of fixed points on $\partial\mC(S)$ (\cite{flp})\\ \hline
Vertex link projection (Section \ref{s:proj}) & Subsurface projection (\cite{masurminsky1}, \cite{masurminsky2})\\ \hline
Bounded Geodesic Image Theorem for graphs with girth $\geq 5$ (Theorem \ref{thm:bgit}) & Bounded Geodesic Image Theorem (\cite{masurminsky2})\\ \hline
Distance formula coarsely measures syllable length in $\aga$ for graphs with girth $\geq 5$ (Section \ref{s:distance}) & Non--annular distance formula coarsely measures Weil--Petersson distance in Teichm\"uller space (\cite{masurminsky2} and \cite{brockconvexcores})\\ 
\hline
\end{tabular}
\caption{Main results, part one}
\end{center}
\end{table}

\begin{table}
\begin{center}
\begin{tabular}{ | p{6.1cm}| p{6.1cm}|}
\hline
\multicolumn{2}{|c|}{{\bf Summary of Algebraic Results}} \\
\hline
$\aga$ & $\Mod(S)$\\ \hline
{\bf Extension graph} $\gex$ & {\bf Curve graph} $\mC(S)$\\ \hline
Induced subgraphs of $\gex$ give rise to right-angled Artin groups of $\aga$ (Section \ref{s:algebra} and \cite{KK2012}) & Induced subgraphs of $\mC(S)$ give rise to right-angled Artin subgroups of $\Mod(S)$ (Subsection \ref{s:algebra} and \cite{Koberda2012})\\ \hline
An embedding $A(\Lambda)\to \aga$ gives rise to an embedding $\Lambda\to K(\gex)$ (Section \ref{s:algebra} and \cite{KK2012}) & An embedding $A(\Lambda)\to \Mod(S)$ gives rise to an embedding $\Lambda\to K(\mC(S))$ (Section \ref{s:algebra} and \cite{kkraagemb})\\ \hline
$\gex$ can be recovered from the intrinsic algebra of $\aga$ (Section \ref{s:alg}) & $\mC(S)$ can be recovered from the intrinsic algebra of $\Mod(S)$ (Section \ref{s:alg})\\ \hline
Cyclically reduced elliptic elements of $\aga$ are supported in joins (Theorem \ref{t:ntclass}) & Reducible mapping classes stablize sub--curve graphs (\cite{BLM1983}) \\ \hline
Injective homomorphisms from right-angled Artin groups to right-angled Artin groups and to mapping class groups preserve elliptics but not loxodromics
(Section \ref{s:type}) & 
Injective homomorphisms from mapping class groups to right-angled Artin groups and to mapping class groups preserve elliptics but not loxodromics 
(\cite{aramayonaleiningersouto})\\ \hline
Powers of loxodromic elements generate free groups (Theorem \ref{t:loxfree}) & Powers of pseudo-Anosov elements generate free groups (Proposition \ref{p:pseudoanosovfree})\\  \hline
Purely loxodromic subgroups are free (Theorem \ref{t:loxpure}) & One--ended purely pseudo-Anosov subgroups fall in finitely many conjugacy classes per isomorphism type (\cite{bowditchoneended})\\  \hline
Powers of pure elements generate right-angled Artin groups (Theorem \ref{thm:purepower}) & Powers of mapping classes with connected supports generate right-angled Artin groups (\cite{Koberda2012})\\ \hline
Automorphism group of $\gex$ is uncountable (Theorem \ref{thm:aut}) & Automorphism group of $\mC(S)$ is $\Mod(S)$ (\cite{ivanov})\\ \hline
\hline
\end{tabular}
\caption{Main results, part two}
\end{center}
\end{table}

\section{Preliminaries}
\subsection{Graph--theoretic terminology}
Throughout this paper, a \emph{graph} will mean a one-dimensional simplicial complex. In particular, graphs have neither loops nor multi--edges.
If there is a group action on a graph, we will assume that the action is a right-action.

Let $X$ be a graph. The vertex set and the edge set of $X$ are denoted by $V(X)$ and $E(X)$, respectively. 
We let ${V(X) \choose 2}$ denote the set of two-element subsets in $V(X)$,
and regard $E(X)$ as a subset of ${V(X)\choose 2}$.
We define the \emph{opposite graph} $X\opp$ of $X$ by the relations $V(X\opp)=V(X)$ and $E(X\opp) = {V(X) \choose 2}\setminus E(X)$.
For two graphs $X$ and $Y$, the \emph{join of $X$ and $Y$} is defined as the graph $X\ast Y = (X\opp\coprod Y\opp)\opp$.
A graph is called a \emph{join} if it is the join of two nonempty graphs. 
A subgraph which is a join is called a \emph{subjoin}.

For $S\subseteq V(X)$, the \emph{subgraph of $X$ induced by $S$} is a subgraph $Y$ of $X$ defined by the relations $V(Y)=S$ and 
\[E(Y) = \{e\in E(X)\;|\; \text{the endpoints of }e\text{ are in }S\}.\]
In this case, we also say $Y$ is an \emph{induced subgraph of $X$} and write $Y\le X$.
For two graphs $X$ and $Y$, we say that $X$ is \emph{$Y$--free} if no induced subgraphs of $X$ are isomorphic to $Y$. In particular, we say $X$ is \emph{triangle--free} (\emph{square--free}, respectively) if no induced subgraphs of $X$ are triangles (squares, respectively).

We say that $A\subseteq V(X)$ is a \emph{clique} in $X$ if every pair of vertices in $A$ are adjacent in $X$. The \emph{link} of a vertex $v$ in $X$ is the set of the vertices in $X$ which are adjacent to $v$, and denoted as $\lk(v)$. The \emph{star} of $v$ is the union of $\lk(v)$ and $\{v\}$, and denoted as $\st(v)$.
By a clique, a link or a star, we often also mean the subgraphs induced by them.
Unless specified otherwise, each edge of a graph is considered to have length one.
For a metric graph $X$, the distance between two points in $X$ is denoted as $d_X$, or simply by $d$ when no confusion can arise.

The \emph{girth} of a graph $\Gam$ is the length of the shortest cycle in $\Gam$.  By convention, the girth of a tree is infinite.

\subsection{Extension graphs}\label{ss:ext}
Let $G$ be a group and $A\subseteq G$. The \emph{commutation graph of $A$} is the graph having the vertex set $A$ such that two vertices are adjacent if the corresponding group elements commute. If $A$ is a set of cyclic subgroups of $G$, the commutation graph of $A$ will mean the commutation graph of 
the set $\{x_\alpha\co \alpha\in A\}$ where for each $\alpha\in A$ we choose a generator $x_\alpha$ for $\alpha$.

Suppose $\Gam$ is a finite graph. The \emph{right-angled Artin group on $\Gam$} is the group presentation 
\[ A(\Gam) = \form{ V(\Gam) \;|\; [a,b]=1\text{ for each }\{a,b\}\in E(\Gam)}.\]  We will refer to the elements of $V(\Gam)$ as the \emph{vertex generators} of $\aga$.
In~\cite{KK2012}, the authors defined the \emph{extension graph} $\Gam^e$ as the commutation graph of the vertex-conjugates in $A(\Gam)$. More precisely, the vertex set of $\Gam^e$ is $\{v^g\co v\in V(\Gam),g\in A(\Gam)\}$ and two distinct vertices $u^g$ and $v^h$ are adjacent if and only if they commute in $A(\Gam)$.
There is a natural right--conjugation action of $A(\Gamma)$ on $\Gamma^e$ defined by 
$v^h\mapsto v^{hg}$ for $v\in V(\Gamma)$ and $g,h\in A(\Gamma)$.

Observe that we may write \[\Gam^e=\bigcup_{g\in\aga}\Gam^g,\] where the notation $\Gam^g$ denotes the graph $\Gam$ with its vertices (treated as elements of $\aga$) replaced with their corresponding conjugates by $g$.  The adjacency relations in $\Gam^g$ are the same as in $\Gam$.  To obtain $\Gam^e$ from the set of conjugates $\{\Gam^g\mid g\in\aga\}$, we simply identify two vertices if they are equal, and similarly with two edges.

\subsection{Curve graphs}
Let $S=S_{g,n}$ be a connected, orientable surface of finite genus $g$ and with $n$ punctures.  We will assume that $2g+n-2>0$, so that $S$ admits a complete hyperbolic metric of finite volume.  We denote the \emph{mapping class group} of $S$ by $\Mod(S)$.  Recall that this group is defined to be the group of isotopy classes of orientation--preserving homeomorphisms of $S$.

By a \emph{simple closed curve} on $S$, we mean the isotopy class of an essential (which is to say nontrivial and non-peripheral in $\pi_1(S)$) closed curve on $S$ which has a representative with no self--intersections.
Observe that the three--times punctured sphere $S_{0,3}$ admits no simple closed curves.
For each simple closed curve $\alpha$, we denote by $T_\alpha$ the Dehn twist along $\alpha$.

Let $S\notin\{S_{0,3}, S_{0,4}, S_{1,1}\}$.  We define the \emph{curve graph} $\mC (S)$ of $S$ as follows: the vertices of $\mC (S)$ are simple closed curves on $S$, and two (distinct) simple closed curves are adjacent in $\mC (S)$ if they can be disjointly realized.  In other words, two isotopy classes $[\gamma_1]$ and $[\gamma_2]$ are connected by an edge if there exist disjoint representatives in those isotopy classes.  Thus, the curve graph of $S$ can be thought of the commutation graph of the set of Dehn twists in the mapping class group $\Mod(S)$.  The reader may recognize the curve graph as the $1$--skeleton of the curve complex of $S$.

The curve graph of $S$ needs to be defined differently in the case $S\in\{S_{0,3}, S_{0,4}, S_{1,1}\}$.  When $S=S_{0,3}$, we define $\mC(S)$ to be empty.  In the other two cases, observe that no two simple closed curves can be disjointly realized.  In these cases, we define two simple closed curves to be adjacent in $\mC(S)$ if they have representatives which intersect a minimal number of times.  Note that for $S_{0,4}$ this means two intersections, and for $S_{1,1}$ this means one intersection.

We will not be using any properties of curve graphs in the proofs of our results in this paper.  They will mostly serve to guide our intuition about extension graphs.

\subsection{Right-angled Artin subgroups}\label{s:algebra}
The goal of this subsection is to note that $\gex$ and $\mC(S)$ classify right-angled Artin subgroups of right-angled Artin groups and mapping class groups respectively, and that they do so in essentially the same way.  The reader will be directed to the appropriate references for proofs.

For a possibly infinite graph $X$, we define the graph $K(X)$ as follows (see~\cite{KK2012} and also \cite{kkraagemb}, where $K(X)$ is denoted as $X_k$).
The vertices of $K(X)$ are in bijective correspondence with the nonempty cliques of $X$. Two vertices $v_J$ and $v_L$ corresponding to cliques $J$ and $L$ are adjacent if $J\cup L$ is also a clique.
Note that $K(\mC(S))$ can be regarded as a \emph{multi-curve graph} of $S$ in the sense that each vertex corresponds to an isotopy class of a multi-curve consisting of pairwise non-isotopic loops
and two distinct multi-curves are adjacent if they do not intersect.
For two groups $H$ and $G$, we write $H\le G$ if there is an embedding from $H$ into $G$.

\begin{thm}[\cite{KK2012}]
Let $\Lambda$ and $\Gamma$ be finite graphs.
\begin{enumerate}
\item
If $\Lambda\le \gex$, then $A(\Lambda)\le \aga$.
More precisely, suppose $\phi$ is an embedding of $\Lambda$ into $\gex$ as an induced subgraph.
Then the map 
\[\phi_N:A(\Lambda)\to \aga\] defined by \[v\mapsto \phi(v)^N\] is injective for sufficiently large $N$.
\item
If $A(\Lambda)\le \aga$, then there exists an embedding from $\Lambda$ into $K(\gex)$
as an induced subgraph.
\end{enumerate}
\end{thm}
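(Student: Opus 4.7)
Well-definedness of $\phi_N$ is immediate: if $\{u^g,v^h\}$ is an edge of $\Lambda\le\gex$, then $u^g$ and $v^h$ commute in $\aga$ by the very definition of $\gex$, so their $N$th powers commute and the defining relations of $A(\Lambda)$ are respected. The content is therefore injectivity for $N\gg 0$. I would prove this by a ping-pong/normal form argument on the Salvetti complex of $\aga$ (or, equivalently, using Servatius' combinatorial normal form). Each generator $\phi_N(v_i)$ is an $N$th power of a conjugate of a vertex, hence a loxodromic isometry of the Salvetti complex with an explicit combinatorial axis $\alpha_i$ whose translation length grows linearly in $N$. If $v_i$ and $v_j$ are non-adjacent in $\Lambda$, then $v_i^{g_i}$ and $v_j^{g_j}$ do not commute in $\aga$, which forces the axes $\alpha_i$ and $\alpha_j$ to not share a common link in the star structure; in particular they either diverge or intersect transversally in a controlled manner. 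Taking $N$ large enough makes the ``off-axis'' tails of $\phi_N(v_i)^k$ longer than any bounded cancellation region between pairs of axes, and a standard ping-pong argument applied to open neighborhoods of the $\alpha_i$ then shows that any reduced word in the $\phi_N(v_i)^{\pm 1}$ that is not already trivial in $A(\Lambda)$ represents a nontrivial element of $\aga$.

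\textbf{Plan for part (2).} Given an embedding $f\co A(\Lambda)\hookrightarrow\aga$, I would construct $\Lambda\hookrightarrow K(\gex)$ by invoking the structure theorem for centralizers of infinite-order elements in right-angled Artin groups (Servatius). For each vertex $v\in V(\Lambda)$ the image $f(v)$ has infinite order, and its centralizer in $\aga$ splits canonically as $\Z^{k_v}\times H_v$ where the abelian factor is generated by a uniquely determined clique $J_v$ in $\gex$ (the clique obtained by taking a root of $f(v)$ together with the ``pure factors'' of its cyclically reduced form). Define $\psi\co V(\Lambda)\to V(K(\gex))$ by $v\mapsto v_{J_v}$. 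If $v$ and $w$ are adjacent in $\Lambda$, then $f(v)$ and $f(w)$ commute, so $f(w)$ lies in the centralizer of $f(v)$; comparing Servatius decompositions shows that $J_v\cup J_w$ spans a clique in $\gex$, so $\psi(v)$ and $\psi(w)$ are adjacent in $K(\gex)$. Conversely, if $J_v\cup J_w$ is a clique in $\gex$, then $f(v)$ and $f(w)$ both lie in the abelian subgroup $\form{J_v\cup J_w}\le\aga$ and so commute; injectivity of $f$ then forces $v$ and $w$ to commute in $A(\Lambda)$, hence to be adjacent in $\Lambda$. Injectivity of $\psi$ on vertices follows because distinct vertex generators of $A(\Lambda)$ map to elements with distinct Servatius ``supports'' in $\gex$, the sole caveat being vertices whose images are conjugate powers of the same element, which is ruled out again by injectivity of $f$.

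\textbf{Expected obstacles.} In part (1) the delicate point is that non-adjacency in $\gex$ is considerably subtler than non-adjacency in $\Gamma$: two conjugates $u^g,v^h$ may share many partial commutations even when they do not commute outright, so cancellations between $\phi_N(v_i)^{n_i}$ and $\phi_N(v_{i+1})^{n_{i+1}}$ can be intricate. The cleanest way to tame this is to pass to the Salvetti complex and work with axes, where the relevant estimates become geometric. In part (2) the main technical issue is the canonical assignment $v\mapsto J_v$: one must choose roots consistently and verify that the map is induced, not merely a graph homomorphism. Both difficulties are handled in~\cite{KK2012}; the argument above is essentially a sketch of the methods used there.
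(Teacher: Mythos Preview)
This theorem is quoted from \cite{KK2012} and is not proved in the present paper, so there is no in-paper argument to compare against. That said, your sketch for part~(1) is broadly in the right spirit: the injectivity of $\phi_N$ for large $N$ is indeed established by a ping-pong/normal-form style argument in \cite{KK2012}, and your identification of the main difficulty (that non-commuting vertex conjugates may still share substantial partial commutation) is accurate.

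Part~(2), however, contains a genuine gap. Your proposed assignment $v\mapsto J_v$ is not well-defined as stated. By the Centralizer Theorem, the centralizer of a cyclically reduced $g\in\aga$ is $\form{p_1}\times\cdots\times\form{p_k}\times A(\Lambda')$, where the $p_i$ are the pure factors of $g$ and $\Lambda'$ is the induced subgraph on the vertices adjacent to all of $\supp(g)$. The free abelian factor is generated by the $p_i$, which are arbitrary elements of $\aga$, not vertex conjugates; they do not in general determine a clique in $\gex$. So the sentence ``the abelian factor is generated by a uniquely determined clique $J_v$ in $\gex$'' is false as written. Even granting some definition of $J_v$, your converse step fails: you assert that if $J_v\cup J_w$ is a clique then $f(v),f(w)\in\form{J_v\cup J_w}$, but nothing in your setup places $f(v)$ inside $\form{J_v}$; you only described $J_v$ via the centralizer of $f(v)$. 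Finally, injectivity of $\psi$ is not handled: two non-adjacent vertices $v,w\in\Lambda$ could a priori have $f(v),f(w)$ with identical pure-factor data (hence $J_v=J_w$) without $f(v)$ and $f(w)$ being conjugate powers of a common element. The actual argument in \cite{KK2012} does not proceed by reading off a clique from the centralizer; it builds the map $\Lambda\to K(\gex)$ by a more delicate analysis of supports of pure factors under conjugation, and the induced-subgraph property requires real work beyond what the Centralizer Theorem immediately gives.
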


The corresponding result for mapping class groups is the following:

\begin{thm}[\cite{kkraagemb} and \cite{Koberda2012}]
Let $\Lambda$ be a finite graph and $S=S_{g,n}$ where $2g+n-2>0$.
\begin{enumerate}
\item
If $\Lambda\le \mC(S)$, then $A(\Lambda)\le \Mod(S)$.
More precisely, suppose $\phi$ is an embedding of $\Lambda$ into $\mC(S)$ as an induced subgraph.
Then the map 
\[\phi_N:A(\Lambda)\to\Mod(S)\] defined by \[v\mapsto T_{\phi(v)}^N\] is injective for sufficiently large $N$.
\item
If $A(\Lambda)\le\Mod(S)$, then there exists an embedding from $\Lambda$ into $K(\mC(S))$
as an induced subgraph.
\end{enumerate}
\end{thm}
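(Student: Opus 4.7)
The plan is to handle the two parts separately, since they mirror the RAAG theorem (stated just above) and essentially transport the same strategies across the analogy. The general philosophy is that part (1) is a ping-pong/embedding result for high powers of Dehn twists, while part (2) is an application of the Nielsen--Thurston classification that reads off a multicurve from the canonical form of each pure power.

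For part (1), I would induct on $|V(\Lambda)|$. The base case $|V(\Lambda)|=1$ is clear since a Dehn twist has infinite order; the two-vertex cases reduce to the classical facts that Dehn twists along disjoint curves commute and that sufficiently high powers of Dehn twists along intersecting curves generate a free group of rank two, via ping-pong on the space $\mathcal{PML}(S)$ of projective measured laminations. For the inductive step, if $\Lambda$ decomposes as a join $\Lambda_1 \ast \Lambda_2$, then all curves in $\phi(V(\Lambda_1))$ are disjoint from all curves in $\phi(V(\Lambda_2))$; applying the inductive hypothesis to each factor, the two resulting subgroups commute and the images generate $A(\Lambda_1) \times A(\Lambda_2) = A(\Lambda)$. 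If $\Lambda$ is not a join, pick a vertex $v$ with $\st(v) \neq \Lambda$; this gives a splitting $A(\Lambda) = A(\Lambda\setminus\{v\}) \ast_{A(\lk(v))} A(\st(v))$. The main obstacle is to show that a word in normal form for this splitting remains non-trivial after applying $\phi_N$ for $N$ large. My plan is a ping-pong argument on $\mathcal{PML}(S)$: a sufficiently large power $T_{\phi(v)}^N$ acts with strong north--south dynamics attracting to $[\phi(v)]$, while the inductive hypothesis applied to $\Lambda\setminus\{v\}$ forces every non-trivial coset representative for $A(\Lambda\setminus \{v\})/A(\lk(v))$ to move a suitable basepoint lamination away from $[\phi(v)]$ a definite distance in $\mathcal{PML}(S)$, so alternating applications cannot collapse to the identity.

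For part (2), given an embedding $\rho:A(\Lambda)\hookrightarrow\Mod(S)$, the plan is to produce an induced embedding $\Lambda\to K(\mC(S))$ by assigning each vertex generator a multicurve. First I would replace each $g_v:=\rho(v)$ by a uniform power so that every $g_v$ is pure in the Nielsen--Thurston sense (Dehn twists along disjoint curves together with pseudo-Anosovs on pairwise disjoint subsurfaces, with trivial component permutation); this is legitimate because only $\Lambda$, not $A(\Lambda)$, needs to embed. To each such pure $g_v$ I associate a multicurve $\sigma_v$ consisting of its canonical reduction system together with the boundary curves of its pseudo-Anosov support subsurfaces, giving a vertex of $K(\mC(S))$. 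If $v,w$ are adjacent in $\Lambda$, then $g_v$ and $g_w$ commute, and for pure elements this is equivalent to their supports being disjoint subsurfaces; hence $\sigma_v\cup\sigma_w$ is again a multicurve, so $\sigma_v$ and $\sigma_w$ are adjacent in $K(\mC(S))$.

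The delicate point, which I expect to be the main obstacle, is showing the embedding is \emph{induced}: if $v,w$ are non-adjacent in $\Lambda$ I must ensure that $\sigma_v$ and $\sigma_w$ are non-adjacent in $K(\mC(S))$ and in particular that they are distinct. The subtle case is when both $g_v$ and $g_w$ are globally pseudo-Anosov on $S$, where the naive canonical reduction systems are both empty. My plan is to enrich $\sigma_v$ by adjoining, for each pseudo-Anosov component of $g_v$ on a subsurface $Y$, a small collection of ``witness'' curves in $Y$; the contrapositive I would then prove is that if $\sigma_v\cup\sigma_w$ can be realized disjointly for appropriately chosen witnesses, the subsurface supports of $g_v$ and $g_w$ are in fact disjoint, forcing $g_v$ and $g_w$ to commute. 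Coordinating the witness choices consistently across all of $V(\Lambda)$, so that adjacency is preserved while non-adjacency is detected, is where the real work lies.
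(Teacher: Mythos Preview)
The paper does not prove this theorem; it is stated with attribution to \cite{kkraagemb} and \cite{Koberda2012} in Section~\ref{s:algebra}, where the reader is explicitly directed to the references for proofs. So there is no ``paper's own proof'' to compare against.

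That said, a brief assessment of your proposal on its merits: for part (1), the strategy of ping-pong on $\mathcal{PML}(S)$ is indeed the approach of \cite{Koberda2012}, though the argument there is organized differently (not via the amalgamated product splitting you suggest). Your inductive scheme is plausible but the step where you need ``every non-trivial coset representative for $A(\Lambda\setminus\{v\})/A(\lk(v))$ to move a suitable basepoint lamination away from $[\phi(v)]$ a definite distance'' is doing a lot of work and would need a uniform quantitative statement that your inductive hypothesis, as phrased, does not obviously supply.

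For part (2), you have correctly located the main difficulty: when a vertex generator maps to a pseudo-Anosov on all of $S$, the canonical reduction system is empty. Your proposed fix via ``witness curves'' is vague, and the sentence ``coordinating the witness choices consistently across all of $V(\Lambda)$ \ldots is where the real work lies'' is an honest admission that you have not yet solved the problem. One observation that helps: if $\rho(v)$ is globally pseudo-Anosov then $v$ must be isolated in $\Lambda$ (a pseudo-Anosov has virtually cyclic centralizer, so any adjacent $w$ would force $\rho(w)$ to share a power with $\rho(v)$, contradicting injectivity on $\langle v,w\rangle\cong\mathbb{Z}^2$). This reduces the coordination problem substantially, since for such $v$ you only need $\sigma_v$ to intersect the finitely many other $\sigma_w$, which can be arranged by hand. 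The argument in \cite{kkraagemb} handles this and the remaining distinctness issues systematically.
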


\section{Intrinsic algebraic characterization of $\mC(S)$ and $\Gam^e$}\label{s:alg}
\subsection{Maximal cyclic subgroups}\label{ss:maximal}
In this section we would like to show that the intrinsic algebraic structure of a mapping class group $\Mod(S)$ and of a right-angled Artin group $\aga$ is sufficient to recover the curve graph $\mC(S)$ and the extension graph $\gex$ respectively.

Recall that the mapping class group has a finite index subgroup $\PMod(S)$, a \emph{pure mapping class group}, which consists of mapping classes $\psi$ such that if 
 $\psi$ stabilizes a multicurve $C$ then $\psi$ stabilizes $C$ component--wise and restricts to the identity or to a pseudo-Anosov mapping class on each component of $S\setminus C$.

\begin{lem}\label{lem:dtchar}
Let $G\le \PMod(S)$ be a cyclic subgroup which satisfies the following conditions:
\begin{enumerate}
\item
The centralizer of $G$ in $\PMod(S)$ contains a maximal rank abelian subgroup (among all abelian subgroups of $\PMod(S)$).
\item
There exists two maximal rank abelian subgroups $A,A'$ in the centralizer of $G$ such that $A\cap A'$ is cyclic and contains $G$ with finite index.
\end{enumerate}
Then there is a simple closed curve $c\subseteq S$ and a nonzero $k\in\bZ$ such that $G=\langle T_c^k\rangle$.
\end{lem}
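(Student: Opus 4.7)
The plan is to invoke the Birman--Lubotzky--McCarthy classification of abelian subgroups of $\PMod(S)$ \cite{BLM1983}. I shall use three facts: (a) the maximum rank of an abelian subgroup of $\PMod(S)$ is $\xi(S)=3g-3+n$; (b) each maximal rank abelian subgroup is, up to finite index, generated by Dehn twists along a multicurve together with pseudo-Anosov classes supported on complementary components of complexity one; and (c) since $\PMod(S)$ is torsion-free, any element of $\PMod(S)$ that commutes with every element of a maximal rank abelian subgroup $A$ admits a nonzero power lying in $A$.

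Let $\psi$ generate $G$. Write $A\cap A'=\langle\alpha\rangle$, so since $G\subseteq A\cap A'$ we have $\psi=\alpha^k$ for some nonzero $k$, and I shall show that $\alpha$ is itself a nonzero power of a single Dehn twist. Note $\alpha\in A$, so every element of $A$ centralizes $\alpha$; by purity this forces each element of $A$ to fix the canonical reduction system of $\alpha$ componentwise. Consequently, for any curve $c$ in that reduction system the twist $T_c$ commutes with every element of $A$, and likewise of $A'$, so by (c) some nonzero power $T_c^m$ lies in $A\cap A'=\langle\alpha\rangle$. If the reduction system of $\alpha$ contained two distinct curves $c_1,c_2$, then nonzero powers $T_{c_1}^{m_1}$ and $T_{c_2}^{m_2}$ would both lie in $\langle\alpha\rangle$; but these powers span a rank-two subgroup of $\PMod(S)$, contradicting cyclicity. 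Similarly, if $\alpha$ had a pseudo-Anosov component on some complexity-one subsurface $\Sigma$, applying the same argument to a boundary Dehn twist $T_{\partial\Sigma}$ would force $T_{\partial\Sigma}^m\in\langle\alpha\rangle$; but no nonzero power of a Dehn twist can equal a nonzero power of $\alpha$, whose pseudo-Anosov part is nontrivial. The boundaryless subcase $\Sigma=S$, which would force $S$ itself to be of complexity one, is ruled out by taking $A\neq A'$. Thus $\alpha=T_c^{j}$ for some $j\neq 0$, and $\psi=T_c^{jk}$, as required.

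The principal obstacle is the passage from \emph{commutes with $\alpha$} to \emph{commutes with every element of $A$}. This rests on the structural fact that any maximal rank abelian subgroup containing $\alpha$ must preserve the canonical reduction system of $\alpha$ componentwise; purity of the mapping class group is essential here, since it rules out permutations of the reduction curves. Once each element of $A$ fixes each curve $c$ in the reduction system, it commutes with $T_c$, allowing (c) to be invoked to pull a power of $T_c$ into $A$, and the subsequent cyclicity analysis goes through cleanly.
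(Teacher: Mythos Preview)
Your argument is correct. The route, however, differs from the paper's two-sentence sketch. The paper associates to each maximal-rank abelian subgroup $A,A'$ its underlying maximal multicurve $C_1,C_2$ (a pants decomposition, via BLM), observes that the generator of $G$ must be a multitwist supported on both, and then reads off that $A\cap A'$ cyclic forces $|C_1\cap C_2|=1$, so the support is a single curve. You instead work intrinsically with the canonical reduction system of the generator $\alpha$ of $A\cap A'$: for each reduction curve $c$ you show $T_c$ centralizes both $A$ and $A'$, pull a power $T_c^m$ into $A\cap A'=\langle\alpha\rangle$, and use cyclicity to bound the reduction system to a single curve and to exclude any pseudo-Anosov piece. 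What this buys you is an explicit treatment of the possibility that $A$ or $A'$ contains pseudo-Anosov factors on complexity-one pieces (which the paper's sketch tacitly suppresses); what the paper's approach buys is brevity once one accepts the multicurve description of maximal-rank abelian subgroups. Both rest on the same BLM input, and your handling of the degenerate case $\Sigma=S$ via $A\neq A'$ matches the paper's implicit reading of ``two''.
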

\begin{proof}
Conditions (1) and (2) on $G$ together guarantee that a generator of $G$ is supported on a maximal multicurve on $S$.  Condition (2) guarantees that there are two maximal multicurves $C_1,C_2$ on $S$ which contain the support of $G$ and whose intersection $C_1\cap C_2$ consists of exactly one curve.
\end{proof}

\begin{prop}
Let $T$ be the set of the maximal cyclic subgroups of $\PMod(S)$ satisfying the conditions of Lemma \ref{lem:dtchar}.
Then $\mC(S)$ is isomorphic to the commutation graph of $T$.
\end{prop}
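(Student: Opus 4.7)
The plan is to build an explicit bijection $\Phi \colon T \to V(\mC(S))$ and then check that it respects the edge relations. By Lemma \ref{lem:dtchar}, every $G \in T$ has the form $\langle T_c^k \rangle$ for some simple closed curve $c \subseteq S$ and some nonzero integer $k$. So $\Phi$ will be defined, essentially, by sending such a $G$ to the underlying curve $c$; the nontrivial work is to see that this is well-defined and bijective.

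First I would verify that $\langle T_c \rangle$ itself lies in $T$ for every simple closed curve $c$. Condition (1) holds because the centralizer of $T_c$ in $\PMod(S)$ contains the free abelian subgroup generated by the Dehn twists along a pants decomposition of $S$ that includes $c$, which is a maximal-rank abelian subgroup of $\PMod(S)$. Condition (2) holds because I can choose two such pants decompositions $C_1, C_2$ sharing only the curve $c$, and the Dehn twist subgroups $A, A'$ along $C_1, C_2$ are maximal-rank abelian subgroups in the centralizer of $\langle T_c \rangle$ whose intersection is precisely $\langle T_c \rangle$. Finally, $\langle T_c \rangle$ is maximal cyclic in $\PMod(S)$ because the root of a Dehn twist in the mapping class group is itself (this is standard; it follows, for example, from the fact that the translation length of $T_c^n$ on a suitable complex grows linearly in $|n|$, or from the classification of centralizers of Dehn twists). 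Combining this maximality with step one, if $G = \langle T_c^k \rangle \in T$, then $G \subseteq \langle T_c \rangle$ and both are in $T$, so maximality forces $G = \langle T_c \rangle$. Therefore the map $\Phi \colon T \to V(\mC(S))$, $\langle T_c \rangle \mapsto c$, is a well-defined bijection, injectivity coming from the fact that distinct simple closed curves give rise to distinct Dehn twists (hence distinct cyclic subgroups).

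Next I would check the adjacency correspondence. Two vertices $G_1 = \langle T_{c_1} \rangle$ and $G_2 = \langle T_{c_2} \rangle$ of $T$ are adjacent in the commutation graph of $T$ exactly when $T_{c_1}$ and $T_{c_2}$ commute in $\PMod(S)$. By a classical theorem (see Ivanov, McCarthy, or Farb--Margalit), two Dehn twists commute in $\Mod(S)$ if and only if their defining curves have geometric intersection number zero, i.e.\ admit disjoint representatives. This is precisely the edge relation of $\mC(S)$, so $\Phi$ is a graph isomorphism.

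The only genuine obstacle is to ensure the maximal cyclic statement in step two: I must rule out the existence of a root of $T_c$ inside $\PMod(S)$ that still sits in $T$. This is handled by the standard fact that Dehn twists are primitive in $\Mod(S)$ together with $T_c \in \PMod(S)$, so no further complication arises; once this is in hand the rest of the argument is bookkeeping. (I am tacitly assuming $S \notin \{S_{0,3}, S_{0,4}, S_{1,1}\}$, since outside these cases adjacency in $\mC(S)$ is defined by disjointness, which is what the centralizer-theoretic characterization detects.)
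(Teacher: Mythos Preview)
Your overall strategy matches the paper's: set up a bijection between simple closed curves and the subgroups in $T$, then check that commutation of Dehn twists corresponds to disjointness of curves. The paper runs the bijection in the opposite direction, sending $c\in\mC(S)$ to $\langle T_c^{k(c)}\rangle$ where $k(c)$ is the least positive integer with $T_c^{k(c)}\in\PMod(S)$; it does not assume $T_c\in\PMod(S)$ outright. Your version with $k(c)=1$ happens to be fine for the specific $\PMod(S)$ defined in the paper (a Dehn twist is pure in Ivanov's sense), but the paper's formulation is more robust to the choice of finite-index pure subgroup.

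There is, however, a genuine error in your justification of maximality. You assert that ``Dehn twists are primitive in $\Mod(S)$,'' and this is false: by the theorem of Margalit and Schleimer, on a closed surface of genus $g\ge 2$ the Dehn twist about a nonseparating curve admits a root of degree $2g-1$ in $\Mod(S)$. What you actually need is that $T_c$ (or $T_c^{k(c)}$) is primitive in $\PMod(S)$, and this is true for a different reason: if $r\in\PMod(S)$ satisfies $r^n=T_c$, then $r$ is pure with canonical reduction system $\{c\}$, hence restricts to the identity on each component of $S\setminus c$ (a pseudo-Anosov restriction would force $r^n$ to be pseudo-Anosov there), so $r$ is itself a power of $T_c$ and $n=\pm 1$. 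You should replace your primitivity claim with this argument. Once that is fixed, your proof and the paper's are essentially the same, differing only in the direction of the bijection and in whether $k(c)$ is carried along explicitly.
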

\begin{proof}
Define a map $\mC(S)$ to the commutation graph of $T$
by \[\phi:c\mapsto \langle T_c^k\rangle,\] where $k=k(c)$ is the smallest positive integer for which $T_c^k\in\PMod(S)$.  Such a $k$ exists since $\PMod(S)$ has finite index in $\Mod(S)$.  Since distinct isotopy classes of curves give rise to distinct Dehn twists and since two Dehn twists commute if and only the corresponding curves are disjoint, the map $\phi$ is well--defined.  If two Dehn twists do not commute then they generate a group which is virtually a nonabelian free group, so that the map $\phi$ preserves non--adjacency as well as adjacency.  By Lemma \ref{lem:dtchar}, $\phi^{-1}$ is defined and is surjective.  Thus $\phi$ is an isomorphism.
\end{proof}

In order to get an analogous result for right-angled Artin groups, we need to put some restrictions on $\Gamma$.  The reason for this is that a vertex generator (or its conjugacy class, more precisely) is not well--defined.  This is because a general right-angled Artin group $\aga$ has a very large automorphism group, and automorphisms may not preserve the conjugacy classes of vertex generators.

\begin{lem}\label{lem:vgchar}
Let $\Gamma$ be a connected, triangle-- and square--free graph.  Let $1\neq g\in\aga$ be a cyclically reduced element whose centralizer in $\aga$ is nonabelian.  Then there exists a vertex $v\in\Gamma$ and a nonzero $k\in\bZ$ such that $g=v^k$.
\end{lem}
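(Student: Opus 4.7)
My plan is to invoke Servatius's centralizer theorem for right-angled Artin groups and then use the triangle-free and square-free hypotheses on $\Gamma$ to force $\supp(g)$ to be a single vertex.

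Recall that Servatius's theorem gives, for a cyclically reduced element $g\in\aga$, a unique decomposition $g = w_1^{n_1}\cdots w_k^{n_k}$ in which each $w_i$ is cyclically reduced and not a proper power, the supports $\supp(w_i)$ are pairwise disjoint, and distinct $w_i$'s pairwise commute in $\aga$; moreover the centralizer has the explicit form
\[C_{\aga}(g) \;=\; \langle w_1\rangle\times\cdots\times\langle w_k\rangle\times A(L_g),\]
where $L_g$ is the subgraph of $\Gamma$ induced on those vertices lying outside $S:=\supp(g)=\bigcup_i\supp(w_i)$ that are adjacent in $\Gamma$ to every vertex of $S$. The first factor is a free abelian group of rank $k$ that commutes with $A(L_g)$, so the assumption that $C_{\aga}(g)$ is nonabelian forces $A(L_g)$ itself to be nonabelian; equivalently, $L_g$ must contain at least two distinct non-adjacent vertices.

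The main step is now a short graph-theoretic exclusion. Suppose for contradiction that $S$ contains distinct vertices $u$ and $v$. If $u$ and $v$ are adjacent in $\Gamma$, then any common neighbor of $u$ and $v$ would form a triangle with them, so the triangle-free hypothesis yields $\lk(u)\cap\lk(v)=\emptyset$ and in particular $L_g=\emptyset$. If instead $u$ and $v$ are non-adjacent, then the square-free hypothesis prevents them from sharing more than one common neighbor, so $|\lk(u)\cap\lk(v)|\le 1$, and hence $|L_g|\le 1$. In either case $L_g$ cannot contain two non-adjacent vertices, contradicting the conclusion of the previous paragraph. Therefore $|S|=1$, and $g=v^k$ for some vertex $v\in V(\Gamma)$ and some nonzero $k\in\bZ$, as required.

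The only point that requires real care is the precise form of Servatius's centralizer theorem, and in particular the verification that any nonabelian behavior of $C_{\aga}(g)$ must be captured by $A(L_g)$ rather than be hidden between the $\langle w_i\rangle$ factors. This is immediate since the $w_i$'s pairwise commute and each commutes with every vertex generator of $A(L_g)$. The connectedness of $\Gamma$ plays no essential role beyond excluding degenerate cases such as $\Gamma$ being a single vertex or an edge, where every centralizer is already abelian and the lemma is vacuous.
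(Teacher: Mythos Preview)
Your proof is correct and follows essentially the same route as the paper's: both invoke Servatius's Centralizer Theorem and then use the triangle-- and square--free hypotheses to force $\supp(g)$ to be a single vertex. The paper phrases the graph-theoretic step as ``every subjoin lies in a star'' and then observes that the centralizer of a nontrivial element supported in a link is abelian, whereas you work directly with the common-link set $L_g$ and bound its size; these are equivalent reformulations of the same argument.
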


\begin{proof}
Let $g$ satisfy the hypotheses of the lemma.  By the Centralizer Theorem (see~\cite{Servatius1989} and \cite[Lemma 5.1]{BC2010}), we have that $\supp(g)$ is contained in a subjoin of $\Gamma$, and that the full centralizer of $g$ is also supported on a subjoin of $\Gamma$.  Because $\Gamma$ has no triangles and no squares, every subjoin of $\Gamma$ is contained in the star of a vertex of $\Gamma$.  So, we may write $g=v^k\cdot g'$, where $v$ is a vertex of $\Gamma$, where $k\in\Z\setminus \{0\}$, and where $\supp(g')\subseteq\lk(v)$.  Observe that if $g'$ is not the identity then the centralizer of $g'$ in $\langle \st(v)\rangle$ is abelian.  It follows that $g=v^k$.
\end{proof}

\begin{defn}\label{d:abstract extension}
Let $G$ be a group and $T$ be the set of maximal cyclic subgroups of $G$ which have nonabelian centralizers.
Then the \emph{abstract extension graph $G^e$ of $G$} is defined as the commutation graph of $T$.
\end{defn}

Observe that if $\Gamma$ is a triangle--free graph without any degree--one or degree-zero vertex,
then the centralizer of each vertex is nonabelian. From this we can characterize powers of vertex--conjugates as follows.

\begin{prop}\label{p:raagrec}
Suppose $\Gamma$ is a finite, connected, triangle-- and square--free graph without any degree--one or degree--zero vertex.
Then for each finite-index subgroup $G$ of $A(\Gam)$, we have $G^e \cong \gex$.
\end{prop}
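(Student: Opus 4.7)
The plan is to define an explicit graph isomorphism $\phi\co \Gamma^e\to G^e$ by sending each vertex-conjugate $u=v^f\in V(\Gamma^e)$ to the cyclic subgroup $\langle u^{n(u)}\rangle\le G$, where $n(u)$ is the least positive integer with $u^{n(u)}\in G$ (which exists because $[A(\Gamma):G]<\infty$). To see this lands in $V(G^e)$ I check two things: that $\langle u^{n(u)}\rangle$ is maximal cyclic in $G$, and that its centralizer in $G$ is nonabelian. The first follows because vertex-conjugates are primitive in $A(\Gamma)$ (visible after abelianization), so $\langle u\rangle$ is maximal cyclic in $A(\Gamma)$, and intersecting with $G$ preserves maximality using uniqueness of roots in right-angled Artin groups. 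For the second, the hypothesis that $\Gamma$ is triangle-free with minimum degree at least two means $\lk(v)$ contains two non-adjacent vertices; these generate a nonabelian free subgroup inside $C_{A(\Gamma)}(v)=\langle\st(v)\rangle$, and conjugating places such a free subgroup inside $C_{A(\Gamma)}(u)$. Since $G$ has finite index, $C_G(u)=C_{A(\Gamma)}(u)\cap G$ still contains a free subgroup of rank at least two (Schreier), and $C_G(u^{n(u)})\supseteq C_G(u)$ is therefore nonabelian.

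For surjectivity, given $C=\langle g\rangle\in V(G^e)$, first conjugate $g$ within $A(\Gamma)$ to a cyclically reduced $g_0=hgh^{-1}$. The centralizer $C_{A(\Gamma)}(g_0)$ is the conjugate of $C_{A(\Gamma)}(g)\supseteq C_G(g)$, hence nonabelian, so Lemma~\ref{lem:vgchar} applies and yields $g_0=v^k$ for some $v\in V(\Gamma)$ and nonzero $k\in\Z$. After replacing $g$ with $g^{-1}$ if necessary I may assume $k>0$, and then $g=u^k$ where $u=v^{h^{-1}}$ is a vertex-conjugate. Since $u^k=g\in G$ and $u^{n(u)}\in G$, a gcd argument using the minimality of $n(u)$ forces $n(u)\mid k$, so $\langle u^{n(u)}\rangle\supseteq\langle g\rangle=C$; maximality of $C$ in $G$ then forces equality, giving $\phi(u)=C$.

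Injectivity and adjacency remain. If $\phi(u)=\phi(u')$, then $u^{n(u)}$ and $(u')^{n(u')}$ generate the same cyclic group and hence agree up to sign; since the inverse of a vertex-conjugate is never a vertex-conjugate (again by the abelianization) and vertex-conjugates are primitive, I conclude $u=u'$. For adjacency, $u$ and $u'$ commute in $A(\Gamma)$ if and only if $u^{n(u)}$ and $(u')^{n(u')}$ do, by the standard fact that in a right-angled Artin group the centralizer of a nonzero power of $g$ coincides with the centralizer of $g$ (Servatius); since commutativity in $G$ and in $A(\Gamma)$ coincide for elements of $G$, the map $\phi$ preserves and reflects edges.

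The main obstacle is the surjectivity step: I must transport a general maximal cyclic subgroup of $G$ with nonabelian centralizer into the controlled setting where Lemma~\ref{lem:vgchar} applies, which requires both carefully passing the nonabelian centralizer property from $G$ up to $A(\Gamma)$ and conjugating into cyclically reduced form by an element of $A(\Gamma)$ that may lie outside $G$. The triangle-free and minimum-degree hypotheses are precisely what is needed to guarantee that every vertex-conjugate has nonabelian centralizer (so that $\phi$ is defined on all of $V(\Gamma^e)$) and that Lemma~\ref{lem:vgchar} can be invoked to pick up every candidate in $V(G^e)$.
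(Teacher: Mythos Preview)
Your proof is correct and follows essentially the same approach as the paper: both define $\phi\co\Gamma^e\to G^e$ by $u\mapsto\langle u^{n(u)}\rangle$ and invoke Lemma~\ref{lem:vgchar} (after conjugating to cyclically reduced form) for surjectivity. The paper's proof is much terser, leaving the well-definedness, injectivity, and adjacency checks implicit, whereas you have spelled these out carefully; in particular your explicit use of the minimum-degree-two and triangle-free hypotheses to produce a nonabelian free subgroup in each vertex centralizer, and your appeal to unique roots for maximality and injectivity, make the argument complete.
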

\begin{proof}
For each vertex $v\in V(\Gamma)$ and $g\in A(\Gamma)$, we let $n(v,g)=\inf\{n>0\co (v^g)^n\in G\}$.
Put $A = \{(v^g)^{n(v,g)}\co v\in V(\Gamma),g\in A(\Gamma)\}\subseteq A(\Gamma)$.
By Lemma~\ref{lem:vgchar}, we see that $G^e$ is the commutation graph of $A$.
It is immediate that $\phi\co \gex\to G^e$ defined by $\phi(v^g)  = (v^g)^{n(v,g)}$ is a graph isomorphism.
\end{proof}

\subsection{Consequences for commensurability}
A general commensurability classification for right-angled Artin groups is currently unknown.  The discussion in the previous subsection allows us to establish some connections between a commensurable right-angled Artin groups and their extension graphs.  
The following is almost immediate from the discussion in the preceding subsection, combined with the fact that 
if $G\le \aga$ is a nonabelian subgroup and $G'\le G$ has finite index, then $G'$ is also nonabelian:

\begin{cor}
Let $\Gamma$ and $\Lambda$ be connected, triangle-- and square--free graphs, and suppose that neither $\Gamma$ nor $\Lambda$ has any degree one vertices.  If $\aga$ is commensurable with $A(\Lambda)$ then $\gex\cong\Lambda^e$.
\end{cor}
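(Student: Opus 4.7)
The plan is to invoke Proposition~\ref{p:raagrec} twice, exploiting the fact that the abstract extension graph of Definition~\ref{d:abstract extension} is defined in purely group-theoretic terms (maximal cyclic subgroups, nonabelianness of centralizers, and the commutation relation) and is therefore an isomorphism invariant of the ambient group. By commensurability, there exist a finite-index subgroup $H\le \aga$, a finite-index subgroup $H'\le A(\Lambda)$, and a group isomorphism $\psi\co H\to H'$. Since $\psi$ carries maximal cyclic subgroups of $H$ bijectively to maximal cyclic subgroups of $H'$, preserves the property of having a nonabelian centralizer, and preserves the commutation relation, it induces a graph isomorphism $\psi_*\co H^e\to (H')^e$.

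The hypotheses of Proposition~\ref{p:raagrec} on $\Gamma$ and $\Lambda$ are exactly the standing hypotheses of the corollary, so the proposition applies to both sides and yields $H^e\cong \gex$ and $(H')^e\cong \Lambda^e$. Composing $\gex\cong H^e\cong (H')^e\cong \Lambda^e$ gives the desired conclusion.

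The only point that merits attention — and the reason the parenthetical fact about nonabelian subgroups was highlighted in the excerpt — is ensuring that the abstract extension graph is genuinely stable under the passage to finite-index subgroups used implicitly on each side. Specifically, when one restricts from $\aga$ to $H$, a maximal cyclic subgroup of $\aga$ with nonabelian centralizer could in principle acquire an abelian centralizer in $H$; that this does not happen is exactly the statement that nonabelian subgroups of $\aga$ remain nonabelian under finite-index inclusions, which is what aligns the vertex sets of $H^e$ and $\gex$ as constructed in the proof of Proposition~\ref{p:raagrec}. Beyond this bookkeeping no further step is needed, which is why the result is essentially a formal consequence of the preceding subsection.
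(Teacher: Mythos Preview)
Your proof is correct and follows the same approach as the paper's. The one small oversight is your claim that ``the hypotheses of Proposition~\ref{p:raagrec} on $\Gamma$ and $\Lambda$ are exactly the standing hypotheses of the corollary'': Proposition~\ref{p:raagrec} also excludes degree--\emph{zero} vertices, whereas the corollary only excludes degree--one vertices, so a single--vertex $\Gamma$ (giving $\aga\cong\Z$) slips through. The paper disposes of this by treating the abelian case separately in one line; once you add that trivial case your argument matches the paper's exactly.
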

\begin{proof}
If $\aga$ and $A(\Lambda)$ are abelian then they must both be cyclic, in which case the conclusion is immediate.  Otherwise, we can just apply Proposition \ref{p:raagrec} to suitable finite index subgroups of $\aga$ and $A(\Lambda)$.
\end{proof}

\begin{exmp}
Let $C_n$ denote the cycle on $n$ vertices.
Since the girths of $C_n^e$ is $n$, we see that $A(C_m)$ is not commensurable to $A(C_n)$ for $m\ne n\ge 3$.
\end{exmp}

One could analogously conclude that if $\Mod(S)$ and $\Mod(S')$ are commensurable then $\mC(S)\cong\mC(S')$.  In fact, if $\Mod(S)$ and $\Mod(S')$ are even quasi--isometric to each other then $S=S'$, by the quasi--isometric rigidity of mapping class groups (see \cite{BKMM2012} and \cite{hamrigidity}).

\section{Electrified Cayley graphs}\label{s:elec}
In~\cite{masurminsky1}, Masur and Minsky proved that an \emph{electrified Cayley graph} (defined below) of $\Mod(S)$ is quasi-isometric to the curve graph. Here, this result will be placed on a more general setting and applied to the action of right-angled Artin groups on extension graphs.

\subsection{General setting}
Let $G$ be a group with a finite generating set $\Sigma$ and let $Y=\cay(G,\Sigma)$.  
For convention, we assume $\Sigma=\Sigma^{-1}$.
The Cayley graph carries a natural metric $d_Y$.
Suppose $G$ acts simplicially and cocompactly on a graph $X$.  To avoid certain technical problems, we will assume that $X$ is connected.  We will write $d_X$ for the graph metric on $X$.
Let $A$ be a (finite) set of representatives of vertex orbits in $X$.

Put $H_\alpha = \stab_G(\alpha)$ for $\alpha\in A$.
We let $\hat{Y}$ be the \emph{electrification} of $Y$ with respect to the disjoint union of right-cosets $\coprod_{\alpha\in A} H_\alpha\setminus G$~\cite{farb1998}. This means that $\hat{Y}$ is the graph with $V(\hat{Y})=(\coprod_{\alpha\in A} H_\alpha\setminus G)\coprod G$ and $E(\hat{Y}) = E(Y) \coprod \{ \{g,C\}\co g\in C\in \coprod_{\alpha\in A}H_\alpha\setminus G\}$. Note our convention that even when $H_\alpha=H_\beta$, we distinguish the cosets of $H_\alpha$ an $H_\beta$ as long as $\alpha\ne\beta$.
The graph $\hat{Y}$ carries a metric $\hat{d}$ such that the edges from $Y$ have length $1$ and the other edges have length $1/2$. Let $T = \cup_{\alpha\in A} H_\alpha\cup \Sigma$. For $g\in G$, define the \emph{$T$--length of $g$} as  $\|g\|_T=\min\{k\co g = g_1 g_2 \cdots g_k \text{ where }g_i\in T\}$. By convention, we set $\|1\|_T=0$.  The $T$--distance between two elements $g$ and $h$ of $G$ is defined to be $d_T(g,h)=\|gh^{-1}\|_T$.  It is clear that $d_T$ is a right-invariant metric on $G$.

\begin{lem}\label{lem:elec}
\begin{enumerate}
\item
The metric space $(G, d_T)$ is quasi-isometric to $(\hat{Y}, \hat{d})$.
\item
If $\diam_X(A\cup A\Sigma)<\infty$, then $X$ is connected and $(G, d_T)$ is quasi-isometric to $(X, d_X)$.
\end{enumerate}
\end{lem}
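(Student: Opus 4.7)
My plan for (1) is to identify $G$ with the set of group vertices of $V(\hat Y)$ and show that this inclusion is an isometric embedding with $\tfrac12$-dense image. Density is immediate, since each coset vertex has length-$\tfrac12$ edges to every element of its coset. For the isometry, the point is to set up a length-preserving correspondence between $T$-factorizations of $gh^{-1}$ and paths in $\hat Y$ from $g$ to $h$ through group vertices: a factorization $gh^{-1} = t_1\cdots t_k$ traces out such a path via partial products, where each $\Sigma$-letter contributes a $Y$-edge of length $1$ and each $H_\alpha$-letter contributes a two-edge detour through a common coset vertex of total length $1$. Conversely, a geodesic in $\hat Y$ between two group vertices visits a sequence $g = p_0,\ldots,p_n = h$ whose consecutive ratios $p_{i+1}p_i^{-1}$ necessarily lie in $T$ (either in $\Sigma$ when the intermediate step is a single $Y$-edge, or in some $H_\alpha$ when it passes through one coset vertex), giving $\|gh^{-1}\|_T \le n \le \hat d(g,h)$.

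For (2), fix $\alpha_0\in A$ and set $D=\diam_X(A\cup A\Sigma)$. The plan is to build an explicit quasi-isometry $\psi:V(\hat Y)\to V(X)$ with coarse inverse $\iota:V(X)\to V(\hat Y)$ given by
\[
\psi(g) = \alpha_0 g,\qquad \psi(H_\alpha g) = \alpha g,\qquad \iota(\alpha g) = H_\alpha g,
\]
each well-defined because $H_\alpha$ fixes $\alpha$ and because distinct elements of $A$ lie in distinct $G$-orbits. The map $\psi$ is $(2D)$-Lipschitz: by $G$-equivariance a $Y$-edge $\{g,\sigma g\}$ goes to a pair at $d_X$-distance $d_X(\alpha_0,\alpha_0\sigma)\le D$, and a coset edge $\{g,H_\alpha g\}$ goes to $(\alpha_0 g,\alpha g)$ of $d_X$-distance $d_X(\alpha_0,\alpha)\le D$. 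Since $\hat Y$ is connected and $\psi$ is vertex-surjective, this simultaneously yields the ``$X$ connected'' clause of (2). To verify that $\iota$ is coarse Lipschitz, I would invoke cocompactness: $X$ has only finitely many $G$-orbits of edges, with representatives $\{\alpha_e,\beta_e h_e\}$ where $\alpha_e,\beta_e\in A$ and $h_e\in G$. Every edge of $X$ is a $G$-translate $\{\alpha_e g,\beta_e h_e g\}$ of one such representative, and
\[
\hat d\bigl(H_{\alpha_e}g,\,H_{\beta_e}h_e g\bigr) \;\le\; \tfrac12+\|h_e\|_\Sigma+\tfrac12
\]
is uniformly bounded over the finite orbit set. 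Finally $\psi\circ\iota=\operatorname{id}_{V(X)}$ and $\iota\circ\psi$ moves each vertex of $\hat Y$ by at most $\tfrac12$, so $\iota$ is a genuine coarse inverse. Composing with (1) yields $(G,d_T)\simeq_{\mathrm{QI}}(X,d_X)$.

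The main technical hurdle is the Lipschitz estimate for $\iota$: a Milnor--Svarc-style orbital argument whose essential input is cocompactness, which reduces the claim to a uniform bound on $\|h_e\|_\Sigma$ over finitely many edge orbit representatives. The hypothesis $\diam_X(A\cup A\Sigma)<\infty$ plays a complementary but distinct role, needed to make $\psi$ coarse Lipschitz; without it, $d_X(\alpha_0,\alpha_0\sigma)$ could be infinite and the orbit map would fail to be even coarsely compatible with the metrics. The two ingredients together upgrade the obvious set-theoretic maps into a genuine quasi-isometry.
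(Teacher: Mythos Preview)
Your proof is correct and follows essentially the same strategy as the paper. For (1), you spell out explicitly that the inclusion $G\hookrightarrow \hat Y$ is an isometric embedding with $\tfrac12$-dense image, while the paper simply observes the natural surjection $\cay(G,T)\to\hat Y$; your version is more detailed but the content is the same. For (2), you factor through $\hat Y$ (showing $\hat Y\simeq_{\mathrm{QI}} X$ via $\psi$ and its coarse inverse $\iota$, then composing with (1)), whereas the paper defines $\psi\colon G\to X$ directly and proves both Lipschitz inequalities for that map; in both arguments the forward Lipschitz bound comes from $\diam_X(A\cup A\Sigma)<\infty$ and the reverse bound comes from cocompactness via the finitely many edge-orbit representatives, so the substance is identical.
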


\begin{rem}\label{rem:qi}
All the quasi-isometries in the following proof will be $G$-equivariant.
\end{rem}

\begin{proof}
(1) is obvious from the existence of a continuous surjection from $\cay(G,T)$ onto $\hat Y$ which restricts to the natural inclusion on the vertex set.

For (2), let us fix $\alpha_0\in A$ and define $\psi\co G\to X$ by $\psi(g) = \alpha_0g$.
It is obvious that the image is quasi-dense.
Put  \[M = \max(\{d_X(\alpha_0,\alpha_0s)\co s\in \Sigma\}\cup\{2 \diam_X(A)\}).\]  We have that $M\le 2\diam_X(A\cup A\Sigma)<\infty$.
For $\alpha\in A$ and $h\in H_\alpha$, we have
$d_X(\alpha_0,\alpha_0 h)\le d_X(\alpha_0,\alpha)+d_X(\alpha,\alpha_0 h) 
= d_X(\alpha_0,\alpha)+d_X(\alpha h,\alpha_0 h) \le 2\diam_X(A)\le M$.
So we see that $d_X(\alpha_0,\alpha_0 g)\le M\|g\|_T$ for each $g\in G$.
In particular for every $\alpha,\beta\in A$ and $x,y\in G$, we have
$d_X(\alpha x,\beta y) \le d_X(\alpha x,\alpha_0 x) + d_X(\alpha_0 x,\alpha_0 y) + d_X(\alpha_0 y,\beta y) 
\le 2\diam_X (A) + M \|y x^{-1}\|_T<\infty$. Hence $X$ is connected.

Choose a (finite) set $B$ of representatives of edge orbits for $G$--action on $X$
and write $B =\{\{\alpha_ix_i,\beta_i y_i\}\co i=1,2,\ldots\}$ where $\alpha_i,\beta_i\in A$ and $x_i,y_i\in G$.
Set \[M' = \max\{\|y_i x_i^{-1}\|_T\co i=1,2,\ldots\}.\]
Consider an edge in $X$ of the form $\{\alpha,\beta h\}$ where $\alpha,\beta\in A$ and $h\in G$.
There exists $\{\alpha x,\beta y\}\in B$ and $g\in G$ such that $\alpha = \alpha xg$ and $\beta h = \beta yg$.
Since $xg\in H_\alpha$ and $h(yg)^{-1}\in H_\beta$,
we have $\|h\|_T = \|h (yg)^{-1} (yx^{-1}) (xg)\|_T \le M'+2$.
This shows that for every $g\in G$ we have $\|g\|_T \le (M'+2) d_X(\alpha_0,\alpha_0 g)$.
\end{proof}

It is often useful to consider the case when $X$ is not connected.
Let $\alpha\ne \beta\in X^{(0)}$.
Suppose that $\ell$ is the smallest nonnegative integer such that
 there exist $g_1,g_2,\ldots,g_\ell\in G$ satisfying
(i) $\alpha\in Ag_1$ and $\beta\in Ag_\ell$,
and 
(ii) $A g_i\cap Ag_{i+1}\ne\varnothing$ for $i=1,2,\ldots,\ell -1$.
Then $\ell$ is called the \emph{covering distance between $\alpha$ and $\beta$} and denoted as $d'_X(\alpha,\beta)$ or $d'(\alpha,\beta)$.
We define $d'(\alpha,\alpha) = 0$.

\begin{lem}\label{lem:cov}
If $\cup_{\alpha\in A} H_\alpha$ generates $G$, then $(G,d_T)$ is quasi-isometric to $(X^{(0)},d')$.
\end{lem}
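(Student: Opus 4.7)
The plan is to adapt the proof of Lemma~\ref{lem:elec}(2), replacing $d_X$ by $d'$ and using the generation hypothesis in place of $\diam_X(A\cup A\Sigma)<\infty$. Set $T' = \bigcup_{\alpha\in A} H_\alpha$. Since $\Sigma$ is finite and $T'$ generates $G$, each $s\in\Sigma$ has finite $T'$-length, so the metrics $d_T$ and $d_{T'}$ on $G$ are bi-Lipschitz equivalent; it therefore suffices to exhibit a quasi-isometry $\psi\co (G,d_{T'})\to(X^{(0)},d')$, which we take to be $\psi(g)=\alpha_0 g$ for a fixed $\alpha_0\in A$.

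Quasi-density of the image is immediate: any $v\in X^{(0)}$ may be written $v=\alpha g$ with $\alpha\in A$, and then $v$ and $\psi(g)$ both lie in $Ag$, so $d'(v,\psi(g))\le 1$.

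For the distance comparison, I would prove $\bigl|\,\|hg^{-1}\|_{T'}-d'(\alpha_0 g,\alpha_0 h)\,\bigr|\le 1$ for all $g,h\in G$. For the upper bound on $d'$, factor $hg^{-1}=t_1 t_2\cdots t_k$ with $t_i\in H_{\alpha_i}$ and $k=\|hg^{-1}\|_{T'}$, and use the chain of translates $Ag,\ At_k g,\ At_{k-1}t_k g,\ldots,\ At_1 t_2\cdots t_k g=Ah$; consecutive translates in this chain share the vertex $\alpha_{k-i+1}\cdot(t_{k-i+2}\cdots t_k g)$ because $t_{k-i+1}$ fixes $\alpha_{k-i+1}$, so $d'(\alpha_0 g,\alpha_0 h)\le k+1$. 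For the lower bound, take a minimizing chain $g_1,\ldots,g_\ell$ witnessing $d'(\alpha_0 g,\alpha_0 h)=\ell$; then $\alpha_0 g\in Ag_1$ yields $g_1 g^{-1}\in H_{\alpha_0}$, each intersection $Ag_i\cap Ag_{i+1}\ne\varnothing$ yields some $\nu_i\in A$ with $g_{i+1}g_i^{-1}\in H_{\nu_i}$, and $\alpha_0 h\in Ag_\ell$ yields $g_\ell h^{-1}\in H_{\alpha_0}$. Telescoping these identities expresses $hg^{-1}$ as a product of $\ell+1$ elements of $T'$.

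The only mildly delicate point is the basic identification underlying every step: if $\beta g=\gamma g'$ with $\beta,\gamma\in A$ and $g,g'\in G$, then $\beta$ and $\gamma$ represent the same $G$-orbit and hence coincide (since $A$ selects one representative per orbit), forcing $g'g^{-1}\in H_\beta$. Once this is verified, the two distance inequalities follow at once and, combined with quasi-density and the bi-Lipschitz equivalence of $d_T$ and $d_{T'}$, prove the lemma.
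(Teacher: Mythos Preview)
Your proof is correct and follows essentially the same route as the paper's. The paper simplifies the opening step by assuming $\Sigma\subseteq\bigcup_{\alpha\in A}H_\alpha$ (which is harmless once $T'$ generates), and then proves the one--variable inequality $\bigl|\,\|g\|_T-d'(\alpha_0,\alpha_0 g)\,\bigr|\le 1$ directly; your two--variable version and the explicit bi-Lipschitz reduction from $d_T$ to $d_{T'}$ amount to the same argument after using right--invariance of both metrics. The ``delicate point'' you isolate is exactly the observation the paper records (``if $h\in G$ and $\alpha\in A\cap Ah$, then $h\in H_\alpha$''), and your two chains of translates match the paper's chains up to reindexing.
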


\begin{proof}
We may assume $\Sigma\subseteq \cup_{\alpha\in A} H_\alpha$.
Fix $\alpha_0\in A$.
We will prove  $\|g\|_T -1 \le  d'(\alpha_0,\alpha_0g) \le  \|g\|_T+1$ for each $g\in G$.
Note that if $h\in G$ and $\alpha\in A\cap A h$, then $h\in H_\alpha$.
Now consider $g\in G$ and set $\ell= d'(\alpha_0,\alpha_0 g)$.
We can choose $h_1, h_2,\ldots,h_\ell$ such that
$\alpha_0\in A h_1$, $\alpha_0 g\in A h_\ell \cdots h_2 h_1$
and $h_i\in H_{\alpha_i}$ where $\alpha_i\in A$ for $i=1,2,\ldots,\ell$.
We see that $ g(h_\ell \cdots h_2 h_1)^{-1}\in H_{\alpha_0}$.
Then $g = g(h_\ell \cdots h_2 h_1)^{-1}\cdot h_\ell\cdots h_2 h_1$ has $T$--length at most $\ell+1$.

Conversely, suppose we have written $g = h_k\cdots h_2 h_1$ where for each $i$ we have $\alpha_i\in A$ and $h_i\in H_{\alpha_i}$.
Then each consecutive pair in the sequence 
\[(\alpha_0,\alpha_1, \alpha_2 h_1,\ldots,\alpha_k h_{k-1} \cdots h_2 h_1,  \alpha_0 g)\]
 is contained in the image of $A$ by some element of $G$. This shows that $d'(\alpha_0, \alpha_0 g)\le \|g\|_T+1$.
\end{proof}

Let us define a graph $X'$  with  $V(X')=V(X)$
by the following condition:
 for $x,y\in G$ and $\alpha,\beta\in A$,
 the two vertices $\alpha x,\beta y$ are declared to be adjacent if there exists $g\in G$ such that
$\alpha x= \alpha g$ and $\beta y = \beta g$.

\begin{lem}\label{lem:cnt}
\begin{enumerate}
\item
For $\alpha,\beta\in V(X')=V(X)$, we have $d_{X'}(\alpha,\beta) = d'_X(\alpha,\beta)$.
\item
If $\cup_{\alpha\in A} H_\alpha$ generates $G$, then $X'$ is connected.
\end{enumerate}
\end{lem}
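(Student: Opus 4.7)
The plan is to prove (1) by a direct translation between paths in $X'$ and chains of translates of $A$ witnessing the covering distance $d'_X$, and then to deduce (2) from (1) together with Lemma~\ref{lem:cov}. For (1), the key observation I will exploit is a clean reformulation of the edge relation in $X'$: two vertices $\alpha x$ and $\beta y$ are adjacent in $X'$ if and only if there exists $g\in G$ with $\{\alpha x,\beta y\}\subseteq Ag$. This is immediate from the definition, since $\alpha x=\alpha g$ means exactly $g\in H_\alpha x$. From this reformulation, a path $\alpha=v_0,v_1,\dots,v_\ell=\beta$ in $X'$ yields, by selecting for each edge $\{v_{i-1},v_i\}$ a translate $Ag_i$ containing both endpoints, a sequence $(g_1,\dots,g_\ell)$ with $v_i\in Ag_i\cap Ag_{i+1}$ for $1\le i\le\ell-1$; hence $d'_X(\alpha,\beta)\le\ell$. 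Conversely, given a minimal sequence $(g_1,\dots,g_\ell)$ witnessing $d'_X(\alpha,\beta)=\ell$, I will pick $v_i\in Ag_i\cap Ag_{i+1}$ for $1\le i\le\ell-1$ and set $v_0=\alpha$, $v_\ell=\beta$; each consecutive pair $\{v_{i-1},v_i\}$ lies in a common $Ag_i$ and is therefore an edge in $X'$, giving a path of length $\ell$. This will establish $d_{X'}(\alpha,\beta)=d'_X(\alpha,\beta)$.

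For (2), assume $\bigcup_{\alpha\in A}H_\alpha$ generates $G$. By (1), to show $X'$ is connected it suffices to show $d'_X$ is finite between any two vertices of $X$. Lemma~\ref{lem:cov} immediately gives $d'_X(\alpha_0,\alpha_0 g)<\infty$ for any fixed $\alpha_0\in A$ and every $g\in G$. To handle arbitrary vertices $\alpha x,\beta y\in V(X)$, I will first note that $d'_X(\alpha x,\alpha_0 x)\le 1$ and $d'_X(\alpha_0 y,\beta y)\le 1$, since in each pair both endpoints lie in the single translate $Ax$ or $Ay$. I will then invoke right $G$-invariance of $d'_X$---transparent from the $G$-equivariance of the translate structure $g\mapsto Ag$---to reduce $d'_X(\alpha_0 x,\alpha_0 y)$ to $d'_X(\alpha_0,\alpha_0 y x^{-1})$, which is finite. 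The triangle inequality will then give $d'_X(\alpha x,\beta y)<\infty$, and (1) will close the argument.

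I do not anticipate any real obstacle: (1) is essentially a definitional translation and (2) is bookkeeping once Lemma~\ref{lem:cov} and the $G$-invariance of $d'_X$ are in hand. The only place demanding mild care will be the indexing in (1)---verifying that a chain of exactly $\ell$ translates of $A$ corresponds to a path of length exactly $\ell$ in $X'$ in both directions of the correspondence, with the endpoints $\alpha$ and $\beta$ correctly placed at the ends of the chain rather than inside it.
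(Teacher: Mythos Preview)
Your proposal is correct and is essentially a careful unpacking of what the paper dismisses in one line: the paper's entire proof is ``Obvious from the definition and Lemma~\ref{lem:cov}.'' Your reformulation of the edge relation in $X'$ as ``both endpoints lie in a single translate $Ag$'' is exactly the point that makes (1) a tautology, and your reduction of (2) to finiteness of $d'_X$ via Lemma~\ref{lem:cov} together with right $G$-invariance and the triangle inequality is the intended argument.
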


\begin{proof}
Obvious from the definition and Lemma~\ref{lem:cov}.
\end{proof}

So in the above sense, the graph $X'$ is a ``connected model'' for $X$.
This will become useful in Section~\ref{s:proj} when we metrize the extension graph of a disconnected graph.

\begin{cor}\label{cor:all qi}
If $X$ is connected and $\cup_{\alpha\in A} H_\alpha$ generates $G$, then 
the following metric spaces are all quasi-isometric to each other:
\[(G,d_T), (\hat Y,\hat d), (X,d_X),(X^{(0)},d'),(X',d_{X'}).\]
\end{cor}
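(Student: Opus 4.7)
The plan is to assemble the statement by concatenating the three preceding lemmas, since the five metric spaces in question have already been pairwise compared (to either $(G,d_T)$ or to one another) in Lemma~\ref{lem:elec}, Lemma~\ref{lem:cov}, and Lemma~\ref{lem:cnt}. Because the quasi-isometry relation is transitive, it suffices to show $(G,d_T)$ is quasi-isometric to each of the other four spaces.

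First I would dispose of the three cases that are essentially immediate. Lemma~\ref{lem:elec}(1) gives $(G,d_T)\sim (\hat Y,\hat d)$ with no extra hypothesis. Lemma~\ref{lem:cov}, applied under the standing assumption that $\bigcup_{\alpha\in A} H_\alpha$ generates $G$, gives $(G,d_T)\sim (X^{(0)},d')$. Finally, Lemma~\ref{lem:cnt}(1) asserts that $d_{X'}$ and $d'_X$ agree pointwise on $V(X)=V(X')$, so the identity is already an isometry $(X^{(0)},d')\to(X',d_{X'})$; combining this with the previous step yields $(G,d_T)\sim(X',d_{X'})$.

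The remaining and only substantive step is to check that $(G,d_T)\sim (X,d_X)$ via Lemma~\ref{lem:elec}(2), whose hypothesis is $\diam_X(A\cup A\Sigma)<\infty$. Here I would simply note that $A$ is finite (since $G$ acts cocompactly on the vertex set of $X$, so $A$ is a finite set of orbit representatives) and $\Sigma$ is finite by assumption, whence $A\cup A\Sigma$ is a finite subset of $V(X)$. As $X$ is connected, any finite subset has finite diameter, so the hypothesis of Lemma~\ref{lem:elec}(2) is satisfied and the lemma supplies the desired quasi-isometry.

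Chaining the four quasi-isometries through $(G,d_T)$ yields the conclusion. The only place where something has to be verified rather than cited is the finiteness of $\diam_X(A\cup A\Sigma)$, and this is immediate from the cocompactness of the action together with the finiteness of the generating set $\Sigma$; there is no real obstacle to overcome.
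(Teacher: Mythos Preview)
Your proposal is correct and follows precisely the route the paper intends: the corollary is stated without proof in the paper, and your argument supplies exactly the routine verification that the hypotheses of Lemmas~\ref{lem:elec}, \ref{lem:cov}, and \ref{lem:cnt} are met (in particular, that $A\cup A\Sigma$ is finite and hence has finite diameter in the connected graph $X$). There is nothing to add.
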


\subsection{Star length}
The following is now immediate from Corollary~\ref{cor:all qi}.

\begin{lem}[{\cite[Lemma 3.2]{masurminsky1}}]\label{lem:mm qi}
Let $S$ be a surface. Fix a generating set $\Sigma$ for $\Mod(S)$ such that $\Sigma=\Sigma^{-1}$ and a set of representatives $A$ of vertex orbits in the curve graph $\mC (S)$. 
Let $\hat Y$ be the electrification of $\cay(\Mod(S),\Sigma)$ with respect to $\coprod_{\alpha\in A}\stab(\alpha)\setminus \Mod(S)$ and put $T = \cup_{\alpha\in A}\stab(\alpha)\cup \Sigma$.
Then the following metric spaces are quasi-isometric to each other:
\[(\Mod(S),d_T),(\mC (S),d_{\mC(S)}),(\hat Y,\hat d).\]
Since $\mC (S)$ is $\delta$--hyperbolic, it follows that $\Mod(S)$ is weakly--hyperbolic relative to $\{\stab(\alpha)\co \alpha\in A\}$.
\end{lem}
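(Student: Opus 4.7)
The plan is to observe that this lemma is an immediate application of Corollary \ref{cor:all qi} in the special case $G=\Mod(S)$ and $X=\mC(S)$, once one verifies the hypotheses of that corollary.  The three required properties to check are (i) $\mC(S)$ is connected, (ii) $\Mod(S)$ acts simplicially and cocompactly on $\mC(S)$ so that a finite orbit-representative set $A$ exists, and (iii) $\bigcup_{\alpha\in A}\stab(\alpha)$ generates $\Mod(S)$.

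For (i), I would cite the classical connectedness of the curve graph; for the exceptional surfaces $S_{0,4}$ and $S_{1,1}$ the modified definition of $\mC(S)$ given in the preliminaries is still connected, so these cases cause no difficulty.  For (ii), the change-of-coordinates principle shows that two isotopy classes of essential simple closed curves on $S$ lie in the same $\Mod(S)$-orbit if and only if their complements are homeomorphic (via an orientation-preserving homeomorphism respecting punctures); since only finitely many topological types of complement occur on the fixed surface $S$, one may take $A$ finite.  This also makes $\diam_{\mC(S)}(A\cup A\Sigma)$ finite, since $A\cup A\Sigma$ is a finite subset of the connected graph $\mC(S)$, which is the condition needed for part (2) of Lemma~\ref{lem:elec}.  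For (iii), each Dehn twist $T_c$ lies in $\stab(c)$, and the Dehn twists along any finite set $A$ of orbit representatives — together with their $\Mod(S)$-conjugates, which are themselves elements of stabilizers of curves in the orbit of $A$ — include a generating set of $\Mod(S)$ by the Dehn--Lickorish theorem.  Thus $\bigcup_{\alpha\in A}\stab(\alpha)$ generates $\Mod(S)$.

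Having verified the hypotheses, Corollary~\ref{cor:all qi} yields that $(\Mod(S),d_T)$, $(\hat Y,\hat d)$, and $(\mC(S),d_{\mC(S)})$ are pairwise quasi-isometric.  For the final sentence, recall that hyperbolicity is a quasi-isometry invariant of geodesic metric spaces; since Masur--Minsky proved $\mC(S)$ is $\delta$-hyperbolic and $\hat Y$ is quasi-isometric to $\mC(S)$, the graph $\hat Y$ is also hyperbolic.  By the definition of weak relative hyperbolicity via the electrified Cayley graph (see \cite{farb1998}), this is exactly the statement that $\Mod(S)$ is weakly hyperbolic relative to $\{\stab(\alpha)\co\alpha\in A\}$.

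There is no real obstacle: the content of the lemma is entirely packaged inside the general Corollary~\ref{cor:all qi}.  The only items that genuinely require external input (beyond the abstract setup of Section~\ref{s:elec}) are the connectedness of $\mC(S)$, the finiteness of orbits under $\Mod(S)$, and the fact that Dehn twists generate $\Mod(S)$ — all standard facts about mapping class groups.
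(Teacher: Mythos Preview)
Your proposal is correct and matches the paper's approach exactly: the paper simply states that the lemma ``is now immediate from Corollary~\ref{cor:all qi},'' and you have spelled out the routine verification of its hypotheses.

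One minor remark: your argument for (iii), that $\bigcup_{\alpha\in A}\stab(\alpha)$ generates $\Mod(S)$, is a bit loose---conjugates of elements of $\stab(\alpha)$ lie in stabilizers of \emph{other} curves, not in $\stab(\alpha)$ itself, so the sentence as written does not quite close the loop.  However, this is harmless here: as you yourself observe, the finiteness of $\diam_{\mC(S)}(A\cup A\Sigma)$ already gives the quasi-isometry $(\Mod(S),d_T)\simeq(\mC(S),d_{\mC(S)})$ directly from Lemma~\ref{lem:elec}(2), and Lemma~\ref{lem:elec}(1) gives $(\Mod(S),d_T)\simeq(\hat Y,\hat d)$ with no hypotheses at all.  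So the three quasi-isometries in the statement follow without ever needing (iii), and your proof stands.
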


Let $\Gam$ be a finite connected graph.
Consider the (right--)conjugation action of $A(\Gamma)$ on $X = \Gamma^e$. Each vertex of $ \Gamma^e$ is in the orbit of a vertex in $\Gamma$ and for $v\in V(\Gamma)$ we have $\stab(v) = \form{\st{v}}$.
The \emph{star length} of $g\in A(\Gamma)$ is the minimum $\ell$ such that $g$ can be written as the product of $\ell$ elements in $\cup_{v\in V(\Gamma)} \form{\st{v}}$. 
We write the star length $g$ as $\stl{g}$.  The star length induces a metric $d_*$ on $\aga$.
From Corollary~\ref{cor:all qi}, we have the following.

\begin{thm}\label{thm:qi}
Let $\hat Y$ be the electrified Cayley graph of $A(\Gamma)$ with respect to the generating set $V(\Gamma)$ and the collection of cosets $\coprod_{v\in V(\Gamma)} \form{\st v}\setminus A(\Gamma)$.
Then the metric spaces
$(A(\Gamma),d_*), (\Gamma^e,d_{\Gam^e})$ and $(\hat{Y},\hat d)$ are quasi--isometric to each other.
\end{thm}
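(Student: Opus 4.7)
The plan is to derive this as a direct application of Corollary~\ref{cor:all qi} to the right-conjugation action of $G = A(\Gamma)$ on $X = \Gamma^e$. Almost the entire argument is a matter of matching up definitions; the only non-formal ingredients are the Centralizer Theorem and the connectedness of $\Gamma^e$, both of which are available off the shelf.

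First I would fix the data entering Corollary~\ref{cor:all qi}. Take $\Sigma = V(\Gamma)\cup V(\Gamma)^{-1}$ as the generating set for $A(\Gamma)$, so that $Y = \cay(A(\Gamma),\Sigma)$ is the usual Cayley graph; and take $A = V(\Gamma)$ as a complete (finite) set of vertex-orbit representatives in $\Gamma^e$, which is legitimate because every vertex of $\Gamma^e$ is by definition a conjugate of some $v\in V(\Gamma)$. The Centralizer Theorem (\cite{Servatius1989}) identifies the stabilizer of $v\in V(\Gamma)$ under conjugation with $C_{A(\Gamma)}(v) = \form{\st(v)}$, so the set $T$ introduced in Section~\ref{s:elec} becomes
\[
T \;=\; \Sigma \,\cup\, \bigcup_{v\in V(\Gamma)}\form{\st(v)} \;=\; \bigcup_{v\in V(\Gamma)}\form{\st(v)},
\]
the second equality holding because $V(\Gamma)\subseteq\bigcup_v\form{\st(v)}$.

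From this I would read off the two observations needed to invoke Corollary~\ref{cor:all qi}. Unpacking the definition of $\|\cdot\|_T$, one sees immediately that $\|g\|_T = \stl{g}$ for every $g\in A(\Gamma)$, so $(A(\Gamma),d_T) = (A(\Gamma),d_*)$. Moreover, since $V(\Gamma)\subseteq\bigcup_v\form{\st(v)}$, the union of stabilizers generates $G$. The only remaining hypothesis of Corollary~\ref{cor:all qi} is connectedness of $X = \Gamma^e$, which I would supply by invoking \cite{KK2012}, where $\Gamma^e$ is shown to be a quasi-tree and in particular connected (this is the entry in Table~1). Corollary~\ref{cor:all qi} then applies verbatim and delivers the asserted pairwise quasi-isometries between $(A(\Gamma),d_*)$, $(\hat Y,\hat d)$, and $(\Gamma^e,d_{\Gamma^e})$.

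The main (though very mild) obstacle is the connectedness of $\Gamma^e$, which is precisely what would need to be proved in the absence of \cite{KK2012}; given the prequel, the theorem reduces to bookkeeping. One could alternatively have verified connectedness via Lemma~\ref{lem:elec}(2) by checking that $\diam_{\Gamma^e}(V(\Gamma)\cup V(\Gamma)\cdot \Sigma) < \infty$, using that $\Gamma$ is a connected, finite subgraph of $\Gamma^e$ and that a vertex conjugated by a single generator lies within uniformly bounded $\Gamma^e$-distance of $\Gamma$; but appealing directly to \cite{KK2012} is cleaner.
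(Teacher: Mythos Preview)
Your proposal is correct and follows essentially the same approach as the paper: both derive the theorem as an immediate application of Corollary~\ref{cor:all qi} after identifying $A=V(\Gamma)$, $\stab(v)=\form{\st(v)}$, and hence $d_T=d_*$, then noting that the union of stabilizers generates $A(\Gamma)$ and that $\Gamma^e$ is connected when $\Gamma$ is. Your write-up is slightly more explicit in checking the hypotheses (and in citing the Centralizer Theorem for the stabilizer computation), but there is no substantive difference.
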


With regards to Theorem \ref{thm:qi}, we note that $\gex$ is connected whenever $\Gam$ is connected.  In \cite{KK2012}, the authors proved that $\Gam^e$ is a quasi-tree. Therefore, we immediately obtain the following fact:

\begin{cor}\label{cor:coned off qi tree}
The group $A(\Gamma)$ is weakly hyperbolic relative to $\{\form{\st{v}}\co  v\in  V(\Gamma)\}$.
\end{cor}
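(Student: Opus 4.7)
The plan is to derive Corollary \ref{cor:coned off qi tree} as an essentially immediate consequence of Theorem \ref{thm:qi} combined with the quasi-tree theorem of \cite{KK2012}. By definition, the assertion that $A(\Gamma)$ is weakly hyperbolic relative to the family $\{\form{\st v}\co v\in V(\Gamma)\}$ is precisely the statement that the electrified Cayley graph $\hat Y$, obtained by coning off the cosets of the star subgroups $\form{\st v}$ in $\cay(A(\Gamma),V(\Gamma))$, is Gromov hyperbolic. So the entire task reduces to verifying that $\hat Y$ is $\delta$--hyperbolic for some $\delta$.

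To do this, I would first appeal to Theorem \ref{thm:qi}, which furnishes a quasi-isometry $\hat Y\simeq \Gamma^e$. Since Gromov hyperbolicity is a quasi-isometry invariant among (connected) geodesic graphs, it is enough to know that $\Gamma^e$ is hyperbolic. The remark immediately following Theorem \ref{thm:qi} ensures that $\Gamma^e$ is indeed connected (because $\Gamma$ is), so we really are in the setting where this invariance applies. The hyperbolicity of $\Gamma^e$ is then immediate from the quasi-tree property established in \cite{KK2012}: every quasi-tree is quasi-isometric to a simplicial tree, which is $0$--hyperbolic, so quasi-trees are uniformly $\delta$--hyperbolic. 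Concatenating the two quasi-isometries yields the result.

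There is essentially no obstacle to overcome at this stage, as all the work has been done in the preceding theorem and in the cited paper; the corollary is a one-line deduction. The only small point of care is to confirm that the electrification used to define weak relative hyperbolicity matches the one set up at the beginning of Section \ref{s:elec}, i.e.\ that the orbit representatives $A\subseteq V(\Gamma^e)$ may be taken to be $V(\Gamma)$ and that the stabilizers under the conjugation action are exactly $\stab(v)=\form{\st v}$. Both facts were recorded just before the statement of Theorem \ref{thm:qi}, so the identification is automatic.
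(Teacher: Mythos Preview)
Your proposal is correct and matches the paper's own argument essentially verbatim: the paper also deduces the corollary immediately from Theorem~\ref{thm:qi} together with the quasi-tree result of \cite{KK2012}, noting just beforehand that the stabilizers are $\form{\st v}$. There is nothing to add.
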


We remark that by the work of M. Hagen (\cite{hagenquasi}, Corollary 5.4), we have that $\aga$ is weakly hyperbolic relative to $\{\form{\lk{v}}\co  v\in  V(\Gamma)\}$.
Suppose $\Gamma$ has no isolated vertices. For $g\in A(\Gamma)$, we let $\|g\|_{\mathrm{link}}$ denote the word length of $g$ with respect to the generating set $\cup_{v\in V(\Gamma)}\form{\lk{v}}$.
Then $\stl{g}\le \|g\|_{\mathrm{link}}\le 2\stl{g}$ and so, Corollary \ref{cor:coned off qi tree} can be obtained from Hagen's result. If $\Gamma$ has an isolated vertex, then $\cup_{v\in V(\Gamma)}\form{\lk{v}}$ does not generate $A(\Gamma)$.
The general case of weak hyperbolicity of Artin groups (not necessarily right-angled ones) relative to subgroups generated by subsets of the vertices of the defining graph was studied by Charney and Crisp in \cite{charneycrisp}.

\section{Combinatorial group theory of right-angled Artin groups}\label{s:comb}
\subsection{Reduced words}
Let $\Gam$ be a finite graph. 
Each $g\in A(\Gam)$ can be represented by a \emph{word}, which is a sequence $s_1 s_2\cdots s_\ell $ where each $s_i\in V(\Gam)\cup V(\Gam)^{-1}$ for $i=1,2,.\ldots,l$. 
Each term $s_i$ of the sequence is called a \emph{letter} of the word.
The \emph{word length} of $g$ is the smallest length of a word representing $g$, and denoted as $|g|$.
A word $w$ is \emph{reduced} if the length of $w$ is the same as the word length of the element which $w$ represents.
A word is often identified with the element of $A(\Gam)$ which the word represents. 
An element of $A(\Gam)$ will be sometimes assumed to be given as a reduced word, if the meaning is clear from the context.
If $w_1, w_2,\ldots,w_k$ are words, we denote by $w_1 \cdot w_2 \cdots w_k$ the word obtained by concatenating $w_1, w_2, \ldots,w_k$ in this order.
If two words $w$ and $w'$ are equal as words, then we write $w\equiv w'$.

\begin{defn}\label{defn:red}
A sequence $(g_1, g_2,\ldots,g_\ell )$ of elements in $A(\Gam)$ is called \emph{reduced} if $|g_1 g_2\cdots g_\ell  | = |g_1|+|g_2|+\cdots+|g_\ell |$.
In this case, we write $x\sim (g_1, g_2,\ldots,g_\ell )$ where $x = g_1 g_2\cdots g_\ell \in A(\Gam)$.
\end{defn}

Note that $g\sim (\cdots,p,q,\cdots)$ implies that $g\sim (\cdots, pq,\cdots)$. Also, if $g\sim (g_1, g_2, \ldots, g_\ell )$ and each $g_i$ is represented by a reduced word, then $g_1 \cdot g_2\cdots g_\ell $ is a reduced word $g\in A(\Gam)$.

The \emph{support} of a word $w$ is the set of vertices $v$ such that $v$ or $v^{-1}$ is a letter of $w$.
The \emph{support} of a group element $g$ is the support of a reduced word for $g$.
We define the \emph{support} of a sequence $(g_1, g_2, \ldots, g_s)$ of elements in $A(\Gam)$ as the sequence
$(\supp(g_1),\supp(g_2),\ldots,\supp(g_s))$.

If a word $w'$ is a subsequence of another word $w'$, then we write $w'\preccurlyeq_0 w$.
In particular, if $w'$ is a consecutive subsequence of $w$, then we say $w'$ is a \emph{subword} of $w$ and write $w'\preccurlyeq w$.  Here, we say that $w'$ is a \emph{consecutive subsequence} of $w$ if $w=w_1\cdot w'\cdot w_2$ for possibly trivial words $w_1$ and $w_2$ (that is to say, the generators used to express $w'$ can be arranged to occur consecutively, without any interruptions).
We define analogous terminology for group elements:

\begin{defn}\label{defn:subword}
Let $g,h\in A(\Gam)$ and $A\subseteq V(\Gam)$.
\begin{enumerate}
\item
If $g\sim (x,h,y)$ for some $x,y\in A(\Gam)$, then we say $h$ is a \emph{subword} of $g$ and write $h\preccurlyeq g$.
\item
If a reduced word for $h$ is a subsequence of some reduced word for $g$, then we say $h$ is a \emph{subsequence} of $g$
and write $h\preccurlyeq_0 g$.
\item
If $g\sim (x,y)$, then we say $x$ is a \emph{prefix} of $g$ and $y$ is a \emph{suffix} of $g$.
We write $x\preccurlyeq_p g$ and $y\preccurlyeq_s g$.
\item
We write $\tau(g;A) = h$ if (i) $h\preccurlyeq_s g$, (ii) $\supp(h)\subseteq A$ and (iii) if $h'$ is another element of $A(\Gam)$ satisfying (i) and (ii) and $|h| \le |h'|$, then $h = h'$.
\item
We write $\iota(g;A) = h$ if (i) $h\preccurlyeq_p g$, (ii) $\supp(h)\subseteq A$ and (iii) if $h'$ is another element of $A(\Gam)$ satisfying (i) and (ii) and $|h| \le |h'|$, then $h = h'$.
\end{enumerate}
\end{defn}

\subsection{More on star length}\label{ss:morestar}
Suppose $\Gam$ is a finite connected graph.  We will write $d$ for the graph metric $d_{\Gam^e}$ on the extension graph.
We have defined the star length of a word in Section~\ref{s:elec}.
By a \emph{star--word}, we mean a word of star length one.
If $g$ and $g'$ are two elements of $A(\Gam)$, we say that $\Gam^{gg'}$ is the \emph{conjugate of $\Gam^g$ by $g'$}. 
By regarding $V(\Gamma)$ as a set of vertex--orbit representatives, we have the notion of 
the covering distance on $\Gamma^e$ as defined in Section~\ref{s:elec}.
More concretely, let $x$ and $y$ be vertices of $\Gam^e$.
Suppose $\ell$ is the smallest nonnegative integer such that there exist conjugates $\Gam_1,\Gam_2,\ldots, \Gam_\ell $ of $\Gam$ in $\Gam^e$ satisfying
(i)
$x\in \Gam_1$ and $y\in\Gam_\ell $
and (ii)
$\Gam_i\cap\Gam_{i+1}\ne\varnothing$ for $i=1,2,\ldots,\ell -1$.
Then $\ell$ is the \emph{covering distance between $x$ and $y$} and denoted as $d'(x,y)$.
As we have seen in Lemma~\ref{lem:cov}, we have a quasi-isometry between $(\Gamma^e,d')$ and $(A(\Gamma),d_*)$. We can see more precise estimates as follows,
which are immediate consequences of the proof of Lemma~\ref{lem:cov}.
\begin{lem}\label{lem:covering distance}
Let $v\in V(\Gam)$, $x,y\in V(\Gam^e)$ and $g\in A(\Gam)$.
\begin{enumerate}
\item
$\|g\|_* -1 \le d'(v,v^g) \le \|g\|_* +1$.
\item
$d'(x,y)\le d(x,y)\le \diam(\Gamma) d'(x,y)$.
\item
$\|g\|_* -1 \le d(v,v^g) \le \diam(\Gamma)(\|g\|_*+1)$.
\end{enumerate}
\end{lem}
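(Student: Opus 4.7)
The plan is to recognize that all three inequalities are essentially bookkeeping consequences of the general framework of Section~\ref{s:elec} applied to the conjugation action of $A(\Gam)$ on $\Gam^e$. The orbit representatives are $A = V(\Gam)$ with stabilizers $H_v = \langle \st v \rangle$, and since $v \in H_v$ the generating set $T = \bigcup_v \langle \st v\rangle \cup \Sigma$ collapses to $\bigcup_v \langle \st v\rangle$, so that the $T$-length of an element equals its star length $\|\cdot\|_*$.

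For (1), I would specialize the proof of Lemma~\ref{lem:cov} by taking $\alpha_0 = v$. The action on $\Gam^e$ is by conjugation, so $\alpha_0 g = v^g$, and the estimate $\|g\|_T - 1 \le d'(\alpha_0, \alpha_0 g) \le \|g\|_T + 1$ established there translates verbatim into $\|g\|_* - 1 \le d'(v, v^g) \le \|g\|_* + 1$. Nothing new needs to be proved.

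For (2), the lower bound $d' \le d$ is where the two metrics must be compared directly. The key observation, already made at the end of Subsection~\ref{ss:ext}, is that $\Gam^e$ is assembled from the family $\{\Gam^g\}_{g \in \aga}$ by identifying common vertices and edges; in particular every edge of $\Gam^e$ lies entirely inside some conjugate $\Gam^g$. Hence any two adjacent vertices of $\Gam^e$ have covering distance at most $1$, and concatenating a path yields $d'(x,y) \le d(x,y)$. For the upper bound, let $\Gam_1, \ldots, \Gam_\ell$ be conjugates of $\Gam$ realizing $d'(x,y) = \ell$, choose $z_i \in \Gam_i \cap \Gam_{i+1}$ for $1 \le i \le \ell - 1$, and concatenate geodesics $x \to z_1 \to \cdots \to z_{\ell-1} \to y$. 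Each of the $\ell$ segments stays inside a single conjugate $\Gam_i$, which as a subgraph of $\Gam^e$ is isometric to $\Gam$, so its length is at most $\diam(\Gam)$. Summing gives $d(x,y) \le \ell \cdot \diam(\Gam)$.

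Finally, (3) is obtained by chaining (1) and (2): $d(v, v^g) \ge d'(v, v^g) \ge \|g\|_* - 1$, and $d(v, v^g) \le \diam(\Gam)\, d'(v, v^g) \le \diam(\Gam)(\|g\|_* + 1)$. The only potential subtlety is the containment-of-edges assertion needed in (2), but this is built into the construction of $\Gam^e$ rather than requiring a fresh argument, so no real obstacle presents itself; the lemma is a direct specialization of the general machinery.
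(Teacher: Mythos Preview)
Your proposal is correct and matches the paper's own treatment: the paper states that all three parts are ``immediate consequences of the proof of Lemma~\ref{lem:cov}'' and gives no further argument, and your write-up simply unpacks this by specializing $\alpha_0=v$ for (1), observing that every edge of $\gex$ lies in some conjugate $\Gam^g$ for (2), and chaining the two for (3). The one phrasing to tighten is the claim that $\Gam^g$ sits in $\gex$ ``isometrically'': what you actually use (and what is true) is that $\Gam^g$ is a copy of $\Gam$ as a graph, so any two of its vertices are joined in $\gex$ by a path of length at most $\diam(\Gam)$.
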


\begin{rem}
Note that the connectivity assumption for $\Gamma$ is not used for the covering distance
and Lemma~\ref{lem:covering distance} (1). This observation will become critical later.
\end{rem}

\begin{lem}\label{lem:subseq}
Let $x,y,z\in A(\Gam)$. 
\begin{enumerate}
\item
If $x\preccurlyeq_0 y$, then $\|x\|_*\le \|y\|_*$.
\item
For $\ell = \|x\|_*$, there exist star--words $x_1, x_2, \ldots, x_\ell$ such that $x\sim (x_1,x_2,\ldots,x_\ell)$.
\item
Suppose $xy\sim(x,y)$ and $\|y\|_*\ge \|z\|_*+2$.
Then $xyz \sim (x,yz)$.
\end{enumerate}
\end{lem}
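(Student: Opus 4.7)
All three parts are combinatorial statements about reduced words in $A(\Gamma)$, so the strategy throughout is to use the fact (essentially Servatius' commutation theorem) that any two reduced words for the same element of $A(\Gamma)$ are related by a sequence of swaps of adjacent commuting generators. For part (1) I would fix a star factorization $y = u_1 \cdots u_k$ with $k = \|y\|_*$ and each $u_i$ a star--word, concatenate reduced words for the $u_i$'s, and reduce the result to a reduced word $w_y$ for $y$. Each surviving letter of $w_y$ carries a class label $i \in \{1, \ldots, k\}$ recording which $u_i$ it came from. Whenever a class-$i$ letter sits after a class-$j$ letter in $w_y$ with $i < j$, their relative order was flipped by a commutation move during the reduction and so they commute as generators. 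Hence further commutations rearrange $w_y$ into a reduced word $w_y'$ in class-block order, and any reduced word $w_x$ for $x$ appearing as a subsequence of $w_y$ transforms into a subsequence $w_x'$ of $w_y'$ that likewise splits into $k$ class-blocks, each a subword of a star--word and therefore itself a star--word. This exhibits $x$ as a product of $k$ star--words, so $\|x\|_* \le k$.

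\textbf{Part (2).} Among all factorizations $x = x_1 \cdots x_\ell$ with $\ell = \|x\|_*$ and each $x_i$ a star--word, I would choose one minimizing $|x_1| + \cdots + |x_\ell|$. If this product were not reduced, standard cancellation theory in $A(\Gamma)$ supplies indices $i < j$ and a generator $a$ such that $a$ appears in $x_i$, $a^{-1}$ appears in $x_j$, and both commute past all the intervening letters of $x_i \cdots x_j$. Deleting this pair produces new star--words $x_i'$ and $x_j'$ (since subwords of star--words remain supported in the same star), leaving the group element unchanged but with strictly smaller total length; this contradicts minimality. So the chosen factorization is reduced, yielding $x \sim (x_1, \ldots, x_\ell)$.

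\textbf{Part (3).} I would argue the contrapositive: assume $(x, yz)$ is not reduced, and deduce $\|y\|_* \le \|z\|_* + 1$. Writing the maximal cancellation between $y$ and $z$ as $y = y'' \cdot c$ and $z = c^{-1} \cdot z''$ with $y''z''$ reduced, the non-reducedness of $xyz$ as $(x, yz)$ produces a generator $a$ with $x = x' \cdot a$ reduced and $a^{-1}$ a prefix--letter of $y''z''$. The key structural point, which is also the main technical hurdle, is a dichotomy: since $y''z''$ is a reduced concatenation of reduced elements, the initial letter of any reduced form of $y''z''$ either (i) is a prefix--letter of $y''$, or (ii) is a prefix--letter of $z''$ whose generator commutes with every letter of $y''$. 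Case (i) would make $a^{-1}$ a prefix--letter of $y$, producing a cancellation in $xy$ that contradicts $xy \sim (x, y)$. In case (ii), $\supp(y'') \subseteq \st(a)$, so $y''$ is a star--word with $\|y''\|_* \le 1$; since $c \preccurlyeq_0 z^{-1}$, part (1) gives $\|c\|_* \le \|z\|_*$, whence $\|y\|_* \le \|y''\|_* + \|c\|_* \le 1 + \|z\|_*$, contradicting $\|y\|_* \ge \|z\|_* + 2$. The delicate step is justifying the prefix--letter dichotomy by tracking how a single letter of $y''z''$ can migrate to the front through Servatius-style commutation moves.
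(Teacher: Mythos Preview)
Your proof is correct and follows essentially the same route as the paper. The paper's own argument is extremely terse: for (1) it simply asserts that if $y=y_1\cdots y_k$ in star--words then $x=x_1\cdots x_k$ with $x_i\preccurlyeq_0 y_i$, for (2) it just says ``obvious from (1)'', and for (3) it writes the same maximal--cancellation decomposition $y\sim(y_1,y_2)$, $z\sim(y_2^{-1},z_1)$, $yz\sim(y_1,z_1)$ and then records the single line $\|y\|_*\le\|y_1\|_*+\|y_2\|_*\le 1+\|z\|_*$. Your write--up supplies the details the paper omits: the label--tracking through commutation moves in (1), and the explicit dichotomy in (3) ruling out the case where the cancelling letter comes from $y''$ (which the paper silently folds into the hypothesis $xy\sim(x,y)$).

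The one genuine difference is part (2). The paper derives it from the \emph{proof} of (1): apply the block--partition argument with $y=x$ to any reduced word for $x$, and the resulting class blocks give a reduced star factorization directly. You instead minimize total length over all star factorizations and argue that a non-reduced one could be shortened. Both are short and correct; the paper's route avoids re-invoking cancellation theory, while yours is self-contained and does not depend on having already proved (1).
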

\begin{proof}
(1) If $y = y_1 \cdots y_k$ for some star--words $y_i$, then $x = x_1\cdots x_k$ for some $x_i\preccurlyeq_0 y_i$.
(2) is obvious from (1). For (3), assume $(x,yz)$ is not reduced. 
Write $y\sim(y_1,y_2),z\sim(y_2^{-1},z_1)$ such that $yz\sim (y_1,z_1)$.
For some $p\in   V(\Gamma)\cup V(\Gamma)^{-1}$, we can write
 $x\sim (x_0,p),z_1\sim(p^{-1},z_2)$ and $[p,y_1]=1$.
Then $\stl{y}\le\stl{y_1}+\stl{y_2}\le1+\stl{z}$.
\end{proof}

We briefly explain the notion of a \emph{dual van Kampen diagram} $\Delta$ for a word $w$ representing a trivial element in $A(\Gam)$; see~\cite{CW2004, Kim2008, Kim2010} for more details. The diagram $\Delta$ is obtained from a van Kampen diagram $X\subseteq S^2$ for $w$ by taking the dual complex of $X$ and removing a small disk around the vertex $\infty$ corresponding to the exterior region of $X\subseteq S^2$; see Figure~\ref{fig:vK} for an example.

\begin{figure}[htb]
  \tikzstyle {av}=[red,draw,shape=circle,fill=red,inner sep=0pt]
  \tikzstyle {bv}=[blue,draw,shape=circle,fill=blue,inner sep=0pt]
  \tikzstyle {cv}=[orange,draw,shape=circle,fill=orange,inner sep=0pt]
  \tikzstyle {wv}=[black,draw,shape=circle,fill=white,inner sep=1pt]  
  \tikzstyle {gv}=[black,draw,shape=circle,fill=black,inner sep=1pt]  
  \tikzstyle {v}=[black,draw,shape=circle,fill=black,inner sep=2pt]  
  \tikzstyle {a}=[red,postaction=decorate,decoration={%
    markings,%
    mark=at position 1 with {\arrow[red]{stealth};}}]
  \tikzstyle {b}=[blue,postaction=decorate,decoration={%
    markings,%
    mark=at position .85 with {\arrow[blue]{stealth};},%
    mark=at position 1 with {\arrow[blue]{stealth};}}]
  \tikzstyle {c}=[orange,postaction=decorate,decoration={%
    markings,%
    mark=at position .7 with {\arrow[orange]{stealth};},%
    mark=at position .85 with {\arrow[orange]{stealth};},
    mark=at position 1 with {\arrow[orange]{stealth};}}]
  \tikzstyle {am}=[red,postaction=decorate,decoration={%
    markings,%
    mark=at position .5 with {\arrow[red]{stealth};}}]
  \tikzstyle {bm}=[blue,postaction=decorate,decoration={%
    markings,%
    mark=at position .45 with {\arrow[blue]{stealth};},%
    mark=at position .55 with {\arrow[blue]{stealth};}}] 
  \tikzstyle {pv}=[black,draw,shape=rectangle,fill=black,inner sep=2pt] 
  \tikzstyle {ar}=[postaction=decorate,decoration={%
    markings,%
    mark=at position .35 with {\arrow[black,ultra thick]{stealth};}}]
  \tikzstyle {cmid}=[orange,postaction=decorate,decoration={%
    markings,%
    mark=at position .45 with {\arrow[orange]{stealth};},%
    mark=at position .5 with {\arrow[orange]{stealth};},
    mark=at position .55 with {\arrow[orange]{stealth};}}] 
\subfloat[(a) A van Kampen diagram $X$]{
	\begin{tikzpicture}[scale=.75,thick]
	\foreach \x in {0,2} {\draw [am] (-2,\x)--(0,\x); }	
	\foreach \x in {-2,0,2} {\draw [bm] (\x,0)--(\x,2); }	
	\foreach \x in {0,2} {\draw [cmid] (0,\x)--(2,\x); }	
	\draw [red,dashed] (0,4) edge [out=-120,in=90] (-.5,1);
	\draw [red,dashed] (-.5,1) edge [out=-90,in=0] (-2,-1);
	\draw [red,dashed] (-2,-1) edge [out=180,in=-90] (-4,3);
	\draw [red,dashed] (-4,3) edge [out=90,in=150] (0,4);	
	\draw [orange,dashed] (0,4) edge [out=-60,in=90] (.5,1);
	\draw [orange,dashed] (.5,1) edge [out=-90,in=180] (2,-1);
	\draw [orange,dashed] (2,-1) edge [out=0,in=-90] (4,3);
	\draw [orange,dashed] (4,3) edge [out=90,in=30] (0,4);

	\draw [blue,dashed] (0,4) edge [out=-45,in=90] (3,2);
	\draw [blue,dashed] (3,2) edge [out=-90,in=0] (0,1.5);
	\draw [blue,dashed] (0,1.5) edge [out=180,in=-90] (-3,2);
	\draw [blue,dashed] (-3,2) edge [out=90,in=-135] (0,4);

	\draw (-2,0) node [left=3,below] {\small $x_0$};
	\draw (0,4) node [gv] {} node [right] {\small $\infty$};	
	\draw [ultra thick,dashed] (0,4) circle (1);
%%%%%%%%
	\foreach \x in {-2,0,2} {\draw (\x,2) node [gv] {}; }	
	\foreach \x in {-2,0,2} {\draw (\x,0) node [gv] {}; }	
	\node  [inner sep=0.9pt] at (-2.5,0) {}; 	%hidden points to configure positions
	\node  [inner sep=0.9pt] at (2.5,0) {}; 		
	\end{tikzpicture}}
$\qquad$
\subfloat[(b) A dual van Kampen diagram $\Delta$]{
	\begin{tikzpicture}[scale=.75,thick]
% 	\draw [dashed,ultra thick] (0,0) circle (2);
	\draw [ar] (1.732,-1) arc [radius=2, start angle = -30, end angle=30];
	\draw [ar] (1.732,1) arc [radius=2, start angle = 30, end angle=90];
	\draw [ar] (0,2) arc [radius=2, start angle = 90, end angle=150];
	\draw [ar] (-1.732,-1) arc [radius=2, start angle = 210, end angle=150];
	\draw [ar] (0,-2) arc [radius=2, start angle = 270, end angle=210];
	\draw [ar] (1.732,-1) arc [radius=2, start angle = 330, end angle=270];
	
	\node [av] at (0:2) (r) {};
	\node [cv] at (60:2) (ru) {};
	\node [bv] at (120:2) (lu) {};
	\node [cv] at (180:2) (l) {};
	\node [av] at (240:2) (ld) {};
	\node [bv] at (300:2) (rd) {};
	\foreach \x in {30,90,150,210,270,330} {\draw (\x:2) node [v] {};}

	\foreach \x/\xtext in {
            0/a,60/c,120/b,180/c^{-1},240/a^{-1},300/b^{-1}}
                \draw (\x:2.5) node [] {$\xtext$};
	
	\draw [red] (r) edge [bend right] (ld);
	\draw [orange] (ru) edge [bend left] (l);
	\draw [blue] (lu) edge [bend left] (rd);	
 	\node  [inner sep=0.9pt] at (-2.5,-3) {}; 	%hidden points to configure positions
	\node  [inner sep=0.9pt] at (2.5,-3) {}; 			
	\end{tikzpicture}}
\caption{Dualizing a van Kampen diagram for the word $w=acbc^{-1}a^{-1}b^{-1}$.}
\label{fig:vK}
\end{figure}
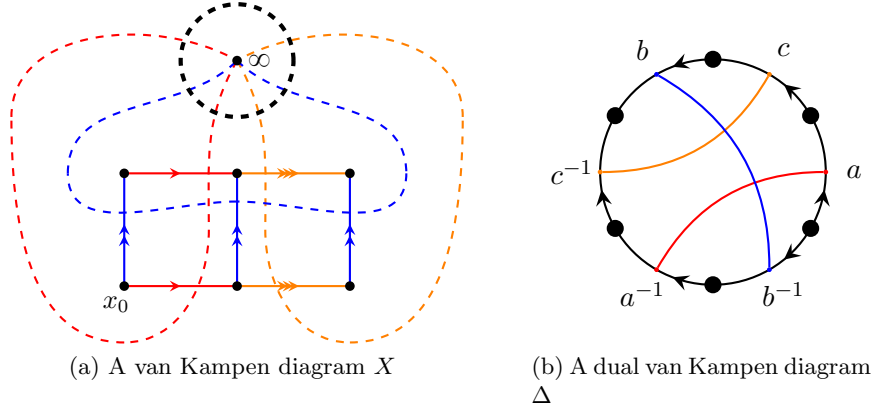

Since $X$ is a simply connected square complex, $\Delta$ is homeomorphic to a disk equipped with properly embedded arcs (\emph{hyperplanes}). 
So the boundary of $\Delta$ is topologically a circle,
which is oriented and divided into segments. Each segment is labeled by a letter of $w$ so that if one follows the orientation of $\partial \Delta$ and reads the labels, one gets $w$ with a suitable choice of the basepoint. 
Each arc $\alpha$ joins two segments on $\partial \Delta$ whose labels are inverses to each other.
Such an arc $\alpha$ is labeled by the vertex that is the support of the two letters. 
If two arcs intersect, then their vertices are adjacent in $\Gam$.
We often identify a letter with the corresponding segment if the meaning is clear.

The \emph{syllable length} of a word $g$ is the minimum $\ell$ such that $g$ can be written as the product of $\ell$ elements in $\cup_{v\in V(\Gamma)} \form{v}$.
We write the syllable length of $g$ as $\syl{g}$.
The syllable length of a word can be estimated from a decomposition into star--words in the following sense.

\begin{lem}\label{lem:syl}
For $g\in A(\Gam)$ and $r = \stl{g}$, we have
\[
\syl{g} = \inf \{ \sum_{i=1}^r \syl{h_i} \co g = h_1 h_2\cdots h_r\text{ and }\|h_i\|_*=1\}.
\]
\end{lem}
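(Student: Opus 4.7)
My plan is to prove the two inequalities separately. The inequality $\syl{g} \le \sum_{i=1}^r \syl{h_i}$ holds trivially for every star decomposition $g = h_1 \cdots h_r$ with each $\|h_i\|_*=1$: concatenating optimal syllable decompositions of the individual $h_i$'s produces a syllable decomposition of $g$ with $\sum_i \syl{h_i}$ syllables, so $\syl{g}$ is a lower bound for the infimum.

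For the reverse inequality I will exhibit an explicit star decomposition of $g$ into $r$ factors attaining $\sum_{i=1}^r \syl{h_i} = \syl{g}$. I start with the optimal star decomposition $g \sim (h_1, \ldots, h_r)$ furnished by Lemma~\ref{lem:subseq}(2) and iteratively modify it, preserving both $r$ and each $h_i$'s star-word status, so as to strictly decrease $\sum_i \syl{h_i}$ at each step. Suppose $\sum_i \syl{h_i} > \syl{g}$; then after fixing optimal syllable decompositions of the individual $h_i$'s and concatenating, the resulting syllable decomposition of $g$ is non-optimal, so some pair of syllables $v^m$ appearing in $h_i$ and $v^n$ appearing in $h_j$ (for some $i<j$ and common vertex $v$) can be combined via commutations inside the product. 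Such combinability forces $v$ to commute with every syllable lying strictly between $v^m$ and $v^n$ in the concatenation. I then slide $v^m$ rightward past these commuting syllables until it coalesces with $v^n$ into $v^{m+n}$; let $h_i^{\mathrm{new}}$ denote the element obtained from $h_i$ by deleting its $v^m$ syllable, and $h_j^{\mathrm{new}}$ the element obtained from $h_j$ by replacing its $v^n$ syllable with $v^{m+n}$. One checks the identity $h_i h_{i+1} \cdots h_j = h_i^{\mathrm{new}} h_{i+1} \cdots h_{j-1} h_j^{\mathrm{new}}$, and both replacements remain star-words since $\supp(h_i^{\mathrm{new}}) \subseteq \supp(h_i)$ and $\supp(h_j^{\mathrm{new}}) \subseteq \supp(h_j)$.

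The main obstacle is ensuring that this modification preserves the number of star-word factors at exactly $r$. A priori, trivialization of $h_i^{\mathrm{new}}$ or $h_j^{\mathrm{new}}$ could drop the count to $r-1$ or $r-2$, which would yield a star decomposition of $g$ into fewer than $\stl{g}$ factors, an absurdity. Hence in an optimal star decomposition such trivializations cannot occur, the factor count remains $r$, and $\sum_i \syl{h_i}$ strictly decreases by at least one at each iteration. Since this sum is a non-negative integer bounded below by $\syl{g}$, the procedure terminates in finitely many steps with $\sum_i \syl{h_i} = \syl{g}$, establishing the reverse inequality.
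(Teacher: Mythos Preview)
Your argument is correct and reaches the same conclusion, but the route differs from the paper's. The paper fixes an \emph{optimal syllable} word $g=x_1^{e_1}\cdots x_\ell^{e_\ell}$ and an arbitrary reduced star decomposition $g\sim(h_1,\ldots,h_r)$, then analyzes the dual van~Kampen diagram for $x_1^{e_1}\cdots x_\ell^{e_\ell}\cdot h_r^{-1}\cdots h_1^{-1}$. Minimizing $\sum_i |A_i|$ (where $A_i$ records which $h_j$'s the arcs from the block $x_i^{e_i}$ hit) and pushing letters leftward forces each syllable $x_i^{e_i}$ to land entirely in a single $h_j$, which immediately gives $\sum_j \syl{h_j}\le \ell$. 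Your approach instead fixes the star side and iteratively merges a same-vertex pair of syllables straddling two star factors, using minimality of $r=\stl{g}$ to rule out trivialization of a factor.

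Both arguments are really doing the same bookkeeping from opposite ends. The paper's van~Kampen diagram argument is self-contained given the diagrammatic machinery already set up in Section~\ref{s:comb}. Your argument is more elementary in that it avoids diagrams, but it leans on the assertion that any syllable expression with more than $\syl{g}$ terms admits a \emph{mergeable pair} (two same-vertex syllables $v^m,v^n$ with every intermediate syllable commuting with $v$). This is a standard consequence of the confluent rewriting system for RAAGs (commutation swaps plus adjacent merges), but it is not proved in the paper; if you want the argument to stand on its own here, you should either cite this fact or give the short confluence justification. Apart from that, your handling of the support inclusions and the non-triviality of $h_i^{\mathrm{new}},h_j^{\mathrm{new}}$ via $\stl{g}=r$ is clean and correct.
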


\begin{proof}
Let $\ell=\syl{g}, r = \stl{g}$ and $\ell'$ be the right-hand side of the equation.
It is obvious that $\ell\le \ell'$.
Write $g = x_1^{e_1}x_2^{e_2}\cdots x_\ell ^{e_\ell }$ where $x_i$ are vertices of $\Gam$ and $e_i\ne0$.
Consider star--words $h_1,\ldots,h_r$ such that 
$g\sim(h_1,\ldots,h_r)$.
There exists a dual van Kampen diagram $\Delta$ of the word
$x_1^{e_1}\cdot x_2^{e_2}\cdots x_\ell ^{e_\ell }\cdot h_r^{-1}\cdots h_1^{-1}$.
Let $A_i = \{ j \co \text{ there is an arc from }x_i^{e_i}\text{ to }h_j^{-1}\text{ in }\Delta\}$.
We may choose $(h_1,h_2,\ldots,h_r)$ and $\Delta$ such that $\sum_{j=1}^l |A_i|$ is minimal.

If a pair of neighboring arcs starting at $x_i^{e_i}$ are joined to a letter in $h_j$ and to a letter in $h_{j'}$ for some $j<j'$, 
then the interval between those two letters contains letters in $\lk(x_i)\cup \lk(x_i)^{-1}$.
In this case, we replace $h_j$ by $h_j x_i^{\sgn(e_i)}$ and $h_{j'}$ by $x_i^{-\sgn(e_i)} h_{j'}$. Note that $\sum_i |A_i|$ did not increase.
By repeating this ``left-greedy'' process and using the minimality, we see that $|A_i|=1$ for $i=1,2,\ldots,l$. This shows that for each $j$, there exists $k>0$ and $i_1<i_2<\cdots<i_k$ such that 
$h_j \sim (x_{i_1}^{e_{i_1}},x_{i_2}^{e_{i_2}},\ldots,x_{i_k}^{e_{i_k}})$. This gives a partition of $\{x_1^{e_1}, x_2^{e_2}, \ldots, x_\ell ^{e_\ell }\}$ into $r$ disjoint sets, and hence,
$\sum_{j=1}^r \syl{h_j} \le l$.
\end{proof}

Let us record a lemma that will be used later in this paper when we establish a version of the distance formula for certain classes of right-angled Artin groups (see Proposition \ref{p:disttree}, for instance). The proof is a simple exercise.

\begin{lem}\label{lem:star}
Let $\Gamma$ be a finite, triangle-- and square--free graph.
Suppose $y\in V(\Gamma)$
and $g\in \form{\st(y)}\subseteq A(\Gamma)$.
\begin{enumerate}
\item
If $|\supp(g)\cap\lk(y)|\ge2$, then $\Gamma\cap\Gamma^g=\{y\}$.
\item
If $\supp(g)\cap\lk(y)=\{a\}$ and $\syl{y}>1$, then $\Gamma\cap\Gamma^g=\{a,y\}$. 
\item
If $\supp(g)\cap\lk(y)=\varnothing$ and $g\ne1$, then $\Gamma\cap\Gamma^g=\st(y)$. 
\item
If $h\in A(\Gamma)$ is not a star--word, then $\Gamma\cap\Gamma^g=\varnothing$.
\end{enumerate}
\end{lem}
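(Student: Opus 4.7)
The engine for all four parts is the following general fact, valid for any right-angled Artin group: for $g \in A(\Gamma)$ and $u \in V(\Gamma)$, one has $u \in \Gamma \cap \Gamma^g$ if and only if $g \in \langle \st(u)\rangle$. Indeed, if $u = v^g$ with $u, v \in V(\Gamma)$, then abelianizing $A(\Gamma) \to \Z^{V(\Gamma)}$ forces $u = v$, so $g$ centralizes $u$; conversely, the centralizer of a vertex $u$ in $A(\Gamma)$ is exactly $\langle \st(u)\rangle$ by the Servatius centralizer theorem, and any such $g$ fixes $u$ under conjugation, putting $u \in \Gamma \cap \Gamma^g$.

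Granted this, parts (3) and (4) are immediate. In (3), the hypotheses force $g = y^k$ with $k \ne 0$, and $y^k \in \langle\st(u)\rangle$ iff $y \in \st(u)$ iff $u \in \st(y)$, so $\Gamma \cap \Gamma^g = \st(y)$. In (4), an element that is not a star--word lies in no $\langle \st(u)\rangle$, so $\Gamma \cap \Gamma^h = \varnothing$.

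For parts (1) and (2), the vertex $y$ clearly lies in $\Gamma \cap \Gamma^g$, and in (2) so does $a$ (since $y$ and $a$ commute, so $g \in \langle \st(y)\rangle \subseteq \langle\st(a)\rangle \cdot \langle \st(y)\rangle$ actually forces $g \in \langle \st(a)\rangle$ once $\supp(g) \subseteq \{a,y\}\subseteq \st(a)$); the real work is excluding additional vertices. Any other $u \in \Gamma \cap \Gamma^g$ satisfies $\supp(g) \subseteq \st(y) \cap \st(u)$. In (1), pick distinct $a, b \in \supp(g) \cap \lk(y)$: a short case split on coincidences among $u, a, b$ produces either a triangle through $y$ (when $u \in \{a, b\}$, using that the other of $a, b$ lies in $\st(u)$ and is adjacent to $y$) or an induced $4$--cycle $(y, a, u, b)$ when $u, a, b$ are pairwise distinct, both contradicting the hypotheses on $\Gamma$. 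In (2), interpreting the condition ``$\syl{y}>1$'' as $\syl{g}>1$ (the reading that makes the statement meaningful, since $y \in V(\Gamma)$ has syllable length one), we have $g = y^m a^n$ with $m, n \ne 0$, so $\supp(g) = \{a, y\}$; for $u \notin \{a, y\}$ this forces $a, y \in \lk(u)$, making $\{u, a, y\}$ a triangle, contradiction.

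The only substantive input is the centralizer identification, and the only real bookkeeping is the triangle/square case split in (1); that case split is the step most likely to demand careful writing, everything else being a direct translation of the hypothesis $g \in \langle \st(y)\rangle$ through the main observation.
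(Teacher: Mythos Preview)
Your proof is correct, and since the paper leaves this lemma as ``a simple exercise'' without giving an argument, your write-up is exactly the kind of fill-in one would expect. The key reduction---that $u\in\Gamma\cap\Gamma^g$ if and only if $g\in\langle\st(u)\rangle$, via the fact that distinct vertices of $\Gamma$ are never conjugate in $\aga$ together with the Centralizer Theorem---is the natural engine, and your case analysis in~(1) is clean. Your reading of the condition ``$\syl{y}>1$'' in~(2) as the evident typo for $\syl{g}>1$ is the only sensible interpretation and is handled correctly.
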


\subsection{Quasi--isometries of right-angled Artin groups with the star length metric}
In this subsection, we give a quasi--isometry classification of right-angled Artin groups equipped with the star length metric.  It turns out that this classification is much coarser than the quasi--isometry classification of right-angled Artin groups with the word metric~\cite{BJN2010,BKS2008}. In particular, there are only two quasi--isometry classes. We will denote by $T_\infty$ the simplicial tree which has countable valance at each vertex.

\begin{thm}\label{thm:starqiclass}
Let $\Gamma$ be a connected finite simplicial graph and let $\aga$ be the associated right-angled Artin group, equipped with the star length metric.  Then $\aga$ is quasi--isometric to exactly one of the following:
\begin{enumerate}
\item
A single point.
\item
The countable regular tree $T_{\infty}$.
\end{enumerate}
\end{thm}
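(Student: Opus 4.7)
The plan is to invoke Theorem~\ref{thm:qi} and the fact from \cite{KK2012} that $\Gamma^e$ is a quasi-tree to replace $(A(\Gamma), d_*)$ by a simplicial tree $T$ up to quasi-isometry. The classification of $A(\Gamma)$ under the star metric then reduces to determining which quasi-isometry class of tree $T$ can be. Only two candidates will arise: either $T$ has finite diameter (so $\Gamma^e$ is bounded, yielding a point), or $T$ is unbounded with enough branching to be quasi-isometric to $T_\infty$.

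First I would handle the bounded case. If $\Gamma^e$ has finite diameter, then $(A(\Gamma), d_*)$ is trivially quasi-isometric to a single point. This certainly occurs whenever $\Gamma = \st(v)$ for some vertex $v$, since then $A(\Gamma) = \form{\st(v)}$ and every element has star length at most one. In the unbounded case, the goal is to show that $\Gamma^e$ is quasi-isometric to $T_\infty$. The conjugation action of $A(\Gamma)$ on $\Gamma^e$ has finitely many vertex orbits, and each vertex stabilizer $\form{\st(v)}$ is an infinite subgroup of $A(\Gamma)$. Since $\Gamma^e$ is unbounded while the action is cocompact, the infinite group $\form{\st(v)}$ must act with unbounded orbits on $\Gamma^e$; otherwise cocompactness would force $\Gamma^e$ itself to be bounded. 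Consequently, infinitely many distinct translates $\Gamma^g$ of the base copy of $\Gamma$ pass through each vertex, producing countably many distinct coarse directions emanating from that vertex.

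The main obstacle is to convert this coarse branching statement into an honest quasi-isometry with $T_\infty$. Concretely, I would verify that distinct cosets in $\form{\st(v)}$ modulo the subgroup fixing a small neighborhood of $v$ in $\Gamma^e$ translate the base copy $\Gamma$ into quasi-disjoint regions of $\Gamma^e$; under the quasi-isometry $\Gamma^e \to T$, these must land in distinct branches of $T$ at the image of $v$. The final step is to invoke (or verify directly) the quasi-isometric uniqueness statement that every unbounded simplicial tree with countably infinite valence at a cofinal collection of vertices is quasi-isometric to $T_\infty$. The two mutually exclusive possibilities (bounded diameter versus countable branching at infinitely many scales) then combine to give precisely the claimed dichotomy.
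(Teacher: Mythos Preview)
Your overall strategy matches the paper's: pass to $\Gamma^e$ via Theorem~\ref{thm:qi}, use that $\Gamma^e$ is a quasi-tree to get a tree $T$, then argue that in the unbounded case $T$ must be quasi-isometric to $T_\infty$ because there is infinite branching near every vertex.

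However, there is a genuine error and a gap. The claim that $\form{\st(v)}$ acts with unbounded orbits on $\Gamma^e$ is false: every $g\in\form{\st(v)}$ has $\stl{g}\le 1$, so by Lemma~\ref{lem:covering distance} the displacement $d(w,w^g)$ is uniformly bounded for every vertex $w$. Your justification (``otherwise cocompactness would force $\Gamma^e$ itself to be bounded'') is a non-sequitur, since cocompactness refers to the full $A(\Gamma)$-action, not to the stabilizer. Fortunately this false claim is not what you actually need; what you need is infinite branching at $v$, and that comes from a different observation.

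The real work, which you gesture at but do not carry out, is to show that $T$ has a vertex of infinite degree within uniformly bounded distance of every vertex. The paper does this as follows. For each $v\in\Gamma^e$, the complement $\Gamma^e\setminus\st(v)$ has infinitely many components $C_1,C_2,\ldots$ of infinite diameter (this uses the conjugation action by powers of $v$, which fixes $\st(v)$ and permutes these components). Any two such components are separated in $\Gamma^e$ only through $\st(v)$, so once one removes a large ball $B_K(v)$ the pairwise distances between the $C_i\setminus B_K(v)$ grow like $K$. Taking $K$ large relative to the quasi-isometry constants of $\phi\colon\Gamma^e\to T$ forces infinitely many of these components to map into a single bounded ball in $T$, which is impossible if $T$ is locally finite. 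This pigeonhole step is the substantive content you are missing; the statement that ``distinct cosets translate $\Gamma$ into quasi-disjoint regions'' is exactly what has to be proved, and the components of $\Gamma^e\setminus\st(v)$ are the correct way to make ``quasi-disjoint'' precise.
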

\begin{proof}
By Theorem \ref{thm:qi}, the group $\aga$ equipped with the star length metric is quasi--isometric to $\gex$.  Recall that $\gex$ has finite diameter if and only if $\aga$ splits as a nontrivial direct product or if $\aga\cong\Z$.

Suppose $\diam(\gex)=\infty$.
We have a quasi-isometry $\phi\co \gex\to T$ for some tree $T$.
Every vertex of $\gex$ has a quasi--dense orbit under the $\aga$ action.  
Furthermore, it is easy to check that for $v\in V(\gex)$, the graph $\gex\setminus \st(v)$ has infinitely many components $C_1, C_2, \ldots$ of infinite diameter.

If we remove a ball $B_K(v)$ of radius $K$ about a vertex $v\in \gex$, then the minimal distance between $C_i\setminus B_K(v)$ and $C_j\setminus B_K(v)$ grows like $K$ for $i\ne$j.  
So, if $K$ is chosen to be much larger than the quasi--isometry constants for a quasi--isometry $\phi:\gex\to T$, we see that $\gex\setminus B_K(v)$ has infinitely many vertices which must be sent to pairwise distinct vertices of a bounded subset of $T$ under $\phi$.  
If $T$ is locally finite, any bounded subset of $T$ is finite, a contradiction.  Thus for each vertex $w$ of $T$, there is a uniform $M$, which is independent of $w$, such that the ball of radius $M$ about $w$ has infinitely many vertices. In particular, such an $M$--ball contains at least one vertex of infinite degree. For each vertex of finite valence, choose a path to a nearest vertex of infinite valence. Such a path has length at most $M$. Hence, contraction of such paths will yield a quasi-isometry from $T$ to $T_\infty$.
\end{proof}

In Subsection \ref{ss:disc}, we will define an appropriate notion of distance on disconnected extension graphs, with respect to which even totally disconnected extension graphs will be quasi--isometric to $T_{\infty}$.

\section{Acylindricity of the $\aga$ action on $\Gamma^e$}
Let $\Gam$ be a finite graph.
The goal of this section is to show that the conjugation action of $A(\Gam)$ on $\Gam^e$ is acylindrical. 

\begin{defn}\label{defn:acyl}
An isometric action of a group $G$ on a path-metric space $X$ is called \emph{acylindrical} if for every $r>0$, there exist $R ,N>0$ such that whenever $p$ and $q$ are two elements of $X$ with $d(p,q)\ge R$, the cardinality of the set 
$\xi(p,q;r) = \{g\in G\co d(p,g.p)\le r\mbox{ and }d(q,g.q)\le r\}$ is are most $N$.
\end{defn}

All the real variables ($r,s,t,R,N,M$ and so forth) will be assumed to take integer values throughout this section.
We say a vertex set $A$ is \emph{adjacent} to another vertex set $B$ if each vertex of $A$ is adjacent to every vertex of $B$; in particular, $A$ and $B$ are disjoint sets because of the assumption that all graphs under consideration are simplicial.

\begin{defn}\label{defn:cancel}
Let $\beta=(s,x,y,v_1,v_2,\ldots,v_s)$ where $s>0,x,y\in A(\Gam)$ and $v_1,v_2,\ldots v_s\in V(\Gam)$.
We say the sequence 
$\alpha=(g_1,g_2,\ldots,g_s,h_1,h_2,\ldots,h_s)\in A(\Gamma)^{2s}$
 is a \emph{cancellation sequence} for $g\in A(\Gam)$ with respect to $\beta$ if the following four conditions hold.
\begin{enumerate}[(i)]
\item
$g = g_1 g_2 \cdots g_s h_1 h_2 \cdots h_s$.
\item
$\supp(g_i), \supp(h_i)\subseteq \st(v_i)$ for each $i$.
\item
For each $1\le i<j\le s$, we have that $\supp(h_i)$ is adjacent to $\supp(g_j)$.
\item
$x\sim (x',w,g_s^{-1},g_{s-1}^{-1},\ldots,g_1^{-1})$ and 
$y\sim (h_s^{-1},h_{s-1}^{-1},\ldots,h_1^{-1},w^{-1},y')$ and for some $x',y',w\in A(\Gam)$.
\end{enumerate}
Furthermore, we say $\alpha$ is \emph{maximal} if the lexicographical order of 
\[(|g_1|,|g_2|,\ldots,|g_s|,|h_s|,|h_{s-1}|,\ldots,|h_1|)\]
 is maximal among the cancellation sequences for $g$ w.r.t. $\beta$.
\end{defn}

If there is a cancellation sequence for $g$ w.r.t.~$\beta$, then it is obvious that a maximal one exists since the complexity is bounded above by
$(|x|,|x|,\ldots,|x|,|y|,|y|,\ldots,|y|)$. Roughly speaking, the item $(2)$ below means that $(g_1,\ldots,g_s)$ is ``left--greedy'' and $(h_1,h_2,\ldots,h_s)$ is ``right-greedy''. The proofs are straightforward.

\begin{lem}\label{lem:max}
Suppose $\alpha=(g_1,g_2,\ldots,g_s,h_1,h_2,\ldots,h_s)$ is a maximal cancellation sequence for some $g$ w.r.t. some $\beta$.
Let $1\le a<b\le s$.
\begin{enumerate}
\item
If $(g_1',g_2',\ldots,g_s', h_1',h_2',\ldots,h_s')$ is another cancellation sequence for $g$ w.r.t. $\beta$ and $|g_i|\le  |g_i'|, |h_i| \le |h_i'|$ for each $i$, then $g_i = g_i'$ and $h_i=h_i'$.
\item
Suppose that for some $u\in A(\Gam)$, either
\begin{enumerate}[(i)]
\item
$(g_1,\ldots,g_{a-1},g_a',g_{a+1},\ldots,g_{b-1},g_b',g_{b+1},\ldots,g_s,h_1,h_2,\ldots,h_s)$
is another cancellation sequence for $g$ w.r.t. $\beta$
where $g_{a}' \sim (g_{a}, u)$ and $g_{b} \sim (u^{-1}, g_{b}')$,
or
\item
$(g_1,\ldots,g_s,h_1,\ldots,h_{a-1},h_a',h_{a+1},\ldots,h_{b-1},h_b',h_{b+1},\ldots,h_s)$
is another cancellation sequence for $g$ w.r.t. $\beta$
where 
$h_a \sim (h_a', u)$ and $h_b' \sim (u^{-1}, h_b)$.
\end{enumerate}
Then $u = 1$.
\end{enumerate}
\end{lem}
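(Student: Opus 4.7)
The plan is to derive both parts from the maximality of $\alpha$ in the lexicographic order on the length tuple $(|g_1|, \ldots, |g_s|, |h_s|, |h_{s-1}|, \ldots, |h_1|)$.

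For Part (2)(i), I would compute how the modification affects the length tuple: since $g_a' \sim (g_a, u)$ and $g_b \sim (u^{-1}, g_b')$, we obtain $|g_a'| = |g_a| + |u|$ and $|g_b'| = |g_b| - |u|$, with all other coordinates unchanged. Because $a < b$, the first coordinate at which the two tuples disagree is the $a$-th, where the modified tuple is strictly larger whenever $u \ne 1$. This contradicts the maximality of $\alpha$, forcing $u = 1$. Part (2)(ii) is handled analogously; now the modification changes only the $h$-entries, giving $|h_b'| = |h_b| + |u|$ and $|h_a'| = |h_a| - |u|$. The chosen lex order reads the $h$-entries in decreasing order of index, so $b > a$ means that the $b$-th $h$-coordinate is examined before the $a$-th, and a nontrivial $u$ again produces a strictly larger tuple, forcing $u = 1$.

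For Part (1), the hypothesis $|g_i| \le |g_i'|$ and $|h_i| \le |h_i'|$ combined with the maximality of $\alpha$ immediately forces equality of all lengths: strict inequality at any coordinate would make $\alpha'$ dominate $\alpha$ in lex order. The remaining task is to upgrade this to equality of group elements. My plan is to proceed by induction on the index: given that $g_j = g_j'$ for $j < i$, use the fact that $g_i$ and $g_i'$ have the same length and both have support in $\st(v_i)$, together with the adjacency condition in Definition \ref{defn:cancel}(iii), to realize any discrepancy between $g_i$ and $g_i'$ as a nontrivial shift of the form covered by Part (2). Part (2) then rules this shift out, so $g_i = g_i'$. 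A symmetric argument applied from the other end of the decomposition yields $h_i = h_i'$.

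The main obstacle will be the inductive step in Part (1). Because reduced words in a RAAG are determined only up to the commutation relations, equality of length and of support does not by itself imply equality of elements, and one has to exhibit a precise factor $u$ witnessing any discrepancy. I expect to use the dual van Kampen diagram machinery introduced earlier in the section, combined with Lemma \ref{lem:subseq}, to isolate such a factor $u$, exploiting the adjacency of $\supp(h_i)$ to $\supp(g_j)$ for $i < j$ to freely commute letters across the decomposition. Once the discrepancy is packaged as a shift of the form in Definition \ref{defn:cancel}, Part (2) closes the induction.
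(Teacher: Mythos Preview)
The paper omits the proof entirely, saying only that it is straightforward, so there is nothing to compare line by line. Your argument for Part~(2) is exactly the intended one: the reduced-decomposition hypotheses give $|g_a'|=|g_a|+|u|$, $|g_b'|=|g_b|-|u|$ (and symmetrically for the $h$'s), and the chosen lexicographic order detects the change at the earlier coordinate, forcing $u=1$. Likewise, your first step in Part~(1)---that maximality together with the coordinatewise inequalities forces $|g_i|=|g_i'|$ and $|h_i|=|h_i'|$---is correct and is presumably all the paper means by ``straightforward'' at that point.

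The place where your outline is not yet a proof is the upgrade from equal lengths to equal elements. You propose to run an induction and, at the first index $i$ where $g_i\ne g_i'$, package the discrepancy as a shift of some $u$ between $g_i$ and a later $g_b$, then invoke Part~(2). But Part~(2) is a statement about a single cancellation sequence: it says you cannot move mass forward \emph{within} $\alpha$. What you have at the inductive step is a comparison between two sequences $\alpha$ and $\alpha'$ that already have identical length tuples; it is not clear how to extract from $g_i\ne g_i'$ a letter $u$ sitting reducedly at the front of some $g_b$ (with $b>i$) in $\alpha$ that can be absorbed into $g_i$ to strictly increase $|g_i|$. Equality of lengths blocks the most naive version of this, and you have not said which letter plays the role of $u$ or why the resulting tuple stays a cancellation sequence (conditions~(iii) and~(iv) must be re-verified). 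A cleaner route is to work directly with condition~(iv): both $(g_s^{-1},\ldots,g_1^{-1})$ and $((g_s')^{-1},\ldots,(g_1')^{-1})$ are reduced suffix decompositions of $x$ with the same piece-lengths and the same star constraints $\supp(g_i)\subseteq\st(v_i)$, and a van Kampen diagram for $x\cdot x^{-1}$ lets you trace letters between the two decompositions; the argument then parallels the greedy analysis in Lemma~\ref{lem:pq} rather than reducing formally to Part~(2). Since Part~(1) is not invoked elsewhere in the paper (only Part~(2) is used, in the proof of Lemma~\ref{lem:pq}), this gap does not affect the downstream results, but as written your inductive step needs to be made precise.
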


For $t>0$, we define $B_t = \{g\in A(\Gam) : \|g\|_*\le t\}$ and $B'_t = \{g\in A(\Gam) : \|g\|_*< t\}$. These ``balls'' are infinite sets in general.

\begin{lem}\label{l:cancel}
Let $s, t  > 0$ and $M\ge s+t+2$.
Suppose $x,y\in A(\Gam)$ satisfy that $\|x\|_*\ge M$ or $\|y\|_*\ge M$.
If $g\in B_s\cap x^{-1}B_t y^{-1}$, then there exist vertices $v_1, v_2, \ldots, v_s$ of $\Gam$ and a cancellation sequence for $g$ with respect to $(s, x,y, v_1, v_2, \ldots, v_s)$. Moreover, we have $\|w\|_*,\|x\|_*, \|y\|_*\ge M-s-t$ where $w$ is as in Definition~\ref{defn:cancel}.
\end{lem}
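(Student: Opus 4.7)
The plan is to establish the existence of a cancellation sequence by strong induction on $s$, and then derive the moreover bounds from the reduced-word structure guaranteed by Definition~\ref{defn:cancel}(iv).

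For existence, I would first invoke Lemma~\ref{lem:subseq}(2) to write $g\sim(u_1,\ldots,u_k)$ with $k=\stl{g}\le s$ and each $u_i$ a star-word in $\langle\st(v_i)\rangle$ for some vertex $v_i$, padding with trivial star-words $u_{k+1}=\cdots=u_s=1$ (and arbitrary vertices) to bring the count to exactly $s$. The task then reduces to splitting each $u_i = g_i h_i$ so that the $g_i$'s collectively account for the cancellation between the right end of $x$ and $g$, while the $h_i$'s account for the cancellation between $g$ and the left end of $y$. Condition (ii) of Definition~\ref{defn:cancel} is automatic from $\supp(u_i)\subseteq\st(v_i)$, and condition (iii) is arranged so that the product $g_1g_2\cdots g_sh_1h_2\cdots h_s$ can be commutatively rearranged to $(g_1h_1)(g_2h_2)\cdots(g_sh_s)=u_1u_2\cdots u_s=g$.

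The inductive step peels off cancellation greedily. Take $g_1$ to be the longest left-factor of $u_1$ lying in $\langle\st(v_1)\rangle$ such that $g_1^{-1}\preccurlyeq_s x$; the identity qualifies so such a maximum exists. Set $h_1:=g_1^{-1}u_1$ and write $x\sim(\tilde{x},g_1^{-1})$; then $xgy=\tilde{x}\cdot h_1u_2\cdots u_s y$, and an analogous greedy peeling is applied to $u_s$ from the right using the prefix structure of $y$. Iterating this process inward on $u_2,\ldots,u_{s-1}$, while tracking the commutation requirement of condition (iii)---which is forced by the need for each $h_i$ to pass by later $g_j$'s when we reassemble $g$---constructs the cancellation sequence. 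For the bounds, assume $\|x\|_*\ge M$ (the other case is symmetric). Condition (iv) combined with $\stl{g_i^{-1}}=1$ gives $\|x\|_*\le\|x'\|_*+\|w\|_*+s$ and $\|y\|_*\le\|y'\|_*+\|w\|_*+s$ by the triangle inequality for star length. Explicit cancellation in the middle of $xgy$ yields $xgy=x'y'$, so $\stl{x'y'}\le t$. Maximality via Lemma~\ref{lem:max}(2) prevents any nontrivial $u$ with $x'\sim(x'',u)$ and $y'\sim(u^{-1},y'')$---such a $u$ could be absorbed into $w$ to enlarge some $g_i$, contradicting maximality---so $\stl{x'}+\stl{y'}=\stl{x'y'}\le t$. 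Hence $\stl{x'}\le t$ and $\|w\|_*\ge M-s-t$. The triangle inequality $\|y\|_*\ge\|x\|_*-\stl{g}-\stl{xgy}\ge M-s-t$ closes out the bounds.

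The main obstacle is the greedy construction, and specifically maintaining condition (iii). Although each $u_i$ is supported in the star of $v_i$, its letters need not mutually commute, so the split $u_i=g_ih_i$ must respect the internal order of letters. The cancellations of the $g_i^{-1}$ into $x$ must occur in the outer-to-inner order $g_1^{-1},g_2^{-1},\ldots$, and the requirement that $\supp(h_i)$ be adjacent to $\supp(g_j)$ for $i<j$ is exactly what permits this order to be consistent with reassembling $g=g_1\cdots g_s h_1\cdots h_s$. A useful geometric encoding is a dual van Kampen diagram for $xgy\cdot h^{-1}$, where $h$ realizes the minimal star-length representation of $xgy$: hyperplanes pair letters of $g$ with letters of $x$, of $y$, or of $h^{-1}$, and the partition of $g$ into the star-words $u_i$ determines the grouping of paired letters into the blocks $g_i$ and $h_i$.
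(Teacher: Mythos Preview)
Your proposal has a genuine gap at the heart of the existence argument. You assume that each star-word $u_i$ splits as $g_ih_i$ where the $g_i$'s cancel entirely into $x$ and the $h_i$'s cancel entirely into $y$, but you never prove that \emph{every} letter of $g$ is absorbed by $x$ or by $y$. Your greedy peeling takes $g_1$ to be the maximal prefix of $u_1$ whose inverse is a suffix of $x$, and sets $h_1=g_1^{-1}u_1$; nothing prevents some letters of $h_1$ from surviving into the reduced form of $xgy$ rather than cancelling with $y$. This complete cancellation is precisely the content of the lemma, and it is where the hypothesis $M\ge s+t+2$ is used. In the paper's argument one sets up a dual van Kampen diagram for $x\cdot w_1\cdots w_s\cdot y\cdot z_t^{-1}\cdots z_1^{-1}$ (with $xgy\sim(z_1,\ldots,z_t)$), writes each $w_i\sim(g_i,u_i,h_i)$ according to whether arcs land in $x$, in the $z$-side, or in $y$, and then shows $u_i=1$: any vertex $v\in\supp(u_i)$ would force every letter of $w$ (the portion of $x$ paired with $y$) to be adjacent to $v$, hence $\stl{w}\le 1$, contradicting $\stl{w}\ge M-s-t\ge 2$. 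Your inductive scheme never invokes this obstruction, so it cannot rule out a nontrivial residue. Condition~(iii) is likewise not established: saying it is ``arranged'' or ``forced'' is not an argument; in the paper it falls out of the planarity of the diagram, since an arc from $g_j$ to $x$ (for $j>i$) must cross every arc from $h_i$ to $y$.

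Your derivation of the moreover bounds also fails. You correctly observe $xgy=x'y'$ and want $\stl{x'}\le t$, but the step ``maximality via Lemma~\ref{lem:max}(2) prevents any nontrivial $u$ with $x'\sim(x'',u)$ and $y'\sim(u^{-1},y'')$'' is wrong: absorbing such a $u$ into $w$ does not change any $|g_i|$ or $|h_i|$, so the lexicographic complexity in Definition~\ref{defn:cancel} is unaffected and Lemma~\ref{lem:max}(2) gives no traction. In general $\stl{x'y'}\le t$ does not imply $\stl{x'}\le t$. The paper instead reads $\stl{x'}\le t$ directly from the diagram: the letters of $x'$ are exactly those paired with letters on the $z$-side, so $x'\preccurlyeq z_1\cdots z_t$ and Lemma~\ref{lem:subseq}(1) applies. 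You mention the dual van Kampen diagram in your final paragraph as a ``useful geometric encoding,'' and indeed it is---but it is doing the real work, not merely encoding an argument that can be carried out combinatorially by greedy peeling.
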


Roughly speaking, the above lemma implies that if $x$ or $y$ is  long and $g$ and $xgy$ are both short in the star lengths,
then $x$ and $y$ are both long and $g$ must be ``completely cancelled'' in $xgy$. 

\begin{proof}[Proof of Lemma \ref{l:cancel}]
Without loss of generality, let us assume $\|x\|_*\ge M$.
There exist $v_1, v_2,\ldots, v_s$ in $V(\Gam)$ and $w_i\in \form{\st(v_i)}$ such that $g \sim (w_1, w_2, \cdots, w_s)$.
We can write $xgy \sim (z_1, z_2, \ldots,z_t)$ for some star--words $z_1, z_2, \ldots, z_t$.

Let $\Delta$ be a dual van Kampen diagram for the following word
\[x\cdot w_1\cdot w_2 \cdots w_s \cdot y \cdot z_t^{-1} \cdot z_{t-1}^{-1} \cdots z_1^{-1},\]
where $x,y, z_i$ are represented by reduced words.
Note that no two letters from \[w_1\cdots w_2\cdots w_s\] are joined by an arc in $\Delta$.
We call the interval on $\partial \Delta$ reading $x\cdot w_1\cdot w_2\cdots w_s\cdot y$ as $\partial_1$ and the closure of $\partial\Delta\setminus\partial_1$ as $\partial_2$. 

We have $x\sim (x', w,x'')$ such that the letters of $x', w$ and $x''$ are joined to $\partial_2, y$ and $g$, respectively.
We may assume that $x$ is written as the reduced word $x'\cdot w\cdot x''$.
Then $x'\preccurlyeq z_1 z_2\cdots  z_t$ and $x''\preccurlyeq w_s^{-1} w_{s-1}^{-1} \cdots w_1^{-1}$.
In particular, $\|w\|_* \ge \|x\|_* - s - t \ge M - s - t$. Since $w\preccurlyeq y$, we have $\|y\|_*\ge M - s -t$. This shows the ``moreover'' part of the lemma.

Write each $w_i\sim (g_i,u_i,h_i)$ where $g_i, u_i$ and $h_i$ are represented by reduced words such that the letters of $g_i, u_i$ and $h_i$ are joined to $x$, $\partial_2$ and $y$, respectively.
Suppose $v\in \supp(u_i)$. Then $\supp(w)$ is adjacent to $v$ and hence, $\|w\|_*\le 1$; this contradicts $M\ge s+t+2$.
Hence, $u_i=1$ for each $i=1,2,\ldots,s$. See Figure~\ref{fig:u}.

\begin{figure}[htb!]
\begin{center}
\includegraphics[width=.25\textwidth]{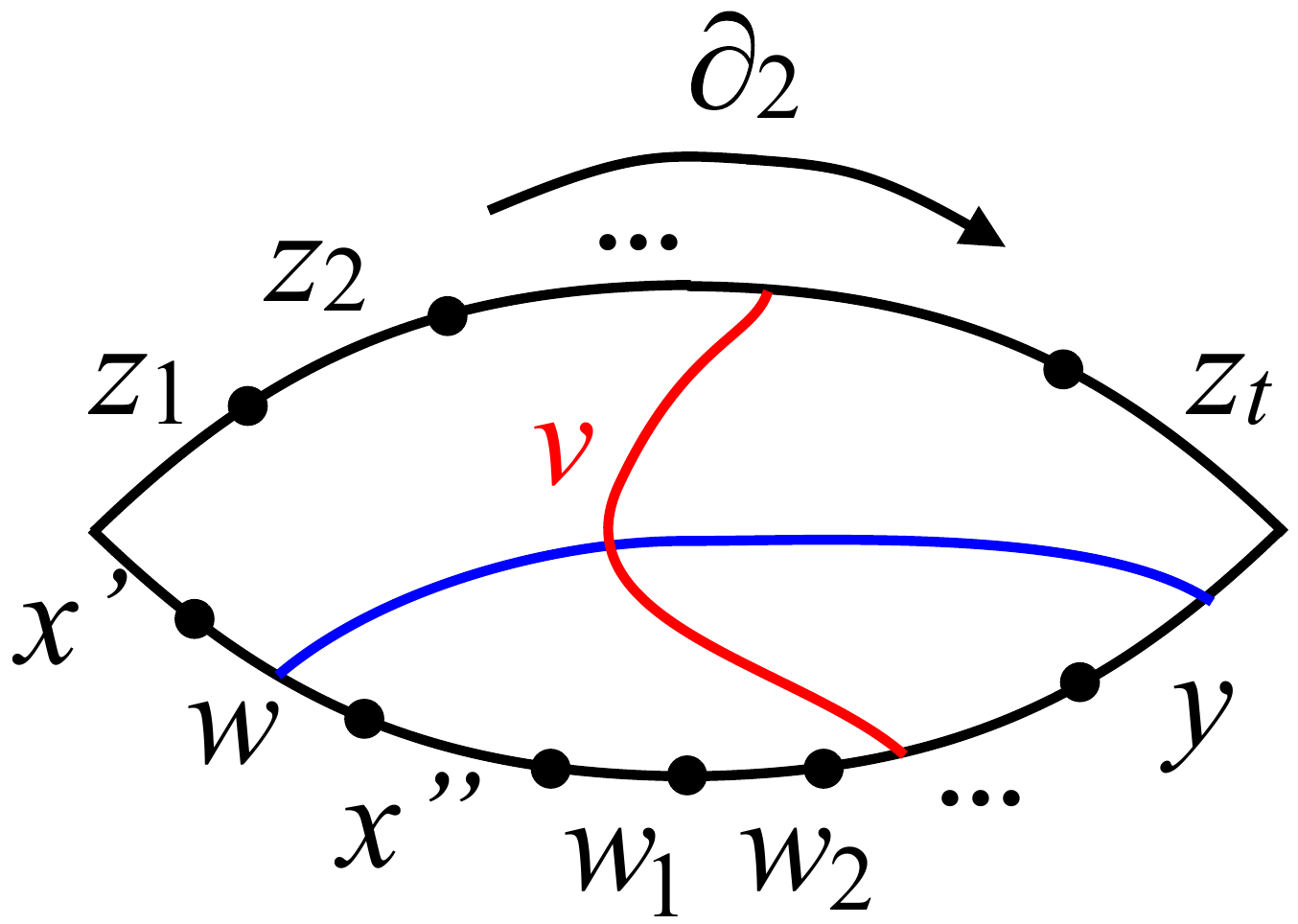}
  \caption{Proof of Lemma~\ref{l:cancel}.}
  \label{fig:u}
  \end{center}
\end{figure}

Let $1\le i<j\le s$. 
Each letter of $g_j$ is joined to a letter in $x$ by an arc separating $h_i$ from $y$; hence we have (iii).
Now (i), (ii) and (iv) are obvious from the construction and (iii).
\end{proof}

\begin{rem}\label{rem:cancel}
\begin{enumerate}
\item
In the above proof, we have $w_i\sim(g_i,h_i)$. However, we do not assume for \emph{maximal} $\alpha$ that each $g_i\cdot h_i$ is a reduced concatenation. This will be critical in the proof of Lemma~\ref{lem:pq}.
\item
Write $x\sim (p,p')$ where $\|p'\|_*=s+2$, a hypothesis stronger than that of Lemma \ref{l:cancel}.
Then no letter of $p$ is joined to a letter of $g$ by Lemma~\ref{lem:subseq} (3). Hence $g_s^{-1} g_{s-1}^{-1}\cdots g_1^{-1}\preccurlyeq_s p'$. Similarly if we write $y \sim (q',q)$ so that $\|q'\|_* = s+2$, then $h_s^{-1} h_{s-1}^{-1}\cdots h_1^{-1}\preccurlyeq_p q'$.
Hence, the cardinality of $B_s\cap x^{-1}B_t y^{-1}$ is at most the number of the choices for $g_1, g_2, \ldots, g_s\preccurlyeq (p')^{-1}$ and $h_1, h_2, \ldots, h_s\preccurlyeq (q')^{-1}$, and so bounded above by $(2^{|p'|+|q'|})^s$.
The goal of Lemma~\ref{lem:st} is to find another upper-bound depending only on $\Gam,s$ and $t$.
\end{enumerate}
\end{rem}

We define the \emph{support} of a sequence of words as the sequence of the supports of those words.

\begin{lem}\label{lem:pq}
Let $\beta= (s,x,y,v_1, v_2,\ldots, v_s)$ as in Definition~\ref{defn:cancel} and $t>0$.
Suppose $\stl{x}\ge s+t+2$ or $\stl{y}\ge s+t+2$.
Let $P_i, Q_i\subseteq\st(v_i)$ for each $i=1,2,\ldots,s$
such that $Q_i$ is adjacent to $P_j$ for each $1\le i<j\le s$.
Then there exists at most one element $g$ in $x^{-1}B_ty^{-1}$ such that $(P_1, P_2, \ldots, P_s, Q_1, Q_s, \ldots, Q_s)$ is the support of a maximal cancellation sequence for $g$ with respect to $\beta$.
\end{lem}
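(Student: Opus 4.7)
My plan is to induct on $s$, the base case $s=0$ being trivial (the sequence is empty and $g=1$). For the inductive step, the strategy is to first show that $g_1=g_1'$ and $h_1=h_1'$, and then reduce to a cancellation problem of length $s-1$.

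To show $g_1=g_1'$, note that condition (iv) of Definition~\ref{defn:cancel} forces both $g_1^{-1}$ and $(g_1')^{-1}$ to be reduced suffixes of $x$, each with support $P_1$. Assume without loss of generality $|g_1|\le|g_1'|$ and write $(g_1')^{-1}=u\cdot g_1^{-1}$ for some $u\in\aga$ with $\supp(u)\subseteq P_1\subseteq\st(v_1)$. I want to deduce $u=1$. The idea is to start from the $\alpha'$--factorization $x\sim(x'',w',(g_s')^{-1},\ldots,(g_2')^{-1},(g_1')^{-1})$ and rewrite its rightmost piece as $(g_1')^{-1}=u\cdot g_1^{-1}$; then, using that $Q_i$ is adjacent to $P_j$ for $i<j$ (so the $h_i$'s and the $g_j$'s respect the required commutations), propagate the factor $u$ leftward through the sequence. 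The resulting modification, combined with Lemma~\ref{lem:max}(2) applied to $\alpha$, produces a cancellation sequence for $g$ whose first coordinate strictly exceeds $|g_1|$, contradicting the lex--maximality of $\alpha$ unless $u=1$. Hence $g_1=g_1'$. An entirely symmetric argument, using condition (iv) applied to $y$ (with prefixes replacing suffixes), gives $h_1=h_1'$.

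Once $g_1=g_1'$ and $h_1=h_1'$ are established, set $\bar x:=x g_1$, $\bar y:=h_1 y$, and $\bar\beta:=(s-1,\bar x,\bar y,v_2,\ldots,v_s)$. Then $(g_2,\ldots,g_s,h_2,\ldots,h_s)$ and $(g_2',\ldots,g_s',h_2',\ldots,h_s')$ are maximal cancellation sequences for $g_1^{-1}g h_1^{-1}$ and $g_1^{-1}g' h_1^{-1}$ respectively, with respect to $\bar\beta$ and with support sequence $(P_2,\ldots,P_s,Q_2,\ldots,Q_s)$. The hypothesis is preserved since $\|\bar x\|_*\ge\|x\|_*-1\ge(s-1)+t+2$ (or the analogue for $\bar y$). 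By induction, $g_1^{-1}g h_1^{-1}=g_1^{-1}g'h_1^{-1}$, and therefore $g=g'$.

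The principal obstacle is the claim $u=1$. A naive substitution of $g_1'$ for $g_1$ in $\alpha$ alters the overall product, so one cannot simply lengthen $g_1$; instead one must show that the discrepancy $u$ can be transported through the $g_j$'s and $h_i$'s without destroying the reduced--factorization structure imposed by Definition~\ref{defn:cancel}. This is precisely where the adjacency between the supports $Q_i$ and $P_j$ for $i<j$ and the containment $\supp(u)\subseteq\st(v_1)$ enter in an essential way, and it is here that the hypothesis $\|x\|_*\ge s+t+2$ or $\|y\|_*\ge s+t+2$ is needed, via Lemma~\ref{l:cancel}, to guarantee that the relevant modifications stay within the range of legitimate cancellation sequences for $\beta$.
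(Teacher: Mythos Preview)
Your inductive scheme has a genuine gap at the very first step, and the reduction step is also not quite right.

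\textbf{The main gap.} To contradict lex--maximality of $\alpha$ you must exhibit a cancellation sequence \emph{for $g$} (with respect to $\beta$) whose first coordinate exceeds $|g_1|$. Your factor $u$ is obtained by comparing $g_1$ with $g_1'$, but $\alpha'$ is a cancellation sequence for a \emph{different} element $g'$; nothing about $u$ tells you how to rearrange the product $g_1g_2\cdots g_s h_1\cdots h_s$ while keeping it equal to $g$. Concretely, if you try to replace $g_1$ by $g_1u^{-1}$ in $\alpha$ you must insert $u$ elsewhere in the sequence; pushing it into $g_2$ requires $\supp(u)\subseteq\st(v_2)$ and $Q_1$ adjacent to $\supp(u)$, and pushing it to $h_1$ requires $u$ to commute with $g_2,\ldots,g_s$, i.e.\ $P_1$ adjacent to $P_j$ for $j>1$. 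None of this is given. Lemma~\ref{lem:max}(2) does not help here: it concerns transferring a factor between two $g$--coordinates of a \emph{single} cancellation sequence, not importing a factor from a sequence for $g'$. A subsidiary issue: even the claim that one of $g_1^{-1},(g_1')^{-1}$ is a suffix of the other is unjustified; two suffixes of $x$ with support contained in a fixed set need not be comparable (e.g.\ in $A(a\!-\!b\!-\!c)$ both $b$ and $c$ are suffixes of $acb$).

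\textbf{The reduction step.} Since $h_1$ commutes with $g_2,\ldots,g_s$ but not, in general, with $h_2,\ldots,h_s$, the correct peeled element is $(g_1h_1)^{-1}g$, not $g_1^{-1}gh_1^{-1}$. Moreover with $\bar y=h_1y$ condition~(iv) for $\bar\beta$ fails: from $y\sim(h_s^{-1},\ldots,h_1^{-1},w^{-1},y')$ you cannot move the prepended $h_1$ past $h_s^{-1},\ldots,h_2^{-1}$ without assuming $Q_1$ adjacent to $Q_j$. You would also need to check that the truncated sequence is still \emph{maximal} for $\bar\beta$.

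\textbf{What the paper does instead.} The paper avoids comparing $\alpha$ with $\alpha'$ altogether. It defines, purely from $x$ and the $P_i$, the elements $p_i=\tau(x_{i-1};P_i)$ (and symmetrically $q_i$ from $y$ and the $Q_i$), and then proves, using a dual van~Kampen diagram for $x\cdot g_1\cdot h_1\cdots g_s\cdot h_s\cdot y\cdot z^{-1}$, that for \emph{any} $g$ with a maximal cancellation sequence of the prescribed support one has $g_i=p_i^{-1}$ and $h_i=q_i^{-1}$. This gives the formula $g=p_1^{-1}\cdots p_s^{-1}q_1^{-1}\cdots q_s^{-1}$, hence uniqueness. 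The diagram argument is exactly what supplies the missing commutations: if a letter $u$ of $p_i$ fails to pair with $g_i$, the arc from $u$ either meets some $g_j$ ($j>i$) or reaches $y$, and in each case one reads off enough adjacency (from arc crossings) to enlarge $g_i$ within $\alpha$ itself, contradicting maximality. That canonical reference point $p_i$---depending only on $x$ and $P_1,\ldots,P_i$---is the idea your argument is missing.
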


\begin{proof}
Suppose $g$ is such an element.
Define $x_0 = x$ and inductively, $p_i = \tau(x_{i-1};P_i), x_i = x_{i-1}p_i^{-1}$ for $i=1,2,\ldots,s$.
Similarly, put $y_0=y$ and $q_i  = \iota(y_{i-1};Q_i), y_i = q_i^{-1}y_{i-1}$ for $i=1,2,\ldots,s$.
We will prove the conclusion by showing that $g = p_1^{-1}p_2^{-1}\cdots p_s^{-1}q_1^{-1}q_2^{-1}\cdots q_s^{-1}$. Choose a cancellation sequence $\alpha=(g_1, g_2,\ldots, g_s, h_1, h_2, \ldots,h_s)$ for $g$ with respect to $(s; x,y; v_1, v_2, \ldots, v_s)$ with support 
$(P_1, P_2, \ldots, P_s, Q_1, Q_2, \ldots, Q_s)$.
Let $z$ be a reduced word representing $xgy$
and $\Delta$ be a dual van Kampen diagram for the following word:
\[x\cdot g_1\cdot h_1 \cdots g_s\cdot h_s \cdot y \cdot z^{-1}.\]
We use an induction on $i=1,2,\ldots, s$ to prove that there is a one-to-one correspondence between the letters of $g_i$ and those of $p_i$ such that each corresponding pair are joined by an arc in $\Delta$, and hence, $g_i=p_i^{-1}$.
By the inductive hypothesis and the maximality of $p_i$, 
 each letter of $g_i$ is joined to a letter of $p_i$.
Suppose a letter $u$ of $p_i$ is not joined to any of the letters of $g_i$. 
We may assume $u\in V(\Gamma)$, rather than $u\in V(\Gamma)^{-1}$.
Let $w$ be as in Definition~\ref{defn:cancel}.
If $u$ is joined to a letter in $z$ then every letter of $w$ would be adjacent to $u$. Since $\stl{w}\ge2$ by Lemma~\ref{l:cancel}, this is impossible. Therefore, we only have the following two cases.

\emph{Case 1.} $u$ is joined to a letter $u^{-1}$ of $g_j$ for some $i<j\le s$ by an arc, say $\gamma$; see Figure~\ref{fig:pq} (a).

We may assume every letter appearing after $u$ in $p_i$ is joined to a letter in $g_i$; namely, $u\cdot g_i^{-1}\preccurlyeq_s p_i$.
Choose an arbitrary letter $u'$ between the last letter of $g_i\preccurlyeq g$ and the letter $u^{-1}$ of $g_j$. The arc $\gamma'$ originating from $u'$ ends at a letter of $x$ before $u$ or at a letter of $y$. Then $\gamma'$ intersects with 
$\gamma$ and $[u',u]=1$. We can enlarge $g_i$ by setting $g_i' \sim (g_i, u^{-1})$ and $g_j' = ug_j$.
Note that $\supp(g_i') = P_i$ and $\supp(g_j')\subseteq P_j$ and hence, the condition (iii) of Definition~\ref{defn:cancel} is satisfied.
This contradicts to the left-greediness of $(g_1, g_2, \ldots, g_s)$.

\emph{Case 2.} $u$ is joined to a letter of $y$; see Figure~\ref{fig:pq} (b).

For each $j>i$, the arc $\gamma$ intersects with every arc originating from $g_j$ and hence, the vertex $u$ is adjacent to $P_j$.
Since $u\subseteq P_i$, the vertex $u$ is adjacent to $Q_1, Q_2, \ldots, Q_{i-1}$.
We set $g_i'\sim (g_i,u^{-1}), h_i' \sim (u, h_i)$; see Figure~\ref{fig:pq} (c). 
Note that $\supp(g_i') = P_i = \supp(g_i)$ and $\supp(h_i')\subseteq \{u\}\cup Q_i$.
The condition (iii) of Definition~\ref{defn:cancel} is obvious.
This again contradicts to the maximality of the sequence $\alpha$.

We conclude that $g_i = p_i^{-1}$ for $i=1,2,\ldots, s$. We also see that $h_i = q_i^{-1}$ by symmetry.

\begin{figure}[htb!]
\begin{center}
\subfloat[(a) Case 1.]{\includegraphics[width=.8\textwidth]{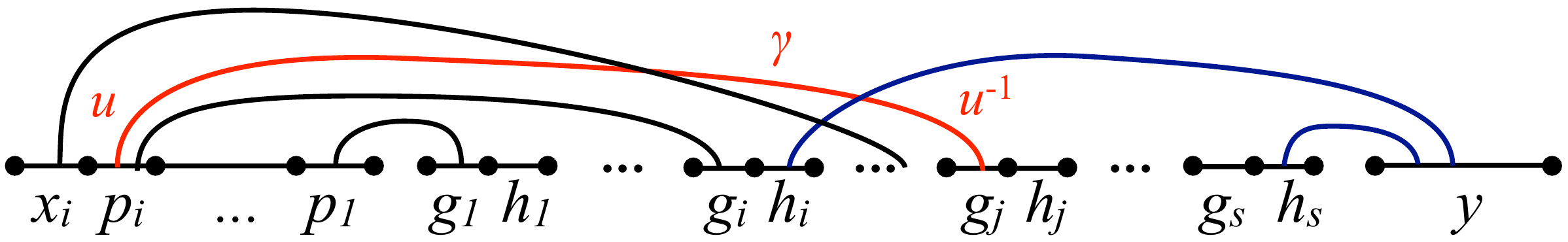}}\\
\subfloat[(b) Case 2.]{\includegraphics[width=.8\textwidth]{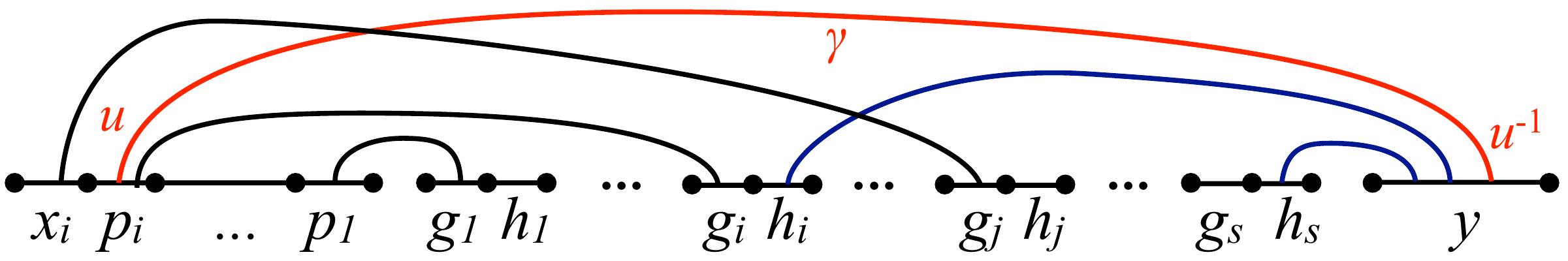}}\\
\subfloat[(c) Case 2 after change.]{\includegraphics[width=.8\textwidth]{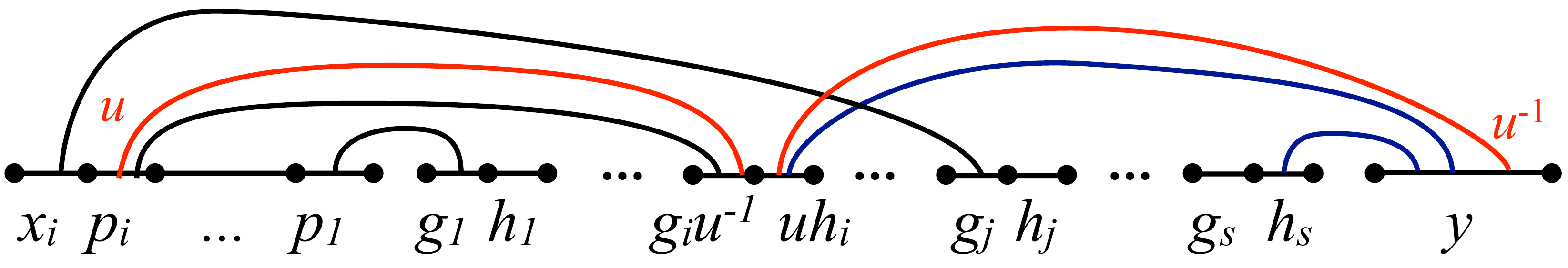}}
  \caption{Proof of Lemma~\ref{lem:pq}.}
  \label{fig:pq}
  \end{center}
\end{figure}

\end{proof}

\begin{lem}\label{lem:st}
Let $s,t>0$ and $x,y\in A(\Gam)$ such that $\|x\|_*\ge s+t+2$ or $\|y\|_*\ge s+t+2$.
Then the cardinality of $B_s\cap x^{-1}B_t y^{-1}$ is at most 
$|V(\Gam)|^s (2^{|V(\Gam)|})^{2s}$.
\end{lem}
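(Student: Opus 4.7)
The plan is to combine Lemma~\ref{l:cancel} and Lemma~\ref{lem:pq} with a crude counting argument. Given any $g \in B_s \cap x^{-1} B_t y^{-1}$, Lemma~\ref{l:cancel} (applicable because the hypothesis $\stl{x} \ge s+t+2$ or $\stl{y} \ge s+t+2$ is exactly the hypothesis of that lemma with $M = s+t+2$) supplies vertices $v_1,\ldots,v_s \in V(\Gam)$ together with a cancellation sequence for $g$ with respect to $\beta = (s,x,y,v_1,\ldots,v_s)$. By passing to a maximal such sequence (which exists because the complexity is bounded), we are in a position to apply Lemma~\ref{lem:pq}.

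The key observation is then that Lemma~\ref{lem:pq} says: for each fixed $\beta$ and each fixed tuple of supports $(P_1,\ldots,P_s,Q_1,\ldots,Q_s)$ satisfying the adjacency condition, there is at most one element $g \in x^{-1} B_t y^{-1}$ admitting a maximal cancellation sequence with respect to $\beta$ whose support is this tuple. So every $g \in B_s \cap x^{-1} B_t y^{-1}$ is uniquely determined by the data $(v_1,\ldots,v_s, P_1,\ldots,P_s,Q_1,\ldots,Q_s)$.

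It remains to count the data. The tuple $(v_1,\ldots,v_s)$ contributes at most $|V(\Gam)|^s$ choices. Each $P_i$ is a subset of $\st(v_i) \subseteq V(\Gam)$ and likewise for each $Q_i$, giving at most $2^{|V(\Gam)|}$ choices each, for a total of $(2^{|V(\Gam)|})^{2s}$ choices of supports. Multiplying yields the claimed bound $|V(\Gam)|^s (2^{|V(\Gam)|})^{2s}$.

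There should be no real obstacle here: the hard work was done in Lemmas~\ref{l:cancel} and \ref{lem:pq}. The only mild subtlety is to confirm that when we apply Lemma~\ref{lem:pq} we do not over- or under-count: each $g$ may admit several maximal cancellation sequences (for different choices of $\beta$ and different supports), but each choice of data determines \emph{at most} one $g$, which is all we need for an upper bound.
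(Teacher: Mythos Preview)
Your proof is correct and follows essentially the same approach as the paper: combine Lemma~\ref{l:cancel} to obtain the cancellation data, pass to a maximal cancellation sequence, invoke Lemma~\ref{lem:pq} for uniqueness given the supports, and then count the possible tuples $(v_1,\ldots,v_s,P_1,\ldots,P_s,Q_1,\ldots,Q_s)$. If anything, your write-up is more explicit than the paper's own two-sentence proof, particularly in noting why over-counting is harmless for an upper bound.
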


\begin{proof}
In Lemma~\ref{l:cancel},
the number of the possible choices for $v_1, v_2, \ldots, v_s$ is bounded by $|V(\Gam)|^s$.
Also, the number of ways of choosing $P_1, P_2, \ldots, P_s, Q_1, Q_2, \ldots, Q_s$ in Lemma~\ref{lem:pq}
is at most $(2^{|V(\Gam)|})^{2s}$.
\end{proof}

\begin{thm}\label{thm:acyl}
The action of $A(\Gam)$ on $\Gam^e$ is acylindrical.
\end{thm}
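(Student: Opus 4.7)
The plan is to deduce acylindricity from Lemma~\ref{lem:st}, using Lemma~\ref{lem:covering distance} to translate between $d_{\Gamma^e}$ and the star length $\|\cdot\|_*$. Given $r > 0$, I will set $s = t = r + 1$ and $D = \diam(\Gamma)$. Since the $A(\Gamma)$-action on $\Gamma^e$ is by graph automorphisms and hence by isometries of $d_{\Gamma^e}$, I first translate the pair $(p, q)$ so that $p = v \in V(\Gamma)$; then $q = u^x$ for some $u \in V(\Gamma)$ and $x \in A(\Gamma)$. This translation is implemented by a conjugation in $A(\Gamma)$, which preserves the cardinality of $\xi(p,q;r)$.

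Next I would convert the two defining inequalities of $\xi(p,q;r)$ into star length bounds on $g$. Under the reduction, $d(p, g.p) \le r$ becomes $d(v, v^g) \le r$, so $\|g\|_* \le r + 1$ by Lemma~\ref{lem:covering distance}(1); that is, $g \in B_{r+1}$. Likewise $d(q, g.q) \le r$ reads $d(u^x, u^{xg}) \le r$, and applying the isometry induced by $x^{-1}$ gives $d(u, u^{xgx^{-1}}) \le r$, whence $\|xgx^{-1}\|_* \le r + 1$; that is, $g \in x^{-1} B_{r+1} x$. To invoke Lemma~\ref{lem:st} with $y = x^{-1}$ (so $\|y\|_* = \|x\|_*$), I further need $\|x\|_* \ge s + t + 2 = 2r + 4$. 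This follows from the separation hypothesis: by the triangle inequality together with Lemma~\ref{lem:covering distance}(3),
\[ d(p,q) = d(v, u^x) \le d(v,u) + d(u, u^x) \le D + D(\|x\|_* + 1) = D(\|x\|_* + 2), \]
so choosing $R := D(2r + 6)$ and requiring $d(p,q) \ge R$ guarantees $\|x\|_* \ge 2r + 4$.

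Lemma~\ref{lem:st} then bounds $|B_{r+1} \cap x^{-1} B_{r+1} x|$ by $N := |V(\Gamma)|^{r+1}(2^{|V(\Gamma)|})^{2(r+1)}$, a quantity depending only on $r$ and $\Gamma$, which is the desired acylindricity bound. I do not anticipate any substantial obstacle in this argument: all of the genuinely combinatorial work has been carried out already in Lemmas~\ref{l:cancel} and~\ref{lem:pq}, and the remaining task is the bookkeeping that converts the two one-sided conditions in Definition~\ref{defn:acyl} into the two-sided star-length condition $g \in B_{r+1} \cap x^{-1} B_{r+1} x$. The only points one must verify carefully are that the conjugation action preserves $d_{\Gamma^e}$ (immediate, since it acts by graph automorphisms) and that the basepoint reduction $p \mapsto v$ is legitimate (also immediate, by the same isometry).
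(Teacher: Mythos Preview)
Your proposal is correct and follows essentially the same strategy as the paper: convert the two displacement bounds into star--length bounds via Lemma~\ref{lem:covering distance}, obtain $g\in B_s\cap x^{-1}B_s x$, check that $\|x\|_*$ is large enough, and invoke Lemma~\ref{lem:st}. The only cosmetic difference is that you normalize $p$ to lie exactly in $V(\Gamma)$ (yielding $s=r+1$), whereas the paper keeps $p$ arbitrary and works with a nearby base vertex $v$ at distance at most $D$ from $p$ (yielding $s=r+2D+1$); your normalization is legitimate since $\xi(p,q;r)$ is preserved up to conjugation, and it gives slightly cleaner constants. One minor point: the inequality $\|g\|_*\le d(v,v^g)+1$ comes from part~(3) of Lemma~\ref{lem:covering distance}, not part~(1).
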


\begin{proof}
Let us fix a vertex $v$ of $\Gam$ and let $r>0$ be given. 
Put $D=\diam(\Gamma),s = r+2D+1, R = D(2s+5)$ and $N =|V(\Gam)|^s (2^{|V(\Gam)|})^{2s}$.
Suppose $p$ and $q$ are two vertices of $\Gam^e$ such that $d(p,q)\ge R$.
Then there exist $w',w\in A(\Gam)$ such that $d(p,v^{w'})\le D$ and $d(q,v^w)\le D$.
Without loss of generality, we may assume $w'=1$.
By Lemma~\ref{lem:covering distance}, we have $\|w\|_* \ge d(v,v^w)/D - 1\ge(R-2D)/D - 1\ge 2s+2$.
For every $g\in \xi(p,q;r)$, 
we have $\|g\|_*\le d(v,v^g)+1 \le  d(v,p) + d(p,p^g) + d(p^g, v^g) + 1\le r + 2D + 1=s$
and similarly, $\|wgw^{-1}\|_* \le d(v,v^{wgw^{-1}}) +1 = d(v^w, v^{wg})+1
 \le  d(v^w,q) + d(q,q^g) + d(q^g, v^{wg}) + 1
 \le s$.
Hence, $\xi(p,q;r)\subseteq  w^{-1} B_s w\cap B_s$.
Since $\|w\|_*\ge 2s+2$, Lemma~\ref{lem:st} implies that the set
 $\xi(p,q;r)$ has at most $N$ elements.
\end{proof}

\section{Nielsen--Thurston classification}\label{s:ntclass}
We have shown that the action of the right-angled Artin group on the extension graph $\Gam^e$ is acylindrical.  We will now discuss several consequences of the acylindricity of the action.

Recall that if a group $G$ acts on a metric space $(X,d)$ by isometries, we may consider the \emph{translation length} (also called \emph{stable length}) of an element $g\in G$.  This quantity is defined for an arbitrary choice of $x\in X$ as following.
\[\tau(g)=\lim_{n\to\infty}\frac{1}{n}d(x,g^n(x)).\]  
This limit always exists and independent of $x$. The following proposition appears as Lemma 2.2 of \cite{bowditchtight}.
\begin{prop}
Let $G$ be a group acting acylindrically on a $\delta$--hyperbolic graph $X$.  Then for each nonidentity $g\in G$, either $g$ is \emph{elliptic}, in which case $\tau(g)=0$ and one (and hence every) orbit of $g$ is bounded, or $g$ is \emph{loxodromic}, in which case $\tau(g)\geq\epsilon>0$.   The constant $\epsilon$ depends only on the acylindricity parameters of the action and the hyperbolicity constant of $X$.
\end{prop}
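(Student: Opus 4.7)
The plan is to prove the two assertions in sequence. First, the translation length is well-defined: the sequence $a_n(x) = d(x, g^n x)$ is subadditive (since $g$ acts by isometries), so Fekete's lemma gives $\tau(g) = \lim_n a_n(x)/n = \inf_n a_n(x)/n$, and the triangle inequality $|a_n(x) - a_n(y)| \leq 2 d(x,y)$ shows this limit is independent of $x$. In particular $d(x, g^n x) \geq n\tau(g)$ for every $x$ and $n$. If some (equivalently, every) orbit of $g$ is bounded, then $\tau(g) = 0$ and we declare $g$ elliptic.

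Assume instead that the orbits of $g$ are unbounded; we must establish a uniform lower bound $\tau(g) \geq \epsilon$, where $\epsilon$ depends only on $\delta$ and the acylindricity parameters. Fix once and for all a constant $r$ of size comparable to $\delta$ (say $r = 100\delta$), and let $R = R(r)$, $N = N(r)$ be the acylindricity constants from Definition~\ref{defn:acyl}. Arguing by contradiction, suppose $\tau(g) < \epsilon$ for $\epsilon$ to be chosen later. The goal is to find two points $p, q \in X$ with $d(p,q) \geq R$ such that at least $N+1$ distinct powers $g^j$ all lie in $\xi(p, q; r)$. The trick that makes this manageable is that, once one has a single point $y$ satisfying $d(y, g^j y) \leq r$ for all $|j| \leq N$, one can take $p = y$ and $q = g^M y$ for $M$ chosen large enough that $d(y, g^M y) \geq R$; such $M$ exists since orbits are unbounded, and $d(q, g^j q) = d(y, g^j y) \leq r$ automatically because $g^j$ commutes with $g^M$.

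Producing the point $y$ is the geometric heart of the argument. A generic basepoint need not satisfy the bound, so I would locate $y$ on an approximate axis of $g$ via a Chebyshev-center construction: for $M$ sufficiently large, let $y$ be a quasi-center of the orbit segment $\{g^{-M}x, g^{-M+1}x, \ldots, g^M x\}$ in the $\delta$-hyperbolic sense. Because $g$ shifts this segment by one index and Chebyshev centers in a $\delta$-hyperbolic graph are unique up to an $O(\delta)$ error, $g$ moves $y$ by $\tau(g) + O(\delta)$, and iterating gives $d(y, g^j y) \leq |j|\tau(g) + C\delta$ for a universal constant $C$. Choosing $\epsilon$ small enough that $N\epsilon + C\delta \leq r$ then produces the desired $N+1$ powers in $\xi(p, q; r)$ and contradicts acylindricity.

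The main obstacle is making the Chebyshev-center estimate uniform: one needs $M$ taken much larger than $r/\tau(g)$ so that the segment is long enough for the translation by $\tau(g)$ to dominate the $\delta$-slack inherent in hyperbolic quasi-centers, and the Chebyshev center of the symmetrized orbit must be shown to be well-defined up to $O(\delta)$. Bowditch's approach in \cite{bowditchtight} packages this computation via Gromov products rather than explicit centers, and I would follow that route in order to keep all constants depending only on $\delta$ and on the acylindricity parameters $R(r), N(r)$.
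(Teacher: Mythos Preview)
The paper does not prove this proposition; it merely records it as Lemma~2.2 of \cite{bowditchtight} and supplies no argument. Your outline therefore already goes beyond what the paper provides, and its architecture---locate a point $y$ that $g$ moves by roughly $\tau(g)$, then exploit that powers of $g$ commute so that $p=y$ and $q=g^My$ exhibit $N{+}1$ distinct elements of $\xi(p,q;r)$---is indeed Bowditch's strategy.

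One step needs sharper justification. You assert that ``iterating gives $d(y,g^jy)\le |j|\tau(g)+C\delta$'' with $C$ independent of $j$. Naive iteration of a one-step bound $d(y,gy)\le\tau(g)+C'\delta$ via the triangle inequality yields only $|j|(\tau(g)+C'\delta)$, and with your choice $r=100\delta$ this would force $N(r)C'\delta<100\delta$, which the acylindricity hypothesis does not guarantee. The additive-$O(\delta)$ estimate you state is correct, but it comes from a different mechanism: once $y$ lies on a bi-infinite geodesic $\alpha$ joining the two fixed points of $g$ at infinity, every translate $g^j\alpha$ is uniformly $O(\delta)$-Hausdorff-close to $\alpha$, and the induced near-translation on $\alpha$ satisfies an approximate additivity $a_{m+n}=a_m+a_n+O(\delta)$ from which $|a_n-n\tau(g)|=O(\delta)$ follows by a doubling argument. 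Your Chebyshev-center heuristic gestures at this but does not by itself explain where the $\tau(g)$ term appears, and ``iterating'' is misleading. Since you already intend to defer the constants to Bowditch's Gromov-product calculation, this is a matter of honest labelling rather than a fatal gap.
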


The principal content of the previous proposition is that acylindrical actions of groups on hyperbolic graphs do not have parabolic elements.  The previous proposition gives us a way to formulate a Nielsen--Thurston classification for nonidentity elements of a right-angled Artin group.  The following tables summarize the Nielsen--Thurston classification for mapping classes.

\begin{table}
\begin{center}
\begin{tabular}{ | p{2cm}| p{2cm}| p{2cm}| p{5cm}|}
\hline
\multicolumn{4}{|c|}{{\bf Nontrivial mapping class $\psi$}} \\
\hline
{\bf $\mC(S)$ type} & {\bf Nielsen--Thurston classification} & {\bf Curve complex characterization} & {\bf Intrinsic algebraic characterization (in $\Out(\pi_1(S))$)} \\ \hline
Elliptic & Finite order & Every orbit is finite & Some nonzero power of $\psi$ is the identity\\ \hline
Elliptic & Infinite order reducible & There exists a finite orbit and an infinite orbit & Some nonzero power of $\psi$ preserves the conjugacy class of a nonperipheral, nontrivial isotopy class in $\pi_1(S)$\\ \hline
Loxodromic & Pseudo-Anosov & Stable length is nonzero & No power of $\psi$ preserves any nonperipheral, nontrivial isotopy class in $\pi_1(S)$\\
\hline
\end{tabular}
\caption{Nielsen--Thurston classification in MCGs}
\end{center}
\end{table}

The following table summarizes the analogous Nielsen--Thurston classification for nonidentity elements of right-angled Artin groups:
\begin{table}
\begin{center}
\begin{tabular}{ | p{2cm}| p{3cm}| p{6.5cm}|}
\hline
\multicolumn{3}{|c|}{{\bf Cyclically reduced, nonidentity element $g$ in $\aga$}} \\
\hline
{\bf $\gex$ type} &{\bf  Extension graph characterization} & {\bf Intrinsic algebraic characterization (in $\aga$)} \\ \hline
Elliptic &  Every orbit is bounded & Support of $g$ is contained in a nontrivial subjoin of $\Gamma$\\ \hline
Loxodromic & Stable length is nonzero & Support of $g$ is not contained in a subjoin of $\Gamma$\\
\hline
\end{tabular}
\caption{Nielsen--Thurston classification in RAAGs}
\end{center}
\end{table}

Now let us give a proof of the intrinsic algebraic characterization of loxodromic and elliptic elements of $\aga$.

\begin{lem}\label{l:anti--loop}
Let $(a_1,a_2,\ldots,a_\ell ,a_1)$ be an edge--loop in $\Gam\opp$ such that $\|a_1a_2\cdots a_\ell \|_*>1$.
Then for arbitrary $e_1, e_2\ldots,e_\ell \ne0$, we have that  $\|(a_1^{e_1}a_2^{e_2}\cdots a_\ell ^{e_\ell })^n\|_* > n$.
\end{lem}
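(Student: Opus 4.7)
The plan is a proof by contradiction. Suppose $\|g^n\|_* \le n$, where $g = a_1^{e_1}a_2^{e_2}\cdots a_\ell^{e_\ell}$. By Lemma~\ref{lem:subseq}(2) we may write $g^n \sim (h_1, h_2, \ldots, h_k)$ with $k \le n$ and each $h_j$ a star--word with support in $\st(v_j)$ for some $v_j \in V(\Gamma)$. I would then set up two reduced words for $g^n$: the natural expression
\[ w_1 = (a_1^{e_1} a_2^{e_2} \cdots a_\ell^{e_\ell})^n,\]
which is reduced because every two consecutive syllables (including the wrap--around from $a_\ell^{e_\ell}$ to $a_1^{e_1}$) involve non-commuting generators by the edge--loop hypothesis in $\Gamma\opp$; and the concatenation $w_2 = h_1\cdot h_2\cdots h_k$, which is reduced by the definition of the symbol $\sim$.

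Next I would consider a dual van Kampen diagram $\Delta$ for the trivial word $w_1\cdot w_2^{-1}$. Since both $w_1$ and $w_2$ are reduced, no arc of $\Delta$ has both endpoints on the same side, so the arcs give a bijection between the letters of $w_1$ and the letters of $w_2$. Define $\phi$ on the letters of $w_1$ by sending $\lambda$ to the block index $j\in\{1,\ldots,k\}$ of the $h_j$ containing the arc partner of $\lambda$ in $w_2$. The key claim is that $\phi$ is weakly increasing along $w_1$. Two arcs in $\Delta$ can cross only if their vertex labels are adjacent in $\Gamma$; whereas two consecutive letters of $w_1$ either share the same label (and so are not adjacent to each other since $\Gamma$ is simplicial) or carry labels $a_i, a_{i+1}$ that are consecutive entries of the anti--loop and hence non-adjacent in $\Gamma$. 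Thus arcs emanating from consecutive letters of $w_1$ never cross, which forces weak monotonicity of $\phi$.

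The remainder is a pigeonhole count. Partition $w_1$ into its $n$ successive \emph{periods}, each a copy of the word $a_1^{e_1}a_2^{e_2}\cdots a_\ell^{e_\ell}$. If some period has all of its letters mapped to a single value $b$ by $\phi$, then its support $\{a_1,a_2,\ldots,a_\ell\}$ lies in $\st(v_b)$, contradicting $\|a_1 a_2\cdots a_\ell\|_* > 1$. Hence each of the $n$ periods contains at least one position at which $\phi$ strictly increases \emph{strictly inside} that period, producing $n$ distinct strict increases of $\phi$ along $w_1$. On the other hand, $\phi$ is weakly increasing and takes values in $\{1,\ldots,k\}$, so it admits at most $k-1\le n-1$ strict increases. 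This contradiction yields $\|g^n\|_*>n$.

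The main obstacle is the monotonicity step: one must be careful that the dual van Kampen analysis really gives a bijection between the letters of $w_1$ and $w_2$ (rather than cancellations internal to either word), and one must use the full strength of the anti--loop condition --- including the wrap--around edge $\{a_\ell,a_1\}$ --- to ensure non-crossing of arcs at \emph{every} consecutive pair of letters in $w_1$. Once this monotonicity is in hand, the pigeonhole argument is elementary, and no use of the size or structure of the exponents $e_i$ is required beyond the fact that each syllable contains at least one letter.
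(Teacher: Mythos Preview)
Your argument is correct. The monotonicity claim is sound: since $w_1$ and $w_2$ are both reduced words for the same element, they are related by commutation moves alone, so one may choose a dual van Kampen diagram for $w_1\cdot w_2^{-1}$ in which every arc runs from the $w_1$ side to the $w_2$ side; the non-commuting of consecutive letters in $w_1$ then forces the letter-to-letter matching to be order-preserving, hence $\phi$ is weakly increasing. The pigeonhole count is fine as stated.

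The paper's proof reaches the same conclusion by a shorter route. Having chosen (via Lemma~\ref{lem:subseq}(2)) a reduced star decomposition $g^n\sim(w_1,\ldots,w_k)$, it simply observes that the reduced word for $g^n$ is unique (again because consecutive syllables do not commute), so each $w_i$ is a consecutive subword of $(a_1^{e_1}\cdots a_\ell^{e_\ell})^n$; since no star contains $\{a_1,\ldots,a_\ell\}$, each $w_i$ spans at most $\ell-1$ consecutive syllables. Then
\[
n\ell=\syl{g^n}\le\sum_i\syl{w_i}\le (\ell-1)k,
\]
giving $k\ge n\ell/(\ell-1)>n$. Your van Kampen/monotonicity step is in effect a careful reproof of the statement ``each $w_i$ is a consecutive subword,'' and your jump count $k-1\ge n$ is a coarser version of the syllable inequality above (which even gives the sharper bound $k\ge n\ell/(\ell-1)$). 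Both arguments rest on exactly the same structural fact; the paper just packages the count more efficiently.
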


\begin{proof}
Write $g=(a_1^{e_1}a_2^{e_2}\cdots a_\ell ^{e_\ell })^n = w_1  w_2 \cdots w_k$ where  $k=\|g\|_*$ and $\|w_i\|_*=1$ for each $i$.
Note that $g$ has only one reduced word representation.
Since $\{a_1,\ldots,a_\ell\}$ is not contained in one star, $\syl{w_i}\le \ell-1$ for each $i$.
Hence, 
$n\ell =\syl{g} \le \sum_i \syl{w_i}\le (\ell-1)k$.
We have $k\ge n\ell / (\ell-1) > n$.
\end{proof}

\begin{lem}\label{l:anti--loop2}
Let $g\in \aga$ be a cyclically reduced element such that the support of $g\in A(\Gam)$ is not contained in a join.
Then for each $n>0$, we have $\|g^{2n |V(\Gam)|^2}\|_*\ge n$.
\end{lem}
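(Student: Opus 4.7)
My plan is to mimic the syllable-counting argument used in the proof of Lemma~\ref{l:anti--loop}, with extra care for the non-uniqueness of reduced forms of $g^N$. First, the hypothesis that $\supp(g)$ is not contained in a join implies that $\supp(g)$ is not contained in any star of $\Gam$: every star $\st(v)$ with $\lk(v)\ne\varnothing$ is a join of $\{v\}$ with $\lk(v)$, so if $\supp(g)$ were in $\st(v)$ it would sit in a subjoin. Consequently, for each vertex $u\in V(\Gam)$ there is some $y_u\in\supp(g)\setminus\st(u)$, which in particular does not commute with $u$.

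Before the main estimate I would establish that $\syl{g^N}=N\syl{g}$ for every $N\ge 1$. Cyclic reducedness guarantees that the concatenation of $N$ copies of a reduced word for $g$ is a reduced word for $g^N$ of syllable length $N\syl{g}$. Any further collapse would require two syllables at the same vertex $v$ to merge in some reduced form, forcing every intervening syllable to commute with $v$; but those syllables span a full period of $g$ and thus exhaust $\supp(g)$, which would place $\supp(g)\subseteq\st(v)$, contradicting the previous paragraph.

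Now suppose for contradiction that $\|g^N\|_*<n$ with $N=2n|V(\Gam)|^2$, and use Lemma~\ref{lem:syl} to pick a decomposition $g^N=w_1\cdots w_r$ with $r<n$, each $w_i$ a star--word, and $\sum_i\syl{w_i}=\syl{g^N}=N\syl{g}$. Pigeonhole produces some $w_i$ with $\syl{w_i}>2|V(\Gam)|^2\syl{g}$. The main obstacle, and the hardest step of the proof, is to contradict this by showing $\syl{w_i}\le 2|V(\Gam)|^2\syl{g}$. The idea is the following. Let $u_i$ satisfy $w_i\in\langle\st(u_i)\rangle$ and set $y=y_{u_i}$. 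Reading off a dual van Kampen diagram for the equation $g^N=w_1\cdots w_r$, the key commutation constraint is that any syllable of $g^N$ lying in the reduced form between two consecutive $w_i$-syllables, but belonging to some $w_j$ with $j\ne i$, must commute with every $w_i$-syllable on one side of its position (the side opposite to $w_j$ in the product order). In particular, because $y$ and $u_i$ do not commute, no $y$-labelled intruder can occur between two $u_i$-syllables of $w_i$. Since $y$ appears in every period of $g^N$, this forces all $u_i$-syllables of $w_i$ to lie inside a single contiguous run of $\st(u_i)$-vertex syllables of $g^N$'s reduced form, and such a run has length at most $\syl{g}$. A parallel analysis of the ``wings'' of $w_i$ on either side of its $u_i$-range, running over the at most $|V(\Gam)|$ choices of $u_i$ and the at most $|V(\Gam)|$ possible types of $\lk(u_i)$-``walls'' that allow intruders to pass, yields the overall bound $\syl{w_i}\le 2|V(\Gam)|^2\syl{g}$, producing the desired contradiction.
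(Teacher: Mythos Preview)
Your overall strategy is quite different from the paper's, and the place where it breaks down is exactly the step you flag as ``the hardest'': bounding $\syl{w_i}$ by $2|V(\Gam)|^2\syl{g}$.  The ``core'' part of your argument is fine.  If $p_1<p_2<p_3$ are positions in the syllable form of $g^N$ with $p_1,p_3\in S_i$ carrying the center vertex $u_i$ and $p_2$ an intruder carrying $y\notin\st(u_i)$ and going to some $w_j$, then the arc from $p_2$ crosses the arc from $p_1$ (if $j<i$) or from $p_3$ (if $j>i$); in either case $y$ is forced to commute with $u_i$, a contradiction.  So the $u_i$--syllables of $w_i$ lie in a $y$--free window of length at most $\syl g$.

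The trouble is with the ``wings''.  Take the leftmost position $p_L\in S_i$, with vertex $a_L\in\lk(u_i)$, and the leftmost $u_i$--position $p_3\in S_i$.  The same crossing analysis only tells you that an intruder at $p_2\in(p_L,p_3)$ commutes with $a_L$ (if it goes to $w_j$ with $j<i$) or with $u_i$ (if $j>i$).  Hence every syllable in $[p_L,p_3]$ has its vertex in $\st(u_i)\cup\st(a_L)$.  If $\supp(g)\subseteq\st(u_i)\cup\st(a_L)$---and nothing in the hypothesis ``$\supp(g)$ is not contained in a join'' forbids this---then there is no vertex playing the role of $y$ for the wing, and your argument gives no bound on $p_3-p_L$.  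The sentence about ``at most $|V(\Gam)|$ possible types of $\lk(u_i)$--walls'' does not explain how to iterate past this obstruction, and I do not see a clean way to do so that recovers the specific constant $2|V(\Gam)|^2$.  (There is also a minor looseness in your claim $\syl{g^N}=N\syl g$: your ``full period'' justification tacitly assumes the two merging $v$--syllables sit at the same position within their respective copies of $g$, which need not be the case if $v$ occurs more than once in the syllable form of $g$.)

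The paper sidesteps all of this.  Since $\supp(g)$ is not in a join, $\Gam\opp$ restricted to $\supp(g)$ is connected, so one can choose a closed edge--walk $(a_1,\dots,a_\ell,a_1)$ in $\Gam\opp$ visiting every vertex of $\supp(g)$, with $\ell\le 2(|V(\Gam)|-1)^2=:M$.  By repeatedly using non-commutation of consecutive $a_i$'s, one extracts $a_1^{\pm1}a_2^{\pm1}\cdots a_\ell^{\pm1}$ as a \emph{subsequence} of $g^M$.  Lemma~\ref{lem:subseq}(1) (star length is monotone under $\preccurlyeq_0$) and Lemma~\ref{l:anti--loop} then give $\|g^{Mn}\|_*\ge\|(a_1^{\pm1}\cdots a_\ell^{\pm1})^n\|_*>n$, and $2|V(\Gam)|^2>M$ finishes it.  This avoids any delicate analysis of how a single star--word piece can spread across $g^N$.
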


\begin{proof}
Put $M=2 (|V(\Gam)|-1)^2$.
Find an edge--loop $(a_1,a_2,\ldots,a_\ell ,a_1)$ in $\Gam\opp$ such that $\supp(g) = \{a_1,a_2,\ldots,a_\ell \}$.
We can further require that $\ell\le M$, since there exists an edge--loop in $\Gamma$ of length at most $M$ which visits all the vertices in a connected component of $\Gam\opp$.
Note that $a_i$'s may be redundant.
We will regard $g^M$ as the reduced word obtained by concatenating $M$ copies of a reduced word for $g$.
We can choose $M$ occurrences of $a_1^{\pm1}$ in $g^M$, all corresponding to the same letter of $g$.
Since $[a_1,a_2]\ne1$, we can choose $M-1$ occurrences of $a_2^{\pm1}$, alternating with the previously chosen $M$ occurrences of $a_1^{\pm1}$.
We can then similarly choose $M-2$ occurrences of $a_3^{\pm1}$ alternating with the previously chosen $M-1$ occurrences of $a_2^{\pm1}$, and continue.
Since $M-\ell+1\ge1$, it is easy to deduce that $a_1^{\pm1}a_2^{\pm2}\cdots a_\ell ^{\pm1}\preccurlyeq_0 g^M$.
By Lemma~\ref{lem:subseq} (1) and~\ref{l:anti--loop}, we have $\|g^{Mn}\|_*\ge n$.
Note that $2|V(\Gamma)|^2>M$.
\end{proof}

\begin{lem}\label{l:join}
Let $g\in \aga$ be a cyclically reduced element.  Then $\{\|g^n\|_*\}_{n\in\mathbb{N}}$ is bounded if and only if the support of $g$ is contained in a join.
\end{lem}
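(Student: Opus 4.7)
The plan is to handle the two implications separately. The forward direction (bounded star length implies $\supp(g)$ lies in a join) follows immediately from the preceding Lemma~\ref{l:anti--loop2}, while the reverse direction is an elementary decomposition argument using that the two commuting factors of $g$ each sit inside a vertex star.

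For the ``only if'' direction I would simply contrapose Lemma~\ref{l:anti--loop2}: if $\supp(g)$ is not contained in any subjoin of $\Gamma$, then that lemma gives $\stl{g^{2n|V(\Gamma)|^2}}\ge n$ for every $n\ge 1$, so the sequence $\{\stl{g^m}\}_{m\in\N}$ is unbounded along the subsequence $m=2n|V(\Gamma)|^2$, hence unbounded.

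For the ``if'' direction, suppose $\supp(g)$ is contained in a subjoin $J=J_1\ast J_2$, where $V(J_1)$ and $V(J_2)$ are nonempty and every vertex of $V(J_1)$ is adjacent in $\Gamma$ to every vertex of $V(J_2)$. Put $A=\supp(g)\cap V(J_1)$ and $B=\supp(g)\cap V(J_2)$. Each vertex of $A$ commutes in $\aga$ with each vertex of $B$, so any reduced word for $g$ may be rewritten by pushing all letters in $A\cup A^{-1}$ to the left, yielding a decomposition $g=g_A g_B$ with $\supp(g_A)\subseteq A$, $\supp(g_B)\subseteq B$, and $[g_A,g_B]=1$; consequently $g^n=g_A^n g_B^n$ for all $n$. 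If either $A$ or $B$ is empty, pick any vertex $v$ of the opposite factor of $J$; then $\supp(g)\subseteq\lk(v)\subseteq\st(v)$, so $g^n\in\form{\st(v)}$ and $\stl{g^n}\le 1$. Otherwise, fix $a\in A$ and $b\in B$: since $B\subseteq\lk(a)\subseteq\st(a)$ and $A\subseteq\lk(b)\subseteq\st(b)$, we get $g_B^n\in\form{\st(a)}$ and $g_A^n\in\form{\st(b)}$, so $\stl{g^n}\le 2$ uniformly in $n$.

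There is no real obstacle here; all of the combinatorial difficulty lives in Lemma~\ref{l:anti--loop2}, and the reverse direction is essentially the observation that each commuting factor of $g$ is swallowed by the star of any vertex from the opposite side of the join. The cyclical reducedness of $g$ is only used implicitly, through the hypothesis of Lemma~\ref{l:anti--loop2}.
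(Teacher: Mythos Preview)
Your proof is correct and follows essentially the same approach as the paper: the ``only if'' direction is deduced from Lemma~\ref{l:anti--loop2}, and the ``if'' direction decomposes $g$ into commuting factors, each supported in the star of a vertex from the opposite side of the join, giving $\stl{g^n}\le 2$. The only cosmetic difference is that the paper picks vertices $v_i\in V(J_i)$ directly (avoiding your case split on whether $A$ or $B$ is empty), but the substance is identical.
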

\begin{proof}
Assume that the support of $g$ is contained in a join $J=J_1*J_2\le \Gamma$. 
Let us choose $v_i\in V(J_i)$ so that $J_i\subseteq \st(v_{3-i})$ for $i=1,2$.
We can write $g = g_1 g_2$ so that $g_i\in A(J_i)\le\form{\st(v_{3-i})}$ for each $i$.
Then $\|g^n\|_* \le \|g_1^n\|_*+\|g_2^n\|_*\le 2$ for every $n$.

The opposite direction is obvious from Lemma~\ref{l:anti--loop2}.
\end{proof}

\begin{thm}\label{t:ntclass}
Let $1\neq g\in \aga$ be cyclically reduced.  Then $g$ is elliptic if and only if the support of $g$ is contained in a join of $\Gamma$.
\end{thm}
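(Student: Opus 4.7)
The plan is to deduce this characterization as a direct consequence of the acylindricity dichotomy together with Lemma~\ref{l:join}, using the quasi--isometry between $(\aga, d_*)$ and $(\gex, d)$ supplied by Lemma~\ref{lem:covering distance} to translate boundedness of star length into boundedness of orbits.

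First I would fix a vertex $v \in V(\Gamma)$ and observe that the orbit of $v$ under the conjugation action of $\langle g \rangle$ on $\gex$ is precisely $\{v^{g^n}\}_{n \in \Z}$. By Lemma~\ref{lem:covering distance}(3),
\[
\|g^n\|_* - 1 \;\le\; d(v, v^{g^n}) \;\le\; \diam(\Gamma)\bigl(\|g^n\|_* + 1\bigr),
\]
so the orbit $\{v^{g^n}\}$ is bounded in $\gex$ if and only if the sequence $\{\|g^n\|_*\}_{n \in \N}$ is bounded in $\Z$.

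Next I would invoke the Nielsen--Thurston--type dichotomy recorded just before the theorem: since the action of $\aga$ on $\gex$ is acylindrical (Theorem~\ref{thm:acyl}) on a $\delta$--hyperbolic (in fact quasi--tree) graph, every nonidentity $g$ is either elliptic (every orbit bounded) or loxodromic (positive translation length, hence unbounded orbits). Thus $g$ is elliptic precisely when some, equivalently every, orbit is bounded, i.e.\ precisely when $\{\|g^n\|_*\}$ is bounded.

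Finally I would apply Lemma~\ref{l:join}, which says exactly that for a cyclically reduced $g$, the sequence $\{\|g^n\|_*\}$ is bounded if and only if $\supp(g)$ is contained in a join of $\Gamma$. Chaining the three equivalences gives the theorem. No significant obstacle should arise, since all the work has been done in Theorem~\ref{thm:acyl}, Lemma~\ref{lem:covering distance}, and Lemma~\ref{l:join}; the only mild point requiring care is that the dichotomy supplies an orbit--boundedness characterization of ellipticity, which must be matched with the star--length characterization through the quasi--isometry estimate above.
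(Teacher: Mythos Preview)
Your proposal is correct and follows essentially the same approach as the paper: both arguments chain together the elliptic/loxodromic dichotomy from acylindricity, the quasi--isometry estimate of Lemma~\ref{lem:covering distance} relating orbit diameter to star length, and Lemma~\ref{l:join} characterizing bounded star--length growth. Your write--up is in fact slightly more explicit than the paper's in citing Lemma~\ref{lem:covering distance}(3) and in noting that the dichotomy makes ``some orbit bounded'' equivalent to ``every orbit bounded''.
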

\begin{proof}
An element $1\neq g\in \aga$ is elliptic if and only if for every vertex $v$ of $\gex$, the orbit $\{v^{g^n}\}_{n\in\Z}$ is bounded.  For any $g\in \aga$, the distance $d_{\gex}(v,v^g)$ is coarsely equal to the star length $\stl{g}$.  It follows that if each orbit $\{v^{g^n}\}_{n\in\Z}$ is bounded then the star lengths $\stl{g^n}$ are uniformly bounded.  By Lemma \ref{l:join}, we see that this occurs if and only if $\supp(g)$ is contained in a join in $\Gamma$.
\end{proof}

It is interesting to note a fundamental difference between the action of $\Mod(S)$ on $\mC(S)$ and the action of $\aga$ on $\gex$.  Observe that if $\psi\in\Mod(S)$ is nontrivial and elliptic, then $\psi$ has a finite orbit in $\mC(S)$.  If $g\in\aga$ is elliptic, it may not have a finite orbit, even though each orbit is bounded in $\gex$.

\begin{prop}\label{p:nofixedpoint}
Consider \[\aga=F_2\times F_2=\langle a,b\rangle\times\langle c,d\rangle.\]  The element $abcd\in F_2\times F_2$ is elliptic and has no finite orbits in $\gex$.
\end{prop}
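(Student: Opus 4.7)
The plan is to verify the two assertions separately: ellipticity follows immediately from Theorem \ref{t:ntclass}, and the absence of finite orbits reduces, via the structure of vertex stabilizers, to an elementary statement about conjugacy in a free group.

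First I will identify the defining graph. Taking $\Gamma$ with $V(\Gamma)=\{a,b,c,d\}$ and edges exactly $\{a,c\},\{a,d\},\{b,c\},\{b,d\}$ gives $\aga = F_2\times F_2$; this $\Gamma$ is the complete bipartite graph $K_{2,2}$, i.e.\ the nontrivial join $\{a,b\}\ast\{c,d\}$. The element $abcd$ is cyclically reduced with $\supp(abcd)=V(\Gamma)$, and since $V(\Gamma)$ is itself contained in a (non-trivial) subjoin of $\Gamma$, Theorem \ref{t:ntclass} declares $abcd$ to be elliptic.

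Next I translate the ``no finite orbit'' assertion to an algebraic one. The action of $\aga$ on $\gex$ is by right-conjugation, and the centralizer of a vertex generator $v$ in $\aga$ is $\form{\st(v)}$, so the stabilizer of $v^h\in\gex$ is $h^{-1}\form{\st(v)}h$. Hence $abcd$ has a finite orbit if and only if there exist $n\neq0$, $v\in V(\Gamma)$ and $h\in\aga$ with
\[
h(abcd)^n h^{-1}\in\form{\st(v)}.
\]
Now I will exploit the direct product structure $\aga=\form{a,b}\times\form{c,d}$. Since $ab$ and $cd$ lie in commuting $F_2$ factors, $(abcd)^n=\bigl((ab)^n,(cd)^n\bigr)$. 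The four star subgroups also split:
\[
\form{\st(a)}=\form{a}\times\form{c,d},\quad \form{\st(b)}=\form{b}\times\form{c,d},
\]
\[
\form{\st(c)}=\form{a,b}\times\form{c},\quad \form{\st(d)}=\form{a,b}\times\form{d}.
\]
Writing $h=(h_1,h_2)$ in the product, conjugation preserves each factor, so for $v=a$ we would need $h_1(ab)^n h_1^{-1}\in\form{a}$ inside $F_2=\form{a,b}$. But $(ab)^n$ is cyclically reduced in $F_2$ with support $\{a,b\}$, so in a free group it cannot be conjugate to any power of a single generator. The cases $v=b$ and $v=c,d$ are symmetric (using the analogous fact for $(cd)^n$ in $\form{c,d}$). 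Thus no nonzero power of $abcd$ fixes any vertex of $\gex$, so $abcd$ has no finite orbit.

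The main obstacle is essentially cosmetic: one must remember that the stabilizer of $v^h$ is the conjugate of the centralizer $\form{\st(v)}$ of $v$, not of $\form{v}$ itself. Once this is set up, the direct product decomposition reduces the argument to the elementary fact about free groups recalled above.
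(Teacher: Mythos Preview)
Your proof is correct and follows essentially the same route as the paper: both reduce the absence of a finite orbit to the statement that no nonzero power of $abcd$ lies in any conjugate of a star subgroup $\form{\st(w)}$. The paper asserts this last step tersely (``$abcd$ and all of its nonzero powers are not supported in the star of any vertex''), whereas you make it explicit by using the direct product decomposition $\aga=\form{a,b}\times\form{c,d}$ to reduce to the elementary fact that $(ab)^n$ is not conjugate to a power of a single generator in $F_2$; this is a nice clarification. The paper also inserts a digression about enlarging $\Gamma$ to a non-join graph $\Lambda$, but that is not needed for the proposition as stated and you are right to omit it.
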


\begin{proof}
Every nontrivial element of $\aga$ is elliptic, since $\Gamma$ splits as a join.  Actually, $\gex$ has finite diameter in this case.  
By adding a degree-one vertex $v$ to $a$, 
one obtains a graph $\Lambda$ which does not split as a  join and thus satisfies $\diam(\Lambda^e)=\infty$.  
It is clear from the Centralizer Theorem that if $g\in A(\Lambda)$ and $n\neq 0$ then $(abcd)^n$ does not stabilize $v^g$.  Thus, it suffices to show that for each nonzero $n$, the element $(abcd)^n$ does not stabilize any conjugate of any of the vertices $\{a,b,c,d\}$.

Let $w\in\{a,b,c,d\}$, and let $g\in A(\Lambda)$.  The centralizer of the vertex $w^g\in\Lambda^e$ is generated by $\st(w^g)\subseteq\Lambda^e$.  However, the element $abcd$ and all of its nonzero powers are not supported in the star of any vertex of $\Lambda^e$.
\end{proof}

We now briefly sketch another approach to the Nielsen--Thurston classification for elements of right-angled Artin groups which relies on the work of M. Hagen and which was pointed out by the referee.  In \cite{hagenquasi}, Hagen developed the \emph{hyperplane graph} for a general CAT(0) cube complex $X$.  This graph has a vertex for every hyperplane $H\subset X$, and two hyperplanes are adjacent if they \emph{contact}, which is to say if they are dual to a pair of $1$--cubes which share a common $0$--cube. 
Let us now assume $\Gamma$ is a finite simplicial graph without isolated vertices.
We denote the hyperplane graph of the universal cover of the Salvetti complex of $\aga$
by $\Gamma^{hyp}$.
At the end of Section 4, we noted that $\|\cdot\|_{\mathrm{link}}$ and $\|\cdot\|_*$ define quasi-isometric distances on $\aga$, say $d_\mathrm{link}$ and $d_*$.
Since $(\aga,d_\mathrm{link})$ and $(\aga,d_*)$ are quasi-isometric to  $\Gamma^\mathrm{hyp}$ and $\gex$ respectively,
we see that $\Gamma^{\mathrm{hyp}}$ and $\gex$ are quasi-isometric.

This quasi-isometry can be realized by a graph map $f:\gex\to \Gamma^{\mathrm{hyp}}$
which sends a vertex conjugate $v^g$ to the hyperplane dual to the generator $v$, translated by the action of $g$.  
In \cite{hagenbdy}, Hagen proves that any element $g$ of a group $G$ acting on a CAT(0) cube complex $X$ either has a bounded orbit in  $\Gamma^{\mathrm{hyp}}$ or has a quasi--geodesic axis (Theorem 5.2).  In the former case, $g$ stabilizes a hyperplane or a join.  
It follows that an element $1\neq g\in\aga$ either has an unbounded orbit in $\Gamma^{\mathrm{hyp}}$ and hence an unbounded orbit in $\gex$, or it stabilizes a hyperplane or a join and thus has a bounded orbit in $\gex$.  Thus, Hagen's hyperplane graph can be used to establish the Nielsen--Thurston classification in $\aga$.
We remark however, that the hyperplane graph is not quasi--isometric to the extension graph if $\Gam$ has an isolated vertex. For example, when $\Gam$ consists of a single vertex $\Gamma^{\mathrm{hyp}}$ is a real line while $\gex=\Gam$.

\section{Injective homomorphisms and types}\label{s:type}
\subsection{Type preservation under injective homomorphisms}
In this subsection we consider the behavior of elliptic and loxodromic elements under elements of $\Mono(G,H)$, where $G$ and $H$ are allowed to be right-angled Artin groups and mapping class groups, and where $\Mono$ denotes the set of injective homomorphisms from $G$ to $H$.  Namely, we consider $f\in\Mono(G,H)$ and $1\neq g\in G$.  Since $g$ is either elliptic or loxodromic, we consider whether $f(g)$ is also elliptic or loxodromic.

Table~\ref{table:type} summarizes type preservation under injective homomorphisms.
The case where $\Mod(S)$ is the source and $\aga$ is the target is handled by the following result (see \cite{Koberda2012} and the references therein):

\begin{prop}\label{p:noinj}
Let $S$ be a surface with genus $g\geq 3$ or $g=2$ with at least two punctures, let $G<\Mod(S)$ be a finite index subgroup, and let $\aga$ be a right-angled Artin group.  Then there is no injective homomorphism $G\to\aga$.
\end{prop}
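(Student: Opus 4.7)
The plan is to argue by contradiction: suppose $f\co G \hookrightarrow A(\Gam)$ is an injection with $G$ a finite-index subgroup of $\Mod(S)$. Since $A(\Gam)$ is torsion-free, I may freely pass to a further finite-index subgroup and so assume $G \le \PMod(S)$ is torsion-free.

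The key algebraic obstruction I would exploit is the following consequence of the Centralizer Theorem of Servatius (cf.\ Subsection~\ref{ss:maximal}): for any infinite-order element $x \in A(\Gam)$ and any nonzero integer $n$, the centralizers of $x^n$ and $x$ in $A(\Gam)$ coincide. In particular, if $[y, x^n] = 1$ then $[y, x] = 1$. My plan is to exhibit two elements of $G$ whose commutation pattern, when transported via $f$ into $A(\Gam)$, must violate this property.

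To build the witness inside $\Mod(S)$, I would use the hypothesis on $S$ to produce a prime $p$, a finite-order homeomorphism $\rho$ of $S$ of order $p$ that cyclically permutes a set of pairwise disjoint simple closed curves $c_0, c_1, \ldots, c_{p-1}$, and a pseudo-Anosov mapping class $\phi_0$ supported on a subsurface $\Sigma \subset S$ contained in a fundamental domain of $\rho$ and disjoint from each $c_i$. Setting $\phi := \rho\phi_0$, a direct computation gives $\phi^p = \phi_0^{(p-1)}\phi_0^{(p-2)}\cdots \phi_0^{(0)}$, where $\phi_0^{(k)} := \rho^{-k}\phi_0\rho^k$ is a pseudo-Anosov supported on the translate $\rho^{-k}(\Sigma)$. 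Because these translates are pairwise disjoint and each is disjoint from every $c_i$, the element $\phi^p$ has infinite order, commutes with each $T_{c_i}$, and yet $\phi$ itself satisfies $\phi(c_0) = c_1 \ne c_0$, so $[\phi, T_{c_0}] \ne 1$. For $g \ge 3$ the existence of $(\rho, \Sigma, \phi_0)$ is routine via symmetric models of $S_g$; for $g = 2, n \ge 2$ one combines automorphisms of the underlying Riemann surface with cyclic permutations of punctures.

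The final step transports the witness into $G$. Choose $p$ to be a prime not dividing the finite index $[\Mod(S):G]$, possibly enlarging $p$ and the accompanying data $(\rho, \Sigma, \phi_0)$; this is where the complexity hypothesis on $S$ is essentially used. Then the smallest $N > 0$ with $\phi^N \in G$ is coprime to $p$, so $\phi^N(c_0) \ne c_0$. Choose $M > 0$ with $T_{c_0}^M \in G$, and set $\alpha := \phi^N$ and $\beta := T_{c_0}^M$. Then $\alpha \in G$ has infinite order, $[\beta, \alpha^p] = [T_{c_0}^M, \phi^{pN}] = 1$ since $\phi^{pN}$ is supported away from $c_0$, while $[\beta, \alpha] = T_{\phi^N(c_0)}^M T_{c_0}^{-M} \ne 1$ because two distinct disjoint Dehn twists generate a copy of $\Z^2$. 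Transporting by $f$, the images $f(\alpha)$ and $f(\beta)$ in $A(\Gam)$ contradict the RAAG centralizer property, giving the desired contradiction. The main obstacle is verifying the joint existence of the prime $p$ and the finite-order homeomorphism $\rho$ in the delicate range $g = 2, n \ge 2$, where the automorphism groups of the underlying Riemann surfaces are substantially constrained; this amounts to a case analysis using the classification of finite subgroups of $\Mod(S_{2,n})$.
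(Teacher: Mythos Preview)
The paper itself does not give a proof of this proposition; it simply records it with a pointer to \cite{Koberda2012} and the references therein. So there is no in-paper argument to compare against. Your underlying obstruction is exactly the right one: in a right-angled Artin group the centralizer of $x^n$ coincides with that of $x$ for every nonzero $n$ (a consequence of the Servatius Centralizer Theorem), so exhibiting $\alpha,\beta\in G$ of infinite order with $[\beta,\alpha^k]=1$ but $[\beta,\alpha]\ne1$ would finish the argument.

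The construction, however, has a genuine gap at the transport step. You require a periodic homeomorphism $\rho$ of $S$ of prime order $p$ with $p\nmid[\Mod(S):G]$, and you propose to guarantee this by ``enlarging $p$ and the accompanying data''. But for a fixed surface of genus $g\ge 2$ the order of any periodic mapping class is at most $4g+2$ (Wiman's bound), so only finitely many primes are available no matter how you choose $\rho$; the complexity hypothesis on $S$ does not let you push $p$ past this barrier. Since $G$ is an \emph{arbitrary} finite-index subgroup, nothing prevents $[\Mod(S):G]$ (or the index of its normal core) from being divisible by every prime up to $4g+2$, and then no admissible $p$ exists. You flag an existence concern only in the range $g=2$, $n\ge 2$, but the obstruction is present for all $g$. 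There is also a smaller slip: even when $p\nmid[\Mod(S):G]$, it does not follow that the minimal $N$ with $\phi^N\in G$ is coprime to $p$; the orbit of the coset $G$ under $\langle\phi\rangle$ has size $N$, which is bounded by the index but need not divide it. This second point is fixable by working with the normal core of $G$, but the first requires a different witness---one that produces the centralizer discrepancy without relying on a finite-order symmetry of bounded order, so that the argument survives passage to \emph{every} finite-index subgroup.
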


\begin{table}
\begin{center}
\begin{tabular}{cc|p{4.3cm}|p{4.3cm}|l}
\cline{3-4}
& & \multicolumn{2}{c|}{{\bf Target}} \\ \cline{3-4}
& & $\aga$ & $\Mod(S)$ & \\ \cline{1-4}
\multicolumn{1}{|c}{\multirow{2}{*}{{\bf Source}}} &
\multicolumn{1}{|c|}{$\aga$} & Elliptics preserved (Theorem~\ref{t:ellipticpreserved}), loxodromics not preserved (Proposition~\ref{p:loxtype}) & Elliptics preserved (Theorem~\ref{t:ellipticpreserved}), loxodromics not preserved (Proposition~\ref{p:p4s2})  &    \\ \cline{2-4}
\multicolumn{1}{|c}{}                        &
\multicolumn{1}{|c|}{$\Mod(S)$} & Usually no injective homomorphisms (Proposition~\ref{p:noinj}) & Elliptics preserved (Theorem~\ref{t:ellipticpreserved}), loxodromics not preserved (\cite{aramayonaleiningersouto}, Theorem 2) &    \\ \cline{1-4}
\end{tabular}
\caption{Type (non)--preservation under injective homomorphisms}\label{table:type}
\end{center}
\end{table}

\begin{thm}\label{t:ellipticpreserved}
Let $f\in\Mono(G,H)$, where $G$ and $H$ are right-angled Artin groups or mapping class groups, and let $1\neq g\in G$ be elliptic.  Then $f(g)$ is elliptic.
\end{thm}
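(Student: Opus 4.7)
The strategy is to detect ellipticity algebraically through the size of the centralizer and then to observe that this characterization is preserved by injective homomorphisms. The key claim I would establish is: in both $\aga$ and $\Mod(S)$, a nonidentity element $g$ is elliptic if and only if $C(g^n)$ contains, for some $n \neq 0$, a rank--two free abelian subgroup not commensurable with $\langle g \rangle$.

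\textbf{Forward direction (elliptic $\Rightarrow$ large centralizer).} After conjugating, I may assume $g \in \aga$ is cyclically reduced, so by Theorem \ref{t:ntclass} the support of $g$ lies in some subjoin $J_1 \ast J_2$ of $\Gamma$. If $\supp(g)$ meets both sides, factor $g = g_1 g_2$ with $g_i \in A(J_i)$ nontrivial; then $\langle g_1, g_2 \rangle \cong \Z^2$ lies in $C(g)$ and has infinite index intersection with $\langle g \rangle$. If $\supp(g) \subseteq J_1$, then for any vertex $v \in J_2$ the subgroup $\langle g, v \rangle \cong \Z^2$ sits inside $C(g)$ with the same property. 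For $\Mod(S)$, a finite order $g$ gives $C(g^n) = \Mod(S)$ for some $n$, which contains plenty of $\Z^2$'s; an infinite order reducible element preserves an essential multicurve and some power commutes with Dehn twists about two of its components, again producing the required $\Z^2$.

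\textbf{Converse direction (large centralizer $\Rightarrow$ elliptic).} For a loxodromic cyclically reduced $g \in \aga$, Servatius' Centralizer Theorem forces $C(g)$ to be virtually cyclic: a vertex $v$ commuting with every letter of $g$ would either lie in $\supp(g)$ (forcing $\supp(g) \subseteq \{v\} \cup \mathrm{lk}(v)$, i.e.\ a subjoin) or lie outside $\supp(g)$ (so that $\{v\} \ast \supp(g)$ is a subjoin containing $\supp(g)$), both contradicting loxodromicity; similarly, any nontrivial commuting factorization $g = g_1 g_2$ with $[g_1, g_2] = 1$ places $\supp(g) = \supp(g_1) \cup \supp(g_2)$ in a join. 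Pseudo-Anosov elements of $\Mod(S)$ have virtually cyclic centralizer by classical Nielsen--Thurston theory. In either case, no $\Z^2 \leq C(g)$ can fail to be commensurable with $\langle g \rangle$.

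\textbf{Conclusion and obstacle.} Let $g \in G$ be elliptic and $A \leq C_G(g^n)$ a rank--two subgroup with $[A : A \cap \langle g \rangle] = \infty$. Since $f$ is injective, $f(A) \cong \Z^2$ sits in $C_H(f(g)^n)$ with $[f(A) : f(A) \cap \langle f(g) \rangle] = \infty$, and $f(g) \neq 1$. The converse direction applied in $H$ forbids $f(g)$ (hence $f(g)^n$) from being loxodromic or pseudo-Anosov, so $f(g)$ is elliptic. The principal obstacle is the converse direction in the right-angled Artin case: extracting from Servatius' Centralizer Theorem the precise statement that loxodromic elements have virtually cyclic centralizers, which rests on ruling out both ``extra'' commuting vertices and nontrivial commuting factorizations as witnesses of $\supp(g)$ in a subjoin.
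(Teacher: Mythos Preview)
Your proposal is correct and takes essentially the same approach as the paper: both detect ellipticity by the algebraic criterion that the centralizer of (a power of) the element is not virtually cyclic, and then observe that an injective homomorphism carries a large centralizer to a large centralizer. The paper states this in two sentences, citing the Centralizer Theorem for the right-angled Artin case and the standard Nielsen--Thurston theory for the mapping class case; your version simply unpacks the forward direction by explicitly exhibiting a $\Z^2$ in the centralizer and spells out the converse (your ``converse direction'' is exactly Lemma~\ref{l:cycliccentralizer} of the paper for the right-angled Artin side).
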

\begin{proof}
Observe that without loss of generality, we may replace $g$ with any positive power.  In particular, whenever $G$ is a mapping class group, we may assume that $g$ is a pure mapping class.  Reducible pure mapping classes and elliptic elements in $\aga$ are characterized by the fact that their centralizers are not (virtually) cyclic.  Since $f$ is injective and $g$ is elliptic, the centralizer of $f(g)$ in $H$ will not be virtually cyclic.  It follows that $f(g)$ is also elliptic.
\end{proof}

For the non--preservation of loxodromics, we have the following examples, the first of which is due to Aramayona, Leininger and Souto:

\begin{thm}[\cite{aramayonaleiningersouto}, Theorem 2]
There exists an injective map between two mapping class groups under which a pseudo-Anosov mapping class is sent to a Dehn multitwist.
\end{thm}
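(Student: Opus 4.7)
The plan is to build the injection via the Birman exact sequence together with the residual finiteness (indeed, LERF) of surface groups. Let $\Sigma$ be a closed hyperbolic surface with a marked point $p$, and recall the Birman exact sequence
\[
1 \to \pi_1(\Sigma, p) \xrightarrow{PP_\Sigma} \Mod(\Sigma, p) \to \Mod(\Sigma) \to 1,
\]
where $PP_\Sigma$ denotes point-pushing. I will use two standard facts: by a theorem of Kra, $PP_\Sigma(\alpha)$ is pseudo-Anosov on $\Sigma\setminus\{p\}$ precisely when $\alpha$ is represented by a filling loop in $\Sigma$; and if $\alpha$ is represented by a simple closed curve, then $PP_\Sigma(\alpha) = T_{\alpha^+}T_{\alpha^-}^{-1}$ is a multitwist.

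First I would pick any filling loop $\alpha \in \pi_1(\Sigma,p)$, so that $\phi := PP_\Sigma(\alpha)$ is a pseudo-Anosov element of $\Mod(\Sigma, p)$. Next, I would invoke Scott's theorem that surface groups are LERF, together with its classical consequence that any essential non-proper-power element of a surface group is represented by a simple closed curve in some finite cover, to obtain a finite cover $q \co \tilde\Sigma \to \Sigma$ such that $\alpha$ lies in $q_*\pi_1(\tilde\Sigma, \tilde p) \le \pi_1(\Sigma,p)$ and is represented there by a simple closed curve $\tilde\alpha$ in $\tilde\Sigma$. Applying the Birman exact sequence for $\tilde\Sigma$ then produces a second point-pushing embedding $PP_{\tilde\Sigma} \co \pi_1(\tilde\Sigma, \tilde p) \hookrightarrow \Mod(\tilde\Sigma, \tilde p)$, and by construction $\psi := PP_{\tilde\Sigma}(\alpha)$ is a multitwist.

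Since both $PP_\Sigma$ and $PP_{\tilde\Sigma}$ are injective, the assignment $PP_\Sigma(\beta) \mapsto PP_{\tilde\Sigma}(\beta)$ for $\beta \in \pi_1(\tilde\Sigma, \tilde p)$ defines an injective homomorphism from the subgroup $PP_\Sigma(q_*\pi_1(\tilde\Sigma, \tilde p)) \le \Mod(\Sigma, p)$ into $\Mod(\tilde\Sigma, \tilde p)$, and it carries the pseudo-Anosov $\phi$ to the multitwist $\psi$. The main obstacle will be to promote this $\pi_1$-level injection into an injection defined on a (finite-index subgroup of a) full mapping class group, as the statement demands. In the argument of Aramayona, Leininger, and Souto, this is handled by extending the assignment across lifts of mapping classes of $\Sigma$ that preserve the covering $q$: the compatibility of such lifts with the point-pushing subgroup yields an injection of a finite-index subgroup of $\Mod(\Sigma, p)$ into $\Mod(\tilde\Sigma, \tilde p)$ that still sends $\phi$ to $\psi$, completing the construction.
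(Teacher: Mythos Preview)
The paper does not prove this theorem; it is quoted as a black-box result from \cite{aramayonaleiningersouto} and no argument is given. So there is no ``paper's own proof'' to compare your proposal against.

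That said, your outline is essentially the Aramayona--Leininger--Souto construction itself: Birman exact sequence, Kra's criterion for point-pushes along filling loops, Scott's subgroup separability to unwrap a filling loop to a simple one in a finite cover, and then lifting of mapping classes through the cover. You have correctly isolated the one genuinely nontrivial step, namely promoting the $\pi_1$-level correspondence to an injection of an honest (finite-index subgroup of a) mapping class group.

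Two points deserve sharpening if this is to stand as a proof rather than a sketch. First, in order for a finite-index subgroup of $\Mod(\Sigma,p)$ to lift through $q$, the subgroup $q_*\pi_1(\tilde\Sigma,\tilde p)$ must be invariant under that subgroup's action on $\pi_1(\Sigma,p)$; the standard fix is to pass to a further \emph{characteristic} finite-index subgroup, and you should note that $\alpha$ remains simple in the corresponding cover since preimages of simple curves are unions of simple curves. Second, the injectivity of the resulting lifting homomorphism is not free: an element of the kernel is a mapping class covered by the identity, hence a deck transformation, and one must argue (as Aramayona--Leininger--Souto do) that no nontrivial deck transformation lies in the relevant subgroup. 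Both issues are handled in the cited paper, so your attribution of them is appropriate, but they are where the actual work lies.
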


For the next proposition, let $P_4$ denote the path on four vertices and let $S_2$ denote a surface of genus two.

\begin{prop}\label{p:p4s2}
There exists an injective map $A(P_4)\to \Mod(S_2)$ whose image contains a pseudo-Anosov mapping class and under which a loxodromic element is mapped to a reducible element.
\end{prop}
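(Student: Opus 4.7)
The plan is to combine the mapping class group version of Koberda's embedding theorem (stated earlier in the excerpt) with the classical relationship between curve--graph distance and filling pairs of curves. Since $\mC(S_2)$ has infinite diameter by Masur--Minsky, fix a geodesic $\gamma_0,\gamma_1,\gamma_2,\gamma_3$ of length three in $\mC(S_2)$. Because a geodesic does not admit shortcuts, the induced subgraph of $\mC(S_2)$ on $\{\gamma_0,\gamma_1,\gamma_2,\gamma_3\}$ is precisely $P_4$, with edges only between consecutive $\gamma_i$'s. Koberda's theorem then yields an integer $N\geq 1$ such that the map $f\co A(P_4)\to\Mod(S_2)$ sending the vertex generators $a,b,c,d$ to $T_{\gamma_0}^N, T_{\gamma_1}^N, T_{\gamma_2}^N, T_{\gamma_3}^N$ respectively is an injective homomorphism.

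Next I would produce a loxodromic element whose image is reducible. Take $g = ac \in A(P_4)$. Since $P_4$ is triangle--free, its subjoins are exactly its three edges, none of which contains $\{a,c\}$; so by Theorem \ref{t:ntclass}, $g$ is loxodromic. On the other hand, because $d_{\mC(S_2)}(\gamma_0,\gamma_2)=2$, the curves $\gamma_0$ and $\gamma_2$ fail to fill $S_2$, so one may choose an essential simple closed curve $\delta$ disjoint from both of them. Then $f(g)=T_{\gamma_0}^N T_{\gamma_2}^N$ fixes the isotopy class of $\delta$ and is therefore reducible in $\Mod(S_2)$.

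For the pseudo-Anosov in the image, the equality $d_{\mC(S_2)}(\gamma_0,\gamma_3)=3$ is equivalent to $\gamma_0\cup\gamma_3$ filling $S_2$. By Thurston's construction of pseudo-Anosovs from pairs of filling curves, the product $T_{\gamma_0}^N T_{\gamma_3}^{-N}$ is pseudo-Anosov whenever $N$ is sufficiently large. This element is $f(ad^{-1})$ and lies in the image. Both Koberda's theorem and Thurston's construction impose only lower bounds on $N$, so any sufficiently large $N$ satisfies both simultaneously.

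The principal obstacle is coordinating the single parameter $N$ between two a priori independent requirements: injectivity of the Koberda embedding and the pseudo-Anosov character of $T_{\gamma_0}^N T_{\gamma_3}^{-N}$. Both are cofinite conditions on $N$, so their intersection is nonempty, and the proof concludes by selecting any $N$ lying in both. The remaining ingredients, namely the infinite diameter of $\mC(S_2)$ and the equivalence between filling and distance at least three in the curve graph, are classical facts that require no further elaboration.
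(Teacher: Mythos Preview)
Your argument contains a genuine error: the element $g=ac$ is \emph{not} loxodromic in $A(P_4)$. You assert that the subjoins of $P_4$ are exactly its three edges, but this is false. Writing the vertices as $a$--$b$--$c$--$d$, the induced subgraph on $\{a,b,c\}$ is the join $\{b\}*\{a,c\}$ (and likewise $\{b,c,d\}=\{c\}*\{b,d\}$). Since $\supp(ac)=\{a,c\}\subseteq\{a,b,c\}$, Theorem~\ref{t:ntclass} says $ac$ is elliptic. In fact a cyclically reduced word in $A(P_4)$ is loxodromic exactly when its support contains \emph{both} endpoints $a$ and $d$.

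This is not easily repaired within your setup. Replacing $ac$ by the genuinely loxodromic $ad$, the image $T_{\gamma_0}^N T_{\gamma_3}^N$ involves the filling pair $\gamma_0,\gamma_3$ (they are at distance three in $\mC(S_2)$), and you have given no reason for this to be reducible. More generally, every loxodromic element of $A(P_4)$ must use both $a$ and $d$, so its image under your map always involves twists about the filling pair $\gamma_0,\gamma_3$; your geodesic configuration provides no mechanism for producing a reducible image of a loxodromic. The paper sidesteps this by deliberately \emph{not} embedding $P_4$ as a curve--graph geodesic: it takes a chain $B,D,A,C$ of four curves on $S_2$ (consecutive curves intersecting once, nonconsecutive curves disjoint) and sends $a,b,c,d$ to high powers of $T_A,T_B,T_C,T_D$. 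The disjointness pattern still realises $P_4$, but now the curves $A,D$ corresponding to the $P_4$--endpoints intersect exactly once and fill only a one--holed torus, so the loxodromic $ad$ is sent to a reducible mapping class.
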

\begin{proof}
Label the vertices of $P_4$ in a row by $\{a,b,c,d\}$ as in Figure~\ref{fig:s2} (a).  We consider a chain of four simple closed curves on $S_2$ which fills $S_2$.  The curves should then be labelled in order by $\{B,D,A,C\}$; see Figure~\ref{fig:s2} (b).

\begin{figure}[htb!]
\begin{center}
\subfloat[(a)]{\includegraphics[width=.2\textwidth]{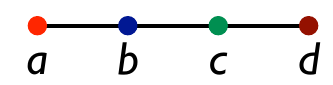}}
\hspace{.5in}
\subfloat[(b)]{\includegraphics[width=.35\textwidth]{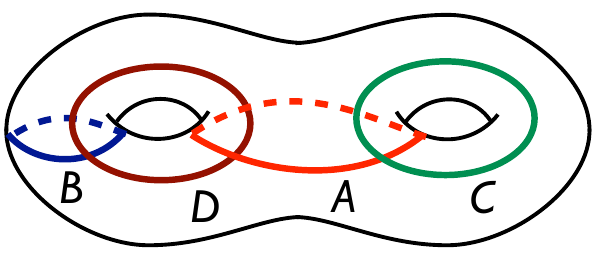}}
\caption{Proposition~\ref{p:p4s2} and Corollary~\ref{cor:p4s2}.}
  \label{fig:s2}
  \end{center}
\end{figure}

 The injective map $\iota: A(P_4)\to\Mod(S_2)$ is given by taking the vertex generator labelled by a lower case letter $x$ to a sufficiently high power of a Dehn twist about the curve labelled by the upper case letter $X$.  Thus, for some sufficiently large $N$, we have an injective map $\iota$ defined by \[a\mapsto T_A^N,\, b\mapsto T_B^N,\, c\mapsto T_C^N,\, d\mapsto T_D^N.\]
Since $\{A,B,C,D\}$ fill $S_2$, there is a pseudo-Anosov mapping class in the image of $A(P_4)$.  Since $A$ and $D$ intersect exactly once, they cannot fill $S_2$ (in fact they fill a torus with one boundary component).  The element $ad\in A(P_4)$ is loxodromic since the distance between $a$ and $d$ in $P_4$ is three, and the image of $ad$ is supported on a torus with one boundary component.  It follows that the image of $ad$ is reducible and hence elliptic in $\Mod(S_2)$.
\end{proof}

In \cite{KK2012}, the authors showed that for any graph $\Gamma$, there is a surface $S$ such that $\gex$ embeds in $\mC (S)$.  The previous proposition has the following corollary, which shows that embeddings from $\gex$ into curve graphs may not be well--behaved from a geometric standpoint:

\begin{cor}\label{cor:p4s2}
There exists an embedding $P_4^e\to \mC (S_2)$ which is not metrically proper.
\end{cor}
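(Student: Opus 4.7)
The plan is to lift the injection $\iota \colon A(P_4) \to \Mod(S_2)$ built in Proposition~\ref{p:p4s2} to a graph embedding $\phi \colon P_4^e \to \mC(S_2)$ and then exhibit a single orbit inside $P_4^e$ whose $\phi$-image remains bounded in the curve graph.

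First, I would define $\phi$ on a vertex $v^g \in P_4^e$ (with $v \in \{a,b,c,d\}$ and $g \in A(P_4)$) by $\phi(v^g) := \iota(g)^{-1}(V)$, where $V \in \{A,B,C,D\}$ is the simple closed curve indexed by $v$. This is well-defined because $\iota(v^g) = T^N_{\iota(g)^{-1}(V)}$, so whenever two pairs $(v,g)$ and $(w,h)$ determine the same element of $A(P_4)$, the corresponding high--power Dehn twists coincide and therefore so do the underlying curves. Standard Dehn twist commutation (two sufficiently high powers commute if and only if the curves are equal or disjoint), combined with injectivity of $\iota$, then promotes $\phi$ to an induced--subgraph embedding, since adjacency in $P_4^e$ is by definition commutation of the corresponding elements of $A(P_4)$.

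To see that $\phi$ is not metrically proper, I would use the orbit of $a$ under the powers of $ad$. Since $\supp(ad) = \{a,d\}$ and $a,d$ are non--adjacent in $P_4$, the support of $ad$ is not contained in any subjoin of $P_4$, so Theorem~\ref{t:ntclass} gives that $ad$ acts loxodromically on $P_4^e$. Combined with acylindricity (Theorem~\ref{thm:acyl}) and the quasi--tree property of $P_4^e$ from \cite{KK2012}, this yields a positive translation length for $ad$, hence $d_{P_4^e}(a,\, a^{(ad)^n}) \to \infty$ as $n \to \infty$.

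On the target side, $\phi(a^{(ad)^n}) = \iota((ad)^n)^{-1}(A) = (T_A^N T_D^N)^{-n}(A)$. Since $A \cup D$ fills a one--holed torus subsurface $\Sigma \subset S_2$, the mapping class $T_A^N T_D^N$ is supported in $\Sigma$ and fixes $\partial\Sigma$ setwise; hence every curve in the orbit $\{(T_A^N T_D^N)^{-n}(A)\}_{n \geq 0}$ lies in $\Sigma$ and is disjoint from $\partial\Sigma$. Consequently $d_{\mC(S_2)}(\phi(a), \phi(a^{(ad)^n})) \leq 2$ uniformly in $n$, contradicting metric properness of $\phi$. The only real subtlety is the bookkeeping in the first step, checking that the naive assignment genuinely gives an induced--subgraph embedding; once that is in place, the rest is a direct repackaging of the loxodromic--versus--reducible phenomenon exposed by $ad$ in Proposition~\ref{p:p4s2}.
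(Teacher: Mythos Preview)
Your proof is correct and follows essentially the same approach as the paper: define the graph map via $v^g \mapsto \iota(g)^{-1}(V)$, then exploit that $ad$ is loxodromic on $P_4^e$ while its image is supported on the one--holed torus filled by $A \cup D$ and hence has bounded curve--graph orbits. Your treatment is in fact slightly more explicit than the paper's in two places---you spell out the well--definedness and induced--subgraph checks via the identity $\iota(v^g)=T^N_{\iota(g)^{-1}(V)}$, and you give the concrete bound $d_{\mC(S_2)}\le 2$ via disjointness from $\partial\Sigma$ rather than just citing reducibility---but the overall strategy is identical.
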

\begin{proof}
Consider the embedding of $A(P_4)$ into $\Mod(S_2)$ as given in Proposition \ref{p:p4s2}.  We obtain a map from $P_4^e$ to $\mC (S)$ as follows: send the vertices $\{a,b,c,d\}$ to the curves $\{A,B,C,D\}$ respectively.  We have that sufficiently high powers of Dehn twists about those curves generate a copy of $A(P_4)<\Mod(S_2)$.  We can obtain the double of $P_4$ over the star of any vertex by applying the corresponding power of a Dehn twist to the curves $\{A,B,C,D\}$, and adding the new curves to the collection $\{A,B,C,D\}$.  The map from the double of $P_4$ to $\mC (S)$ is given by sending a vertex to the corresponding curve.

We identify the vertex generators $\{a,b,c,d\}$ of $A(P_4)$ with sufficiently high powers of Dehn twists about the curves $\{A,B,C,D\}$.  We then consider the orbits of these curves inside of $\mC(S_2)$ under the action of $A(P_4)$.  By definition, this gives us an embedding of $P_4^e$ into $\mC(S_2)$.  Specifically, let \[\phi:A(P_4)\to\Mod(S_2)\] be the embedding above which sends a vertex generator $v$ to the Dehn twist power $T^M_{\gamma(v)}$.  The map $P_4^e\to\mC(S_2)$ is given by \[v^g\mapsto \phi(g^{-1})(\gamma(v)).\]

Write $w=ad\in A(P_4)$.  We have seen that this element is loxodromic.  It follows that for any vertex $v\in P_4^e$, we have that the distances $d(v,v^{w^n})$ tend to infinity.  However, the curves $A$ and $D$ do not fill $S_2$, so that any mapping class written as a product of twists about $A$ and $D$ will be reducible, and each of its orbits in $\mC (S_2)$ will be bounded.  It follows that for any vertex $v\in P_4^e$, the distances between the images of $v$ and $v^{w^n}$ in $\mC(S_2)$ will remain bounded as $n$ tends to infinity.
\end{proof}

We briefly note the following fact which shows that injective maps between right-angled Artin groups do not preserve loxodromic elements:

\begin{prop}\label{p:loxtype}
Let $\Gam$ be a finite simplicial graph which does not split as a nontrivial join, so that $\aga$ contains at least one loxodromic element.  Then the canonical inclusion $\aga\to\aga\times\bZ$ is an injective map which does not preserve loxodromic elements.
\end{prop}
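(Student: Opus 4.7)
The plan is to identify $\aga \times \bZ$ with the right-angled Artin group $A(\Gamma')$, where $\Gamma' = \Gamma \ast \{v_0\}$ is obtained from $\Gamma$ by adjoining a single new vertex $v_0$ joined by edges to every vertex of $\Gamma$. Under this identification, the canonical inclusion $\aga \hookrightarrow \aga \times \bZ$ becomes the standard subgroup inclusion $A(\Gamma) \hookrightarrow A(\Gamma')$ coming from the inclusion of defining graphs, which is obviously injective since distinct vertex generators of $\aga$ go to distinct vertex generators of $A(\Gamma')$.

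Next I would produce a loxodromic element of $\aga$ explicitly. Since $\Gamma$ does not split as a nontrivial join, the only induced subgraph of $\Gamma$ whose vertex set is all of $V(\Gamma)$ is $\Gamma$ itself, and this is not a subjoin. Taking $g$ to be the product of all the vertex generators of $\Gamma$ in some order yields a cyclically reduced element with $\supp(g) = V(\Gamma)$, which by Theorem \ref{t:ntclass} is loxodromic.

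Finally, I would observe that the defining graph $\Gamma'$ of $\aga \times \bZ$ is by construction a nontrivial join, so $\Gamma'$ is a subjoin of itself. Viewing $g$ in $A(\Gamma')$, its support is still $V(\Gamma) \subseteq V(\Gamma')$, and hence is contained in a subjoin. Moreover, $g$ remains cyclically reduced in $A(\Gamma')$, since no reduced word representing $g$ contains the letter $v_0$ and the new commutation relations involving $v_0$ cannot create any cancellations among letters of $V(\Gamma)$. A second application of Theorem \ref{t:ntclass}, now to $A(\Gamma')$, then shows that $g$ is elliptic in $\aga \times \bZ$. The only step that requires any care is this verification of cyclic reducedness, which is routine; everything else is just the correct packaging of the hypothesis and the Nielsen--Thurston criterion.
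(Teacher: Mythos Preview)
Your proof is correct and follows the same underlying idea as the paper's: the defining graph $\Gamma' = \Gamma * \{v_0\}$ of $\aga\times\bZ$ is a nontrivial join, so the image of any loxodromic element is elliptic. The paper's proof is a single sentence, because it observes the stronger fact that $A(\Gamma')$ contains \emph{no} loxodromic elements whatsoever (every cyclically reduced element has support contained in the join $\Gamma'$), which makes your explicit construction of a particular loxodromic $g$ and your verification that $g$ stays cyclically reduced in $A(\Gamma')$ unnecessary. Your extra work is not wrong, just superfluous; you could streamline by noting that since $\Gamma'$ is a join, Theorem~\ref{t:ntclass} (or directly Lemma~\ref{l:join}) forces every nonidentity element of $A(\Gamma')$ to be elliptic.
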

\begin{proof}
This is clear, because the defining graph of $\aga\times\bZ$ splits as a nontrivial join, so that $\aga\times\bZ$ contains no loxodromic elements.
\end{proof}

\subsection{Subgroups generated by pure elements}
Whereas it is not true that loxodromic elements are sent to pseudo-Anosov mapping classes under injective maps between right-angled Artin groups and mapping class groups, one can always find faithful representations of right-angled Artin groups into mapping class groups which map loxodromics to pseudo-Anosov mapping classes on the subsurface filled by the support of the loxodromic.

In fact, more is true.  
An element $1\neq g\in\aga$ is \emph{pure} if, after cyclic reduction, it cannot be written as a product of two commuting subwords. 
Put $X = \{g_1,\ldots,g_k\}\subseteq G$ and denote by $\Comm(X)$ the commutation graph of $X$.
We evidently get a surjective map \[A(\Comm(X))\to\langle g_1,\ldots,g_k\rangle\le G,\] given by sending the vertex of $\Comm(X)$ labelled by $g_i$ to the element $g_i$ (see \cite{KK2012} for more discussion of both of these definitions).  In this subsection we establish the following result:

\begin{prop}\label{p:pure}
Given a graph $\Gamma$, there exists a surface $S$ and a faithful homomorphism \[\phi:\aga\to\Mod(S)\] such that if $1\neq g\in\aga$ is pure then $\phi(g)$ is pseudo-Anosov on a subsurface of $S$.
\end{prop}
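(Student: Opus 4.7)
The plan is to combine the two embedding theorems recalled in Section~2 with a subsurface analysis, and then invoke the structural results of \cite{Koberda2012} to identify the type of the image of a pure element.

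First, I would realize $\Gamma$ as an induced subgraph of a curve graph. By the theorem of \cite{KK2012} recalled in Section~2, there is a surface $S$ and an induced-subgraph embedding $\iota\co\Gamma\hookrightarrow\mC(S)$; write $\gamma_v=\iota(v)$, so that $\gamma_v$ and $\gamma_w$ can be realized disjointly if and only if $\{v,w\}\in E(\Gamma)$. Replacing $S$ by the subsurface that $\{\gamma_v\}$ fills, I may assume these curves fill $S$. By the Koberda embedding theorem (the $\Mod(S)$ analogue stated in Section~2), there is an $N$ large enough that the assignment $v\mapsto T_{\gamma_v}^N$ extends to an injective homomorphism $\phi\co\aga\to\Mod(S)$.

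Next, fix a cyclically reduced pure element $1\ne g\in\aga$, set $X=\supp(g)\subseteq V(\Gamma)$, and let $S_X\subseteq S$ be the subsurface filled by $\{\gamma_v\co v\in X\}$. The hypothesis that $g$ is pure is exactly the condition that the induced subgraph of $\Gamma$ on $X$ does not split as a nontrivial join. My aim is to show that $\phi(g)$ restricts to a pseudo-Anosov on $S_X$ and is trivial on $S\setminus S_X$. After enlarging $N$ if necessary to exceed the thresholds from \cite{Koberda2012} for each of the finitely many candidate subsets $X\subseteq V(\Gamma)$, this follows from the main classification theorem of \cite{Koberda2012}: sufficiently high Dehn twist powers about a filling configuration generate a right-angled Artin group inside $\Mod(S)$ in which any word whose support does not join-decompose acts as a pseudo-Anosov on the subsurface it fills.

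The main obstacle is reconciling the combinatorial ``no join'' hypothesis on $\supp(g)$ with Thurston's dichotomy applied to $\phi(g)|_{S_X}$. If $\phi(g)$ were reducible on $S_X$, then some power of it would fix an essential multicurve $C\subseteq S_X$; using the Servatius/Behrstock--Charney centralizer analysis of Section~\ref{s:comb} (cf.\ Lemma~\ref{lem:vgchar}), the only way such a $C$ can be preserved by a product of very high powers of the Dehn twists $T_{\gamma_v}^N$ with $v\in X$ is for $X$ to split as a join whose two factors act separately on subsurfaces cut out by components of $C$. Since $g$ is pure, no such decomposition exists, so $\phi(g)|_{S_X}$ must be pseudo-Anosov. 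Uniformity in $N$ is not an issue because $V(\Gamma)$ is finite and there are only finitely many possible supports $X$.
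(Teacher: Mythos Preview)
Your strategy is the same as the paper's: realize $\Gamma$ by curves on a surface, embed $\aga$ via high twist powers, and then read off the Nielsen--Thurston type of $\phi(g)$ on the subsurface filled by $\{\gamma_v:v\in\supp(g)\}$. The difference is in the black box you invoke for the last step. The paper appeals to the Clay--Leininger--Mangahas theorem (stated here as Theorem~\ref{thm:clm}), whose ``Furthermore'' clause already asserts that for every cyclically reduced $w$ the image $\phi(w)$ is pseudo-Anosov on each component of $\Fill(X_0)$; purity of $g$ then just means $\Gamma[\supp(g)]$ is not a join, hence the complement graph on $\supp(g)$ is connected, hence $\Fill(X_0)$ is connected, and you are done.

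Your write-up instead attributes this type-preservation statement to ``the main classification theorem of \cite{Koberda2012}''. That reference gives the embedding $\aga\hookrightarrow\Mod(S)$, but not the assertion that a non-join-supported word is sent to a partial pseudo-Anosov; that is precisely the extra content supplied by \cite{CLM2012}. Your fallback direct argument in the last paragraph does not close this gap: the claim that a reducing multicurve $C\subseteq S_X$ forces $\supp(g)$ to split as a join is exactly the hard step, and it is asserted rather than proved. The appeal to Lemma~\ref{lem:vgchar} is misplaced---that lemma concerns centralizers inside a triangle- and square-free right-angled Artin group, not stabilizers of multicurves in $\Mod(S)$, and carries no information here. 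To repair the argument, either cite Theorem~\ref{thm:clm} directly (as the paper does), or supply an honest Thurston-style argument that a product of high Dehn-twist powers about a collection of curves filling a connected subsurface $S_X$ cannot fix an essential curve in $S_X$.
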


Here, the list $X$ is called \emph{irredundant} if no two elements of $X$ share a nonzero common power (or equivalently generate a cyclic subgroup of $\aga$).
Proposition \ref{p:pure} will then imply the following result, which is the analogue of the primary result of \cite{Koberda2012} for right-angled Artin groups:

\begin{thm}\label{thm:purepower}
Let $X=\{g_1,\ldots,g_k\}$ be an irredundant collection of nonidentity, pure elements of $\aga$.  Then for all sufficiently large $N$, we obtain an isomorphism \[A(\Comm(X))\cong\langle g_1^N,\ldots,g_k^N\rangle\le \aga.\]
\end{thm}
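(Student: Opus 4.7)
The plan is to reduce to the analogous theorem for mapping class groups from \cite{Koberda2012} by pushing the problem through the faithful representation provided by Proposition \ref{p:pure}. The surjection $A(\Comm(X))\twoheadrightarrow\form{g_1^N,\ldots,g_k^N}$ is automatic from the definition of the commutation graph, so the task is only to show that it is injective for all sufficiently large $N$.

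First, I would invoke Proposition~\ref{p:pure} to produce a surface $S$ and a faithful homomorphism $\phi\co \aga\to\Mod(S)$ such that each pure element of $\aga$ is sent to a mapping class which is pseudo-Anosov on its support subsurface. In particular, each $\phi(g_i)$ is a pure mapping class pseudo-Anosov on a connected subsurface $S_i\subseteq S$. Since $\phi$ is injective, for any pair of indices $i$ and $j$ we have $[g_i,g_j]=1$ in $\aga$ if and only if $[\phi(g_i),\phi(g_j)]=1$ in $\Mod(S)$. Hence the commutation graph of $\{\phi(g_1),\ldots,\phi(g_k)\}$ inside $\Mod(S)$ coincides with $\Comm(X)$.

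Next, I would verify that the collection $\{\phi(g_1),\ldots,\phi(g_k)\}$ remains irredundant, meaning no two of these mapping classes share a nonzero common power. Indeed, if $\phi(g_i)^m=\phi(g_j)^n$ for some nonzero integers $m,n$, then by the injectivity of $\phi$ we would have $g_i^m=g_j^n$, contradicting the irredundancy of $X$. Now I would apply the main theorem of \cite{Koberda2012}, which states that for an irredundant family of mapping classes, each of which is pseudo-Anosov on its connected support, sufficiently high powers generate a right-angled Artin group whose defining graph is the commutation graph of the family. Thus there exists $N_0$ such that for every $N\ge N_0$,
\[
\form{\phi(g_1)^N,\ldots,\phi(g_k)^N}\cong A(\Comm(X)).
\]

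Finally, since $\phi$ is injective, the composition
\[
A(\Comm(X))\twoheadrightarrow\form{g_1^N,\ldots,g_k^N}\xrightarrow{\phi}\form{\phi(g_1)^N,\ldots,\phi(g_k)^N}\cong A(\Comm(X))
\]
shows that the surjection on the left is in fact an isomorphism, completing the proof. The main obstacle, and the reason for the preparatory work on Proposition~\ref{p:pure}, is that one needs a faithful embedding which converts purity in $\aga$ into a form of purity in the target mapping class group that is amenable to the results of \cite{Koberda2012}; once this is in place, irredundancy and commutation transfer cleanly through $\phi$ and the whole argument becomes a straightforward transport of structure.
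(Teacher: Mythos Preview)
Your proposal is correct and follows essentially the same approach as the paper: push the problem through the faithful homomorphism of Proposition~\ref{p:pure} into a mapping class group, then invoke the main result of \cite{Koberda2012}. The paper's proof is terser and also mentions the alternative of verifying the hypotheses of Theorem~\ref{thm:clm} directly for the images $\phi(g_i)$, but your choice to go straight to \cite{Koberda2012} is one of the two routes the paper itself suggests, and you have spelled out the transfer of irredundancy and of the commutation graph more carefully than the paper does.
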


The following result is due to Clay, Leininger and Mangahas in \cite{CLM2012} (compare with \cite{Koberda2012}):

\begin{thm}[\cite{CLM2012}, Theorem 1.1]\label{thm:clm}
Let $X=\{f_1,\ldots,f_k\}\subseteq\Mod(S)$ be an irredundant collection of pseudo-Anosov mapping classes on connected subsurfaces of $S$.  Suppose furthermore that for $i\neq j$, there are no inclusion relations between $\supp(f_i)$ and $\supp(f_j)$, and that no component of $\partial\supp(f_i)$ coincides with any component of $\partial\supp(f_j)$ whenever $\supp(f_i)\cap\supp(f_j)=\emptyset$. 
Put $\Gamma=\Comm(X)$ and let $v_i\in V(\Gamma)$ be the vertex labeled by $f_i$.
Then for all sufficiently large $N$, 
the map
\[\phi:A(\Comm(X))\to\langle f_1^N,\ldots,f_k^N\rangle\le\Mod(S)\]
defined by $v_i\mapsto f_i^N$ is an isomorphism.
Furthermore, if $1\neq w\in\aga$ is cyclically reduced,
then $\phi(w)$ is a pseudo-Anosov mapping class on each component of the subsurface 
$\Fill(X_0)\subseteq S$
where $X_0=\{\supp(f_i)\co 1\le i\le k\text{ and }v_i\in \supp(w)\}$.
\end{thm}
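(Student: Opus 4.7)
The plan is to prove the theorem by a ping-pong argument on the marking complex of $S$ using subsurface projections in the style of Masur and Minsky. The key ingredients are: each $f_i$ acts loxodromically on $\mC(S_i)$ with positive translation length, since it is pseudo-Anosov on the connected subsurface $S_i = \supp(f_i)$; commuting pairs of vertices in $\Comm(X)$ correspond exactly to pairs $(f_i,f_j)$ with $S_i \cap S_j = \emptyset$, so the target commutation relations already hold in $\Mod(S)$; and Behrstock's inequality, which provides a constant $M$ such that whenever $S_i, S_j$ overlap and $d_{S_i}(\alpha,\beta) > M$, then $d_{S_j}(\alpha,\beta) \le M$.

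First I would fix a reference marking $\mu_0$ and, for each $i$, define the ping-pong region
\[
R_i = \{\mu : d_{S_i}(\pi_{S_i}(\mu), \pi_{S_i}(\mu_0)) > 10M\}.
\]
Choosing $N$ large enough that each $f_i^{\pm N}$ moves curves at least $100M$ in $\mC(S_i)$, the hypothesis that disjoint supports share no boundary components, combined with Behrstock's inequality, implies $R_i \cap R_j = \emptyset$ whenever $f_i, f_j$ do not commute, while $f_i^{\pm N}$ sends any marking outside $R_i$ deep into $R_i$. For any word $w = f_{i_1}^{n_1 N} \cdots f_{i_\ell}^{n_\ell N}$ representing a reduced nontrivial element of $A(\Comm(X))$, I would induct on $\ell$ to show $w(\mu_0) \in R_{i_1}$, using that reducedness means adjacent syllables involve non-commuting generators and so consecutive ping-pong moves land in disjoint regions. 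In particular $\phi(w) \ne 1$, giving injectivity.

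For the furthermore clause, fix a cyclically reduced $w \in A(\Comm(X))$ and a component $F_0$ of $F = \Fill(X_0)$. Applying the same ping-pong with projections to subsurfaces of $F_0$, one obtains $d_{F_0}(\pi_{F_0}(\mu_0), \pi_{F_0}(\phi(w)^n(\mu_0))) \to \infty$ linearly in $n$, so $\phi(w)$ acts loxodromically on $\mC(F_0)$. By the Masur--Minsky characterization of pseudo-Anosov elements via the curve graph, $\phi(w)$ restricts to a pseudo-Anosov on $F_0$. The filling hypothesis is essential here: it ensures the projections make coherent progress in $\mC(F_0)$ rather than being trapped in some proper subsurface.

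The main obstacle will be controlling the overlap patterns among the $S_i$. When $S_i$ and $S_j$ neither coincide nor are disjoint nor nested, the definitions of $R_i, R_j$ must be carefully chosen so that Behrstock's inequality truly separates them and the ping-pong step for $f_i^N$ does not inadvertently fix the $S_j$-projection in a way that stalls the induction. The boundary-sharing hypothesis rules out the delicate case where a single Dehn twist about a shared boundary curve lies in the centralizers of both $f_i$ and $f_j$ and could allow $R_i$ and $R_j$ to leak into one another; without it, one could not guarantee that the kernel of $\phi$ is exactly the normal closure of the commutation relations of $A(\Comm(X))$.
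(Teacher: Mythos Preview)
The paper does not prove this theorem. It is quoted verbatim as Theorem~1.1 of \cite{CLM2012} and used as a black box to derive Proposition~\ref{p:pure} and Theorem~\ref{thm:purepower}. There is therefore no proof in the paper to compare your proposal against.

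That said, your sketch is broadly in the spirit of the Clay--Leininger--Mangahas argument: ping-pong on markings via subsurface projections, with Behrstock's inequality controlling the interaction between overlapping supports. A few points would need tightening before it could stand as a proof. First, the disjointness of $R_i$ and $R_j$ for non-commuting $i,j$ does not follow from Behrstock's inequality alone as you have stated it; Behrstock gives that \emph{at most one} of $d_{S_i}(\mu,\mu_0)$, $d_{S_j}(\mu,\mu_0)$ can exceed the threshold, which is what you need, but your phrasing ``whenever $d_{S_i}(\alpha,\beta)>M$ then $d_{S_j}(\alpha,\beta)\le M$'' requires the overlap hypothesis and the correct form of the inequality involving a third curve or marking. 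Second, the induction on syllable length must accommodate the fact that a reduced word in $A(\Comm(X))$ can have consecutive syllables that \emph{do} commute (the generators just have to be distinct); the ping-pong has to track a block of mutually commuting syllables at once, not one syllable at a time. Third, for the ``furthermore'' clause, showing that $\phi(w)$ is pseudo-Anosov on each component $F_0$ of $\Fill(X_0)$ requires more than loxodromic action on $\mC(F_0)$: you must rule out that $\phi(w)$ is reducible on $F_0$, and the linear growth of $d_{F_0}$ is exactly what is needed, but extracting it from the ping-pong on the individual $S_i$'s (which are proper subsurfaces of $F_0$) requires an additional argument passing through the distance formula or a direct estimate, not just the characterization you cite.
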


Here, $\Fill(X_0)$ denotes the smallest subsurface (up to isotopy) filled by \[\bigcup_{S_i\in X_0} S_i.\]  The subsurface $\Fill(X_0)$ is given, up to isotopy, by taking the set--theoretic union of the subsurfaces in $X_0$ and filling in any nullhomotopic boundary curves.

\begin{proof}[Proof of Proposition \ref{p:pure}]
Taking a sufficiently large surface $S$, we can find a collection of elements $X$ of $\Mod(S)$ for which $\Comm(X)\cong\Gamma$ and which satisfy the hypotheses of Theorem \ref{thm:clm} (this fact is proved in \cite[Corollary 1.2]{CLM2012}.  The reader may also consult \cite{KK2012} for another discussion).  Let $1\neq w\in\aga$ be pure.  After cyclically reducing $w$, we have that $\supp(w)\subseteq\Gamma$ is not a join.  Identifying $\supp(w)$ with $X_0\subseteq X$, the purity of $w$ translates to the connectedness of $\Fill(X_0)$.  It follows that $\phi(w)$ is pseudo-Anosov on a connected subsurface of $S$.
\end{proof}

\begin{proof}[Proof of Theorem \ref{thm:purepower}]
Let $X=\{g_1,\ldots,g_k\}\subseteq \aga$ be an irredundant collection of nonidentity pure elements of $\aga$, and let $\phi:\aga\to\Mod(S)$ be an injective homomorphism coming from Theorem \ref{thm:clm}.  We have that $\phi(g_i)$ is pseudo-Anosov mapping class on a connected subsurface of $S$ for each $g_i\in X$.  One can either check that these partial pseudo-Anosov mapping classes again satisfy the conditions of Theorem \ref{thm:clm}, or one can appeal to the primary result of \cite{Koberda2012} to see that for all sufficiently large $N$, we obtain an isomorphism \[A(\Comm(X))\cong\langle g_1^N,\ldots,g_k^N\rangle\le \aga,\] as claimed.
\end{proof}

\section{North--south dynamics on $\partial\gex$}
\label{s:loxodromic}
The following is a standard result about pseudo-Anosov mapping classes:
\begin{prop}\label{p:pseudoanosovfree}
Let $\{\psi_1,\ldots,\psi_k\}$ be pseudo-Anosov elements of $\Mod(S)$ with the property that for $i\neq j$, we have that $\psi_i$ and $\psi_j$ share no common power.  Then for all sufficiently large $N$, we have \[\langle \psi_1^N,\ldots,\psi_k^N\rangle\cong F_k.\]
\end{prop}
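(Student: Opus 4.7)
The plan is to deduce this from a standard ping--pong argument using the north--south dynamics of pseudo-Anosov mapping classes. The setup is parallel to the analogous loxodromic statement (Theorem \ref{t:loxfree}) for right-angled Artin groups, but here we use the well-known action on projective measured laminations $\mathcal{PML}(S)$ (equivalently, on $\partial\mC(S)$).

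First, I would invoke Thurston's classification to obtain, for each pseudo-Anosov $\psi_i$, a distinct attracting fixed point $F_i^+$ and repelling fixed point $F_i^-$ in $\mathcal{PML}(S)$, with the property that $\psi_i$ acts on $\mathcal{PML}(S)$ with north--south dynamics: every compact subset of $\mathcal{PML}(S)\setminus\{F_i^-\}$ is attracted to $F_i^+$ under forward iteration, and similarly for $\psi_i^{-1}$ with the roles reversed. The second ingredient is the classical fact that the stabilizer in $\Mod(S)$ of the fixed-point pair $\{F_i^+,F_i^-\}$ is virtually cyclic, so that two pseudo-Anosov mapping classes sharing even a single fixed point in $\mathcal{PML}(S)$ must share a common power. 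The hypothesis on $\{\psi_1,\ldots,\psi_k\}$ therefore guarantees that the $2k$ points $\{F_i^\pm : 1\le i\le k\}$ are pairwise distinct.

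Next, I would choose pairwise disjoint open neighborhoods $U_i^\pm\subset \mathcal{PML}(S)$ of the points $F_i^\pm$, and a basepoint $x_0$ lying outside the union $\bigcup_{i,\pm}U_i^\pm$. By the uniform attraction coming from north--south dynamics, there exists $N_0$ such that for all $N\ge N_0$ and all $i$,
\[
\psi_i^{\,N}\!\left(\mathcal{PML}(S)\setminus U_i^-\right)\subseteq U_i^+,\qquad \psi_i^{-N}\!\left(\mathcal{PML}(S)\setminus U_i^+\right)\subseteq U_i^-.
\]
Fix such an $N$. The ping--pong lemma then applies directly: any nonempty reduced word $w$ in $\psi_1^{\pm N},\ldots,\psi_k^{\pm N}$, applied to $x_0$, lands inside one of the $U_i^\pm$ (the one determined by the leftmost letter of $w$). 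In particular $w\ne 1$, so the map from $F_k$ sending the $i$-th free generator to $\psi_i^N$ is injective, yielding $\langle\psi_1^N,\ldots,\psi_k^N\rangle\cong F_k$.

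The only real obstacle is verifying that the two inputs used above are on firm footing: (i) the uniform north--south behavior on $\mathcal{PML}(S)$ needed to extract a single $N_0$ that works simultaneously for all $i$, and (ii) the fact that sharing a single fixed point forces a common power. Both are classical consequences of Thurston's theory and the structure of centralizers of pseudo-Anosovs (see \cite{flp}), so once they are invoked, the ping--pong step is routine and the rank-$k$ free conclusion follows immediately.
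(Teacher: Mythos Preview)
Your proposal is correct and follows essentially the same approach as the paper's sketch: invoke the north--south dynamics of each $\psi_i$ on $\mathbb{P}\mathcal{ML}(S)$, verify that the fixed points are pairwise distinct (you do this via the virtually cyclic stabilizer, the paper simply asserts it), and then run a ping--pong argument. Your write-up is in fact more detailed than the paper's, which leaves the ping--pong step entirely to the reader and also remarks that one could equivalently work on $\partial\mC(S)\cong\mathcal{EL}(S)$.
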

\begin{proof}[Sketch of proof]
One approach is as follows: we have that for $i\neq j$, the mapping classes $\psi_i$ and $\psi_j$ stabilize pairs of measured laminations $\mL_i^{\pm}$ and $\mL_j^{\pm}$.  Furthermore, neither of $\mL_i^{\pm}$ coincides with either of $\mL_j^{\pm}$.  The action of a pseudo-Anosov $\psi_i$ on $\mathbb{P}\mathcal{ML}(S)$ is by north--south dynamics, with a source at $\mL_i^-$ and a sink at $\mL_i^+$.

The boundary of $\mC(S)$ is identified with the space of ending laminations $\mathcal{EL}(S)\subseteq\mathbb{P}\mathcal{ML}(S)$, and the north--south dynamics restricts to $\mathcal{EL}(S)$.  A straightforward ping--pong argument shows that sufficiently high powers of the mapping classes in question generate the expected free group.
\end{proof}

One does not need to consider the boundary of $\mC(S)$ to prove the previous proposition.  The north--south dynamics on $\mathbb{P}\mathcal{ML}(S)$ suffices.  The reason we mention the boundary of $\mC(S)$ is because we will be using the boundary of $\gex$ to prove the following analogue:

\begin{thm}\label{t:loxfree}
Let $\lambda_1,\ldots,\lambda_k$ be loxodromic elements of $\aga$ with the property that for $i\neq j$, we have that $\lambda_i$ and $\lambda_j$ share no common power.  Then for all sufficiently large $N$, we have \[\langle \lambda_1^N,\ldots,\lambda_k^N\rangle\cong F_k.\]
\end{thm}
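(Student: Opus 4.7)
The plan is to carry out a ping-pong argument on the Gromov boundary $\partial\gex$. Since $\gex$ is a quasi-tree (hence $\delta$-hyperbolic) by the cited result of \cite{KK2012}, and since $\aga$ acts acylindrically on $\gex$ by Theorem~\ref{thm:acyl}, the classical Gromov theory of hyperbolic isometries applies: each loxodromic $\lambda \in \aga$ fixes exactly two points $\lambda^\pm \in \partial\gex$ and acts with north--south dynamics on $\gex \cup \partial\gex$, as recorded in Lemma~\ref{l:northsouth}. This is the input from the acylindricity theorem that makes the argument run.

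First, I would verify that the fixed-point pairs $\{\lambda_i^+, \lambda_i^-\}$ and $\{\lambda_j^+, \lambda_j^-\}$ are disjoint for $i \neq j$. This is the main technical step, and where the acylindricity hypothesis is used in an essential way. The standard fact is: in an acylindrical action on a hyperbolic space, if two loxodromic elements share even one fixed point on the boundary, then they share a common power. One proves this by observing that if $\lambda_i^+ = \lambda_j^+$ but $\lambda_i^- \neq \lambda_j^-$, then iterated commutators $[\lambda_i^m, \lambda_j^m]$ would translate a long subsegment of the axis of $\lambda_i$ by a small amount, contradicting the acylindricity constants (given $r$, one can force arbitrarily many elements $g$ satisfying $d(p, g.p), d(q, g.q) \le r$ along two points $p, q$ of the axis). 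Since by hypothesis no two $\lambda_i$ and $\lambda_j$ share a common power, we conclude that the $2k$ fixed points $\{\lambda_1^\pm, \ldots, \lambda_k^\pm\}$ are pairwise distinct.

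Next I would perform the ping-pong itself. Fix pairwise disjoint open neighborhoods $U_i^\pm \subseteq \gex \cup \partial\gex$ of each $\lambda_i^\pm$, small enough that the $2k$ sets are pairwise disjoint. By north--south dynamics, for each $i$ there is an integer $N_i$ such that for all $|n| \ge N_i$, the element $\lambda_i^n$ sends the complement of $U_i^-$ (resp.\ $U_i^+$) into $U_i^+$ (resp.\ $U_i^-$). Taking $N = \max_i N_i$, the elements $\{\lambda_1^N, \ldots, \lambda_k^N\}$ satisfy the hypotheses of the ping-pong lemma applied to the $2k$ disjoint sets $\{U_i^\pm\}$, so they generate a free group of rank $k$.

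The main obstacle is the first step: ruling out shared boundary fixed points using acylindricity, and not merely shared pairs of fixed points. Once this is established, the north--south dynamics and the ping-pong lemma assemble the result in a routine fashion. If one wanted to avoid constructing $\partial\gex$ explicitly, an alternative would be to run ping-pong directly on quasi-axes of the $\lambda_i^N$ in $\gex$, using that acylindricity plus distinct translation directions force the quasi-axes of high powers to intersect in a uniformly bounded set; this again rests on the same ``no shared fixed endpoint'' lemma.
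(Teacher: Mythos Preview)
Your approach is essentially the same as the paper's: north--south dynamics on $\partial\gex$ followed by ping--pong, exactly as in the sketch of Proposition~\ref{p:pseudoanosovfree}. The paper says so explicitly, reducing Theorem~\ref{t:loxfree} to Lemma~\ref{l:northsouth}.

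The one genuine difference lies in how the disjointness of fixed--point pairs is verified. The paper's proof of Lemma~\ref{l:northsouth} is RAAG--specific: if two loxodromics shared an endpoint their orbits would fellow--travel, so by Lemma~\ref{l:diverge} the set $\{\lambda_1^n\lambda_2\lambda_1^{-n}\}$ would have bounded diameter in $\gex$; Lemma~\ref{l:boundedconjugation} then invokes the finiteness estimate Lemma~\ref{lem:st} (the technical core of acylindricity) to force two of these conjugates to coincide, and finally Baudisch's classification of two--generated subgroups of right-angled Artin groups converts the resulting relation into a common power. Your argument instead appeals directly to the acylindricity of the action in its abstract form, which is cleaner and works for any acylindrical action on a hyperbolic space. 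One small correction: when only one endpoint is shared, the elements that coarsely fix a long segment of the axis are products $\lambda_i^{p}\lambda_j^{-q}$ with $p,q$ chosen so that the translation lengths nearly match, not the commutators $[\lambda_i^m,\lambda_j^m]$; with that adjustment your sketch goes through.
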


The proof of Theorem \ref{t:loxfree} is identical to that of Proposition \ref{p:pseudoanosovfree}.  The only point which needs to be checked is that a loxodromic $\lambda\in\aga$ truly acts on $\partial\gex$ by north--south dynamics.  Observe that Theorem \ref{t:loxfree} follows immediately from Theorem \ref{thm:purepower}.  It is not the result we wish to analogize, but rather the method of proof:

\begin{lem}\label{l:northsouth}
Let $\lambda\in\aga$ be a loxodromic element.  Then there exists a unique pair of points $p^{\pm}\subseteq\partial\gex$ such that for any compact subset $K\subseteq \partial\gex\setminus\{p^{-}\}$ and for any open subset $U\subseteq\partial\gex$ containing $p^+$, there exists an $N$ such that $\lambda^N(K)\subseteq U$.  Furthermore, if $\lambda'\in\aga$ is another loxodromic which shares no common power with $\lambda$, then no fixed boundary point of $\lambda$ coincides with any fixed boundary point of $\lambda'$.
\end{lem}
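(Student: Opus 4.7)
The plan is to deduce the result from the general theory of loxodromic isometries acting on Gromov-hyperbolic spaces, using two inputs already available in the paper: $\Gamma^e$ is a quasi-tree and hence $\delta$-hyperbolic by \cite{KK2012}, and the conjugation action of $A(\Gamma)$ on $\Gamma^e$ is acylindrical by Theorem \ref{thm:acyl}. Throughout, $\partial \Gamma^e$ denotes the Gromov boundary. Fix a basepoint $v_0 \in \Gamma^e$.

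First I would produce the pair $p^\pm \in \partial \Gamma^e$. Since $\lambda$ is loxodromic, the discussion in Section \ref{s:ntclass} gives a uniform lower bound $\tau(\lambda) \geq \epsilon > 0$ on its translation length. It is then standard (and especially transparent in a quasi-tree) that the orbit map $n \mapsto \lambda^n v_0$ is a quasi-isometric embedding $\mathbb{Z} \to \Gamma^e$; its two ends define distinct points $p^+ = \lim_{n \to +\infty} \lambda^n v_0$ and $p^- = \lim_{n \to -\infty} \lambda^n v_0$ in $\partial \Gamma^e$, both preserved by $\lambda$ by construction.

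Next I would establish the north--south dynamics. Let $\gamma$ be a quasi-geodesic axis for $\lambda$ joining $p^-$ to $p^+$. For a point $q \in \partial \Gamma^e \setminus \{p^-\}$, the Gromov product $(q, p^-)_{v_0}$ is finite, so any quasi-geodesic ray $[v_0, q)$ fellow-travels $\gamma$ only on an initial segment of bounded length before diverging toward $p^+$. Applying $\lambda^N$ shifts the tracking data forward along $\gamma$ by approximately $N \tau(\lambda)$, placing $\lambda^N q$ deep inside any prescribed neighborhood of $p^+$. Uniformity of the Gromov product bound over a compact $K \subseteq \partial \Gamma^e \setminus \{p^-\}$ then yields $\lambda^N(K) \subseteq U$ for all sufficiently large $N$. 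Uniqueness of $p^\pm$ is automatic from this dynamical characterization.

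Finally, for the disjointness of fixed points, suppose a second loxodromic $\lambda'$ shares a boundary fixed point with $\lambda$, and consider the two cases. If they share both fixed points, the setwise stabilizer of the pair $\{p^-(\lambda), p^+(\lambda)\}$ is virtually cyclic: any such stabilizer element translates along the common axis, and the acylindricity parameters of Theorem \ref{thm:acyl} bound the number of group elements with any given translation amount, so modulo a cyclic subgroup only finitely many remain; hence $\lambda$ and $\lambda'$ share a common power, contradicting the hypothesis. If they share exactly one fixed point, say $p^+(\lambda) = p^+(\lambda') = p$ but $p^-(\lambda) \neq p^-(\lambda')$, then conjugating gives a sequence of loxodromic elements $\lambda^N \lambda' \lambda^{-N}$ with attracting fixed point $p$ and repelling fixed point $\lambda^N \cdot p^-(\lambda')$ tending to $p$; as these two fixed points collide, the translation lengths must tend to zero, violating the uniform lower bound $\tau \geq \epsilon > 0$ available for loxodromics in any acylindrical action on a hyperbolic space. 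I expect this last step --- converting the boundary collision into a genuine translation-length contradiction --- to be the main obstacle, and it may be cleanest to sidestep it by appealing directly to the general fact (due to Bowditch and Osin in the acylindrical setting) that two loxodromic elements of an acylindrical action on a $\delta$-hyperbolic space either have disjoint fixed-point sets on the boundary or share both fixed points.
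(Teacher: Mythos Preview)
Your first two parts---existence of $p^\pm$ and north--south dynamics---are correct and essentially match the paper's Lemma~\ref{l:endpoint}, though stated in slightly more general hyperbolic-space language.

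Your Case~2 argument for the disjointness of fixed points contains a genuine error. You claim that as the two fixed points of $\lambda^N \lambda' \lambda^{-N}$ collide at $p$, the translation lengths must tend to zero. But translation length is a conjugacy invariant: $\tau(\lambda^N \lambda' \lambda^{-N}) = \tau(\lambda')$ for every $N$, so it is constant, not tending to zero. What actually happens is that the axis of $\lambda^N \lambda' \lambda^{-N}$ escapes to infinity in $\Gamma^e$ while the translation along it remains fixed. So this step fails as written. Your hedge---citing the general Bowditch/Osin fact that loxodromics in an acylindrical action either share both boundary fixed points or none---does rescue the argument, and is a legitimate route.

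The paper takes a rather different path for the disjointness and does not split into cases. Assuming $p_1^+ = p_2^+$, it argues that the orbit rays $\{v^{\lambda_1^n}\}$ and $\{v^{\lambda_2^n}\}$ fellow travel, then invokes two RAAG-specific lemmas: Lemma~\ref{l:diverge} converts bounded orbit distance into a bound on the set $\{v^{\lambda_1^n \lambda_2 \lambda_1^{-n}}\}$, while Lemma~\ref{l:boundedconjugation} shows (using the acylindricity estimate Lemma~\ref{lem:st} together with Baudisch's classification of two-generated subgroups of RAAGs) that $\|\lambda_1^n \lambda_2 \lambda_1^{-n}\|_* \to \infty$ unless $[\lambda_1,\lambda_2]=1$. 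So the paper's argument is more hands-on and leverages the algebraic structure of $A(\Gamma)$ directly, whereas your approach (once patched) treats the problem as a black-box consequence of acylindricity on a hyperbolic space. Both are valid; yours is shorter once the general machinery is granted, while the paper's is more self-contained.
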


We will first gather the necessary facts and then prove Lemma \ref{l:northsouth}.  Theorem \ref{t:loxfree} will follow immediately.

\begin{lem}\label{l:cycliccentralizer}
Let $\lambda\in\aga$ be loxodromic.  Then the centralizer of $\lambda$ is cyclic.
\end{lem}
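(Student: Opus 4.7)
The plan is to reduce to the cyclically reduced case and then invoke the Centralizer Theorem of Servatius. Since being loxodromic and having a cyclic centralizer are both conjugation invariant, I may first replace $\lambda$ by a cyclically reduced conjugate; Theorem~\ref{t:ntclass} then reformulates the loxodromic hypothesis as the statement that $\supp(\lambda)$ is not contained in any subjoin of $\Gamma$.

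Next, applying the Centralizer Theorem (\cite{Servatius1989}, cf. \cite[Lemma 5.1]{BC2010}) to the cyclically reduced element $\lambda$ yields a canonical decomposition $\lambda = g_1^{a_1} \cdots g_\ell^{a_\ell}$ into pairwise commuting cyclically reduced pure factors $g_i$ with pairwise disjoint supports, together with an induced subgraph $\Gamma_0 \le \Gamma$ consisting of those vertices of $\Gamma \setminus \supp(\lambda)$ that are adjacent in $\Gamma$ to every vertex of $\supp(\lambda)$. The centralizer then has the product description
\[
C_{\aga}(\lambda) = \langle g_1 \rangle \times \cdots \times \langle g_\ell \rangle \times A(\Gamma_0).
\]
From here it suffices to show that $\ell = 1$ and that $\Gamma_0$ has no vertices, for then $C_{\aga}(\lambda) = \langle g_1 \rangle$ is infinite cyclic.

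Both remaining claims are forced by the join hypothesis. If $\ell \ge 2$, then since distinct pure factors share no common root, their pairwise commutation upgrades to full bipartite adjacency of their supports in $\Gamma$; consequently $\supp(g_1) \,\ast\, (\supp(g_2) \cup \cdots \cup \supp(g_\ell))$ is a subjoin containing $\supp(\lambda)$, contradicting loxodromicity. Similarly, a vertex $v \in V(\Gamma_0)$ would yield a subjoin $\supp(\lambda) \ast \{v\}$ containing $\supp(\lambda)$, again a contradiction. The main delicate point I anticipate is purely bookkeeping: one must work with the canonical pure-factor form of the Centralizer Theorem, in which the factor supports are disjoint, so that commutation of pure factors genuinely records adjacency in $\Gamma$; once that form is in hand, the argument closes immediately via Theorem~\ref{t:ntclass}.
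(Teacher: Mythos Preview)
Your proof is correct and takes essentially the same approach as the paper: reduce to a cyclically reduced conjugate, invoke Theorem~\ref{t:ntclass} to translate loxodromicity into the statement that $\supp(\lambda)$ lies in no subjoin, and then apply the Centralizer Theorem of Servatius. The paper compresses this into a one--line citation, whereas you have written out the Servatius decomposition $C_{\aga}(\lambda)=\langle g_1\rangle\times\cdots\times\langle g_\ell\rangle\times A(\Gamma_0)$ and explained explicitly why $\ell\ge 2$ or $\Gamma_0\ne\varnothing$ would produce a forbidden subjoin; this is exactly the content hidden behind the paper's reference.
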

\begin{proof}
This follows from the Centralizer Theorem (see \cite{Servatius1989} or \cite[Lemma 5.1]{BC2010}) since after cyclic reduction, the support of $\lambda$ is not contained in a subjoin of $\Gamma$.
\end{proof}

It follows that two loxodromics commute if and only if they share a common power. 
Following~\cite{BH1999}, let us write $a=_M b$ to mean $|a-b|\le M$.

\begin{lem}\label{l:diverge}
Let $g,h\in\aga$ be any nontrivial elements and let $v$ be a vertex of $\gex$.  
Suppose that the sequence $\{d(v^{g^{n}},v^{h^{n}})\}_{n\ge1}$ is bounded. 
Then the diameter of the set $\{v^{g^{n}hg^{-n}}\co n\ge1\}$ is finite.
\end{lem}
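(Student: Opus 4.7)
The plan is to exploit the fact that $\aga$ acts on $\gex$ by isometries, so right-translation by any group element preserves distance. The first step is to reduce the claim to a one-sided estimate: since $g^n$ acts isometrically, we have
\[
d(v^{g^n h g^{-n}}, v) \;=\; d(v^{g^n h g^{-n}\cdot g^n}, v^{g^n}) \;=\; d(v^{g^n h}, v^{g^n}),
\]
so it suffices to bound $d(v^{g^n h}, v^{g^n})$ uniformly in $n$, and then apply the triangle inequality to bound the diameter of $\{v^{g^n h g^{-n}}\co n\ge 1\}$ by twice this bound.

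The main step is a three-term triangle inequality obtained by inserting $v^{h^{n+1}}=v^{h^n h}$ as an intermediate point:
\[
d(v^{g^n h}, v^{g^n}) \;\le\; d(v^{g^n h}, v^{h^n h}) + d(v^{h^n h}, v^{h^n}) + d(v^{h^n}, v^{g^n}).
\]
Each of the three terms can be controlled using isometry. Applying the isometry $h^{-1}$ to the first term yields $d(v^{g^n}, v^{h^n})$, which is bounded by hypothesis. Applying the isometry $h^{-n}$ to the second term yields the constant $d(v^h, v)$. The third term is again bounded by hypothesis. Collecting these estimates gives
\[
d(v^{g^n h}, v^{g^n}) \;\le\; 2\sup_{n\ge 1} d(v^{g^n}, v^{h^n}) + d(v, v^h),
\]
which is finite, and hence yields a uniform bound on $d(v^{g^n h g^{-n}}, v)$ independently of $n$.

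There is no genuine obstacle here; the only subtlety is bookkeeping with the right-action convention adopted in the paper and making sure each isometry is applied on the correct side. Once the two identifications $d(v^{g^n h g^{-n}}, v) = d(v^{g^n h}, v^{g^n})$ and $d(v^{g^n h}, v^{h^n h}) = d(v^{g^n}, v^{h^n})$ are in hand, the conclusion is immediate from the triangle inequality and the standing hypothesis.
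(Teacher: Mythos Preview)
Your proof is correct and follows essentially the same approach as the paper: both arguments convert $d(v,v^{g^n h g^{-n}})$ to $d(v^{g^n},v^{g^n h})$ via the isometry $g^n$, then insert $v^{h^{n+1}}$ as an intermediate point and control each piece using the hypothesis and the constant $d(v,v^h)$. The paper presents this more tersely using the $=_M$ notation, but the content is identical.
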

\begin{proof}
Assume that there exists $M>0$ such that $d(v^{g^{n}},v^{h^{n}})\le M$ for every $n$.
By Lemma~\ref{lem:covering distance} and the triangle inequality, for every $n$ we have
 \[M\ge  d(v^{g^{n}},v^{h^{n}})=_L d(v^{g^{n}},v^{h^{n}h})=_M d(v^{g^{n}},v^{g^{n}h})=d(v,v^{g^{n}hg^{-n}}).\]  This establishes the claim. 
\end{proof}

Observe that the proof of Lemma \ref{l:diverge} is more or less the same as the standard proof that if $g$ and $h$ are elements of a finitely generated group $G$ and if the sets $\{g^n\}$ and $\{h^n\}$ in $\cay(G)$ fail to diverge, then there exists a nonzero $N$ such that $[g^N,h]=1$ (see \cite[p.467]{BH1999}).  The complicating factor in the case of the extension graph is that $\gex$ not locally finite, so that bounded diameter subsets need not be finite.  This problem is circumvented by the following fact:

\begin{lem}\label{l:boundedconjugation}
Let $\lambda\in\aga$ be a loxodromic element and $g$ be an element of $\aga$ which is not contained in the centralizer of $\lambda$.  Then as $n$ tends to infinity, we have that \[\stl{\lambda^{n}g\lambda^{-n}}\to\infty.\] 
\end{lem}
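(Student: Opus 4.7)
The plan is to translate the star-length statement into one about displacements along the quasi-axis of $\lambda$ in the quasi-tree $\gex$ (\cite{KK2012}), and then derive a contradiction from acylindricity (Theorem~\ref{thm:acyl}) and the cyclicity of $Z(\lambda)$ (Lemma~\ref{l:cycliccentralizer}) under the supposition $g \notin Z(\lambda)$.

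Set $w_n := v^{\lambda^n}$. Since right-multiplication by $\lambda^n$ is an isometry of $\gex$,
\[
d(v, v^{\lambda^n g \lambda^{-n}}) \;=\; d(v^{\lambda^n}, v^{\lambda^n g}) \;=\; d(w_n, w_n^g),
\]
so by Lemma~\ref{lem:covering distance} the assertion $\stl{\lambda^n g \lambda^{-n}} \to \infty$ is equivalent to $d(w_n, w_n^g) \to \infty$. Suppose for contradiction that $d(w_{n_k}, w_{n_k}^g) \le r$ along an infinite subsequence $n_k \to \infty$. Since $\{w_n\}_{n \ge 0}$ is a quasi-geodesic ray converging to an endpoint $\xi^+ \in \partial\gex$ ($\lambda$ being loxodromic on the quasi-tree $\gex$), the bound together with $w_{n_k} \to \xi^+$ forces $w_{n_k}^g \to \xi^+$, and continuity of the boundary action yields $g \cdot \xi^+ = \xi^+$. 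The principal obstacle is then to upgrade this to a uniform bound $d(w_n, w_n^g) \le C$ along the \emph{whole} ray; this follows from the standard fact that two quasi-geodesic rays sharing an endpoint at infinity in a hyperbolic space must asymptotically fellow-travel (after a bounded reparametrization, and using that $g^{-1}\lambda g$ has the same translation length as $\lambda$), with $C$ depending only on $g$, $\lambda$, and the hyperbolicity constants of $\gex$.

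Now consider the conjugates $h_j := \lambda^j g \lambda^{-j}$ for $j \ge 0$. By Lemma~\ref{l:cycliccentralizer}, $Z(\lambda^k) = Z(\lambda)$ for every nonzero $k$, so $g \notin Z(\lambda)$ forces the $h_j$ to be pairwise distinct: any coincidence $h_j = h_{j'}$ would put $\lambda^{j-j'}$ in the centralizer of $g$, hence $g$ in $Z(\lambda^{j-j'}) = Z(\lambda)$. An analogous isometry calculation yields
\[
d(w_a, w_a^{h_j}) \;=\; d(w_{a+j}, w_{a+j}^g),
\]
which is at most $C$ once $a + j$ exceeds the threshold from the uniform-bound step. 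Fix anchors $w_a, w_b$ past this threshold with $d(w_a, w_b) \ge R$, where $R, N$ are the acylindricity constants of Theorem~\ref{thm:acyl} attached to parameter $C$. Then $h_j \in \xi(w_a, w_b; C)$ for every sufficiently large $j$, producing infinitely many distinct elements in a set of cardinality at most $N$---a contradiction, which establishes $g \in Z(\lambda)$, contrary to our hypothesis.
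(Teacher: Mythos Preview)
Your proof is correct and reaches the same endgame as the paper---infinitely many distinct conjugates $\lambda^j g\lambda^{-j}$ trapped in a finite set---but the route is genuinely different. The paper works algebraically with star length: it first asserts that $\stl{\lambda^n g\lambda^{-n}}$ is monotone in $n$, so a failure to diverge gives a uniform bound $t$; it then chooses $M$ with $\stl{\lambda^{\pm M}}>2t+2$ and invokes Lemma~\ref{lem:st} directly to conclude that $B_t\cap\lambda^{-M}B_t\lambda^M$ is finite, forcing a collision $\lambda^n g\lambda^{-n}=\lambda^m g\lambda^{-m}$; the contradiction is then closed via Baudisch's dichotomy that $\langle\lambda,g\rangle$ is abelian or free. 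You instead work geometrically in $\gex$: from a bounded subsequence you pass to a boundary fixed point $g\cdot\xi^+=\xi^+$, then use fellow-traveling of the two quasi-axes (with equal translation length) to promote this to a uniform bound $d(w_n,w_n^g)\le C$, and finally feed the conjugates $h_j$ into the acylindricity set $\xi(w_a,w_b;C)$. What your approach buys is that it sidesteps the monotonicity claim (which the paper states without proof) and uses Theorem~\ref{thm:acyl} as a black box; what the paper's approach buys is brevity and avoidance of the boundary and fellow-traveling machinery, applying the finer Lemma~\ref{lem:st} directly. Your appeal to $Z(\lambda^k)=Z(\lambda)$ is legitimate: since $\lambda^k$ is also loxodromic, Lemma~\ref{l:cycliccentralizer} gives that both centralizers are cyclic, and in a torsion-free group the inclusion $Z(\lambda)\subseteq Z(\lambda^k)$ between cyclic groups containing $\lambda$ forces equality.
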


If $\lambda'$ is a loxodromic element of $\aga$ which does not share a common nonzero power with $\lambda$ then $\lambda'$ is not contained in the centralizer of $\lambda$ and thus satisfies the hypotheses on $g$ in Lemma \ref{l:boundedconjugation}.
\begin{proof}
Note that $\stl{\lambda^n g \lambda^{-n}}$ is monotone increasing with respect to $n$.
Suppose for every $n\ge0$ we have $\stl{\lambda^n g \lambda^{-n}} < t <\infty$.
Recall our notation $B_t = \{h\in A(\Gamma)\co \stl{h}\le t\}$.
Choose $M>0$ such that $\stl{\lambda^M}=\stl{\lambda^{-M}}>2t+2$.
For every $n\ge0$ we see that $\lambda^n g \lambda^{-n} \in B_t\cap \lambda^{-M} B_t \lambda^M$.
Lemma~\ref{lem:st} implies that $\lambda^n g \lambda^{-n} = \lambda^m g \lambda^{-m}$ for some $m>n>0$.  It follows that $\lambda^{m-n}$ commutes with $\lambda^n g\lambda^{-n}$.  The classification of two--generated subgroups of right-angled Artin groups (see \cite[Theorem 1.1]{baudisch}) implies that $\lambda$ and $g$ either commute or generate a free group.  We thus have that $\lambda^{m-n}$ commutes with $\lambda^n g\lambda^{-n}$ if and only if $[\lambda,g]=1$, which contradicts our initial assumption.
\end{proof}

\begin{lem}\label{l:endpoint}
Let $\lambda\in\aga$ be loxodromic.  Then there exists a unique pair $p^{\pm}\subseteq\partial\gex$ with respect to which $\lambda$ acts by north--south dynamics.
\end{lem}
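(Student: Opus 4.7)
The plan is to invoke the standard fact that every loxodromic isometry of a Gromov hyperbolic space acts by north--south dynamics on its boundary, and to verify that the hypotheses are met for $\lambda$ acting on $\Gamma^e$. Recall from \cite{KK2012} that $\Gamma^e$ is a quasi--tree, hence $\delta$--hyperbolic for some $\delta$, so its Gromov boundary $\partial\Gamma^e$ is well-defined. Since the $\aga$--action on $\Gamma^e$ is acylindrical (Theorem~\ref{thm:acyl}), a loxodromic $\lambda$ in the sense of Section~\ref{s:ntclass} has positive translation length $\tau(\lambda)>0$. Thus for any basepoint $v\in\Gamma^e$, the orbit map $n\mapsto v^{\lambda^n}$ is a quasi--isometric embedding of $\Z$ into $\Gamma^e$.

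First I would construct the two fixed points. Setting $p^+=\lim_{n\to+\infty} v^{\lambda^n}$ and $p^-=\lim_{n\to-\infty} v^{\lambda^n}$ in the Gromov boundary, the usual argument (the orbit is a quasi--geodesic, so the endpoints exist, are distinct, and are independent of $v$) shows $p^\pm\in\partial\Gamma^e$ and are fixed by $\lambda$. One can then check, using the Morse lemma, that a quasi--axis of $\lambda$ is the coarsely unique biinfinite quasi--geodesic with endpoints $\{p^-,p^+\}$, and that $\lambda$ shifts along it by $\tau(\lambda)$.

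Next I would establish north--south dynamics. For any $q\in\partial\Gamma^e\setminus\{p^-\}$, the Gromov product $(q,p^-)_v$ is finite, while $(v^{\lambda^n},p^+)_v\to\infty$. A standard thin--triangle computation yields $(\lambda^n q,p^+)_v\to\infty$, so $\lambda^n q\to p^+$ in $\partial\Gamma^e$. To upgrade pointwise convergence to uniform convergence on compacta, I would use that $K\subseteq\partial\Gamma^e\setminus\{p^-\}$ compact has $\sup_{q\in K}(q,p^-)_v<\infty$; then a single $N$ suffices to push all of $K$ into any prescribed neighborhood $U$ of $p^+$. The symmetric argument handles $\lambda^{-n}$ and $p^-$.

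Finally, uniqueness of the pair $\{p^-,p^+\}$ is automatic: any fixed point $q\in\partial\Gamma^e$ other than $p^-$ would satisfy $\lambda^n q\to p^+$ by the dynamics just established, forcing $q=p^+$. For the second assertion, if $\lambda'$ is loxodromic and shares a fixed boundary point with $\lambda$, a ping--pong style argument shows that $\langle\lambda,\lambda'\rangle$ virtually stabilizes that boundary point, hence lies in a virtually cyclic subgroup by acylindricity; but by Lemma~\ref{l:cycliccentralizer} the centralizer of a loxodromic is cyclic, so $\lambda$ and $\lambda'$ share a common power, a contradiction. The main subtlety to be careful about is the non--local--finiteness of $\Gamma^e$: the usual boundary arguments must be phrased entirely in terms of Gromov products rather than in terms of combinatorial neighborhoods, but since $\Gamma^e$ is genuinely hyperbolic this is routine.
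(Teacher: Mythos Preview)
Your argument for the stated lemma is correct and follows essentially the same route as the paper: both rely on the $\delta$--hyperbolicity of $\gex$ (it is a quasi--tree) to produce the two limit points $p^{\pm}$ of the $\lambda$--orbit and then invoke standard hyperbolic geometry to obtain north--south dynamics. The paper's proof is a bit more informal, simply asserting that hyperbolicity forces translates of a ray with endpoint $p'\ne p^{\pm}$ to move toward $p^+$, whereas you phrase the same idea more carefully via Gromov products; but the content is the same.

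One remark: the ``second assertion'' you prove at the end---that two loxodromics sharing a boundary fixed point must share a common power---is \emph{not} part of Lemma~\ref{l:endpoint}. In the paper this is the second half of Lemma~\ref{l:northsouth}, proved separately. Your approach to it is genuinely different from the paper's. You invoke the general fact (from the theory of acylindrical actions) that the setwise stabilizer of a boundary fixed point under an acylindrical action is virtually cyclic, and then conclude via Lemma~\ref{l:cycliccentralizer}. The paper instead develops two bespoke lemmas: Lemma~\ref{l:diverge} shows that if the orbits $\{v^{\lambda^n}\}$ and $\{v^{(\lambda')^n}\}$ stay boundedly close, then the conjugates $\lambda^n\lambda'\lambda^{-n}$ have bounded star length; and Lemma~\ref{l:boundedconjugation} shows directly (via the combinatorial Lemma~\ref{lem:st}) that this forces $[\lambda,\lambda']=1$. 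Your route is shorter and more conceptual, relying on the general machinery of acylindrical actions; the paper's route is more hands--on and avoids appealing to that machinery, instead re--deriving the needed consequence from the explicit bound in Lemma~\ref{lem:st} that underlies acylindricity here.
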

\begin{proof}
Let $v\in\gex$ be an arbitrary vertex.  Since $\lambda$ is loxodromic, we have that $d(v,v^{\lambda^n})$ tends to infinity.  Since $\gex$ is a quasi--tree, it follows that $v^{\lambda^n}$ converges to a point $p^+\in\partial\gex$.  We define $p^-$ to be the corresponding accumulation point in $\partial\gex$ for powers of $\lambda^{-1}$.

Let $\gamma$ be a quasi--geodesic ray emanating from $v$ whose endpoint $p'$ is different from $p^{\pm}$.  Then hyperbolicity implies that for all sufficiently large $N$, the quasi--geodesic $\gamma_N$ which travels from $v$ to $v^{\lambda^N}$ and then follows the corresponding translate of $\gamma$, the endpoint of $\gamma_N$ is strictly closer to $p^+$ than $p'$.
\end{proof}

We are now ready to prove Lemma \ref{l:northsouth}:
\begin{proof}[Proof of Lemma \ref{l:northsouth}]
Let $\lambda_1$ and $\lambda_2$ be non--commuting loxodromics.  By Lemma \ref{l:endpoint}, we have that for each $i$, the loxodromics $\lambda_i$ act on $\partial\gex$ by north--south dynamics.  It suffices to show that none of the pairs of endpoints $p_1^{\pm}$ and $p_2^{\pm}$ coincide.

Without loss of generality, suppose $p=p_1^+=p_2^+$.  Then for any vertex $v\in\gex$, the quasi--geodesics $\{v^{\lambda_1^n}\}$ and $\{v^{\lambda_2^n}\}$ must fellow travel and converge to $p$.    Combining Lemmas \ref{l:diverge} and \ref{l:boundedconjugation}, we see that $\lambda_1$ and $\lambda_2$ must commute, a contradiction.
\end{proof}

Theorem \ref{t:loxfree} follows immediately.

\section{Purely loxodromic subgroups of $A(\Gam)$}
In \cite{bowditchoneended} (cf. \cite{dahmanifujiwara}), Bowditch proves that if $G\le \Mod(S)$ is a finitely presented, one--ended, purely pseudo-Anosov subgroup of the mapping class group $\Mod(S)$, then there are finitely many isomorphic subgroups $\{G_1,\ldots,G_k\}$ of $\Mod(S)$ and an element $\psi\in\Mod(S)$ such that $G^{\psi}=G_i$ for some $1\leq i\leq k$.  In other words, finitely presented, one--ended, purely pseudo-Anosov subgroups of $\Mod(S)$ fall into finitely many conjugacy classes of subgroups in $\Mod(S)$ for each isomorphism type.

Finitely presented, one--ended subgroups of right-angled Artin groups are also quite restricted.  In \cite{leiningeroneended}, Leininger proves that any finitely presented, one--ended subgroup of a right-angled Artin group contains a nontrivial element whose support,  up to conjugacy, is contained in a nontrivial join.  In light of our characterization of loxodromic and elliptic elements of right-angled Artin groups, Leininger's result implies that there are no finitely presented, one ended, purely loxodromic subgroups of right-angled Artin groups.  By analogy to purely pseudo-Anosov subgroups of $\Mod(S)$, a \emph{purely loxodromic} subgroup of $\aga$ is a subgroup $G$ for which every $1\neq g\in G$ is loxodromic in $\aga$.  We will prove the following result, which completely characterizes purely loxodromic subgroups of right-angled Artin groups:

\begin{thm}\label{t:loxpure}
A purely loxodromic subgroup of a right-angled Artin group is free.
\end{thm}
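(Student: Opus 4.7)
The plan is to combine the acylindricity of the $A(\Gamma)$-action on the quasi-tree $\Gamma^e$ (Theorem \ref{thm:acyl}), the cyclic centralizer property for loxodromic elements (Lemma \ref{l:cycliccentralizer}), the Nielsen--Thurston classification (Theorem \ref{t:ntclass}), and Leininger's theorem on one-ended subgroups of right-angled Artin groups (cited in the paragraph preceding the statement). The argument splits into reduction to finitely generated subgroups, an induction on rank using Stallings' theorem on ends, and a finite-presentation step that rules out the one-ended case.

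First I would reduce to the case where $G$ is finitely generated. A torsion-free group is free if and only if it has cohomological dimension at most one, and this property passes to directed unions of subgroups. Since every subgroup of a purely loxodromic group is itself purely loxodromic, it suffices to prove the theorem under the added assumption that $G$ is finitely generated.

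Next, assuming $G$ is finitely generated, I would induct on $\mathrm{rank}(G)$. The rank-one case is immediate, so assume $\mathrm{rank}(G)\ge 2$. Stallings' theorem on ends yields that a finitely generated torsion-free group is either infinite cyclic, one-ended, or splits nontrivially as a free product $G=G_1\ast G_2$. In the splitting case each factor $G_i$ is a purely loxodromic subgroup of $A(\Gamma)$ of strictly smaller rank, by Grushko's theorem, so the inductive hypothesis gives that each $G_i$, and hence $G$, is free.

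The hard part will be to rule out the one-ended case. Suppose $G$ is finitely generated, purely loxodromic, and one-ended; if one can show that $G$ is finitely presented, then Leininger's theorem supplies a nontrivial element whose support lies, up to conjugacy, in a proper subjoin of $\Gamma$, which is elliptic by Theorem \ref{t:ntclass} and therefore contradicts pure loxodromicity. To establish finite presentation of $G$ I would attempt to use the restricted acylindrical action on the $\delta$-hyperbolic space $\Gamma^e$ together with the positivity of translation lengths of nontrivial elements to promote a coarse convex hull of a $G$-orbit in $\Gamma^e$ to a proper, cocompact $G$-action on a locally finite hyperbolic graph, thereby making $G$ a Gromov-hyperbolic group and, in particular, finitely presented. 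A potentially cleaner alternative would be to construct a $G$-equivariant simplicial tree from the quasi-tree $\Gamma^e$ on which every nontrivial element of $G$ still acts hyperbolically; Bass--Serre theory would then deliver freeness of $G$ immediately, bypassing both Stallings' theorem and Leininger's theorem.
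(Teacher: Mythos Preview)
Your proposal has two genuine gaps, and the paper's approach is quite different and more elementary.

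First, your reduction to the finitely generated case is incorrect. The assertion that ``cohomological dimension at most one passes to directed unions of subgroups'' is false: $\mathbb{Q}$ is the directed union of its cyclic (hence free) subgroups, yet $\mathrm{cd}(\mathbb{Q})=2$. Equivalently, locally free does not imply free. You would need a separate argument explaining why a locally free subgroup of a right-angled Artin group is free, and no such argument is supplied.

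Second, the heart of your argument---showing that a finitely generated, one-ended, purely loxodromic $G$ is finitely presented---is a genuine gap. Acylindricity together with positive translation lengths does not by itself yield a proper cocompact action on a locally finite hyperbolic graph; indeed $\Gamma^e$ is not locally finite, and producing such an action is tantamount to proving $G$ is word-hyperbolic, which is at least as hard as what you are trying to establish. The alternative you float, extracting an honest $G$-tree from the quasi-tree $\Gamma^e$ while retaining loxodromicity, has no general mechanism behind it.

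The paper's proof avoids all of this by inducting on $|V(\Gamma)|$ rather than on $G$. For any vertex $v$ one has the amalgamated product decomposition
\[
A(\Gamma)\cong \langle \st(v)\rangle *_{\langle \lk(v)\rangle} A(\Gamma\setminus v).
\]
Since $\langle \st(v)\rangle$ and $\langle \lk(v)\rangle$ are supported on joins, a purely loxodromic $G$ meets every conjugate of the edge group trivially, so $G$ acts on the Bass--Serre tree with trivial edge stabilizers. Kurosh then expresses $G$ as a free product of a free group with subgroups of conjugates of $A(\Gamma\setminus v)$, each of which is again purely loxodromic in the smaller right-angled Artin group, and induction finishes the argument. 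This handles arbitrary (not necessarily finitely generated) $G$ in one stroke. Your floated ``cleaner alternative'' of finding a tree action is exactly what the paper does---but the tree comes from the algebraic splitting of $A(\Gamma)$, not from $\Gamma^e$.
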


Observe that we put no further hypotheses on the purely loxodromic subgroup.  In particular, we do not assume that it is finitely generated.

\begin{proof}[Proof of Theorem \ref{t:loxpure}]
Let $G$ be a purely loxodromic subgroup of $A(\Gamma)$.
The proof proceeds by induction on the number of vertices of $\Gamma$.  The crucial observation is that if $v\in V(\Gamma)$ and if $g\in \aga$ then the assumption that $G$ is purely loxodromic implies \[G\cap\langle\st(v)\rangle^g=\langle 1\rangle,\] since every cyclically reduced element of $\aga$ whose support is contained in a nontrivial join is elliptic.

The base case of the induction is where $\Gamma=v$.  In this case, $\aga$ is purely elliptic, in which case the result is vacuous.  For the inductive step, let $v\in V(\Gamma)$.  Observe that we have the following free product with amalgamation description of $\aga$: 
\[\aga\cong\langle\st(v)\rangle*_{\langle\lk(v)\rangle}A(\Gamma_v),\] where $\Gamma_v$ is the graph spanned by $V(\Gamma)\setminus v$.  By standard Bass--Serre Theory, we see that $G$ acts on the corresponding Bass--Serre tree with trivial edge stabilizer, since $G$ is purely loxodromic.  In particular, there exists a (possibly infinite) collection of subgroups $\{H_i\}$ with each $H_i$ conjugate to $A(\Gamma_v)$ in $\aga$ and $0\leq r\leq\infty$ such that \[G\le \Asterisk_i H_i * F_r.\]

Observe that if $g\in A(\Gamma_v)\le \aga$ is loxodromic in $\aga$ then $g$ is loxodromic as an element of $A(\Gamma_v)$.  Indeed, if $g$ is cyclically reduced then $\supp(g)\subseteq\Gamma_v$ is not contained in a subjoin of $\Gamma$, whence it is not contained in a subjoin of $\Gamma_v$.  Since $H_i$ is conjugate to $A(\Gamma_v)$ and $\Gamma_v$ has fewer vertices than $\Gamma$, we see that $G$ is free by induction.
\end{proof}

\section{Subsurface and vertex link projections}\label{s:proj}
In this section, we will give the necessary setup to develop the Bounded Geodesic Image Theorem and the distance formula for right-angled Artin groups.

\subsection{Distances in disconnected extension graphs}\label{ss:disc}
When $\Gamma$ is a connected graph, we have that the extension graph $\gex$ is also connected.  Equipping $\gex$ with the graph metric, it is straightforward to discuss the distance in $\gex$ between each pair of points.  If $\gex$ is disconnected, we cannot talk about paths between arbitrary pairs of vertices, but we would still like to discuss distances between vertices in those graphs.

Our approach is motivated by the curve graph of a once--punctured torus or of a four--punctured sphere.  In both of these cases, there are simple closed curves, but there are no pairs of disjoint curves.  Thus, the usual definition of the curve graph gives us a graph with infinitely many vertices and no edges.

The definition of the curve graph for these two surfaces is modified in such a way that two curves are adjacent whenever they intersect minimally.  In the case of a once--punctured torus, pairs of curves which intersect exactly once are adjacent.  In the case of the four--punctured sphere, two curves are adjacent when they intersect exactly twice.

From an algebraic point of view, recall $\Mod(S_{1,1}) = \form{a,b\vert aba = bab}$.
The simple closed curves on $S_{1,1}$ are represented as $a^g$ or $b^g$ where $g,h\in \Mod(S_{1,1})$.
Two vertices $a^g$ and $b^h$ in $\mC(S_{1,1})$ are adjacent if $a^g=a^w$ and $b^h = b^w$ for some $w\in\aga$. This is equivalent to the condition $a^g b^h a^g = b^h a^g b^h$; see \cite[Proposition 3.13]{FM2012}.
It turns out that the curve graph in this case is the Farey graph.

Now let us consider a finite discrete graph $\Gamma$ and $F$ be the free group generated by $V(\Gamma)$.
We will define $\Gamma^e$ as a graph with the vertex set $\{v^g \co v\in V(\Gamma)\text{ and }g\in F\}$
such that two vertices $u^g$ and $v^h$ are adjacent if $u^g = u^w$ and $v^h = v^w$ for some $w\in F$.  Equivalently, a pair of vertices $(u^g,v^h)$ spans an edge if this pair is conjugate via the diagonal action of some word $w\in F$ to the pair $(u,v)\in V(\Gamma)\times V(\Gamma)$.
Note that there is a simply transitive simplicial action of $F$ on $\Gamma^e$ such that all the edges are in the orbit of an edge joining two vertices of $\Gamma$; see Figure~\ref{fig:d2ext} for the case in which $\Gamma$ has two vertices. 

\begin{figure}[htb!]
  \begin{center}
\includegraphics[width=.4\textwidth]{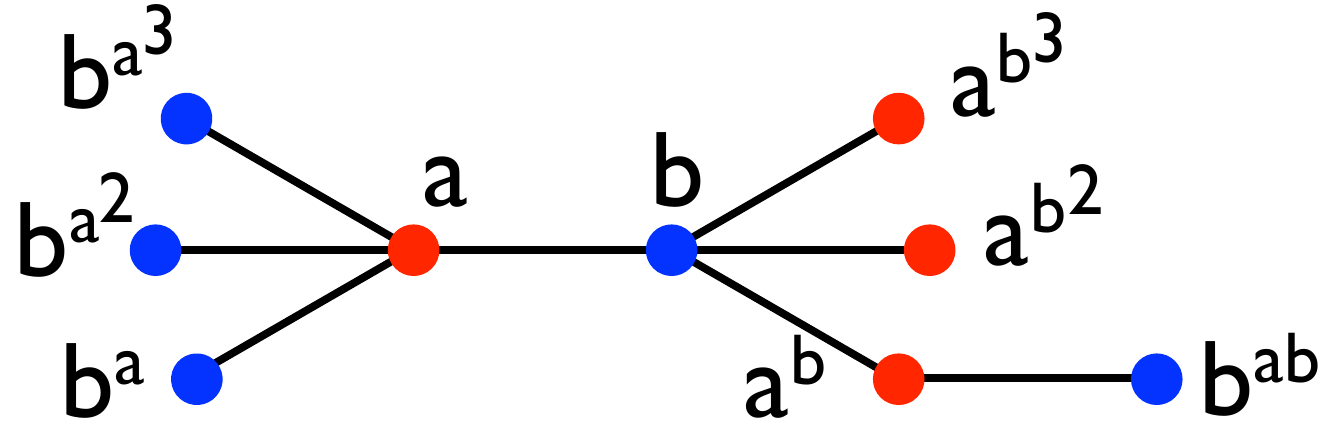}
  \caption{A part of $\Gamma^e$.}
  \label{fig:d2ext}
    \end{center}
\end{figure}

Recall that the \emph{covering distance} $d'$ as in Section~\ref{s:elec} is a well-defined metric on $\gex$ even when $\Gamma$ is not connected; see also  Section~\ref{ss:morestar}.
In the case where $\Gamma$ is connected we have that for all vertices $v$ and elements $g\in\aga$, the distance $d(v,v^g)$ is coarsely given by the star length $\stl{g}$ as shown in Lemma~\ref{lem:covering distance}. 
If $\Gamma$ is  a finite discrete graph,
then the distance on $\Gamma^e$ is precisely the covering distance.
Note that the star length and the syllable length coincide for words in free groups.
Hence, the following is obvious from Lemma~\ref{lem:covering distance} (1).

\begin{lem}\label{lem:discrete}
Let $\Gamma$ be a finite discrete graph.
Suppose $g\in A(\Gamma)$ and $a, b\in V(\Gamma^e)$
such that $a\ne b^g$.
Then 
\[
\max(1,\syl{g}-1)\le d_{\Gamma^e}(a,b^g)\le \syl{g}+1.
\]
\end{lem}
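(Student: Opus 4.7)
The plan is to deduce the lemma directly from Lemma~\ref{lem:covering distance}(1). The key observations are that for a discrete $\Gamma$, the group $A(\Gamma)$ is the free group $F$ on $V(\Gamma)$, so $\stl{g}=\syl{g}$ (each star $\st(v)$ is a single vertex); and, as noted just above the lemma, $d_{\Gamma^e}=d'$. Hence Lemma~\ref{lem:covering distance}(1) already yields the bounds when $a=b$ is a vertex of $V(\Gamma)\subseteq V(\Gamma^e)$. Using $F$-invariance of $d_{\Gamma^e}$ and the simply transitive $F$-action on the conjugates of $\Gamma$, I would reduce the general case to $a,b\in V(\Gamma)$.

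For the upper bound on $d_{\Gamma^e}(u,v^g)$ with $u,v\in V(\Gamma)$, write $g=x_1^{n_1}\cdots x_k^{n_k}$ in reduced syllable form, so $k=\syl{g}$ and $x_i\ne x_{i+1}$. I would exhibit the explicit walk
\[
u,\; x_1,\; x_2^{x_1^{n_1}},\; \ldots,\; x_k^{x_1^{n_1}\cdots x_{k-1}^{n_{k-1}}},\; v^g
\]
of length $k+1$ in $\Gamma^e$. The adjacency criterion from Subsection~\ref{ss:disc}---namely, $u^{g_1}\sim v^{g_2}$ iff $u\ne v$ and $g_1g_2^{-1}=u^{-n}v^m$ for some integers $n,m$---applies at each step with one of $n,m$ equal to zero, so consecutive pairs are adjacent. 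Collapsing an endpoint when $u=x_1$ or $v=x_k$ only improves the bound.

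For the lower bound, take a geodesic $u=p_0,p_1,\ldots,p_\ell=v^g$ and write $p_i=a_i^{g_i}$, with $a_0=u$, $g_0=1$, $a_\ell=v$, and $g_\ell=v^kg$ for some $k\in\Z$. Each adjacency condition reads $g_{i-1}g_i^{-1}=a_{i-1}^{-n_i}a_i^{m_i}$; iterating telescopes $g$ into a product of $\ell+1$ vertex--syllables whose consecutive labels $a_{i-1},a_i$ are distinct, so $\syl{g}\le\ell+1$, i.e., $\ell\ge\syl{g}-1$ (and $\ell\ge 1$ from $a\ne b^g$). The main obstacle is the bookkeeping in this telescoping: one must track the coset representatives $g_i$ carefully and absorb the $v^{-k}$ indeterminacy into the leading syllable so that the resulting product is genuinely in reduced syllable form. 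Everything else then follows directly from the adjacency criterion.
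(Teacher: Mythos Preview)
Your explicit arguments for $u,v\in V(\Gamma)$ are correct and are exactly an unpacking of the paper's one-line proof: the paper just says ``obvious from Lemma~\ref{lem:covering distance}(1)'', using that for discrete $\Gamma$ one has $d_{\Gamma^e}=d'$ and $\|\cdot\|_*=\syl{\cdot}$. Your walk for the upper bound and your telescoping for the lower bound are precisely the content of the proof of Lemma~\ref{lem:cov} specialized to this situation. One small remark: you do not need the telescoped product to be in \emph{reduced} syllable form---some exponents may vanish and adjacent syllables may merge, but that only decreases the count, so $\syl{g}\le\ell+1$ still follows.

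There is, however, a real problem with your proposed reduction of the ``general case'' $a,b\in V(\Gamma^e)$ to $a,b\in V(\Gamma)$: no such reduction exists, because the inequality is simply false in that generality. Applying an element $h\in F$ to move $a=u^{h}$ back into $V(\Gamma)$ replaces $b^g$ by $(b^{h^{-1}})^{gh^{-1}}$ (or similar), and there is no control on $\syl{gh^{-1}}$ in terms of $\syl{g}$. Concretely, take $\Gamma=\{x,y\}$, $a=x\in V(\Gamma)$, $b=x^{y}\in V(\Gamma^e)$, and $g=1$: then $\syl{g}+1=1$, but $x$ and $x^{y}$ lie in no common conjugate of $\Gamma$, so $d_{\Gamma^e}(a,b^g)\ge 2$. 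The lemma is only ever applied (in Proposition~\ref{p:disttree} and Lemma~\ref{l:sylgen}) with $a,b$ genuine vertices of the underlying discrete graph, so the hypothesis should be read as $a,b\in V(\Gamma)$; drop the reduction step and your proof is complete.
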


\subsection{Subsurface and vertex link projection}
We are now in a position to define subsurface projections for right-angled Artin groups.  Since the vertices of $\gex$ do not correspond to curves on a surface in any canonical way, there is some difficulty in defining a subsurface projection in the right-angled Artin group case.

Let $\gamma\in \mC (S)$ be a vertex.  The set of essential, nonperipheral simple closed curves on the subsurface $S\setminus\gamma$ forms a curve graph in its own right, and it is canonically identified with the subcomplex $\lk(\gamma)$ of $\mC (S)$.  If $\gamma'\in\mC (S)$ is another vertex, then $\gamma'$ cannot generally be viewed as a curve on $S\setminus\gamma$.  One way to take $\gamma'$ and produce a simple closed curve on $S\setminus\gamma$ which is ``most like" $\gamma'$ is as follows.    Write $G(\gamma,\gamma')$ for the set of geodesic paths between $\gamma$ and $\gamma'$ in $\mC (S)$.  Define the \emph{subsurface projection} $\pi_{\gamma}(\gamma')$ of $\gamma'$ to $S\setminus\gamma$ by setting \[\pi_{\gamma}(\gamma')=\bigcup_{\delta\in G(\gamma,\gamma')}\delta\cap\lk(\gamma),\] following Masur and Minsky (see \cite{masurminsky1}, \cite{masurminsky2}).  Thus, $\pi_{\gamma}(\gamma')$ is the set of points of $\lk(\gamma)$ which some geodesic $\delta$ meets on its way from $\gamma'$ to $\gamma$.  Of course there is an issue of how complicated such a set might be, which is the point of Bounded Geodesic Image Theorem.

We will define subsurface projection in right-angled Artin groups in a way which is analogous to curve graph case.  In the curve graph case we have that for any vertex $\gamma$, the subgraph $\lk(\gamma)$ is itself a curve graph.  An analogous fact holds for vertices in the extension graph:

\begin{lem}\label{lem:linkext}
Let $v\in\Gamma$ be a vertex.  Then there is a canonical identification \[\lk_{\gex}(v)=(\lk_{\Gamma}(v))^e.\]
\end{lem}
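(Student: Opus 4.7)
The plan is to construct the identification as the subgroup inclusion. Specifically, I would define
\[\phi\colon (\lk_\Gamma(v))^e\longrightarrow\lk_{\gex}(v)\]
by sending each vertex $u^h$ of $(\lk_\Gamma(v))^e$ (with $u\in V(\lk_\Gamma(v))$ and $h\in A(\lk_\Gamma(v))$) to the same element $u^h$ in $A(\Gamma)$ via the natural inclusion $A(\lk_\Gamma(v))\hookrightarrow A(\Gamma)$, and then verify that $\phi$ is a well-defined bijection preserving adjacency and non-adjacency. That $\phi(u^h)$ is a vertex of $\gex$ adjacent to $v$ is clear: by the Centralizer Theorem (Servatius), $C_{A(\Gamma)}(v)=A(\st(v))$, and $u\in\lk_\Gamma(v)$ together with $h\in A(\lk_\Gamma(v))$ forces both into $A(\st(v))$, hence $u^h$ commutes with $v$; and $u^h\ne v$ by abelianizing, since $u\ne v$ as basis vectors of $A(\Gamma)^{\mathrm{ab}}=\bZ^{V(\Gamma)}$. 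Injectivity is automatic from the inclusion $A(\lk_\Gamma(v))\hookrightarrow A(\Gamma)$, and preservation of adjacency in both directions is tautological since the commutation of two elements is independent of the ambient group.

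The substantive step is surjectivity. Given $u^g\in\lk_{\gex}(v)$, the Centralizer Theorem places $u^g$ inside $A(\st(v))=\langle v\rangle\times A(\lk_\Gamma(v))$, so I would decompose uniquely $u^g=v^n w$ with $n\in\bZ$ and $w\in A(\lk_\Gamma(v))$. The technical tool I would then invoke is the retraction $\pi\colon A(\Gamma)\to A(\lk_\Gamma(v))$ fixing every vertex of $\lk_\Gamma(v)$ and sending every other vertex (in particular $v$) to $1$; this is a well-defined homomorphism because $\lk_\Gamma(v)$ is a full subgraph of $\Gamma$, so the defining relations of $A(\Gamma)$ with at most one endpoint outside $\lk_\Gamma(v)$ map to trivial relations. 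Comparing abelianizations in $\bZ^{V(\Gamma)}$ forces $u\in\st(v)$, and in the case $u\in\lk_\Gamma(v)$ one gets $n=0$ and $u^g=w\in A(\lk_\Gamma(v))$. Applying $\pi$ to both sides of $u^g=w$ then yields $u^g=u^{\pi(g)}$ with $\pi(g)\in A(\lk_\Gamma(v))$, placing $u^g$ in the image of $\phi$.

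The main obstacle is the residual case $u=v$: a priori there could be conjugates $v^g\ne v$ commuting with $v$, which would enlarge $\lk_{\gex}(v)$ beyond the image of $\phi$. The retraction $\pi$ again clears this cleanly: if $u=v$, then on one hand $\pi(u^g)=\pi(v)^{\pi(g)}=1$, while on the other hand $\pi(v^n w)=w$, so $w=1$; then $u^g=v^n$, and abelianizing forces $n=1$, so $u^g=v$, contradicting $u^g\ne v$ in $\lk_{\gex}(v)$. Once this case is excluded, surjectivity is established and $\phi$ is the desired canonical identification.
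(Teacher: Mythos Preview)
Your argument is correct. The core mechanism is the same as the paper's---both proofs ultimately rest on the fact that the centralizer of $v$ in $A(\Gamma)$ is $\langle\st_\Gamma(v)\rangle=\langle v\rangle\times A(\lk_\Gamma(v))$---but you deploy it differently. The paper applies the Centralizer Theorem to the \emph{conjugator}: given $w$ adjacent to $v$ in $\gex$, one finds $g$ with $(v,w)^g$ an edge of $\Gamma$, so $g$ fixes $v$ and hence $g\in\langle\st(v)\rangle$; writing $g=u\cdot v^n$ with $\supp(u)\subseteq\lk(v)$ and observing that conjugation by $v$ fixes $\st(v)$ immediately gives $w\in(\lk_\Gamma(v))^e$. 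You instead apply the Centralizer Theorem to the \emph{conjugate} $u^g$ itself, then use abelianization to pin down which generator $u$ is and a retraction $\pi\colon A(\Gamma)\to A(\lk_\Gamma(v))$ to recover an explicit conjugator in $A(\lk_\Gamma(v))$. This forces you to dispose separately of the possibility $u=v$, which the paper's route never encounters. Your route is perfectly valid and the retraction is a clean tool, but the paper's argument is shorter precisely because controlling the conjugator directly avoids the case split and the extra homomorphism.
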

Here we choose $v\in\Gamma$, since then we can discuss $\lk_{\Gamma}(v)$.  If $v$ is an arbitrary vertex of $\gex$ then $v$ is conjugate to a unique vertex of $\Gamma$.
Note our convention that $\lk_{X}(v)$ also denotes the subgraph of $X$ induced by the vertex set $\lk_X(v)$.

Note that while the diameter of $\lk_{\gex}(v)$ is finite in $\gex$ by virtue of being joined with $v$, the intrinsic diameter of $\lk_{\gex}(v)$ may not be.  Indeed, for any finite graph $\Gam$ we have that $\Gam^e$ has finite diameter if and only if $\Gam$ splits as a join.  Therefore, $\lk_{\gex}(v)$ has infinite diameter unless $\lk_{\Gam}(v)$ splits as a nontrivial join.

\begin{proof}[Proof of Lemma \ref{lem:linkext}]
The inclusion \[\lk_{\gex}(v)\supseteq(\lk_{\Gamma}(v))^e\] is clear.  Suppose that $v$ is adjacent to a vertex $w$ in $\gex$.  By the definition of edges in $\gex$, we have that there is an element $g\in\aga$ such that $(v,w)^g$ is an edge in $\Gamma$.  Since conjugation by $g$ fixes $v$, we have that $\supp(g)\subseteq\st(v)$.  It follows that we can write \[g=u\cdot v^n,\] where $\supp(u)\subseteq\lk(v)$.  Finally, conjugation by $v$ fixes the star of $v$.  It follows that $w$ can be written as a conjugate of a vertex in $\lk_{\Gamma}(v)$ by an element of $\aga$ whose support is contained in $\lk_{\Gamma}(v)$.  Thus we get the reverse inclusion \[\lk_{\gex}(v)\subseteq(\lk_{\Gamma}(v))^e.\]

To see that the two links are canonically identified, take $v'\in\lk_{\Gamma}(v)$ and $g=u\cdot v^n\in\langle\st(v)\rangle$.  We identify $(v')^g\in\lk_{\gex}(v)$ with $(v')^u\in(\lk_{\Gamma}(v))^e$.  By the previous paragraph, this identification is a well--defined bijection.
\end{proof}

Now let $v,v'\in\gex$ be vertices.  Write $G(v,v')$ for the set of geodesic paths in $\gex$ which connect $v$ and $v'$.  We define the \emph{vertex link projection} of $v'$ to $v$ by \[\pi_v(v')=\bigcup_{\delta\in G(v,v')}\delta\cap\lk(v).\]  If $X\subseteq\gex$ is an arbitrary set of vertices, we can also define its vertex link projection to $v$ by \[\pi_v(X)=\bigcup_{v'\in X}\bigcup_{\delta\in G(v,v')}\delta\cap\lk(v).\]

\section{The Bounded Geodesic Image Theorem}
In this section and for the rest of this paper unless otherwise noted, we will be assuming that the graph $\Gamma$ has no triangles and no squares and is connected.  This is equivalent to the requirement that $\aga$ contains no copies of $\Z^3$ or $F_2\times F_2$ and that $A(\Gam)$ is freely indecomposable.  We will say that a subset $X\subseteq V(\gex)$ is \emph{connected} if the subgraph of $\gex$ spanned by $X$ is connected.  In this section, we will prove the following result:

\begin{thm}[Bounded Geodesic Image Theorem for right-angled Artin groups]\label{thm:bgit}
Let $\Gamma$ be a finite connected triangle-- and square--free graph.
\begin{enumerate}
\item
There exists a constant $M$ which depends only on $\Gamma$ such that if $v,v'\in\gex$ are vertices then \[\diam_{\lk(v)}\pi_v(v')\leq M.\]
\item
Let $v\in\gex$ and let $K\subseteq\gex$ be a connected subset contained outside of a closed ball of radius two about $v$.  Then \[\diam_{\lk(v)}\pi_v(K)\leq M.\]
Furthermore, there exists a constant $M'$ which depends only on $\Gamma$ such that if $v\in\gex$ and if $X\subseteq\gex$ is a geodesic segment, ray, or bi--infinite line contained in $\gex\setminus\st(v)$, then \[\diam_{\lk(v)}\pi_v(X)\leq M'.\]  
\end{enumerate}
\end{thm}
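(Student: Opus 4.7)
The plan is to first use the $A(\Gamma)$-action on $\gex$ to reduce to the case $v\in V(\Gamma)$, and to invoke Lemma \ref{lem:linkext} together with the triangle-free hypothesis to identify $\lk(v) = (\lk_\Gamma(v))^e$ with $\lk_\Gamma(v)$ a discrete graph; by Lemma \ref{lem:discrete}, the intrinsic metric on $\lk(v)$ is then coarsely the syllable length in the free group $A(\lk_\Gamma(v))$. The crucial preliminary observation for part (1) is that, because $\lk(v)$ is precisely the set of vertices at $\gex$-distance one from $v$, every geodesic from $v$ to a point $v'$ with $d_{\gex}(v,v')\ge 2$ meets $\lk(v)$ in exactly one vertex, namely its second vertex. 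Thus $\pi_v(v')$ is the set of second vertices of geodesics from $v$ to $v'$, and the task reduces to bounding the $\lk(v)$-diameter of this set.

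For the core estimate of part (1), write $v' = b^g$ with $b\in V(\Gamma)$, and any $w\in\pi_v(v')$ as $w = a^u$ with $a\in\lk_\Gamma(v)$ and $u\in A(\lk_\Gamma(v))$. The second-vertex condition $d_{\gex}(w,v') = d_{\gex}(v,v')-1$ together with Lemma \ref{lem:covering distance} gives the star-length equation $\stl{u^{-1}g} = \stl{g}-1$ up to a constant depending only on $\Gamma$, so each such $u$ yields a star-length-additive factorization $g = u\cdot h$ with $u\in\langle\st(v)\rangle$ and $\stl{h}$ close to $\stl{g}-1$. Given two such factorizations $g = u_1 h_1 = u_2 h_2$, I would apply the maximal cancellation machinery of Section 5 — Lemma \ref{l:cancel} with $s=1$ applied to each decomposition, and the pairing analysis of Lemma \ref{lem:pq} — to compare them. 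The triangle-- and square-free hypothesis on $\Gamma$ is what forces the greedy choice of first star-word from $\langle\st(v)\rangle$ to be well-defined up to bounded syllable-length ambiguity in $A(\lk_\Gamma(v))$, yielding $\syl{u_1u_2^{-1}}\le M_0$ for some $M_0 = M_0(\Gamma)$. Lemma \ref{lem:discrete} then converts this into a diameter bound $M = M_0+1$ for $\pi_v(v')$ in $\lk(v)$, completing part (1).

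Part (2) follows from part (1) via a chaining argument. For adjacent vertices $v_1,v_2\in K\subseteq\gex\setminus B_2(v)$, a case analysis on $d_{\gex}(v,v_i)$ shows that $\pi_v(v_1)\cap\pi_v(v_2)\neq\varnothing$: if $d_{\gex}(v,v_2) = d_{\gex}(v,v_1)\pm 1$ then one geodesic extends to or restricts from the other, so the projections nest; if $d_{\gex}(v,v_1) = d_{\gex}(v,v_2)\ge 3$, the Gromov product argument together with the quasi-tree structure of $\gex$ established in \cite{KK2012} (combined with the $B_2(v)$-avoidance hypothesis) forces the two geodesics to share their second vertex. Chaining through a path in $K$ and applying the part (1) diameter bound to each individual $\pi_v(v_i)$ then yields the uniform bound $M$ on $\diam_{\lk(v)}\pi_v(K)$. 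For the final statement, where $X\subseteq\gex\setminus\st(v)$ is a geodesic segment, ray, or bi-infinite line, the same quasi-tree analysis, possibly with a somewhat larger constant $M'$, controls the ``shadow'' of $X$ on $\st(v)$ and hence its image in $\lk(v)$.

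The hard part will be the cancellation step in part (1): translating the $\gex$-metric (star-length) geodesic condition into a syllable-length bound inside the free group $A(\lk_\Gamma(v))$. This is precisely where the triangle-- and square-free hypothesis is essential, as it rules out the square and triangle configurations in $\Gamma$ that would otherwise create multiple, far-apart choices of ``first star-word'' in $\langle\st(v)\rangle$ and destroy the uniform bound on $\syl{u_1u_2^{-1}}$.
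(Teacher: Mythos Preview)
Your approach has a genuine gap in part (2), and the plan for part (1) is not workable as stated.

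For part (2), the chaining argument is simply wrong. Knowing that adjacent vertices $v_i,v_{i+1}$ of a path in $K$ have $\pi_v(v_i)\cap\pi_v(v_{i+1})\neq\varnothing$ and that each $\pi_v(v_i)$ has diameter at most $M$ only gives $\diam\bigl(\bigcup_i\pi_v(v_i)\bigr)\le nM$ for a path of length $n$; since $K$ may contain arbitrarily long paths, no uniform bound follows. The paper does not chain. Instead it builds a \emph{connected model} $Y_v$ for $\lk_{\gex}(v)$ (a connected graph quasi--isometric to $\lk_{\gex}(v)$, assembled from copies of an auxiliary finite graph $Z_v$) and proves the following separation fact: if two points $w_x,w_y\in\lk_{\gex}(v)$ are far enough apart, there is a vertex $z\in\lk_{\gex}(v)$ whose star in $\gex$ separates $w_x$ from $w_y$ (this uses Lemma~\ref{lem:starsep}). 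Now if $x,y\in K$ project far apart, following a geodesic from $w_x$ out to $x$, then through $K$ to $y$, then back to $w_y$ yields a path from $w_x$ to $w_y$ that avoids $\st_{\gex}(z)$ --- the $B_2(v)$--avoidance hypothesis is exactly what ensures $K$ misses $\st_{\gex}(z)$ --- contradicting separation. The geodesic case uses additionally that a geodesic meets the $2$--sphere about $v$ in at most five points.

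Your plan for part (1) has a related problem. Lemmas~\ref{l:cancel} and~\ref{lem:pq} bound the \emph{cardinality} of sets of the form $B_s\cap x^{-1}B_t y^{-1}$; they say nothing about syllable lengths, and it is not clear how to extract a bound on $\syl{u_1u_2^{-1}}$ from them. (Indeed, elements of $\langle\st(v)\rangle$ of star length one can have arbitrarily large syllable length, so the star--length control you obtain from the geodesic condition is not by itself enough.) The paper's argument for part (1) is again the separation argument: two geodesics $\delta,\delta'$ from $v$ to $v'$ hitting $\lk(v)$ at far--apart points $x,x'$ would be separated by some $\st_{\gex}(z)$ with $z\in\lk_{\gex}(v)$, but triangle-- and square--freeness force $\delta\cap\st_{\gex}(z)=\delta'\cap\st_{\gex}(z)=\{v\}$, so the concatenation $\bar\delta\cdot\delta'$ gives a path from $x$ to $x'$ avoiding $\st_{\gex}(z)$, a contradiction. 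The construction of $Y_v$ and the star--separation property are the missing ingredients in your proposal.
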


In the statement of Theorem \ref{thm:bgit}, the notation $\lk(v)$ means $\lk_{\gex}(v)$.  Theorem \ref{thm:bgit} should be compared to the Bounded Geodesic Image Theorem of Masur and Minsky:

\begin{thm}[Bounded Geodesic Image Theorem for Surfaces, \cite{masurminsky2}]
There exists a constant $M$ which depends only on $S$ such that if $Q\subseteq S$ is a proper, incompressible surface which is not a pair of pants and if $\gamma$ is any geodesic segment, ray or bi--infinite line for which $\pi_Q(v)\neq \emptyset$ for each vertex $v$ of $\gamma$, then \[\diam_{\mC (Q)}\pi_Q(\gamma)\leq M.\]
\end{thm}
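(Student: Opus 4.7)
My plan is to prove (1), then leverage it for (2). Throughout I will take as a separately-verified input the inheritance principle that a triangle- and square-free $\Gamma$ forces $\gex$ to have girth at least 5; this is consistent with the example after Proposition~\ref{p:raagrec} that $C_n^e$ has girth $n$, and follows from the Centralizer Theorem applied to potential short cycles of pairwise-commuting vertex-conjugates (a triangle in $\gex$ would produce three pairwise commuting vertex-conjugates, forcing either a triangle in $\Gamma$ or collapse to a cyclic subgroup; a similar but more delicate analysis handles the square case).

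For (1), by $\aga$-equivariance of $\pi_v$ reduce to $v\in V(\Gamma)$. If $v'\in\st(v)$ then $|\pi_v(v')|\le 1$, so assume $d:=d_{\gex}(v,v')\ge 2$. Given distinct $u,u'\in\pi_v(v')$, both lie in $\lk(v)$ with $d(u,v')=d(u',v')=d-1$, so the triangle inequality through $v$ gives $d_{\gex}(u,u')\le 2$, while triangle-freeness of $\gex$ forbids $d_{\gex}(u,u')=1$ (which would make $\{v,u,u'\}$ a triangle). Hence $d_{\gex}(u,u')=2$; pick a common neighbor $x\notin\{u,u'\}$. If $x\ne v$ and $x\not\sim v$, then $\{v,u,x,u'\}$ is an induced 4-cycle in $\gex$ (the diagonal $u\sim u'$ being ruled out by triangle-freeness), contradicting girth 5. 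So any non-$v$ common neighbor lies in $\lk(v)$, giving $d_{\lk(v)}(u,u')\le 2$ via $u,x,u'$. The remaining task is to exclude the ``bad case'' in which $v$ is the \emph{only} common neighbor of $u$ and $u'$. For this, take a geodesic $\gamma=v,u,w,\ldots,v'$; the second vertex $w$ satisfies $w\notin\st(v)$ and $d(w,v')=d-2$. If $w\sim u'$ then $w$ is a common neighbor $\ne v$ of $u,u'$, contradicting the assumption; and if $d_{\gex}(w,u')=2$ via some common neighbor $y$, then a case analysis using girth 5 shows either $y\in\lk(v)$ (yielding a useful short path) or the configuration $\{v,u,w,y,u'\}$ is forced to close up into a cycle of length $<5$. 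The case $d_{\gex}(w,u')=3$ is handled analogously by iterating the argument one more step along the geodesic. The conclusion is that Part (1) holds with $M=2$.

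For (2), let $x,x'\in K$ be joined by a path $x_0,x_1,\ldots,x_m$ inside $K$ with each $d(v,x_i)\ge 3$. For adjacent $x_i,x_{i+1}$ and any $u_i\in\pi_v(x_i)$, $u_{i+1}\in\pi_v(x_{i+1})$, the bounds $d(u_i,x_{i+1})\le d(v,x_i)$ and $d(u_i,x_{i+1})\ge d(v,x_{i+1})-1$, together with the hypothesis $d(v,x_i),d(v,x_{i+1})\ge 3$, show that $u_i$ sits on a geodesic from $v$ to $x_{i+1}$ (so $u_i\in\pi_v(x_{i+1})$) or is close to one in $\lk(v)$ after a small modification. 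Applying Part (1) at $x_{i+1}$ gives $d_{\lk(v)}(u_i,u_{i+1})\le M$; since consecutive projections thus overlap, telescoping yields $\diam_{\lk(v)}\pi_v(K)\le M$. For the geodesic statement of (2), a geodesic $X\subseteq\gex\setminus\st(v)$ may meet the radius-2 shell about $v$; but since $X$ is a geodesic and the shell has thickness 1, it enters and exits the shell at most twice, and the projections of these boundary vertices are controlled by (1). The rest of $X$ lies outside the closed 2-ball and is handled by the first half of (2). Thus $M'=M+O(1)$ suffices.

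The main obstacle is rigorously closing out the ``only common neighbor is $v$'' subcase of Part (1). The rough picture is that two distinct first steps $u, u'$ of geodesics from $v$ to $v'$, combined with their continuations, would assemble into a short closed walk in $\gex$ (of length at most $4$) and thus violate girth 5; however, making this precise requires controlling how two $\gex$-geodesics with the same endpoints reconverge. A clean organization would use the star-length quasi-isometry from Theorem~\ref{thm:qi} to translate the problem into a statement about star-word factorizations of $g\in\aga$, where $v'=v_0^g$: different first steps correspond to different leading star-factors, and Lemma~\ref{lem:star} on the intersection $\Gamma\cap\Gamma^g$ for suitable $g$ should pin down these factors up to bounded ambiguity.
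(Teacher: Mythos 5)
You have proven the wrong statement. The theorem you were asked to address is the classical Bounded Geodesic Image Theorem for curve complexes of \emph{surfaces}, due to Masur--Minsky: it concerns a hyperbolic surface $S$, an essential non--pants subsurface $Q \subseteq S$, the curve complex $\mC(Q)$, and subsurface projections $\pi_Q$. The paper does not prove this theorem; it simply cites it from \cite{masurminsky2} and uses it as motivation. Your proposal, by contrast, is an attempted proof of the paper's Theorem~\ref{thm:bgit} --- the right-angled Artin group analogue involving $\gex$, $\lk_{\gex}(v)$, vertex link projections $\pi_v$, and triangle-- and square--free defining graphs. Those are two different theorems about two different objects, and an argument about $\gex$-girth, star words, and Lemma~\ref{lem:star} says nothing about curve complexes of subsurfaces.

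Setting aside the mismatch: even read as a proposed proof of Theorem~\ref{thm:bgit}, the argument has a gap you yourself identify but do not close, and it is not a small one. You show that two projection points $u,u' \in \pi_v(v')$ satisfy $d_{\gex}(u,u')=2$, but this does \emph{not} bound $d_{\lk(v)}(u,u')$, since the distance relevant to the theorem is the intrinsic one in $\lk_{\gex}(v) \cong (\lk_\Gamma(v))^e$, which typically has infinite diameter. In the ``only common neighbor is $v$'' case nothing you say controls this intrinsic distance, and your claimed bound $M=2$ is almost certainly false --- the paper's argument produces the much weaker bound $M = 3\diam(Z_v)$. The paper's actual proof of Theorem~\ref{thm:bgit}~(1) proceeds quite differently: it builds the connected model $Y_v$ of $\lk_{\gex}(v)$, shows that if two projection points are far apart in $Y_v$ then some vertex $z \in \lk(v)$ has $\st_{Y_v}(z)$ separating them, and then uses the filtration argument of Lemma~\ref{lem:starsep} to promote this to a separation by $\st_{\gex}(z)$ in $\gex$, contradicting the existence of the two geodesics through $v$. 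This separation strategy is the crux and is missing from your proposal.
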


In Theorem \ref{thm:bgit} (2), the additional assumption that $K$ misses a $2$--ball about $v$ is essential.
For example, let $\Gamma=C_5$, the pentagon.  Clearly, $C_5$ is triangle-- and square--free.  Fix a vertex $v\in C_5$.  Label the vertices in $\lk_{C_5}(v)$ by $x$ and $y$, and write $a$ and $b$ for the remaining vertices, with $a$ adjacent to $x$.  Write $E\subseteq C_5^e$ be the graph whose vertices are given by 
\[\{u^g\mid u\in V(C_5) \textrm{ and } g\in\langle x,y\rangle\},\] and whose edges are inherited from $C_5^e$.  As in the proof of Lemma~\ref{lem:linkext}, \[\lk_{C_5^e}(v)\subseteq E.\]

One can show that $D=E\setminus v$ is connected.  In fact, $D$ is the graph $Y_v$ which we will construct in a more general context in Subsection \ref{ss:connmodel}.  In fact, the vertices of $D$ which correspond to the conjugates of $x$ and $y$ are precisely the vertices of degree one of $D$, and all the others have infinite degree.  It follows that if we set $K$ to be the subgraph of $D$ spanned by the vertices of infinite degree, we will obtain a connected subgraph of $C_5^e$.  Furthermore, by Lemma \ref{lem:linkext} we have that $K\cap\lk_{C_5^e}(v)=\emptyset$.  Finally, consider the map $\pi_v$ restricted to $K$.  The distance from any vertex of $K$ to $v$ is exactly two.  Since $C_5^e$ is square--free, there is a unique geodesic connecting a vertex of $K$ to $v$.  Thus, the map \[\pi_v:K\to\lk_{C_5^e}(v)\] is surjective.  It follows that $\diam_{\lk(v)}\pi_v(K)$ is not finite.

\subsection{The Bounded Geodesic Image Theorem when $\Gamma$ is a tree}\label{ss:bgit tree}
The situation at hand is significantly simpler when the graph $\Gamma$ is a tree.  Recall that in this case, the extension graph $\gex$ is also a tree.  Geodesics in trees are unique, so that if $v'\in\gex$, the set $G(v,v')$ consists of exactly one path.  Thus, the diameter of the vertex link projection of $v'$ to $v$ is zero.

Now suppose that $X$ is a connected subset of $\gex$ which avoids $\st(v)$.  Choose a geodesic path $\gamma$ from $v$ to a point in $X$, and let $v_0$ be the unique point in $\gamma\cap\lk(v)$.  The connectedness of $X$ and the $0$--hyperbolicity of trees implies that any geodesic path from $v$ to a point in $X$ must also pass through $v_0$.  Thus, $\pi_v(X)=v_0$.

The requirement that $X\cap\st(v)=\emptyset$ is clearly essential, even if $X$ is assumed to just be a geodesic segment.  A geodesic segment which enters $\st(v)$ can be extended to $v$ and then extended past $v$ arbitrarily.  Since $\lk(v)$ has infinite diameter, no bound on the projection of $X$ to $\lk(v)$ can possibly hold.

\subsection{A connected model for the link of $v$}\label{ss:connmodel}
Let us fix a vertex $v$ of $\Gamma$.
A difficulty in understanding the geometry of $\lk(v)\subseteq\gex$ is that $\lk(v)$ is completely disconnected whenever $\Gamma$ has no triangles.  In this subsection, we will build a connected graph $Y_v$ which is canonically obtained from $\Gamma\setminus v$ and which is quasi--isometric to $\lk(v)$.

We first modify $\Gamma$ so that every vertex $x$ in $\lk_\Gamma(v)$ has degree larger than one in $\Gamma$. If $\deg(x)=1$, then we simply add an extra vertex $x'$ to $\Gamma$ such that the link of $x'$ is $\{x\}$. For $x\in\lk_\Gamma(v)$ we let $B_x$ be the set of vertices
 \[\{u\in V(\Gamma\setminus v)\mid d(u,x)=1\}=\lk_{\Gamma\setminus v}(x).\] 
The set $B_x$ can be thought of as the ``boundary" of $x$.  
Note that $B_x$ is a nonempty, completely disconnected graph.

For each pair of distinct vertices $x,y\in\lk_{\Gamma}(v)$ we see that $x$ and $y$ are not adjacent in $\Gamma$
and also that $x$ and $y$ have no common neighbors. 
In particular, we have $d_{\Gamma\setminus v}(x,y)>2$.
If $x\neq y$ are vertices in $\lk_{\Gamma}(v)$, we have that $B_x\cap B_y=\emptyset$ and that $B_x\cap\lk_{\Gamma}(v)=\emptyset$.

Let $Z_v$ be the graph obtained from $\Gamma\setminus v$ by declaring that whenever $z\in B_x$ and $w\in B_y$ for a pair of distinct vertices $x,y\in\lk_{\Gamma}(v)$
we have an edge $\{z,w\}$ in $Z_v$.  Note that $Z_v$ contains a canonical copy of $\lk_{\Gamma}(v)$, since we did not add any edges between vertices of $\lk_{\Gamma}(v)$.  
It follows that $\langle\lk_{\Gamma}(v)\rangle$ can be identified with a subgroup of $A(Z_v)$.  See Figure \ref{fig:link} (a) for an illustration of how $Z_v$ might look.

\begin{figure}[htb!]
  \begin{center}
\subfloat[(a)]{\includegraphics[width=.3\textwidth]{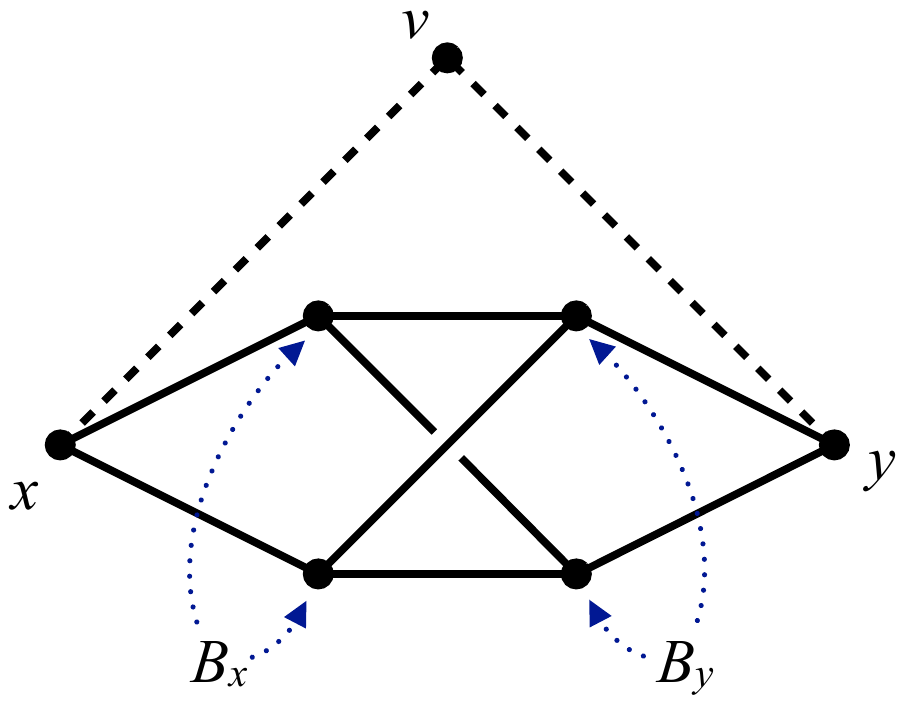}}
\hspace{.5in}
\subfloat[(b)]{\includegraphics[width=.3\textwidth]{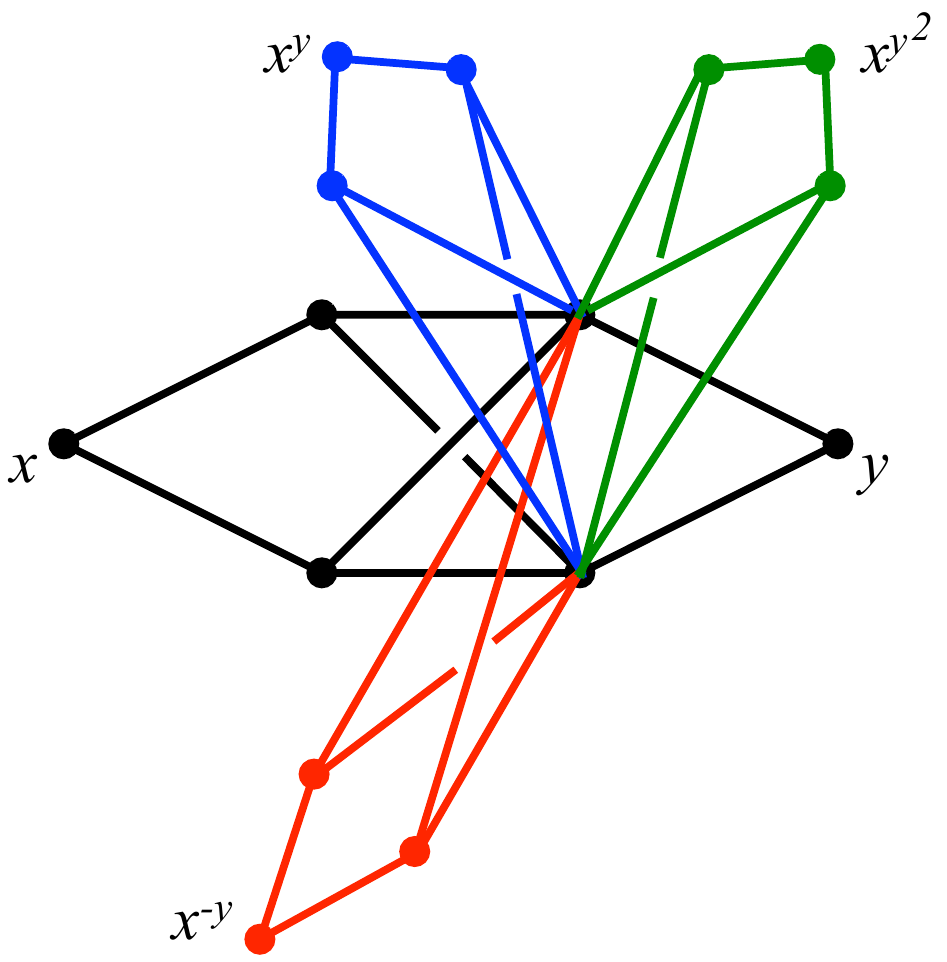}}
  \caption{(a) A schematic for the graph $Z_v$, with the vertex $v$ included.
  (b) A very small part of $Y_v$.}
  \label{fig:link}
    \end{center}
\end{figure}

The graph $Y_v$ is defined to be a subgraph of $Z_v^e$ in which we only allow conjugation by elements of $\langle\lk_{\Gamma}(v)\rangle$.  
In other words, $Y_v$ is the commutation graph of 
\[\{x^g\mid x\in V(Z_v)\textrm{ and } g\in\langle\lk_{\Gamma}(v)\rangle\}\subseteq A(Z_v).\]A small part of $Y_v$ is given in Figure \ref{fig:link} (b).

\begin{lem}\label{l:connmodel}
Let $Y_v$ be as above.
\begin{enumerate}
\item
The graph $Y_v$ is connected.
\item
The graph $Y_v$ contains a canonical copy of $\lk_{\Gamma}(v)^e=\lk_{\gex}(v)$.
\item
The inclusion of $\lk_{\Gamma}(v)^e\to Y_v$ is a quasi--isometry, where $\lk_{\Gamma}(v)^e$ is equipped with the extension graph metric and where $Y_v$ is equipped with the graph metric.
\end{enumerate}
\end{lem}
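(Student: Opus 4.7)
The plan is to establish the three claims in sequence, using the conjugation action of $F=\langle\lk_\Gamma(v)\rangle$ on $Y_v$ by graph automorphisms as the common tool.

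For (1), I would first show that $Z_v$ itself is connected. Since $\Gamma$ is connected, every component of $\Gamma\setminus v$ meets $\lk_\Gamma(v)$, and after the degree-augmentation step every $B_x$ is non-empty, so the added edges between $B_x$ and $B_y$ for distinct $x,y\in\lk_\Gamma(v)$ reconnect any pair of such components. Now $Y_v$ contains $Z_v$ as the slice $g=1$, and each generator $y\in\lk_\Gamma(v)$ satisfies $y^y=y$, so conjugation by $y$ preserves the connected component $C$ of $Y_v$ containing $y$; hence $Z_v\subseteq C$ forces $Z_v^y\subseteq C$, and iteratively every $F$-translate of $Z_v$ lies in $C$. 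These translates exhaust $V(Y_v)$, so $Y_v$ is connected.

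For (2), I would define $\iota\co \lk_\Gamma(v)^e\to Y_v$ by $(v')^g\mapsto(v')^g$, reading the right-hand side in $A(Z_v)$. Well-definedness and injectivity reduce to matching the two ambient equivalence relations on $F$. The standard parabolic intersection formula $\langle A\rangle\cap\langle B\rangle=\langle A\cap B\rangle$ in a RAAG, together with the triangle-freeness of $\Gamma$ (which rules out common neighbors of $v$ and $v'$), gives $\langle\lk_\Gamma(v)\rangle\cap\langle\st_\Gamma(v')\rangle=\langle v'\rangle$ in $\aga$. The same triangle-freeness implies that no edge of $Z_v$ joins two vertices of $\lk_\Gamma(v)$, and an identical computation yields $\langle\lk_\Gamma(v)\rangle\cap\langle\st_{Z_v}(v')\rangle=\langle v'\rangle$ in $A(Z_v)$. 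Both equivalences therefore reduce to equality modulo $\langle v'\rangle$, so $\iota$ is a well-defined vertex-set injection.

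For (3), I would apply the electrification framework of Section~\ref{s:elec} to the $F$-action on $Y_v$, taking orbit representatives $A=V(Z_v)$ with stabilizers $F_x=F\cap\langle\st_{Z_v}(x)\rangle$. A case analysis — using triangle-freeness to handle $x\in\lk_\Gamma(v)$ and square-freeness to bound the common $\Gamma$-neighbors of $v$ and $x\notin\lk_\Gamma(v)$ by at most one vertex — shows that each $F_x$ is trivial or cyclic on a single element of $\lk_\Gamma(v)$. Hence $T=\bigcup_x F_x\cup\lk_\Gamma(v)\subseteq\bigcup_{y\in\lk_\Gamma(v)}\langle y\rangle$, and the $T$-length on $F$ coincides with the syllable length. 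Since $Z_v$ is connected and each $x^y$ lies within $Y_v$-distance $2\diam(Z_v)$ of $x$ (travel in $Z_v$ to $y$, then in $Z_v^y$ back to $x^y$), the set $V(Z_v)\cup V(Z_v)\cdot\lk_\Gamma(v)$ has finite $Y_v$-diameter, so Lemma~\ref{lem:elec}(2) yields a quasi-isometry $(F,d_T)\to(Y_v,d_{Y_v})$ via the orbit map $g\mapsto(v')^g$ for a fixed $v'\in\lk_\Gamma(v)$. Lemma~\ref{lem:discrete} applied to the discrete graph $\lk_\Gamma(v)$ gives a quasi-isometry $(F,d_T)\to(\lk_\Gamma(v)^e,d)$ via the same orbit map, so the composition is precisely $\iota$, which is therefore a quasi-isometry. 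The principal obstacle is the stabilizer computation: both triangle-freeness and square-freeness of $\Gamma$ must be exploited to conclude that each $F_x$ is at most cyclic, and the combinatorics must be performed inside $\Gamma$ rather than $Z_v$, since the bridging edges may create triangles in $Z_v$ itself.
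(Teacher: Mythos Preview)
Your arguments for (1) and (2) are correct and in the same spirit as the paper's, though your treatment of (2) is considerably more careful: the paper simply declares the identification, while you actually match the two equivalence relations using the parabolic intersection $\langle\lk_\Gamma(v)\rangle\cap\langle\st(\,\cdot\,)\rangle=\langle\,\cdot\,\rangle$ on both sides.

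For (3) you take a genuinely different route. The paper does not invoke the electrification machinery at all; instead it builds an auxiliary graph $c_v$ by collapsing each $B_x$ to a single vertex $b_x$ and discarding the rest of $\Gamma\setminus v$, forms the corresponding $F$--orbit graph $C_v\subseteq c_v^e$, and argues directly that each conjugate of $b_x$ is a cut point of $C_v$ with $F$--stabilizer $\langle x\rangle$, giving the lower bound $d_{C_v}(x,x^g)\ge\stl{g}-1$. Your approach via Lemma~\ref{lem:elec}(2) is conceptually cleaner and avoids the ad hoc collapse, but it carries one obligation you do not discharge: Lemma~\ref{lem:elec} has \emph{cocompactness} of the $G$--action as a standing hypothesis, and its proof of (2) explicitly uses a finite set of edge--orbit representatives to obtain the inequality $\|g\|_T\le (M'+2)\,d_X(\alpha_0,\alpha_0 g)$. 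You verify finitely many vertex orbits (namely $V(Z_v)$) and the diameter condition $\diam_{Y_v}(A\cup A\Sigma)<\infty$, but not edge cocompactness.

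This gap is fillable using exactly your stabilizer computation. Given an edge $\{x,y^k\}$ of $Y_v$ with $k\in F$, the extension--graph edge condition forces $k\in\langle\st_{Z_v}(y)\rangle\langle\st_{Z_v}(x)\rangle$; applying the retraction $\rho\co A(Z_v)\to F$ that kills generators outside $\lk_\Gamma(v)$ gives $k=\rho(k)\in F_yF_x$. Writing $k=k_yk_x$ and translating by $k_x^{-1}\in F_x\subseteq F$ sends the edge to $\{x,y\}\in E(Z_v)$. Hence every edge of $Y_v$ lies in the $F$--orbit of an edge of $Z_v$, and the action is cocompact. With this line added, your application of Lemma~\ref{lem:elec}(2) and Lemma~\ref{lem:discrete} goes through and yields the quasi--isometry, by a route that is arguably more systematic than the paper's.
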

\begin{proof}
First, let us check that $Z_v$ is connected.  Suppose first that $\lk_{\Gamma}(v)$ consists of exactly one vertex.  Then $v$ has degree one in $\Gamma$.  It follows that $\Gamma\setminus v$ is connected and hence $Z_v$ is connected.  If there is a pair of distinct vertices $x,y\in\lk(v)$, then $Z_v$ contains the join $B_x*B_y$.  Furthermore since $\Gamma$ is connected, for each vertex $v'\in\Gamma\setminus v$ there is a path in $\Gamma\setminus v$ connecting $v'$ to $\st(v)$.  It follows that $Z_v$ is connected, since the image of the vertices \[\bigcup_{x\in\lk_{\Gamma}(v)} B_x\] in $Z_v$ spans a connected subgraph.

To build $Y_v$, we take one copy of $Z_v$ for each element of $g\in\langle\lk_{\Gamma}(v)\rangle\le A(Z_v)$ and label the vertices of $Z_v^g$ by the appropriate conjugate in $A(Z_v)$.  Then, we identify two vertices in two different copies of $Z_v$ if they have the same label.  One can therefore build $Y_v$ by repeatedly doubling $Z_v$ over the stars of vertices in $\lk_{\Gamma}(v)\subseteq Z_v$, as in the construction of $\gex$ by repeated doubling along stars.  It follows that $Y_v$ is connected.

The copy of $\lk_{\gex}(v)$ in $Y_v$ is given by \[\{x^g\mid x\in\lk_{\Gamma}(v) \textrm{ and } g\in\langle\lk_{\Gamma}(v)\rangle\le A(Z_v)\}.\]

To see that this inclusion is a quasi--isometry, we perform a further construction.  Let $c_v$ be the graph obtained from $Z_v$ by collapsing $B_x$ to a single vertex $b_x$ for each $x\in\lk_{\Gamma}(v)$ and by removing any other vertices not contained in $\lk_{\Gamma}(v)$ nor in \[\bigcup_{x\in\lk_{\Gamma}(v)}B_x.\]  Let $C_v\subseteq c_v^e$ be the subgraph spanned by the vertices \[\{x^g\mid x\in V(c_v)\textrm{ and }g\in\langle \lk_{\Gamma}(v)\rangle\le A(c_v)\}.\]  It is easy to see that $C_v$ is connected and quasi--isometric to $Y_v$, and that $C_v$ also contains a canonical copy of $\lk_{\gex}(v)$.

Observe that $C_v$ is a quasi--tree, and that each conjugate of a vertex $b_x$ is a cut point.  Furthermore, the vertex $x$ generates the entire centralizer of $b_x$ inside of $\langle\lk_{\Gamma}(v)\rangle$.  It follows that for each $g\in\langle\lk_{\Gamma}(v)\rangle$, we have $d_{C_v}(x,x^g)\geq \stl{g}-1$, where the star length is measured in the free group $\langle\lk_{\Gamma}(v)\rangle$.

Now write $D$ for the diameter of $c_v$.  If $x\in\lk_{\Gamma}(v)$ and $g\in\langle\lk_{\Gamma}(v)\rangle$ then $d_{C_v}(x,x^g)\leq D \stl{g}+D$ by the proof of Lemma \ref{lem:covering distance}.  It follows that the graph distance on $\lk_{\gex}(v)\subseteq C_v$ is quasi--isometric to the star length in $\langle\lk_{\Gamma}(v)\rangle$, which is what we set out to prove.
\end{proof}

There is a further slight complication resulting from the fact that conjugation by $v$ is generally nontrivial in $\aga$.  We may wish to add $v$ back to $Z_v$ and consider the subgraph $Y'_v$ of $Z_v^e$ obtained by including all the conjugates of $Z_v$ by elements of $\langle\st_{\Gamma}(v)\rangle$ instead of just $\langle\lk_{\Gamma}(v)\rangle$, and then removing the vertex $v$.  Up to quasi--isometry, the resulting graph is just $Y_v$.  Indeed, we have that conjugation by $v$ stabilizes $\lk_{\Gamma}(v)$, and in general we obtain a splitting \[\langle\st_{\Gamma}(v)\rangle\cong\bZ\times\langle\lk_{\Gamma}(v)\rangle.\]  We can view the difference between $Y'_v$ and $Y_v$ thusly: for each conjugate of $Z_v$ used to obtain $Y_v$, replace it with a $\bZ$--worth of copies of $Z_v$, which we then glue together along the corresponding copies of $\lk_{\Gamma}(v)$ sitting inside of each conjugate, like an infinite stack of pancakes identified along their boundaries.  
It is clear that the embedding $Y_v\to Y'_v$ is a quasi--isometry.

\subsection{Vertex link projection is coarsely well--defined}
In this subsection, we will prove the first part of Theorem \ref{thm:bgit}, which implies that vertex link projection is coarsely well--defined. We will need the following lemma, the proof of which can be found in~\cite[Lemma 26]{KK2012}:

\begin{lem}\label{lem:starsep}
Let \[\Gamma=\Gamma_0\subseteq\Gamma_1\subseteq\cdots\] be a filtration of graphs such that $\Gamma_{i+1}$ is obtained from $\Gamma_i$ by attaching a copy of $\Gamma$ along the star of a vertex of $v\in\Gamma$.  Suppose $x,x',z\in\Gamma_i$ are vertices such that $x$ and $x'$ lie in different components of $\Gamma_i\setminus\st_{\Gamma_i}(z)$.  Then the vertices $x$ and $x'$ lie in different components of $\gex\setminus\st_{\gex}(z)$.
\end{lem}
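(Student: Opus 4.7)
Plan: I would induct on the filtration index $j \ge i$ to show that $x$ and $x'$ lie in different components of $\Gamma_j \setminus \st_{\Gamma_j}(z)$ for every $j$. Because any path in $\gex \setminus \st_{\gex}(z)$ is finite and hence lies in some $\Gamma_j$, this implies the lemma, and the base case $j=i$ is the hypothesis. For the inductive step, write $\Gamma_{j+1} = \Gamma_j \cup_S \Gamma'$ with $\Gamma'$ the new copy of $\Gamma$ and $S = \st(w) = \Gamma_j \cap \Gamma'$ the shared star, and suppose for contradiction there is a path $\gamma$ from $x$ to $x'$ in $\Gamma_{j+1} \setminus \st_{\Gamma_{j+1}}(z)$ that is not already in $\Gamma_j$. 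Then some subpath $\gamma_0 = (a, p_1, \ldots, p_m, b)$ has $a, b \in S$ and each $p_\ell \in \Gamma' \setminus S$. The strategy is to replace $\gamma_0$ by a path in $\Gamma_j$ from $a$ to $b$ that still avoids $\st_{\Gamma_j}(z)$; iterating this surgery kills every detour into $\Gamma' \setminus S$ and yields a path in $\Gamma_j \setminus \st_{\Gamma_j}(z)$ from $x$ to $x'$, contradicting the inductive hypothesis.

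The replacement splits into three cases according to the relation between $w$ and $z$. The case $w = z$ cannot occur, because it would force $a, b \in S = \st(z)$, contradicting $\gamma \cap \st_{\Gamma_{j+1}}(z) = \varnothing$. If $w \notin \st_{\Gamma_{j+1}}(z)$, the length-two path $(a, w, b)$ lies inside $S \subseteq \Gamma_j$ and avoids $\st_{\Gamma_j}(z)$, since $a$, $b$, and $w$ all do. The essential case is $w$ adjacent to $z$ in $\gex$, where $w$ itself lies in $\st(z)$ and is forbidden as the intermediary. For this case I would deploy a ``mirror'' argument: the new copy $\Gamma'$ is obtained as the $w$-conjugate inside $\aga$ of the specific copy $\Gamma^{(k)} \subseteq \Gamma_j$ along whose star the attachment was made, so right-conjugation by $w$ realises an isomorphism $\Gamma^{(k)} \to \Gamma'$ that is the identity on $S$ (because elements of $\st(w)$ commute with $w$). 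Transferring $\gamma_0$ backward across this isomorphism produces a path $\gamma_0'$ in $\Gamma^{(k)} \subseteq \Gamma_j$ from $a$ to $b$ with interior in $\Gamma^{(k)} \setminus S$.

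To close the argument I would verify that $\gamma_0'$ avoids $\st_{\Gamma_j}(z)$. The key observation is that $w \in \st(z)$ lies in the centraliser $\langle \st(z) \rangle$, and since inner automorphisms preserve any subgroup they lie in, conjugation by $w$ normalises $\langle \st(z) \rangle$; equivalently, a vertex $u$ of $\gex$ is adjacent to $z$ if and only if $u^w$ is. Hence the mirror carries $\st_{\gex}(z) \cap \Gamma^{(k)}$ bijectively onto $\st_{\gex}(z) \cap \Gamma'$, so $\gamma_0$'s avoidance of the latter transfers to $\gamma_0'$'s avoidance of the former; because $\Gamma^{(k)}$ is an induced subgraph of $\Gamma_j$, this is the same as avoiding $\st_{\Gamma_j}(z)$. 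The main obstacle I anticipate is precisely this bookkeeping: certifying that $\Gamma'$ really is the $w$-conjugate of $\Gamma^{(k)}$ inside the $\aga$-action on $\gex$, that right-conjugation by $w$ fixes $S$ pointwise, and that the normalisation of $\langle \st(z) \rangle$ by $w$ is exactly what is needed to transport adjacency-to-$z$ across the two copies. Once this group-theoretic check is in place, the three-case surgery completes the induction.
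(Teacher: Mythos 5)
Your proof is correct, and the overall strategy (induction on the filtration with surgery on paths, a three‑way case analysis on the relation between $z$ and the gluing vertex $w$, and a mirror/reflection argument in the key case) matches the approach of the cited reference. Two bookkeeping points deserve attention, though neither is a gap once spelled out.

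First, the gluing element need not be $w$ itself. Under the triangle‑ and square‑free hypothesis in force in this section, Lemma~\ref{lem:star} shows that if $\deg_\Gamma(w)\ge 2$ then $\Gamma'=(\Gamma^{(k)})^{w^n}$ for some $n\ne 0$, and any such power of $w$ still commutes with $z$ and fixes $S$ pointwise, so your mirror argument goes through verbatim with $w^n$ in place of $w$. If $\deg_\Gamma(w)=1$ the gluing element can be an arbitrary nontrivial element of $\langle\st(w)\rangle\cong\Z^2$, but in that case $\lk_{\gex}(w)$ is a single vertex, so Case~3 ($w\in\lk(z)$) forces $z$ to be that unique neighbour and hence $S=\st(w)=\{w,z\}\subseteq\st_{\gex}(z)$; this contradicts $a,b\notin\st(z)$ exactly as in Case~1, so the mirror is never needed when $\deg(w)=1$.

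Second, the remark that $w$ normalises $\langle\st(z)\rangle$ is a detour; the fact you actually use, and which you state in the ``equivalently'' clause, is the pointwise identity $z^{w}=z$, from which $[u^{w},z]=[u,z]^{w}$ and hence $u\in\st_{\gex}(z)\iff u^{w}\in\st_{\gex}(z)$. That is the right statement and it is what transports avoidance of $\st_{\gex}(z)$ across the reflection. With those two clarifications, the surgery reduces each detour into $\Gamma'\setminus S$ to a path in $\Gamma_j$ avoiding $\st_{\Gamma_j}(z)$, the induction closes, and the finite‑path/exhaustion observation lifts the conclusion from the $\Gamma_j$'s to $\gex$.
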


\begin{proof}[Proof of Theorem~\ref{thm:bgit} (1)]
If $\Gamma$ splits as a join then the triangle-- and square--free assumptions on $\Gamma$ imply that $\Gamma$ is a tree, so that $\gex$ is also a tree.  In this case, the Bounded Geodesic Image Theorem holds by the discussion we have given above.
So we assume that $\Gamma$ does not split as a join and $\gex$ has infinite diameter.  
We will also assume that $\lk_{\Gamma}(v)$ contains at least two vertices, since otherwise the conclusion is trivial.  

If $d(v,v')=1$ then $v'\in\lk_{\gex}(v)$, and if $d(v,v')=2$ then $v'\in B_x$ for some $x\in\lk_{\gex}(v)$.  By the triangle-- and square--freeness of $\Gamma$, in those two cases there is a unique geodesic connecting $v$ to $v'$, so that the vertex link projection has diameter zero.

Now we assume that $d(v,v')\geq 3$.
Let $\delta,\delta'\in G(v,v')$ be two distinct geodesics connecting $v$ and $v'$.  We will treat $\delta$ as a reference geodesic segment between $v$ and $v'$, and we will show that if $\delta'\cap\lk(v)$ is ``too far" from $\delta\cap\lk(v)$ then $\delta'$ could not have been a geodesic segment.

Write $x$ and $x'$ for the points of $\lk(v)$ which are met by $\delta$ and $\delta'$ respectively, and consider the vertices $x$ and $x'$ as vertices in $Y_v$.  Since $Y_v$ is quasi--isometric to $\lk_\gex(v)$, the distance between $x$ and $x'$ is comparable in $\lk_\gex(v)$ and in $Y_v$.  
Observe that if $d_{Y_v}(x,x')\geq 3\diam(Z_v)=M$ then there is a vertex $z\in\lk(v)$ such that $\st_{Y_v}(z)$ contains neither $x$ nor $x'$ and such that $\st_{Y_v}(z)$ separates $x$ from $x'$; see the proof 
of~\cite[Lemma 26]{KK2012}.
Figure \ref{fig:link3} gives an illustration of the fact that such a vertex $z$ exists. In this figure, the star of $x$ separates $x^y$ from $x^{y^x}$.  The boundaries of vertices in $\lk_\gex(v)$ have been collapsed to single (unlabelled) vertices.
Actually, if $d_{Y_v}(x,x')\geq N\diam(Z_v)$ then there are at least $N/3$ such $z$'s and this fact will be used in the proof of the second part of Theorem~\ref{thm:bgit}.

\begin{figure}[htb!]
  \begin{center}
\includegraphics[width=.3\textwidth]{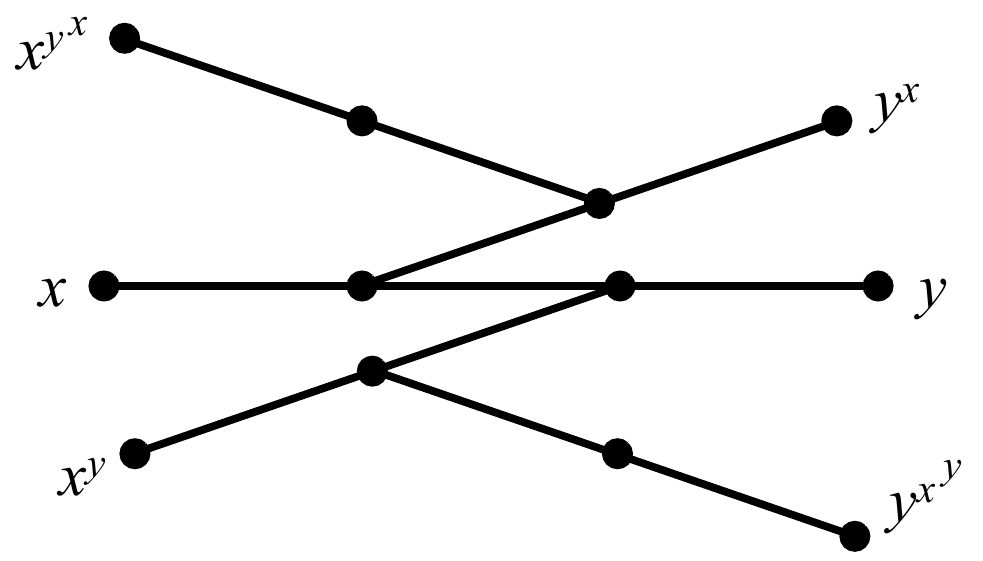}
  \caption{Distant vertices in $Y_v$ are separated by stars of intermediate vertices.}
  \label{fig:link3}
    \end{center}
\end{figure}

We claim that $\st_{\gex}(z)$ separates $x$ from $x'$ in $\gex$, and that this suffices to prove the result.  
For the latter claim, suppose that the star of $z$ separates $x$ from $x'$ in $\gex$.
The following is clear from triangle-- and square--freeness: 
\[\delta\cap\st_{\gex}(z)=\delta'\cap\st_{\gex}(z)=v.\]  
But then the subpath of $\bar\delta\cdot\delta'$ from $x$ to $x'$ is contained in $\gex\setminus\st(z)$ and we have a contradiction.

So, it suffices to see that $\st_{\gex}(z)$ separates $x$ from $x'$ in $\gex$.  Let $L_v$ be the subset of $\gex$ given by doubling over stars of vertices which lie in $\st(v)$.  Then $L_v$ can be filtered by finite subgraphs in a way which satisfies the hypotheses of Lemma \ref{lem:starsep}.  So, it suffices to see that $x$ and $x'$ are separated by $\st_{L_v}(z)$.  Observe that there is a natural map of graphs $L_v\setminus v\to Y'_v$.  The star of $z$ in $Y'_v$ separates $x$ from $x'$, whence it follows that $\st_{L_v}(z)$ separates $x$ from $x'$.
\end{proof}

\subsection{The Bounded Geodesic Image Theorem}
We now give the proof of the second part of the Bounded Geodesic Image Theorem (Theorem \ref{thm:bgit}).

\begin{proof}[Proof of Theorem~\ref{thm:bgit} (2)]
Since $K$ remains entirely outside of a $2$--ball containing $v$, we have that if $w\in\lk_{\gex}(v)$ then $\st(w)\cap K=\emptyset$.  Let $x$ and $y$ be vertices of $K$, and let $\gamma$ be a path in $K$ connecting $x$ and $y$.  Write $\delta_x$ and $\delta_y$ for geodesics in $\gex$ connecting $v$ with $x$ and $y$ respectively.  Assume that $d_{\lk_{\gex}(v)}(\pi_v(x),\pi_v(y))>M$, where $M$ is the constant given by part (1) of Theorem \ref{thm:bgit}.  By the proof of part (1) of Theorem \ref{thm:bgit}, there is a vertex $z\in\lk_{\gex}(v)$ whose star separates $w_x=\delta_x\cap\lk_{\gex}(v)$ from $w_y=\delta_y\cap\lk_{\gex}(v)$.  However, following $\delta_x$ followed by $\gamma$ followed by $\delta_y$ furnishes a path in $\gex\setminus\st(v)$ between $w_x$ and $w_y$, a contradiction.

Now suppose that $\delta\subseteq\gex\setminus\st(v)$ is a geodesic.  Write $S_2(v)$ for the sphere of radius two about $v$.  We may assume that $\delta\cap S_2(v)\neq\emptyset$, since otherwise we are in the case considered in the previous paragraph.  Observe that $\diam S_2(v)\leq 4$.  Since $\delta$ is a geodesic, we have that $|\delta\cap S_2(v)|\leq 5$.  Thus, there are at most five vertices in $\lk_{\gex}(v)$ whose links meet $\delta$.  Write $M'=18\diam(Z_v)$. 
Let $x,y\in\delta$ project to points $w_x$ and $w_y$ in $\lk_{\gex}(v)$ which are at a distance exceeding $M'$ from each other.  Then there is a vertex $z\in\lk_{\gex}(v)$ such that $\lk_{\gex}(z)$ does not meet $\delta$ and such that $\st_{\gex}(z)$ separates $w_x$ from $w_y$.  This is again a contradiction, so that the diameter of the projection of $\delta$ is at most $M'$.
\end{proof}

\subsection{Failures of the Bounded Geodesic Image Theorem}
The failure of Theorem~\ref{thm:bgit} for right-angled Artin groups containing $F_2\times F_2$ can already be seen with $F_2\times F_2$ itself.  Label the square $C_4$ cyclically by $\{a,b,c,d\}$.  It is an easy exercise to check that $C_4^e\cong K_{\infty}*K_{\infty}$, the complete bipartite graph on two countable discrete sets of vertices.  One of the copies of $K_{\infty}$ contains the conjugates of $a$ and $c$, whereas the other contains the conjugates of $b$ and $d$.  Observe that since $a$ and $c$ do not commute with each other, the distance between them in $C_4^e$ is at least two.  However, any length two path from $a$ to $c$ which passes through any conjugate of $b$ or $d$ realizes a geodesic path from $a$ to $c$.  Thus, $\pi_{a}(c)$ consists of $\lk_{C_4^e}(a)$, which has infinite diameter in the intrinsic metric on the link of $a$.

It is possible that Theorem~\ref{thm:bgit} holds for square--free graphs which have triangles.  However, the analysis we have carried out above fails, as the structure of links in $\gex$ becomes much more complicated in this situation.

\section{The distance formula}\label{s:distance}
We have seen already that distance in $\gex$ is coarsely just star length in the right-angled Artin group $\aga$.  In this section, we will develop a Masur--Minsky style distance formula, which will incorporate the extra data that vertex link projections give us in order to refine distance estimates in $\gex$.

The Masur--Minsky distance formula allows one to use distances in the curve graph to coarsely recover distances in the mapping class groups.  The high degree of symmetry of the extension graph does not allow such a result to be proved for right-angled Artin groups.  Indeed, the automorphism group of the curve graph is, up to finite index, the mapping class group of the underlying surface.  A complicating factor in the extension graph case is that powers of vertices are indistinguishable.  To see this, consider a vertex $v\in V(\Gamma)=\Gamma_0$.  In $\gex$, there are copies of $\Gamma$ meeting $\Gamma_0$ along the star of $v$ which correspond to conjugates of $\Gamma$ by powers of $v$.  Let $v_0$ be another vertex of $\Gamma$ which does not lie in the star of $v$.  We will see (Theorem \ref{thm:aut}) that there are symmetries of $\gex$ which fix $\Gamma_0$ and which permute the $v$--conjugates of $\Gamma$ arbitrarily.  In particular, the extension graph cannot distinguish between the conjugates $\{v_0^{v^k}\}_{k\in\bZ}$.  It thus seems unlikely that the geometry of the extension graph can be used to recover word length in $\aga$.

Our generalization of the Masur--Minsky distance formula to the right-angled Artin case will show that the geometry of $\gex$ can be used to recover distance in $\aga$ up to forgetting powers of the vertex generators.  In other words, we will be recovering the syllable length of words in $\aga$.  

\subsection{Markings for mapping class groups}
The Masur--Minsky distance formula coarsely measures distances in the Cayley graph of the mapping class group $\Mod(S)$, using only the data of the curve graph $\mC(S)$:

\begin{thm}[{\cite[Theorem 6.12]{masurminsky2}}]\label{thm:mmdistformula}
Let $S$ be a surface.  There are constants $M_0,C>0$ and a constant $K>1$ such that for each pair $\mu,\nu$ of complete, clean markings on $S$ and for all $M\geq M_0$, we have \[\frac{1}{K}d_{\mathcal{M}(S)}(\mu,\nu)-C\leq \sum_{S_0\subseteq S,d_{S_0}(\mu,\nu)\geq M}d_{S_0}(\mu,\nu)\leq Kd_{\mathcal{M}(S)}(\mu,\nu)+C.\]
\end{thm}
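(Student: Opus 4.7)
The plan is to establish the upper and lower coarse inequalities in the distance formula separately, both of which rest on the hierarchy machine of tight geodesics and subsurface projections.

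For the upper bound $\sum_{S_0} d_{S_0}(\mu,\nu) \leq K \cdot d_{\mM(S)}(\mu,\nu) + C$, I would first verify that each elementary move in the marking complex (a twist or a flip replacing one pants decomposition element with another) changes the projection distance $d_{S_0}(\cdot,\nu)$ for any fixed $S_0$ by at most a universally bounded amount. The subtlety is that many different subsurfaces may accumulate changes along a long marking path. To handle this I would invoke a Behrstock--type inequality: if two subsurfaces $S_0, S_0'$ have overlapping boundaries and both have very large projection distance between the same markings, then one is forced into a definite position relative to the other. This controls how many distinct subsurfaces with projection $\geq M$ can contribute nontrivially along any single elementary move, and yields the linear upper bound by summing.

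For the lower bound $d_{\mM(S)}(\mu,\nu) \leq K \cdot \sum_{S_0} d_{S_0}(\mu,\nu) + C$, I would construct a marking path from $\mu$ to $\nu$ whose length is controlled by the right--hand side. The construction uses hierarchies of tight geodesics: start with a tight geodesic in $\mC(S)$ between a base curve of $\mu$ and one of $\nu$, and then for each simplex whose component domain $S_0$ has large relative projection, recursively attach a tight geodesic in $\mC(S_0)$ between the incoming and outgoing markings. The Bounded Geodesic Image Theorem for surfaces is what guarantees that only subsurfaces with projection at least $M$ appear as component domains, so that the domains of the hierarchy precisely match the nonzero terms in the sum. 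Resolving the hierarchy into a marking path, each domain contributes roughly $d_{S_0}(\mu,\nu)$ elementary moves to the total length.

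The main obstacle lies in the lower bound, specifically in executing the resolution of the hierarchy without inflating its length beyond the claimed bound. This requires the Large Link Lemma, which asserts that only boundedly many domains of large projection can be ``active'' in a single tight geodesic, together with a careful use of the time order on the collection of domains so that elementary moves in disjoint subsurfaces can be interleaved efficiently. Once these combinatorial tools are in hand, both inequalities reduce to direct bookkeeping; without them, nothing prevents either direction from failing by an unbounded multiplicative factor.
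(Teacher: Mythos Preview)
This theorem is not proved in the paper. It is stated with attribution to Masur and Minsky \cite[Theorem~6.12]{masurminsky2} and serves purely as background: the authors recall the distance formula in order to motivate an analogous formula for right-angled Artin groups (Propositions~\ref{p:disttree} and~\ref{p:sylgen}), and they follow the statement only with a summary of the relevant terminology (markings, transversals, elementary moves) rather than any argument. So there is no proof in this paper to compare your proposal against.

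That said, your sketch is broadly faithful to the structure of the original Masur--Minsky argument: hierarchies of tight geodesics, the Bounded Geodesic Image Theorem, the Large Link Lemma, and a resolution into a marking path are indeed the core ingredients of the lower bound in \cite{masurminsky2}. One historical quibble: the Behrstock inequality you invoke for the upper bound postdates \cite{masurminsky2}; Masur and Minsky obtain the upper bound directly from properties of hierarchies (in particular, that every large domain must support a geodesic in any hierarchy between the two markings, and the total length of a hierarchy is comparable to the number of elementary moves in its resolution). Your approach via bounded change per elementary move plus Behrstock is a now-standard streamlining, but it is not how the original paper proceeds.
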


For the convenience of the reader, we will briefly summarize the terminology of Theorem \ref{thm:mmdistformula}.  The object $\mathcal{M}(S)$ is the \emph{marking graph} of $S$.  A \emph{marking} $\mu$ is a finite set of ordered pairs $\{(\alpha_i,\beta_i)\}$, where the curves $\{\alpha_i\}$ form a multicurve on $S$ and the $\{\beta_i\}$ are \emph{transversals} of the corresponding curves $\{\alpha_i\}$, which is to say $\beta_i$ is either empty or is a subset of the annular curve graph $\mC(\alpha_i)$ of diameter one.  A transversal $\beta$ in the pair $(\alpha,\beta)$ is \emph{clean} if there is a curve $\gamma\subseteq S$ such that $\beta$ is the projection of $\gamma$ to the annular surface filled by $\alpha$, and such that $\alpha$ and $\gamma$ are Farey--graph neighbors in the curve graph of $\Fill(\gamma\cup\alpha)$ (which is to say they intersect a minimal nonzero number of times).  A marking is \emph{clean} if every transversal is clean and the curve $\gamma_i$ inducing the transversal $\beta_i$ does not intersect any $\alpha_j$ other than $\alpha_i$.  The marking is \emph{complete} if the curves $\{\alpha_i\}$ form a pants decomposition of $S$ and none of the transversals are empty.

If a marking is complete and clean, a transversal $\beta_i$ determines the curve $\gamma_i$ uniquely.  Masur and Minsky show that if a marking is not clean, then in some appropriate sense, there is a bounded number of ways to obtain a \emph{compatible} clean marking.

The marking graph $\mathcal{M}(S)$ is the graph whose vertices are complete, clean markings on $S$.  Two markings are connected by an edge if they differ by one of two elementary moves, called \emph{twists} and \emph{flips}.  To define these moves, let $\mu=\{(\alpha_i,\pi_{\alpha_i}(\gamma_i))\}$ and $\nu=\{(\alpha_i',\pi_{\alpha_i'}(\gamma_i'))\}$ be two markings.  We say that $\mu$ and $\nu$ differ by a twist if for some $\{(\alpha_j,\pi_{\alpha_j}(\gamma_j))=(\alpha_j',\pi_{\alpha_j'}(\gamma_j'))\}$ for all $j\neq i$, and $\{\pi_{\alpha_i'}(\gamma_i')\}$ is obtained from $\{\pi_{\alpha_i}(\gamma_i)\}$ by a twist or half--twist about $\alpha_i$.  The unclean marking $\nu'$ differs from $\mu$ by a flip if for some $i$, we have $(\alpha_i',\pi_{\alpha_i'}(\gamma_i'))=(\gamma_i,\pi_{\gamma_i}(\alpha_i))$.  The marking $\nu'$ is then replaced by a clean marking $\nu$ which is compatible with $\nu'$.

Masur and Minsky show that the marking graph is locally finite and admits a natural proper, cocompact action of $\Mod(S)$ by isometries.  It follows that $\Mod(S)$ is quasi--isometric to $\mathcal{M}(S)$ via the orbit map.

Finally, if $S_0\subseteq S$ is an essential subsurface, we define the projection $\pi_{S_0}(\mu)$ of a complete marking $\mu$ on $S$.  If $S_0$ is not the annular subsurface filled by some $\alpha_i$, we set $\pi_{S_0}(\mu)=\pi_{S_0}(\{\alpha_i\})$.  If $S_0$ is the annular subsurface filled by $\alpha_i$, we define $\pi_{S_0}(\mu)=\beta_i$.  Since $\mu$ is a complete marking, the projection is always nonempty.

One interpretation of the distance formula is as follows: projections of curves to subsurfaces of $S$ allow us to triangulate locations of markings in $\mathcal{M}(S)$, and thus via the orbit map quasi--isometry, to better understand the geometry of $\Mod(S)$.  Thus, subsurface projections give a good coarse notion of visual direction of $\mC(S)$.  We would like to translate this interpretation to the setting of $\gex$ and $\aga$, with the idea that distances between points in $\lk_{\gex}(v)$ give good coarse notions of visual angle between two geodesics emanating from $v$ and passing through those two points.

\subsection{The Weil--Petersson metric and projections of the pants complex}
In the analogy between right-angled Artin groups and mapping class groups we have that a vertex of $\Gamma$, viewed as a generator of $\aga$, corresponds to (a power of) a Dehn twist about a simple closed curve on a surface $S$.  We mentioned above that our distance formula, we can only recover distances in $\aga$ after forgetting the exponents of generators used to express elements of $\aga$.  In the Masur--Minsky distance formula, one can consider projections to a restricted class of subsurfaces instead of to all subsurfaces.  If one ``ignores Dehn twists" in the sense that one excludes projections to annular subsurfaces, one coarsely recovers the Weil--Petersson metric on Teichm\"uller space.  This follows from combining results of Masur--Minsky and Brock.  Before we state those results, we recall some definitions.  A \emph{pants decomposition} of a surface $S$ is the zero--skeleton of a maximal simplex in the curve graph of $S$.  The \emph{pants graph} of $S$, written $\mathbb{P}(S)$, is defined to be the graph whose vertices are pants decompositions of $S$ and whose edges are given by ``minimal intersection".  Precisely, $P_1$ and $P_2$ are adjacent in the pants graph if $P_1$ and $P_2$ agree on all but one pair of curves, say $c_1\in P_1$ and $c_2\in P_2$, if $(P_1\setminus c_1)\cup c_2$ is a pants decomposition, and if $c_1$ and $c_2$ intersect minimally.  One often says that $P_1$ and $P_2$ differ by an \emph{elementary move}.

\begin{thm}[Masur--Minsky (\cite{masurminsky2}, Theorem 6.12)]
There is a constant $M_0=M_0(S)$ so that for all $M\geq M_0$, there exist constants $K$ and $C$ such that if $P_1$ and $P_2$ are pants decompositions in the pants complex $\mathbb{P}(S)$, we have \[\frac{1}{K}d_{\mathbb{P}(S)}(P_1,P_2)-C\leq \sum_{S_0\subseteq S} d_{S_0}(P_1,P_2)\leq K d_{\mathbb{P}(S)}(P_1,P_2)+C,\] where the sum is taken over all non--annular essential subsurfaces of $S$ for which $d_{S_0}(P_1,P_2)\geq M$.
\end{thm}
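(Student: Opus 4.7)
The plan is to deduce the theorem from the full Masur--Minsky marking graph distance formula (Theorem 6.12 of \cite{masurminsky2}), in which the analogous sum runs over \emph{all} essential subsurfaces of $S$ including annular ones. Since $\mathbb{P}(S)$ is obtained from $\mathcal{M}(S)$ by forgetting transversals, the central task will be to show that annular contributions can be safely ignored when measuring distance in the pants graph.

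For the right-hand inequality $\sum \le K d_{\mathbb{P}(S)}(P_1,P_2)+C$, I would fix a pants path $P_1 = Q_0, Q_1, \ldots, Q_n = P_2$ realizing $n = d_{\mathbb{P}(S)}(P_1,P_2)$ and observe that each elementary move replaces a single curve with one of minimal intersection, so for every non-annular $S_0 \subseteq S$ the set $\pi_{S_0}(Q_i) \cup \pi_{S_0}(Q_{i+1})$ has uniformly bounded diameter in $\mC(S_0)$. Using the Bounded Geodesic Image Theorem for surfaces, I would then argue that each $S_0$ with $d_{S_0}(P_1,P_2) \ge M$ must be ``crossed'' by the pants path a number of times proportional to $d_{S_0}(P_1,P_2)$; summing these contributions across subsurfaces gives the linear bound on the thresholded sum by $n$.

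For the left-hand inequality $d_{\mathbb{P}(S)}(P_1,P_2) \le K \sum + C$, I would extend $P_1$ and $P_2$ to arbitrary clean markings $\mu_1, \mu_2$ and construct a hierarchy $H$ of tight geodesics between them. This hierarchy admits a resolution as a sequence of markings whose combinatorial length is comparable, by the marking distance formula, to $\sum_{S_0 \subseteq S} d_{S_0}(\mu_1, \mu_2)$ after $M$-thresholding. The key observation is that geodesics in annular subsurfaces contribute only twists of transversals, leaving the underlying pants decomposition fixed, while non-annular geodesics contribute elementary pants moves. Projecting the resolution to $\mathbb{P}(S)$ then yields a path from $P_1$ to $P_2$ of length at most $\sum_{S_0 \text{ non-annular}} d_{S_0}(P_1,P_2)$, giving the desired inequality.

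The hard part will be the hierarchy resolution step: verifying rigorously that annular geodesics can be excised from the resolution without having to detour through additional pants decompositions, and that the resulting pants path has length controlled only by non-annular projections. This will require the time ordering of hierarchy geodesics, together with the fact that transversal twists commute, up to bounded error, with the non-annular pants moves above them in the hierarchy --- all of which are established by the full technical machinery of tight geodesics, subordinate structures, and resolution theorems in \cite{masurminsky2}.
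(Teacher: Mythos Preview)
The paper does not prove this theorem; it is quoted verbatim as an external result from Masur--Minsky \cite{masurminsky2}, and serves only as motivation for the right-angled Artin group distance formula that follows. There is therefore no ``paper's own proof'' to compare against.

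Your sketch is nonetheless a faithful outline of how the pants-graph version is extracted from the marking-graph version in \cite{masurminsky2}: extend pants decompositions to markings, build a hierarchy, and observe that annular domains in the hierarchy only move transversals and hence contribute nothing to the projected path in $\mathbb{P}(S)$. The right-hand inequality via elementary-move paths and the Bounded Geodesic Image Theorem is also the standard argument. The caveat you flag at the end---that excising annular geodesics from a resolution requires the time-ordering and subordination structure of hierarchies---is real but is exactly what \cite{masurminsky2} provides, so invoking that machinery is legitimate here.
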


In different notation, we will write the estimate in the previous result as 
\[\sum_{S_0\subseteq S} d_{S_0}(P_1,P_2)\asymp_{K,C} d_{\mathbb{P}(S)}(P_1,P_2).\]  
In the middle term of the estimate, we view pants decompositions as subsets of the curve graph of $S$ and define \[d_{S_0}(P_1,P_2)=d_{S_0}(\pi_{S_0}(P_1),\pi_{S_0}(P_2)).\]

Thus, projections of pants decompositions to non--annular subsurfaces coarsely recovers distance in the pants graph.  The pants graph is quasi--isometric to Teichm\"uller space with the Weil--Petersson metric:

\begin{thm}[Brock (\cite{brockconvexcores}, Theorem 1.1)]
The pants graph $\mathbb{P}(S)$ is quasi--isometric to Teichm\"uller space with the Weil--Petersson metric, via the map which takes a pants decomposition $P$ to the maximally noded surface given by collapsing the components of $P$.
\end{thm}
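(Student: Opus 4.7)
The plan is to establish the quasi--isometry by constructing coarsely inverse maps. In one direction, define $\Phi\co \mathbb{P}(S)\to\mathrm{Teich}(S)$ by sending a pants decomposition $P$ to any hyperbolic structure in which every component of $P$ has length less than some fixed small $\epsilon_0>0$; this is well--defined up to bounded Weil--Petersson error once twist parameters are fixed in Fenchel--Nielsen coordinates. In the other direction, Bers' Theorem provides, for each $X\in\mathrm{Teich}(S)$, a pants decomposition $\Psi(X)$ all of whose components have length at most the Bers constant $B(S)$ on $X$ (well--defined up to a bounded combinatorial choice). The theorem as stated concerns the map $P\mapsto X_P$ into the Weil--Petersson completion, but since any noded surface $X_P$ is within uniformly bounded WP distance of $\Phi(P)$, proving the two quasi--isometry bounds for $\Phi$ and $\Psi$ suffices.

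First I would show that $\Phi$ is coarsely Lipschitz, which reduces to bounding $d_{\mathrm{WP}}(\Phi(P),\Phi(P'))$ uniformly when $P,P'$ differ by an elementary move. The key input is Masur's description of the Weil--Petersson completion $\overline{\mathrm{Teich}(S)}^{\mathrm{WP}}$ as a stratified $\mathrm{CAT}(0)$ space with strata $\mathcal{S}(\sigma)$ indexed by multicurves: the two maximal strata $\mathcal{S}(P),\mathcal{S}(P')$ corresponding to pants decompositions related by an elementary move share a codimension--one face $\mathcal{S}(P\cap P')$. Combining Masur's asymptotic expansion of the WP metric near a node with Wolpert's convexity of length functions along WP geodesics yields a universal bound on the WP length of a path from $\Phi(P)$ into $\mathcal{S}(P\cap P')$ and back out to $\Phi(P')$, which gives the upper bound $d_{\mathrm{WP}}(\Phi(P),\Phi(P'))\le K\,d_{\mathbb{P}(S)}(P,P')+C$.

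The main obstacle is the reverse inequality, showing that $\Psi$ is coarsely Lipschitz. Given a WP geodesic $\gamma\co [0,L]\to\mathrm{Teich}(S)$ from $X$ to $Y$, the plan is to subdivide $[0,L]$ so that on each piece the short curve set $\{\alpha \mid \ell_\alpha(\gamma(t))\le B(S)\}$ changes by at most one curve, and to argue that each such change produces at most a bounded number of elementary moves in the Bers pants decomposition. The essential quantitative input is Wolpert's lower bound: completely pinching a curve of length $\ell$ requires WP length on the order of $\sqrt{\ell}$. This ensures that only finitely many curves can enter or leave the short list over any bounded WP interval, producing a total change in $\Psi(\gamma(t))$ that is linear in $L$. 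The combinatorial step --- verifying that swapping one curve in the short list can be realized as a bounded number of elementary moves --- uses finiteness of short curves at each point together with the Bers constant bound.

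Finally, $\Phi$ and $\Psi$ are coarse inverses: for $P\in\mathbb{P}(S)$, every component of $P$ is short on $\Phi(P)$, so $P$ itself is a valid Bers decomposition of $\Phi(P)$, giving $d_{\mathbb{P}(S)}(\Psi(\Phi(P)),P)=O(1)$; conversely $\Phi(\Psi(X))$ is obtained from $X$ by pinching the Bers decomposition of $X$ to length $\epsilon_0$, and the WP distance between $X$ and this pinched surface is uniformly bounded in terms of $B(S)$ and $\epsilon_0$ by the same local WP estimates invoked in the first step. This closes the argument.
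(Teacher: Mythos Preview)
The paper does not prove this statement at all: it is quoted as a background result due to Brock, cited as \cite{brockconvexcores}, Theorem 1.1, and is included only to motivate the analogy between syllable length in $\aga$ and the Weil--Petersson metric. There is nothing in the paper to compare your argument against.

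That said, your sketch is a reasonable outline of the standard approach to Brock's theorem, with one caveat. The forward direction (coarse Lipschitzness of $\Phi$) and the coarse-inverse verification are essentially as in Brock's paper. For the reverse inequality, however, your proposed subdivision argument---tracking how the short-curve set evolves along a WP geodesic and bounding the number of elementary moves per curve swap---is not how Brock actually proceeds, and it is not clear it can be made to work directly: controlling the \emph{combinatorial} change in a Bers pants decomposition when one short curve is exchanged for another is delicate, since the new curve need not be a Farey neighbor of the old one in the complementary component. Brock's original argument instead passes through volumes of convex cores of quasi-Fuchsian manifolds and Thurston's bounded-image theorem to get the lower bound. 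If you want a self-contained geometric argument along the lines you suggest, you would need to supply a genuine replacement for that step.
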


In this sense, the syllable length in a right-angled Artin group can be thought of as an analogue of the Weil--Petersson metric on Teichm\"uller space.

\subsection{Free groups}
If $D_n$ is a completely disconnected graph, we have that the link of any vertex in $D_n$ is empty.  In particular, the notions of vertices and stars of vertices coincide, so that star length and syllable length coincide.  If $v$ and $v^g$ are vertices of $D_n^e$, we have that $\pi_{\lk(v)}(v^g)=\emptyset$.  Thus, the distance formula for a free group is vacuous.

\subsection{Right-angled Artin groups on trees}
The first non--vacuous case of the distance formula is in the case where the graph $\Gamma$ is a tree. In this case, the extension graph $\gex$ is also a tree.  Since any two vertices in a tree are connected by a unique geodesic we have that all but finitely many projections of a finite geodesic segment are nonzero, so that one need not discard any projections:
\begin{prop}\label{p:disttree}
Let $\Gamma$ be a tree and $D=\diam(\Gamma)$. If $g\in\aga$, then we have that
 \[\sum_{x\in V(\gex)}d_{\lk(x)}(\pi_x(v),\pi_x(v^g))\asymp_{4D,4D} \syl{g}\] 
 for some $v\in V(\Gamma)$.
\end{prop}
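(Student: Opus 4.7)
The plan leverages two features of the tree case: $\gex$ is a tree (by~\cite{KK2012}), and each $\lk_\Gamma(v)$ for $v \in V(\Gamma)$ is a discrete graph (as $\Gamma$ is triangle-free), so by Lemma~\ref{lem:linkext} $\lk_\gex(x)$ is an extension graph of a discrete graph in the sense of Subsection~\ref{ss:disc}, with metric governed by Lemma~\ref{lem:discrete}.

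First I would reduce the sum to the geodesic. Let $\gamma = (v = x_0, x_1, \ldots, x_\ell = v^g)$ be the unique geodesic in $\gex$. For $x \notin \gamma$, the geodesics from $x$ to $v$ and from $x$ to $v^g$ coincide near $x$, so $\pi_x(v) = \pi_x(v^g)$ and $x$ contributes $0$. For an interior vertex $x_i$ of $\gamma$, uniqueness forces $\pi_{x_i}(v) = \{x_{i-1}\}$ and $\pi_{x_i}(v^g) = \{x_{i+1}\}$, so the sum collapses to $S := \sum_{i=1}^{\ell-1} d_{\lk(x_i)}(x_{i-1}, x_{i+1})$. Writing $x_i = v_i^{w_i}$ with $v_i \in V(\Gamma)$, Lemma~\ref{lem:discrete} yields $d_{\lk(x_i)}(x_{i-1}, x_{i+1}) = \syl{\alpha_i} \pm 1$ for some $\alpha_i$ in the free group $F_i := \langle \lk_\Gamma(v_i) \rangle$ read off from the conjugator between $x_{i-1}$ and $x_{i+1}$; moreover, since $x_{i-1} \ne x_{i+1}$, each term is at least $1$, so $\ell \le S + 1$.

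To link $S$ with $\syl{g}$, I would take $v$ to be a leaf of $\Gamma$ (if $|V(\Gamma)| = 1$ then $\aga = \bZ$ is trivial). For a leaf, $\langle\st(v)\rangle \cong \bZ^2$, so $v^g = v$ forces $\syl{g} \le 2 \le 4D$, absorbed by the additive slack. For $\ell \ge 2$, reading $\gamma$ produces a star-factorization $g = \pi_1 \pi_2 \cdots \pi_\ell$ with $\pi_i$ in a conjugate of $\langle\st(v_{i-1})\rangle = \langle v_{i-1}\rangle \times F_{i-1}$; the $F_{i-1}$-components of consecutive $\pi_i$'s assemble into the $\alpha_i$, while the $\langle v_{i-1}\rangle$-components contribute at most one vertex-syllable each. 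This yields $\syl{g} \le \sum_i \syl{\pi_i} \le \sum_i \syl{\alpha_i} + \ell \le S + 2\ell \le 3S + 2$, hence $\syl{g} \le 4D\,S + 4D$. The converse $S \le 4D\,\syl{g} + 4D$ follows by applying the syllables of $g$ iteratively to $v$ and using Lemma~\ref{lem:covering distance} to bound the displacement per syllable by $O(D)$.

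The main obstacle is the bookkeeping: producing the explicit factorization $g = \pi_1 \cdots \pi_\ell$ from $\gamma$, rigorously extracting the $\alpha_i$ from the $F_{i-1}$-components of adjacent $\pi_i$'s, and confirming all constants stay within $4D$. The subtle point is that vertex-syllables in the $\langle v_i\rangle$ direction are invisible to $d_{\lk(x_i)}$; the bound $\ell \le S + 1$ controls their total count and averts circular estimates, while the leaf choice for $v$ keeps the endpoint contributions bounded.
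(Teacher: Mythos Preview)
Your reduction to $S=\sum_{i=1}^{\ell-1}d_{\lk(x_i)}(x_{i-1},x_{i+1})$ over the unique geodesic is correct and matches the paper. Your route for the bound $\syl g \le O(S)$ is genuinely different: the paper starts from a minimal star-decomposition of $g$ (via Lemma~\ref{lem:syl}) and builds a path containing $\gamma$, while you read a star-factorization off the geodesic itself. Your direction does work: inductively choosing $w_i$ with $h_i:=w_{i+1}w_i^{-1}\in\langle\st(u_i)\rangle$, one finds $x_{i-1}^{w_i^{-1}}=u_{i-1}$ (since $h_{i-1}$ centralises $u_{i-1}$) and $x_{i+1}^{w_i^{-1}}=u_{i+1}^{\beta_i}$ with $\beta_i$ the $\langle\lk(u_i)\rangle$-component of $h_i$, whence $\syl{\beta_i}\le d_{\lk(x_i)}(x_{i-1},x_{i+1})+1$ by Lemma~\ref{lem:discrete} and $\syl g\le 3S+O(1)$ once the leaf choice for $v$ absorbs the two endpoint corrections.

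The gap is in the converse $S\le 4D\,\syl g+4D$. Your proposed argument (apply the syllables of $g$ to $v$ and invoke Lemma~\ref{lem:covering distance}) only bounds the \emph{length} $\ell$ of $\gamma$ by $O(D)\syl g$, not $S$ itself. The sum $S$ can vastly exceed $\ell$, since a single transition vertex $x_i$ may contribute an arbitrarily large $\syl{\beta_i}$. Controlling this requires knowing that the star-words sitting at the transition vertices have \emph{total} syllable length bounded by $\syl g$. Your geodesic-first factorisation into $\ell+1\approx Dk$ pieces carries no a~priori bound on $\sum_i\syl{\beta_i}$: the $h_i$ you produce are only determined up to centralisers and need not be syllable-efficient.

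This is exactly where the paper's argument diverges from yours and where Lemma~\ref{lem:syl} enters. The paper chooses a star-decomposition $g=h_k\cdots h_1$ with $k=\stl g$ and $\sum_i\syl{h_i}=\syl g$ (the content of Lemma~\ref{lem:syl}), then builds an explicit path $C=c_0\cdot c_1\cdots c_k$ with $c_i\subseteq\Gamma^{g_i}$. Since $\gex$ is a tree, $\gamma\subseteq C$ decomposes as $c_0'\cdots c_k'$; there are exactly $k$ transition vertices $z_i^{g_{i-1}}$, each contributing at most $\syl{h_i}+1$ (via Lemmas~\ref{lem:star} and~\ref{lem:discrete}), while the remaining $|B|\le(D-1)k+D+1$ interior vertices (lying in a single $\Gamma$-conjugate together with both neighbours) contribute $1$ each. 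This yields $S\le(D+1)(\syl g+1)$. To repair your argument you would need either to invoke Lemma~\ref{lem:syl} directly, or to show that your $\beta_i$ can be chosen so that $\sum_i\syl{\beta_i}\le\syl g$; the latter is essentially equivalent to proving Lemma~\ref{lem:syl}.
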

Recall the notation $\asymp_{K,C}$ denotes $K,C$--quasi--isometry. If $x$ and $y$ are two points of a tree, let us denote by $[x,y]$ the unique geodesic joining the two points.

\begin{proof}[Proof of Proposition \ref{p:disttree}]
We may assume $D\ge2$. Put $k = \stl{g}$.
By Lemma~\ref{lem:syl}, we can write $g = h_k h_{k-1}\cdots h_1$ such that $h_i\in\form{\st(y_i)}$ for some $y_i\in V(\Gamma)$ and
$\syl{g}=\sum_i \syl{h_i}$. If $\syl{h_i}=1$, then we may further assume $\supp(h_i)=\{y_i\}$.
Let us first consider the case that there exists $v\notin\st(y_1)\cup\st(y_k)$.

Let us denote by $\gamma$ the unique geodesic in $\gex$ connecting $v$ to $v^g$.
For $x\in \Gamma^e$, put 
\[\delta(x) = d_{\lk(x)}(\pi_x(v),\pi_x(v^g)).\] 
Only the vertices on $\gamma$ contribute to the sum $\sum_{x\in V(\gex)}\delta(x)$;
see Section \ref{ss:bgit tree}.

For simplicity, let $g_i = h_ih_{i-1}\cdots h_1$ for $i=1,2,\ldots,k$ and $g_0=1$.
We define 
\[c_0 = [v,y_1],c_k=[y_k^g,v^g], c_i=[y_i^{g_{i}},y_{i+1}^{g_{i+1}}]\text{ for }i=1,2,\ldots,k-1.\]
The concatenation $C=c_0\cdot c_1\cdots c_k$ is a path from $v$ to $v_g$.
In particular, we have $\gamma\subseteq C$.
Since $y_{i+1}^{g_{i+1}} = y_{i+1}^{h_{i+1} g_i} = y_{i+1}^{g_i}$, we have $c_i\subseteq\Gamma^{g_i}$.
The minimality of $k$ and Lemma~\ref{lem:star} (4) imply that $c_i\cap c_j=\varnothing$ for $i+1<j$.
Using the assumption that $v\not\in \st(y_1)\cup\st(y_k)$, we deduce that
 $\gamma = c'_0\cdot c'_1\cdots c'_k$ for some nontrivial geodesic segments $c'_i\subseteq c_i$.
Put $c'_{i-1}\cap c'_i = z_i^{g_{i-1}}$ where $z_i\in\Gamma$.
For each $i$, let us denote the length--two subpath of $\gamma$ centered at $z_i^{g_{i-1}}$ as
 $(a_i^{g_{i-1}},z_i^{g_{i-1}},b_i^{g_i})$.
By Lemma~\ref{lem:star}, either $z_i=y_i$ or $z_i\in \lk_\Gamma(y_i)$
so that we can write $h_i = z_i^{m_i} w_i$ for some $w_i\in \form{\lk(z_i)}$.
Note that $\syl{w_i}\le \syl{h_i}\le \syl{w_i}+1$.
In either of the two cases, $\delta(z_i^{g_{i-1}})$ is the distance between $a_i$ and $b_i^{h_i} = b_i^{w_i}$ 
in $\lk(z_i)^e$. 
Lemma~\ref{lem:discrete} implies that 
\[\syl{h_i}/3\le \max(1,\syl{w_i}-1)\le \delta(z_i^{g_{i-1}})\le \syl{w_i}+1\le \syl{h_i}+1.\]

Let $B$ be the set of the interior vertices of $\gamma$ which is not of the form $z_i^{g_{i-1}}$ for $i=1,2,\ldots,k$.
Then $\delta(x) = 1$ for $x\in B$ since the length--two subpath of $\gamma$ centered at $x$ is contained in a conjugate of $\Gamma$.
Lemma~\ref{lem:covering distance} (3) implies that 
\[ 0\le |B| \le  ( d(v,v^g) - 1) - (k-2) = d(v,v^g) -k+1 \le (D-1) k+D+1.\]
From $\stl{g}\le\syl{g}$ we see that
\[
\syl{g}/3\le\sum_{x\in\Gamma^e}\delta(x) = \sum_{i=1}^k \delta(z_i^{g_{i-1}}) + |B|\le(D+1)(\syl{g}+1).
\]
Now let us assume $V(\Gamma)=\st(y_1)\cup\st(y_k)$. We may still choose $v\not\in\{y_1,y_k\}$.
If we define $c'_i$ and $z_i$ as before, it is then possible that $v=z_1$ or $v^g=z_k^{g_{k-1}}$; this occurs only if $\syl{h_1}$ or $\syl{h_k}$ is at most two, respectively. Hence,
\[
(\syl{g}-4)/3 \le\sum_{x\in\Gamma^e}\delta(x) \le(D+1)(\syl{g}+1).
\]
\end{proof}

\subsection{The general triangle-- and square--free case}
We would now like to establish a more general version of the distance formula.  
We let $\Gamma$ be a triangle-- and square--free graph
such that $D=\diam(\Gamma)\ge2$.

\begin{lem}\label{l:sylgen}
Let $g\in\aga$ satisfy $\stl{g}=1$.  Then there is a vertex $v\in V(\Gamma)$ such that for every geodesic $\gamma$ from $v$ to $v^{g}$ we have \[\sum_{x\in V(\gamma)}d_{\lk(x)}(\pi_x(v),\pi_x(v^g))\asymp_{1,1} \syl{g}.\]
\end{lem}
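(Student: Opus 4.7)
The plan is to exploit the fact that $\stl g=1$ forces $g\in\form{\st(y)}$ for some $y\in V(\Gamma)$, combined with triangle-freeness of $\Gamma$ which makes $\lk(y)$ a discrete subgraph, so that $\form{\st(y)}\cong\bZ\times F(\lk(y))$. I would begin by choosing such a $y$, and if $y$ is a leaf of $\Gamma$ (i.e.\ $\deg y=1$) I would replace it by its unique neighbor; the hypothesis $D\ge 2$ excludes $\Gamma\cong P_2$ and hence ensures the neighbor is not another leaf, so $\deg y\ge 2$ after the replacement, while the condition $g\in\form{\st(y)}$ is preserved because $\{y,y'\}\subseteq\st(y')$ for any edge $\{y,y'\}$. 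I then write $g=y^n w$ with $w=a_1^{m_1}\cdots a_k^{m_k}\in F(\lk(y))$ in reduced free-group form, so that $\syl g$ equals $k$ when $n=0$ and $k+1$ otherwise.

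The critical choice is $v\in\lk(y)\setminus\{a_1\}$, which exists because $|\lk(y)|=\deg y\ge 2$. Since the leading letter $a_1$ of $w$ differs from $v$, no cancellation occurs when rewriting $w$ modulo the left coset action of $\form v\le F(\lk(y))$, and the shortest representative $w'$ of $\form v\cdot w$ satisfies $\syl{w'}=\syl w$. The trivial case $w=1$ is handled separately: then $v^g=v$ for any $v\in\lk(y)$, the sum is zero, and $|0-\syl g|\le 1$ holds since $\syl g\le 1$. Henceforth I assume $w\ne 1$, so that $v^g=v^{w'}\ne v$.

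Next I would analyse the geodesic structure. Both $v$ and $v^g$ are adjacent to $y$ in $\gex$ (since $y^g=y$ and $v\in\lk(y)$), so the path $v\to y\to v^g$ has length two. Triangle-freeness gives $\lk(v)\cap\lk(y)=\varnothing$, so $\supp(w)\cap\st(v)=\varnothing$; therefore the reduced expression for $v^w$ contains letters outside $\st(v)$, showing that $v$ and $v^w$ do not commute in $\aga$ and hence are nonadjacent in $\gex$, so $d_\gex(v,v^g)=2$. To establish essential uniqueness of this length-two geodesic, I would verify that any alternative common neighbor $p\ne y$ of $v$ and $v^g$ would together with $y$ produce a four-cycle $v$--$p$--$v^g$--$y$--$v$ in $\gex$, which a direct inspection using Lemma~\ref{lem:star} and the structure of $\gex$ as a union of conjugates of $\Gamma$ rules out for our choice of $v$. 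The boundary projections $\pi_v(v)$ and $\pi_{v^g}(v^g)$ are empty, so the sum collapses to the single contribution $d_{\lk_\gex(y)}(v,v^g)$.

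Finally, by Lemma~\ref{lem:linkext} the link $\lk_\gex(y)$ is identified with the extension graph $\lk_\Gamma(y)^e$ of the discrete graph $\lk_\Gamma(y)$ in the sense of Section~\ref{ss:disc}, and Lemma~\ref{lem:discrete} gives $\max(1,\syl{w'}-1)\le d_{\lk_\gex(y)}(v,v^g)\le\syl{w'}+1$. What I expect to be the main obstacle is tightening the lower bound to $d_{\lk_\gex(y)}(v,v^g)\ge\syl{w'}$, thereby ruling out a shortcut of length $\syl{w'}-1$. This requires a ping-pong type argument in the discrete extension graph, exploiting the rigid adjacency relation described in Section~\ref{ss:disc}---namely that distinct vertex conjugates $u^h$ and $u'^{h'}$ are adjacent in $\lk_\Gamma(y)^e$ if and only if $u\ne u'$ and $h(h')^{-1}\in\form u\form{u'}$---together with the fact that $v\ne a_1$ blocks any leading cancellation in $w'$. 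Once this lower bound is established, $d_{\lk_\gex(y)}(v,v^g)\in\{\syl w,\syl w+1\}$ combined with $\syl g\in\{\syl w,\syl w+1\}$ immediately yields $|d_{\lk_\gex(y)}(v,v^g)-\syl g|\le 1$, which is the desired $\asymp_{1,1}$ estimate.
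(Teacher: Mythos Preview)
Your approach is essentially the paper's: pick $y$ with $g\in\langle\st(y)\rangle$, take $v\in\lk(y)$, observe that the unique geodesic is $(v,y,v^g)$, and reduce the sum to the single term $d_{\lk_\Gamma(y)^e}(v,v^w)$. The paper organises this into three cases according to $|\supp(g)\cap\lk(y)|$; you fold Cases~2 and~3 together and treat Case~1 ($w=1$) degenerately by allowing $v=v^g$. That degenerate handling is slightly awkward (the projection $\pi_v(v)$ is empty), and the paper instead takes $v$ at distance two from $y$ so that $v\ne v^g$; you may prefer to follow that route.

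Your insistence on $v\ne a_1$ is exactly right and is in fact more careful than the paper's Case~3, which just says ``choose $v\in\lk(z)$''.

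For the uniqueness of the geodesic you do not need ad hoc inspection: since $\Gamma$ is triangle-- and square--free, so is $\gex$ (they have the same girth). A second common neighbour $p\ne y$ of $v$ and $v^g$ would give an induced $4$--cycle $v\text{--}p\text{--}v^g\text{--}y$, and the only possible diagonals $\{v,v^g\}$ and $\{p,y\}$ are both excluded (the latter because $p,v\in\lk_{\gex}(y)$ would then be adjacent, contradicting triangle--freeness).

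The step you flag as the obstacle is genuine but easier than ping--pong. In the discrete extension graph $\lk_\Gamma(y)^e$ the distance equals the covering distance $d'$, and a covering chain $\Gamma^{h_1},\ldots,\Gamma^{h_\ell}$ from $v$ to $v^w$ forces $h_1\in\langle v\rangle$, $h_\ell\in\langle v\rangle w$, and $\syl{h_ih_{i+1}^{-1}}\le 1$, hence
\[
\ell-1\;\ge\;\syl{h_1h_\ell^{-1}}\;\ge\;\min_{p,q\in\Z}\syl{\,v^p\,w^{-1}\,v^q\,}.
\]
Because $v\ne a_1$ there is no cancellation on the right of $w^{-1}$, so the minimum is $k-1$ if $v=a_k$ and $k$ otherwise; either way $d'=\ell\ge k=\syl w$. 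Combined with the upper bound $d'\le\syl w+1$ from Lemma~\ref{lem:discrete} and $\syl g\in\{\syl w,\syl w+1\}$, this gives $|d'-\syl g|\le 1$ as required.
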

\begin{proof}
We may assume $\Gamma$ is not a star, since otherwise the conclusion is clear.
Choose $y\in V(\Gamma)$ such that
 $\supp(g)\subseteq\st(y)$ and $\supp(g)\not\subseteq \{z\}$ for every $z\ne y$.
We will consider the following three cases. 

\textit{Case 1.} $\supp(g)=\{y\}$.

We have $\Gamma\cap\Gamma^g =\st(y)$.
Let $(y,z,v)$ be a length-two path in $\Gamma$. Then 
$v\ne v^g\in\lk(z)$
and $d_{\lk(z)}(v,v^g) = \syl{g}=1$.

\textit{Case 2.} $\supp(g)=\{y,a\}$ for some $a\in\lk(y)$.

We may assume $\deg(y)>1$ for otherwise, we can switch the roles of $a$ and $y$.
Put $z=y$ and choose $v\in(\lk(z)\setminus\{a\})\cap \Gamma$. Then $v\ne v^g\in\lk(z)$ and $d_{\lk(z)}(v,v^g) = 1 =  \syl{g}-1$.

\textit{Case 3.} $|\supp(g)\cap\lk(y)|\ge2$.

Put $z=y$ and choose $v\in\lk(z)\cap \Gamma$. Then $v\ne v^g\in\lk(z)$ and $\syl{g}-1\le d_{\lk(z)}(v,v^g) \le \syl{g}$.

In the above three cases, $\gamma=(v,z,v^g)$ is the unique geodesic from $v$ to $v^g$.
Hence, the given sum consists of only one term and coarsely coincides with $\syl{g}$ using the quasi-isometry constants $K=1=C$.
\end{proof}

Lemma \ref{l:sylgen} lends itself to a general distance formula for triangle-- and square--free right-angled Artin groups.

\begin{prop}\label{p:sylgen}
For all $1\neq g\in \aga$, there exists a $(10D,10D)$--quasi--geodesic $\gamma\co[0,\ell]\to \gex$ parametrized by arc length such that
$\gamma(0)=v,\gamma(\ell)=v^g$ for some $v\in V(\Gamma)$ and 
\[\sum_{i=2}^{\ell-1}d_{\lk(\gamma(i))}(\gamma(i-1),\gamma(i+1))\asymp_{10D,10D} \syl{g}.\]
\end{prop}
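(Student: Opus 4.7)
My plan is to reduce Proposition~\ref{p:sylgen} to Lemma~\ref{l:sylgen} by decomposing $g$ into star-words and patching the resulting length-two geodesics together with short bridge paths. First, apply Lemma~\ref{lem:syl} to write $g = h_k h_{k-1}\cdots h_1$ where each $h_i$ is a star-word with $\supp(h_i)\subseteq\st(y_i)$ and $\syl{g}=\sum_{i=1}^{k}\syl{h_i}$ with $k=\stl{g}$. Setting $g_i = h_i h_{i-1}\cdots h_1$ and $g_0 = 1$, apply Lemma~\ref{l:sylgen} to each $h_i$ to obtain vertices $v_i, z_i\in V(\Gamma)$ so that $(v_i, z_i, v_i^{h_i})$ is a length-two geodesic in $\gex$ with
\[
d_{\lk(z_i)}(v_i, v_i^{h_i})\asymp_{1,1}\syl{h_i}.
\]

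Setting $v = v_1$, build $\gamma$ by concatenating, in order, the pivot segments $(v_i^{g_{i-1}},\, z_i^{g_{i-1}},\, v_i^{g_i})$ for $i=1,\dots,k$, interleaved with bridge geodesics from $v_i^{g_i}$ to $v_{i+1}^{g_i}$ inside the conjugate $\Gamma^{g_i}$ (with the convention $v_{k+1}=v_1$, so that the final bridge in $\Gamma^{g}$ terminates at $v^g$). Note that $v_i^{g_i}$ and $v_{i+1}^{g_i}$ both lie in $\Gamma^{g_i}$ since $v_i, v_{i+1}\in V(\Gamma)$, so each bridge is indeed a path of length at most $D$ inside a single conjugate of $\Gamma$, while each pivot segment has length two. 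Thus $\ell\le(D+2)k$, and Lemma~\ref{lem:covering distance} yields $d(v,v^g)\ge\stl{g}-1=k-1$, so $\gamma$ is a quasi-geodesic with constants linear in $D$, which I expect to fit within $(10D,10D)$ after handling small values of $k$ separately.

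The sum of projections along $\gamma$ splits into contributions from pivot vertices and bridge vertices. At each pivot $z_i^{g_{i-1}}$, equivariance of the conjugation action gives
\[
d_{\lk(z_i^{g_{i-1}})}(v_i^{g_{i-1}},v_i^{g_i}) = d_{\lk(z_i)}(v_i,v_i^{h_i})\asymp_{1,1}\syl{h_i},
\]
so these contributions sum to something comparable to $\syl{g}$. At each internal bridge vertex $w\in\Gamma^{g_i}$, its two neighbors on $\gamma$ lie in $\lk_{\Gamma^{g_i}}(w)\subseteq\lk(w)$, so the projection distance is bounded by $\diam(\lk_{\Gamma}(y))\le D$. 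There are at most $O(kD)$ such bridge vertices, and since $k\le\syl{g}$, the total bridge contribution is $O(D\cdot\syl{g})$, producing the claimed relation.

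The main obstacle will be checking that the constants really come out as $(10D,10D)$. The lower bound is robust since each pivot contributes at least $\syl{h_i}/2$ (inspecting the three cases in the proof of Lemma~\ref{l:sylgen}), so $\sum\text{projections}\ge\syl{g}/2$; the upper bound is more delicate because bridge vertices can each contribute up to $D$ and their total number grows like $kD$. Some care is also needed to handle the degenerate case $\syl{h_i}=1$ (Case~1 of Lemma~\ref{l:sylgen}), where $v_i$ must be chosen outside $\st(y_i)$, and possibly to adjust the construction when $k$ is very small or when $v_k=v_1$ so that no terminal bridge is required; these are technical but not substantive issues.
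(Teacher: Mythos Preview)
Your construction is essentially identical to the paper's: same star-word decomposition via Lemma~\ref{lem:syl}, same choice of pivot vertices $z_i$ from Lemma~\ref{l:sylgen}, same concatenation of length-two pivot segments $(v_i^{g_{i-1}},z_i^{g_{i-1}},v_i^{g_i})$ with bridge geodesics $c_i^{g_i}\subseteq\Gamma^{g_i}$, and the same split of the sum into pivot and bridge contributions.

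Two points need tightening. First, your bound on a bridge vertex contribution by ``$\diam(\lk_\Gamma(y))\le D$'' is not what you want: since $\Gamma$ is triangle--free, $\lk_\Gamma(y)$ is a discrete graph, so its intrinsic diameter is not even finite. The correct observation (which the paper uses) is that both neighbours of a bridge vertex lie in a single conjugate $\Gamma^{g_j}$, so in the link they are conjugates of vertices of $\lk_\Gamma$ by the \emph{identity}; Lemma~\ref{lem:discrete} with $g=1$ then gives distance at most $1$. This sharpens your $O(D\cdot\syl g)$ upper bound to $(D+2)k\le(D+2)\syl g$ and is what makes the constants come out.

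Second, and more seriously, your quasi--geodesic argument is incomplete: bounding $\ell\le(D+2)k$ and $d(v,v^g)\ge k-1$ controls only the endpoints, not arbitrary subpaths, and a path can have the right endpoint distance while wandering badly in the middle. What is needed is that for $\gamma(p)\in\delta_i$ and $\gamma(q)\in\delta_j$ one has $d'(\gamma(p),\gamma(q))\ge j-i-4$; the paper obtains this from Lemma~\ref{lem:covering distance}(1) applied to $g_jg_i^{-1}$ together with the minimality $\stl{g_jg_i^{-1}}=j-i$. Once you insert this step, your sketch becomes the paper's proof.
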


\begin{proof}
Let $g = h_k h_{k-1}\cdots h_1$ such that $\supp(h_i)\subseteq \st(y_i)$ and $\syl{g}=\sum_i\syl{h_i}$.
As in the proof of Lemma~\ref{l:sylgen}, we may further assume the following.
\begin{enumerate}[(i)]
\item
If $\syl{h_i}=1$, then $\supp(h_i)=\{y_i\}$.
\item
If $\syl{h_i}\ge2$, then $\deg(y_i)>1$.
\end{enumerate}
Put $g_i = h_i h_{i-1}\cdots h_1$ and $g_0=1$.

We choose $z_i$ and $v_i$ as in the proof of Lemma~\ref{l:sylgen}.
Namely, if $\syl{h_i}=1$ then we choose a length-two path $(y_i,z_i,v_i)$ in $\Gamma$.
If $\syl{h_i}\ge2$, then we set $z_i = y_i$ and choose $v_i\in\lk(z_i)$ such that $v_i^{h_i}\ne v_i$.
Note that $z_i^{h_i}=z_i$. 
In (i) above, we see that $d_{\lk(z_i)}(v_i,v_i^{h_i}) = \syl{h_i}=1$.
In (ii), we note that $\syl{h_i}/2\le \max(1,\syl{h_i}-1)\le d_{\lk(z_i)}(v_i,v_i^{h_i}) \le \syl{h_i}$.

We put $v = v_1$.
Choose a shortest path $c_i$ in $\Gamma$ from $v_i$ to $v_{i+1}$ for $i=1,2,\ldots,k-1$
and let $c_k$ be a shortest path in $\Gamma$ from  $v_k$ to $v$.
Define $\gamma$ to be the concatenation 
\[\gamma=(v_1,z_1,v_1^{g_1})\cdot c_1^{g_1}\cdot(v_2^{g_1},z_2^{g_1},v_2^{g_2})\cdot c_2^{g_2}
\cdots
(v_k^{g_{k-1}},z_k^{g_{k-1}},v_k^{g_k})\cdot c_k^g.\]
Let $\gamma\co[0,\ell]\to \gex$ be the parametrization by arc length.
In particular, $\gamma([0,\ell]\cap\mathbb{Z})$ is contained in the vertex set of $\gex$.
Put $A = \{i \co \gamma(i) = z_j^{g_{j-1}} \text{ for some }j\}$ and $B = \{1,2,\ldots,\ell-1\}\setminus A$.

From the above observations, we have
\[\syl{g}/2 \le \sum_{i\in A} d_{\lk(\gamma(i))}(\gamma(i-1),\gamma(i+1)) \le \syl{g}.\]

If $i\in B$, then $d_{\lk(\gamma(i))}(\gamma(i-1),\gamma(i+1)) \le 1$ since $\{\gamma(i-1),\gamma(i),\gamma(i+1)\}\subseteq \Gamma^{g_j}$ for some $j$. So,
\[
0 \le \sum_{i\in B}d_{\lk(\gamma(i))}(\gamma(i-1),\gamma(i+1))
\le
(D+2)k\le (D+2)\syl{g}.
\]
Hence,
\[\sum_{i=2}^{\ell-1}d_{\lk(\gamma(i))}(\gamma(i-1),\gamma(i+1))\asymp_{D+3,0} \syl{g}.\]

It remains to show that $\gamma$ is a $(10D,10D)$--quasi--geodesic.
Let us set 
\[\delta_0=[v_1,z_1],\]
\[\delta_i = [z_i^{g_{i-1}},v_i^{g_i}]\cdot c_i^{g_i}\cdot
[v_{i+1}^{g_i},z_{i+1}^{g_i}]\subseteq \Gamma^{g_i}
\text{ for }i=1,2,\ldots,k-1,\]
\[
\text{ and }
\delta_k=[z_k^{g_{k-1}},v_k^g]\cdot c_k^g.\]
Then we have $\gamma = \delta_0 \cdot \delta_1\cdot \delta_2\cdots\delta_{k-1}\cdot  \delta_k$.
Note that the length of $\delta_i$ is between one and $D+2$.

We claim that for
 $0\le i<j\le k$ and
  $x\in\delta_i, y\in\delta_j$ we have $j-i-4\le d'(x,y)\le j- i+1$.
The upper bound is clear.
Assume $d'(x,y)\le j-i-5$.
Since $v_s^{g_s}\in\delta_s\subseteq \Gamma^{g_s}$ for each $s$
we have that $d'(v_i^{g_i},v_j^{g_j})\le j-i-3$.
Then we have $d'(v_i^{g_i},v_i^{g_j})\le j-i-2$.
On the other hand, Lemma~\ref{lem:covering distance} (1) implies that $d'(v_i^{g_i},v_i^{g_j})\ge \stl{g_j g_i^{-1}}-1=j-i-1$. This is a contradiction.

Now fix $0\le p<q\le \ell$. Let $\gamma(p)\in \delta_i$ and $\gamma(q)\in\delta_j$ where $0\le i<j\le k$.
We have that $d'(\gamma(p),\gamma(q))\ge j-i-4$. 
From the estimates on the lengths of $\delta_i$'s, we have 
\[j-i-1 \le q-p \le (D+2)(j-i+1).\]
By Lemma~\ref{lem:covering distance}, we have
\[
j-i-4\le d'(\gamma(p),\gamma(q)) \le d(\gamma(p),\gamma(q)) \le D d'(\gamma(p),\gamma(q)) \le D (j-i+1).\]
It follows that 
\begin{eqnarray*}
\frac1D d(\gamma(p),\gamma(q))-2&\le& j-i-1 \le q-p \\ 
&\le& (D+2)(j-i+1)\le (D+2)(d(\gamma(p),\gamma(q))+5).
\end{eqnarray*}
\end{proof}
The general distance formula we have given in Proposition \ref{p:sylgen} is somewhat unsatisfying in that it is not a direct analogue of the Masur--Minsky distance formula.  Ideally, the sum on the left hand side would be a sum over all vertices within $\gex$, and the terms in the summation would be projected distances which exceed a certain lower threshold.  At the time of the writing there are several issues which impede the authors from proving such a general result, perhaps the most serious of which is the lack of an appropriate notion of a tight geodesic.

\section{Automorphism group of $\Gamma^e$}
By a result of Ivanov in \cite[Theorem 1]{ivanov}, the automorphism group of the curve graph of a non--sporadic surface is commensurable with the mapping class group of the surface.  Bestvina asked the authors whether an analogous result holds for the extension graph.  The answer is no:

\begin{thm}\label{thm:aut}
The isomorphism group of an extension graph of a connected, anti--connected graph contains the infinite rank free abelian group.  In particular, $\aut(\Gam^e)$ is uncountable.
\end{thm}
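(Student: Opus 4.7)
My plan is to fix $v \in V(\Gamma)$ and to exhibit an infinite rank free abelian subgroup of $\aut(\gex)$ via ``partial conjugations by powers of $v$'' applied to different sectors of $\gex$.  Since $\Gamma$ is anti--connected it does not split as a nontrivial join, which forces $V(\Gamma)\setminus\st_\Gamma(v)\neq\varnothing$ for every $v$.  The key observation is that the action of $v$ on $\gex$ by conjugation fixes $\st_\gex(v)$ pointwise: for every $x\in\st_\gex(v)$ we have $[x,v]=1$ and so $x^v=x$.  In particular, this action permutes the set of connected components of $Y:=\gex\setminus\st_\gex(v)$.

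For each $\langle v\rangle$-orbit $\mathcal{O}$ of components of $Y$, I would define $\phi_\mathcal{O}\co\gex\to\gex$ by $\phi_\mathcal{O}(x)=x^v$ when $x$ lies in a component belonging to $\mathcal{O}$, and $\phi_\mathcal{O}(x)=x$ otherwise.  The only edges whose preservation is not immediate are those joining $u\in\st_\gex(v)$ to some $x$ in a component of $\mathcal{O}$; but $u=u^v$ commutes with $x^v$ whenever $u$ commutes with $x$, so such edges persist.  Replacing $v$ by $v^{-1}$ gives an inverse, so $\phi_\mathcal{O}\in\aut(\gex)$.  For distinct orbits $\mathcal{O}_1\neq\mathcal{O}_2$ the maps $\phi_{\mathcal{O}_1}$ and $\phi_{\mathcal{O}_2}$ act nontrivially on disjoint vertex sets and therefore commute.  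Each $\phi_\mathcal{O}$ has infinite order: if $\phi_\mathcal{O}^n$ were the identity on some $C\in\mathcal{O}$, then $v^n$ would centralise every vertex of $C$, but by the Centralizer Theorem in $\aga$ the elements $v$ and $v^n$ have the same centraliser $\langle\st(v)\rangle$, which is disjoint from the vertex set of $C\subseteq Y$.  Consequently the $\phi_\mathcal{O}$ freely generate a free abelian subgroup of $\aut(\gex)$ of rank equal to the number of $\langle v\rangle$-orbits on components of $Y$.

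The main obstacle is to show that this number of orbits is infinite.  I would argue this via the iterated doubling description of $\gex$ combined with Lemma~\ref{lem:starsep}.  Writing $\gex$ as a direct limit $\Gamma=\Gamma_0\subseteq\Gamma_1\subseteq\cdots$ in which each $\Gamma_{i+1}$ is obtained from $\Gamma_i$ by attaching a fresh copy of $\Gamma$ along the star of a vertex, iterated doublings along $\st(v)$ itself cyclically produce the single $\langle v\rangle$-orbit containing the ``base'' sector $V(\Gamma)\setminus\st(v)$; by contrast, a doubling along $\st(w)$ for any $w\in V(\Gamma)\setminus\st(v)$ introduces a new copy of $V(\Gamma)\setminus\st(v)$ whose attaching pattern to $\st_\gex(v)$ cannot be matched by applying any power of $v$ to the previously constructed sectors.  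Iterating these ``transverse'' doublings yields, at each finite stage, new sectors whose separations from one another and from every $\langle v\rangle$-translate of earlier sectors are visible within the finite graph $\Gamma_n$; Lemma~\ref{lem:starsep} then promotes these finite--stage separations to genuine separations in $\gex$, producing infinitely many distinct $\langle v\rangle$-orbits of components of $Y$.  This yields the desired $\bigoplus\Z$ inside $\aut(\gex)$, and uncountability of $\aut(\gex)$ follows at once.
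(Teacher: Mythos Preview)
Your construction of the partial conjugations $\phi_{\mathcal{O}}$ is correct and clearly presented: each $\phi_{\mathcal{O}}$ is a well--defined automorphism, distinct orbits give commuting automorphisms, and each $\phi_{\mathcal{O}}$ has infinite order.  The difficulty is entirely in your claim that there are infinitely many $\langle v\rangle$--orbits of components of $Y=\gex\setminus\st_{\gex}(v)$, and the argument you give for this does not work.

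The specific problem is the sentence ``a doubling along $\st(w)$ for any $w\in V(\Gamma)\setminus\st(v)$ introduces a new copy of $V(\Gamma)\setminus\st(v)$ whose attaching pattern to $\st_{\gex}(v)$ cannot be matched\ldots''.  First, doubling along $\st(w)$ introduces a copy of $V(\Gamma)\setminus\st(w)$, not of $V(\Gamma)\setminus\st(v)$.  More seriously, the new vertices you create this way do \emph{not} form a new component of $Y$: they are attached along $\st(w)$, and $w$ itself already lies in $Y$, so every new vertex that lands in $Y$ is connected (through $\st(w)$) to the component of $Y$ that already contained $w$.  Transverse doublings therefore enlarge an existing component rather than produce a new one, and Lemma~\ref{lem:starsep} does nothing to rule this out --- it promotes separations, not orbit--distinctness.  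The claim that there are infinitely many $\langle v\rangle$--orbits may well be true, but it requires a genuinely different argument (for instance, analyzing how $\langle\lk(v)\rangle$ acts on components and showing that $\langle v\rangle$--orbits are distinguished by double cosets in $\langle\st(v)\rangle$).

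The paper's route avoids this obstacle entirely.  Rather than fixing a single vertex $v$ and seeking many orbits around it, the paper first uses the doubling picture to produce infinitely many pairwise disjoint, pairwise isomorphic infinite--diameter components of $\gex\setminus\st(v_0)$ (the set $C_0$), and then places one automorphism $f_i$ \emph{inside each} such component, by choosing a vertex $v_i$ there and taking a partial conjugation whose support lies entirely within that component.  The $f_i$ then commute because their supports are disjoint, with no need to count $\langle v\rangle$--orbits at all.  Your single--vertex approach is conceptually appealing, but to make it go through you would have to supply the missing orbit count; the paper's nested/disjoint approach trades that for the easier task of finding one nontrivial partial conjugation inside each of infinitely many disjoint regions.
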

Here, a graph is \emph{anti-connected} if its complement graph is connected.
\begin{proof}
Let $\Gam$ be a connected and anti--connected graph.
We observe that each star of a vertex $v_0$ separates $\gex$ into infinitely many components of infinite diameter, all of which are isomorphic to each other.  To see this, fix a vertex $v\in V(\Gamma)$.  By assumption, $\st(v)\neq \Gamma$.  Observe that for each $n>0$, we can build the graph \[\Gamma_n=\bigcup_{\st(v),i=1}^n\Gamma,\] which is obtained by identifying $n$ copies of $\Gamma$ along the star of $v$.  The graph $\Gamma_n$ naturally embeds in $\gex$.  Observe that $\Gamma_n\setminus\st(v)$ separates $\Gamma_n$ into $n$ components, so that $\st(v)$ separates $\gex$ into at least $n$ components.  It follows that $\gex\setminus\st(v)$ has infinitely many components.  Observe that the conjugation action of $v$ permutes these components.  In particular, $\gex\setminus\st(v)$ has infinitely many isomorphic components.  Since $\Gamma$ is anti--connected, we have that $\gex$ has infinite diameter, so that at least one component of $\gex\setminus\st(v)$ has infinite diameter.  By applying the conjugation action of $v$, we have that infinitely many components of $\gex\setminus\st(v)$ have infinite diameter.

Pick such an infinite component $\Gam_0$ and consider $C_0=\{\Gam_0^{v_0^i}\co i \in\Z\smallsetminus\{0\}\}$. We let $f_0$ be the conjugation action by $v_0$ on $\Gam^e$.
Fix an element in $C_0$, we find the star of another vertex $v_1$ that separates out an infinite component. Around this vertex, there is an action $f_1$ similar to the previous step fixing $\st(v_1)$.
Continuing this process, we see that \[\{f_1,f_2,\ldots\}\cong\prod_{i=1}^{\infty}\Z,\] whence the result.
\end{proof}
\section*{Acknowledgements}
We thank J. F. Manning for valuable suggestions regarding Theorem~\ref{thm:starqiclass}.  We also thank Y. Algom--Kfir, M. Bestvina, A. Hadari, C. McMullen, Y. Minsky, I. Rivin, P. Sarnak and S. P. Tan for numerous useful conversations.  Finally, we thank the anonymous referees for numerous helpful comments which improved the readability and completeness of the article.

The first named author is supported by Basic Science Research Program through the National Research Foundation of Korea (NRF) funded by the Ministry of Education, Science and Technology (2013R1A1A1058646). S. Kim is also supported by Samsung Science and Technology Foundation (SSTF-BA1301-06). The second named author is partially supported by NSF grant DMS-1203964.

\def\cprime{$'$}
\providecommand{\bysame}{\leavevmode\hbox to3em{\hrulefill}\thinspace}
\providecommand{\MR}{\relax\ifhmode\unskip\space\fi MR }
\providecommand{\MRhref}[2]{%
  \href{http://www.ams.org/mathscinet-getitem?mr=#1}{#2}
}
\providecommand{\bysame}{\leavevmode\hbox to3em{\hrulefill}\thinspace}
\providecommand{\MR}{\relax\ifhmode\unskip\space\fi MR }
% \MRhref is called by the amsart/book/proc definition of \MR.
\providecommand{\MRhref}[2]{%
  \href{http://www.ams.org/mathscinet-getitem?mr=#1}{#2}
}
\providecommand{\href}[2]{#2}

\end{document}